\documentclass[11pt,reqno]{amsart}
\usepackage{amsmath,amsopn,amssymb,amsthm,multicol,xcolor,mathtools,tikz,hyperref}
\usepackage[normalem]{ulem} 

\usepackage{tikz-cd}

\newcommand\hscale{0.5}
\newcommand\wscale{1.5}
\newcommand\dist{0.2}

\newcommand{\nocontentsline}[3]{}
\let\origcontentsline\addcontentsline
\newcommand\stoptoc{\let\addcontentsline\nocontentsline}
\newcommand\resumetoc{\let\addcontentsline\origcontentsline}

\newcommand\com[1]{}

\newcommand\C{{\mathbb C}}

\newcommand\E{\mathcal{E}}

\newcommand\F{\mathfrak{F}}
\newcommand\g{\mathfrak{g}}

\newcommand\G{{\mathcal{G}}}
\newcommand\h{\mathfrak{h}}

\newcommand\J{\mathbf{J}}
\renewcommand\k{\Bbbk}
\renewcommand\l{\lambda}
\newcommand\La{\Lambda}
\renewcommand\O{{\mathcal O}}
\newcommand\op[1]{\mathop{\rm #1}\nolimits}
\newcommand\ot{\otimes}
\newcommand\p{\partial}
\renewcommand\P{{\mathbb P}}
\newcommand\R{{\mathbb R}}
\newcommand\vf{\mathfrak{D}}
\newcommand\W{{\mathcal{W}}}
\newcommand\Wd{{\mathcal{W}_\dagger}}

\newcommand\Hmod{{\tilde{\mathrm{H}}}}
\newcommand\Hcech{{\check{\mathrm{H}}}}
\newcommand\HH{{\mathrm{H}}}
\newcommand\wt{{\mathrm{wt}}}
\newcommand\Z{{\mathbb Z}}

\newcommand\AbsInv{\mathcal{A}}
\newcommand\RelInv{\mathcal{R}}
\newcommand\RelInvRat{\mathcal{R}_{\mathrm{rat}}}
\newcommand\Der{\mathcal{D}}

\newcommand\AbsDiffInv{\mathcal{A}}
\newcommand\RelDiffInvPol{\mathcal{R}}

\theoremstyle{plain}
\newtheorem{theorem}{Theorem}
\newtheorem*{theorem*}{Theorem}
\newtheorem{prop}{Proposition}
\newtheorem{cor}{Corollary}
\newtheorem{lemma}{Lemma}
\theoremstyle{definition}
\newtheorem{definition}{Definition}
\newtheorem{example}{Example}
\newtheorem{remark}{Remark}

\begin{document}

\title{Scalar relative differential invariants}

\author[Boris Kruglikov]{Boris Kruglikov$^\dagger$}
\address{$^\dagger{}$ Department of Mathematics and Statistics, UiT the Arctic University of Norway, Troms\o\ 9037, Norway.}

\author[Eivind Schneider]{Eivind Schneider$^\dagger$}

\address{Email addresses:\qquad {\tt boris.kruglikov@uit.no}\quad\text{\rm and }\quad {\tt eivind.schneider@uit.no}\hspace{1pt}.}


 \begin{abstract}
Computation of polynomial relative invariants is a classical tool in algebra. 
Relative differential invariants are central for the equivalence problem of geometric structures. 
We address the fundamental problem of finite generation of their (differential) 
algebra and demonstrate both positive and negative results in this respect under various setups. 
As in the algebraic case, the algebra of polynomial differential invariants is not finitely generated. 
However we show that after localization on a finite set of relative invariants 
the differential algebra becomes finitely generated.  We also investigate the weights of rational relative differential invariants and bound their order. Several nontrivial examples are considered and further applications are discussed. 
 \end{abstract}

\maketitle
\tableofcontents


\section{Introduction}\label{S1}

The notion of relative invariant (also known as semi-invariant) is central in algebra  
via the classical invariant theory \cite{W,PV}. 
Linear dependence of $n$ vectors in $n$-dimensional vector space can be expressed via the determinant,
which is a relative invariant of the general linear group $GL(n)$. Other important examples of
relative invariants include resultants, discriminants, etc. 
There are scalar and vector valued versions of such invariants.

Invariants play an important role in geometry via Klein's Erlangen program \cite{Kl}.
S.\,Lie founded an invariant theory of general groups and made the theory of differential invariants 
an indispensable tool for integration of differential equations \cite{Li1,Li2}. 

Relative differential invariants were central in the works of Halphen and Laguerre 
\cite{H,La} on classification of curves in projective spaces. As another example,
the problem of trivialization of a second order ODE by a point transformation is equivalent to the vanishing
of two relative differential invariants of fourth order (meaning given via nonlinear differential operators of
order four) as was demonstrated by Lie's student A.~Tresse \cite{T2}.

In the works of Lie and Tresse a general theorem of finite generation of absolute differential invariants was
formulated and justified in a simple setup \cite{T1}. This was further generalized in the works by
L.\,Ovsiannikov \cite{Ov}, A.\,Kumpera \cite{Kum}, P.\,Olver-J.\,Pohjanpelto \cite{OP} in local settings.
Global theory in differential-algebraic setting was considered for both finite-dimensional Lie groups 
and infinite-dimensional pseudogroups in the work of the first author and V.\,Lychagin in \cite{KL2},
where a general finite-generation property was established.

Surprisingly, no progress in this direction was made for relative differential invariants.
The general classification problem for relative invariants is of fundamental importance in
a variety of areas, including general relativity, quantum mechanics, the equivalence problem in differential geometry,
the theory of special functions, computer vision and much more; see \cite{FO} and references therein. 
We refer to the book \cite{O1} for basic properties and applications of relative invariants.
However, neither the finite generation problem nor the global behaviour has been discussed.
The purpose of this paper is to address these fundamental questions.

\subsection{Smooth relative invariants and Lie algebra cohomology}\label{S1.1}

Classically relative invariants in geometry were considered in a smooth (or analytic) setting.
If $G$ is a group acting smoothly on a manifold $M$, then $R\in C^\infty(M)$ is 
a relative invariant if $\Sigma=\{R=0\}$ is a $G$-invariant condition (this set $\Sigma$ in general
may not be a submanifold). Often $R$ is defined locally on a domain $U\subset M$, and a local regular action 
is considered \cite{O1,FO} with global issues ignored. 

Equivalently, the relative invariance can be written so:
 \begin{equation}\label{grprel}
g^*R=\Lambda_g\cdot R\quad\forall g\in G.
 \end{equation}
Here $\Lambda_g$ is a smooth nonvanishing function on $M$ (or a domain $U$ in it, which we skip mentioning in what follows) 
called a multiplier. For nontrivial $R$, equation \eqref{grprel} means that $\Lambda_g$ is a group cocycle.
Changing the defining function for $\Sigma$ modifies the multiplier by a coboundary.
This leads to a group cohomology, and in the case $G$ is a Lie group, to a continuous cohomology.

Now let $\g$ be a Lie algebra represented by vector fields on $M$: $\g\subset\vf(M)$.
The relative invariance condition \eqref{grprel} transforms to the following via the Lie derivative:
 \begin{equation}\label{algrel}
L_vR=\lambda(v)\cdot R\quad\forall v\in\g.
 \end{equation}
Here $\lambda\in\g^*\otimes\F$ is a linear functional on $\g$ with values in the space of functions on $M$. 
Usually one chooses $\F=C^\infty(M)$ in the real case (this will be modified in this paper). 

The homomorphism rule $L_{[u,v]}R=[L_u,L_v]R$, $u,v\in\g$, implies the cocycle property 
 $$
d\l(u,v)=L_u\l(v)-L_v\l(u)-\l([u,v])=0.
 $$ 
Changing $R$ by another defining function $e^\varphi R$ for $\Sigma$ (in the real case this is the most general 
change up to sign) 
the multiplier is changed so: $\l(v)\mapsto\l(v)+L_v(\varphi)$, or equivalently $\l\mapsto\l+d\varphi$. 
These differentials are united into the Chevalley-Eilenberg complex
 $$
0\to\F\stackrel{d}\longrightarrow\g^*\ot\F\stackrel{d}\longrightarrow\La^2\g^*\ot\F\to\dots
 $$
and $\l$ defines a cohomology class $[\l]\in \HH^1(\g,\F)$.

Note that for infinite-dimensional $\g$ one has to specify a completion of the 
tensor product. In what follows we will simply change $\g^*\ot\F$ to $\op{Hom}_\k(\g,\mathcal{F})$, 
where $\k$ is the ground field, in this paper chosen to be $\C$ 
(the latter case for the space of analytic functions). 

The above cohomology $\HH^1(\g,\F)$ is also related to lifts of the algebra of vector fields 
$\g\simeq\hat{\g}\subset\vf(\hat{M})$ to the total space of the trivial line bundle 
$\hat{M}=M\times\C$: denoting by $t$ the vertical coordinate, we get
 $$
\hat{X}=X+\lambda(X)\,t\,\p_t,\quad X\in\g.
 $$
By the results of M.\,Fels and P.\,Olver \cite{FO}, relative invariants $R$ for $\lambda$ 
correspond to absolute invariants of $\hat{\g}$ in $\hat{M}$, and level sets of such absolute
invariants are $\hat{\g}$-invariant sections of $\hat{M}$. This leads to a natural generalization
of relative invariants:
\begin{definition}
   A relative invariant on $M$ is an invariant global section of a $\g$-equivariant line bundle $L\to M$.
\end{definition}
A $\g$-equivariant line bundle is a line bundle $L \to M$ together with a lift of $\g$ to a Lie algebra $\hat{\g} \subset \vf(L)$, 
and the invariance is meant with respect to the lift $\hat{\g}$.  Locally, sections of line bundles are still represented by functions, 
and the representative functions must satisfy \eqref{algrel}. 

In the same way as $\HH^1(\g,\F)$ describes the lifts of $\g$ to $M\times \C$, there is a cohomological description of the 
group $\mathrm{Pic}_\g(M)$ of $\g$-equivariant line bundles. This was explored in detail in \cite{KS2}, 
and Section \ref{sect:RelativeInvariants} gives a summary of some essentials of that paper.

\subsection{Algebraic actions and weights}\label{S1.2}

In most of this paper, we restrict to algebraic actions. The reason for this, as well as a specification 
for what this means, especially in the differential context, will be given in the next subsection. 
For now we note that much more can be said about the space of orbits in this context. 

Indeed, Rosenlicht's theorem \cite{R1} states that if an algebraic Lie group $G$ 
acts algebraically on an algebraic variety $M$ then there exists 
an invariant algebraic subvariety $S$ such that the action of $G$ on 
$M\setminus S$ admits a good quotient \cite{SR}: 
all the orbits are closed and can be separated by rational invariants. In other words, the quotient space $(M\setminus S)/G$ is Hausdorff
and the field of functions on it can be identified with the field of rational invariants on $M\setminus S$. 

 \begin{definition} 
Let $R$ be a rational relative $\g$-invariant, i.e.\ a $\hat{\g}$-invariant 
section of a $\g$-equivariant line bundle $\ell=(L \to M ,\hat{\g})$. 
We call the equivalence class of $\ell$ in $\mathrm{Pic}_\g(M)$ the weight of $R$.
 \end{definition} 

The rational relative invariants form a group $\RelInvRat^\times$, and the map 
 \[ 
\wt\colon \RelInvRat^\times \to \mathrm{Pic}_\g(M),
 \]
which takes a relative invariant to its weight, is a group homomorphism. 
We define the group of weights 
$\W:=\wt(\RelInvRat^\times)\subset\mathrm{Pic}_\g(M)$.

 \begin{remark}
This definition, as well as a part of our results, have a counter-part 
in the analytic category: analytic action of $G$ on $M$, 
meromorphic relative invariant $R$, etc; see \cite{KS2} and \cite{GM}.
 \end{remark}
 
In general, the group $\mathrm{Pic}_\g(M)$ may be quite complicated. Even in the 
special case when $M$ is a polydisc and all line bundles are trivial, 
the cohomology group  $ \HH^1(\g,\mathcal{O}(M)) \simeq \mathrm{Pic}_\g(M)$
may be large. For comparison, when $\g$ is the Lie algebra $\vf(M)$ 
of all vector fields on $M$ and the coefficients are trivial, 
$\HH^\bullet(\g)$ is called the Gelfand-Fuks cohomology \cite{F}. 
For $\g=\vf(M)$ and $\F=C^\infty(M)$ the space $\HH^1(\g,\F)$ is generated 
by the de Rham cohomology of $M$ and a divergence (Theorem 2.4.11 of loc.cit.);
we refer to \cite{F} for a panorama of other computations. Yet for general 
infinite-dimensional Lie algebras, as well as for Lie-Reinhart algebras $\g$ 
over the commutative algebra $\F$, the cohomology $\HH^1(\g,\F)$ has not been 
computed. Even for finite-dimensional $\g$ but infinite-dimensional $\F$ 
it is not known in general when this cohomology is finite-dimensional.
 
We can still say something concrete about the weights. 
Since the set of equivalence classes $\RelInvRat^\times/\!\!\sim\,$ 
(where two relative invariants are considered equivalent if they have 
the same locus) is at most countable, 
the subgroup $\W\subset\mathrm{Pic}_\g(M)$ is discrete.
In Section \ref{sect:weights} we will prove the following theorem. 
Loosely speaking it is a consequence of the fact that a generating set of 
rational absolute invariants separate orbits outside an invariant 
Zariski-closed proper subset, and that this algebraic set has finitely many components. 

 \begin{theorem}\label{Th1}
If $\g$ is the Lie algebra of an algebraic action of a Lie group $G$ on an irreducible algebraic variety $M$, then 
the space of weights $\mathcal{W}$ is finitely generated;
in particular it has finite rank in the equivariant Picard group, that is, 
the vector subspace $\mathcal{W}\ot\C\subset\mathrm{Pic}_\g(M)\ot\C$ is finite-dimensional.
 \end{theorem}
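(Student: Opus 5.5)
Since $\wt$ is a group homomorphism from $\RelInvRat^\times$ onto $\W$, it suffices to find a finitely generated subgroup of $\RelInvRat^\times$ whose image is all of $\W$. Equivalently, modulo the kernel of $\wt$, the group $\RelInvRat^\times$ should be generated by finitely many elements. The kernel contains all nonzero rational absolute invariants: these are exactly the relative invariants of trivial weight in the trivialized setting, and weight-zero classes correspond to invariants of a trivial equivariant bundle. So the goal is to show that $\RelInvRat^\times/\AbsInv^\times_{\mathrm{rat}}$ is finitely generated, where $\AbsInv^\times_{\mathrm{rat}}$ denotes the group of nonzero rational absolute invariants.

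The key step is to use the divisor of a relative invariant. For a rational relative invariant $R$, the zero locus and the polar locus of $R$ are $G$-invariant. Since $\hat\g$ preserves the section, $\{R=0\}$ is $\g$-invariant; because $M$ is irreducible and the connected group acts algebraically, each irreducible component of the divisor of $R$ is a $G$-invariant prime divisor. Now apply Rosenlicht's theorem: there is an invariant proper Zariski-closed subset $S\subset M$ such that rational absolute invariants separate orbits on $M\setminus S$. Every invariant prime divisor $D$ is either a component of $S$, or it meets $M\setminus S$. In the second case $D\cap(M\setminus S)$ is a union of orbits of codimension one. Then $D$ is cut out generically by a level set $\{I=c\}$ of a rational absolute invariant $I$, because the quotient map $M\setminus S\to (M\setminus S)/G$ is given by the generating invariants and $D$ is the preimage of a hypersurface in the quotient. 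More precisely, I would argue that the generating invariants $I_1,\dots,I_k$ give a dominant rational map $\pi\colon M\dashrightarrow Y$ with $\C(Y)=\C(M)^G$. An invariant divisor not in $S$ is $\overline{\pi^{-1}(E)}$ for a prime divisor $E\subset Y$. A suitable rational function on $Y$ vanishing on $E$ pulls back to an absolute invariant whose divisor contains $D$, with the other components again invariant. Components coming from the exceptional set of $\pi$ are among the finitely many components of $S$, together with the finitely many divisorial components where $\pi$ fails to be equidimensional.

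Next, I would take $R$ and subtract suitable absolute invariants from its divisor. This shows that $\op{div}(R)$ is congruent, modulo divisors of absolute invariants, to a combination of the finitely many invariant prime divisors $D_1,\dots,D_m$ contained in $S$ or in the bad locus of $\pi$. Two relative invariants with the same divisor have ratio a rational function without zeros or poles, hence a unit in $\O(M)$, and the units modulo constants form a finitely generated group (a classical result for irreducible varieties). Thus $\RelInvRat^\times/\AbsInv^\times_{\mathrm{rat}}$ embeds, up to a finitely generated contribution from units, into $\Z^m$. It is therefore finitely generated, and so is its image $\W$. Finite rank of $\W\ot\C$ follows immediately.

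I expect the main obstacle to be the middle step: showing that an invariant prime divisor meeting the good locus is, modulo finitely many exceptional divisors, the divisor of an absolute invariant. This needs care with non-normal $M$, since Weil divisors may fail to be Cartier there; I would first pass to the normalization, which is $G$-equivariant. It also requires checking that the fibres of $\pi$ over codimension-one points of $Y$ are generically irreducible, or else that only finitely many of them split. I would first try to handle this by choosing $\pi$ so that its generic fibre is irreducible, with $\C(M)^G$ algebraically closed in $\C(M)$. Splitting then happens only over a proper closed subset of $Y$, and only its finitely many divisorial components contribute further generators.
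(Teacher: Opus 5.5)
Your outline follows the paper's own route: Rosenlicht quotient, finitely many exceptional invariant prime divisors $D_1,\dots,D_m$, absolute invariants for all other divisors, and units of trivial weight. You are right that the middle step is the weak point, but the obstruction there is not multiplicity, normality or splitting of fibres. If $f$ is a rational function on $Y$ vanishing on $E$, then $\mathrm{div}(\pi^*f)$ also contains the preimages of all the other zeros and poles of $f$ on $Y$. These are again generic invariant prime divisors, not among the $D_j$. So ``subtracting suitable absolute invariants'' only terminates if $f$ can be chosen with $\mathrm{div}(f)=E$ modulo boundary components, that is, if $E$ is principal on the good part $Y_0$ of the quotient. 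Take all the precautions you list: shrink to an invariant open set $U$ on which $\pi\colon U\to Y_0$ is a smooth geometric quotient with irreducible fibres, so that $\pi^*$ identifies prime divisors of $Y_0$ with invariant prime divisors of $U$, multiplicities included. What your argument then proves for smooth $M$ is the exact sequence
\[
\bigoplus_{j=1}^m\Z\,[D_j]\longrightarrow\W\longrightarrow\mathrm{Cl}(Y_0)\longrightarrow0 .
\]
Hence $\W$ is finitely generated exactly when the class group $\mathrm{Cl}(Y_0)$ is, and nothing in Rosenlicht's theorem guarantees this.

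This is a genuine failure of the statement itself. The paper's proof of Theorem~\ref{th:finitgenrelinv} has the same gap, at the claim that every $G$-invariant hypersurface of $M\setminus\Sigma$ is given by $F(I_1,\dots,I_r)=0$. Here is a counterexample. Let $T$ be a complex elliptic curve and let $G=PGL(2,\C)$ act on the second factor of $M=T\times\C P^1$. Then $\AbsInv=\C(T)$, and $\pi=\mathrm{pr}_1$ is already a smooth geometric quotient with irreducible fibres. Take any line bundle $L\to T$ with a nonzero rational section $s$. Then $\mathrm{pr}_1^*s$ is a rational relative invariant for the horizontal lift ($\lambda=0$) of $\g$ to $\mathrm{pr}_1^*L$. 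Its weight is trivial only if $L$ is trivial, as one sees by restricting to $T\times\{\mathrm{pt}\}$. Hence $\mathrm{Pic}(T)$ embeds in $\W$; in fact $\W\cong\mathrm{Pic}(T)\cong\Z\oplus T$. This group is uncountable and not finitely generated, and $\W\otimes\C$ is infinite-dimensional.

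Concretely, let $O$ be the pole of Weierstrass generators $I_1,I_2$, and suppose $p-O$ has infinite order in $\mathrm{Pic}^0(T)$. Then no $F(I_1,I_2)$ has divisor equal to a nonzero multiple of $\{p\}\times\C P^1$ on $M\setminus(\{O\}\times\C P^1)$.

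A correct version needs an extra hypothesis making $\mathrm{Cl}(Y_0)$ finitely generated. For example, require that a smooth projective model $\bar Y$ of the rational quotient satisfies $H^1(\bar Y,\O)=0$; this holds if $\bar Y$ is rational, as in all the paper's examples. Then $\mathrm{Pic}(\bar Y)$ is the N\'eron--Severi group, $\mathrm{Cl}(Y_0)$ is a quotient of it after shrinking $Y_0$, and your argument goes through for smooth $M$.

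Separately, for non-normal $M$ passing to the normalization is not harmless. The map $\mathrm{Pic}(M)\to\mathrm{Pic}(\tilde M)$ can have a continuous kernel, already for a cuspidal curve. The paper in effect assumes $M$ smooth.
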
 

Let us note that \cite{KS2} gives sufficient conditions for $\mathrm{Pic}_\g(M)$ to be embedded in
$\mathrm{Pic}(M)\times\mathfrak{M}_\g(M)$, where the second factor is the space of multipliers, which
in the previous subsection was locally identified with $\HH^1(\g,\mathfrak{F})$. 
Thus one may establish sufficient conditions for finite-dimensionality of 
$\mathrm{Pic}_\g^\C(M):=\mathrm{Pic}_\g(M)\otimes\C$ but the above theorem is 
more fundamental\footnote{We introduce vector space $\mathrm{Pic}_\g^\C(M)$ over $\C$ 
for convenience. If $\ell$ is a line bundle,
so $\ell^s$ is a line bundle for $s\in\mathbb Z$ but this sometimes expands for real 
or complex $s$ with $\ell^s$ interpreted as a density bundle (in algebraic context 
this is not possible globally). Later we will also use $s\in\mathbb Q$ 
with the corresponding space $\mathrm{Pic}_\g^{\mathbb Q}(M)$.}.

For an Abelian finitely generated group $W$ we let
$W_\tau$ denote its torsion part and $W_{tf}=W/W_\tau$ the torsion-free part.
In most cases, the weights have no torsion: it will be demonstrated that 
the condition $H_1(M)=0$ is sufficient for $\W_\tau=0$ 
(but not necessary, as the example $M=\C^\times(z)$, $\g=\langle z\p_z\rangle$ shows:
$\W=0$ while $\op{Pic}_\g(M)=\C/\Z$); this condition is often satisfied 
(for instance, any smooth projective rationally connected variety is simply-connected). 
Therefore, in what follows, we will call $\W$ the 
{\em weight lattice}.\footnote{In general, $\W\subset\mathrm{Pic}_\g(M)$ 
is not cocompact, so only the torsion free-part $\W_{tf}\subset\W\otimes\R$ is a lattice; 
still, by abuse of language, we call $\W$ the weight lattice 
even in the presence of torsion.}

\subsection{Rational absolute differential invariants: the global Lie-Tresse theorem}\label{S1.3}
Next we consider a differential-geometric version, allowing both the Lie algebra and the space it acts upon 
to be infinite-dimensional.

A general setup for equivalence problems is an action of a Lie pseudogroup. Pseudogroups consist 
of germs of transformations, whose compositions and inverses satisfy the group properties,
see \cite{Kum,OP,KL2,RK} for details. Such a pseudogroup $G\subset\op{Diff}_{\text{loc}}(M)$
defines an induced action on germs of $n$-dimensional submanifolds $N\subset M$, and hence also on 
the space of their jets $J^k(M,n)$ of order $k=0,1,\dots,\infty$. 

The passage from a germ $\varphi$ of a diffeomorphism to its jet $j^k\varphi$,
whose evaluation at the point $a\in M$ is denoted $[\varphi]_a^k$, is called 
a {\em prolongation\/}. A Lie pseudogroup can be defined by integrating 
the Lie equation, but this can be formulated via the prolongation as follows.

 \begin{definition}
A {\em Lie equation\/} of order $k$ is a submanifold 
$\mathcal{G}^k\subset J^k(M,M)$ that is a Lie sub-groupoid,
meaning it contains jets of the identity and is closed with respect 
to compositions (whenever defined) and inversions.
A {\em Lie pseudogroup\/} of order $k$ consists of those transformations 
$\varphi\in\op{Diff}_{\text{loc}}(M)$ such that for any $a\in\op{dom}(\varphi)$ 
the $k$-jet at $a$ satisfies $[\varphi]^k_a\in\mathcal{G}^k_a$. (Then the same 
holds for higher jets via prolongations for all orders up to $k=\infty$.)
 \end{definition}
 
An important property of prolongations of transformations $\varphi\in G$ is 
that they are algebraic. More precisely, denoting $m=\dim M-n$ the codimension 
of submanifolds $N$, we have the following.
For $m>1$ prolongations are linear fractional in the first jets and affine 
in higher jets; for $m=1$ we may take $\varphi$ to be a contact transformation 
of $J^1(M,n)$ and then its prolongations are linear fractional in the second jets 
and affine in higher jets (see formulae in \cite{KL1,O1}).

The pseudogroup action on $J^k(M,n)$ also induces an action on 
(invariant) differential equations, which we in this paper 
understand as a sequence $\E=\{\E^i\subset J^i(M,n)\}_{i=0}^\infty$ of submanifolds,
related by prolongations as discussed in Section \ref{JetsDiffEq}. 
In what follows, we assume that the action of $G$ on $\E^0$  is {\em transitive} (often $\E^0=M$); 
this is the standard situation considered in the literature and arising in most 
applications (e.g., the action of the diffeomorphism group).
Thus, stabilizers $G^k_a$ for different points $a\in M$ are conjugated (the case $m=1$ is special but can be treated similarly
provided a transitive action on $J^1(M,n)$).

 \begin{definition}
A Lie pseudogroup $G$ is called {\em algebraic}\/ if it is given by 
an algebraic equation $\G$. More precisely, we require that the equation is 
algebraic in jet-variables, i.e.\ coordinates in fibers of the projection 
$\pi_{k,0}:J^k(M,M)\to M\times M$, but not necessary in base coordinates on $M$.
Equivalently, for every point $a\in M$ the stabilizer $G^k_a$ should be defined 
by an algebraic equation.
A Lie algebra is called algebraic if it corresponds to an algebraic Lie 
pseudogroup\footnote{A criterion for Lie algebra to be algebraic 
was formulated by Chevalley  \cite{Ch} via replicas; one can also formulate 
Lie equations for Lie algebras \cite{KumSp} and explore their algebraicity, 
to be discussed elsewhere.}.
 \end{definition}
 
Since ultimately the pseudogroup can be given by a Lie equation, 
whose local integrability is yet to be established (passage from jets to germs), 
it is important to note that algebraicity persists upon prolongations, i.e.\ the 
prolongation-equations $\G^i\subset J^i(M,M)$ are algebraic for all $i>k$. We also impose algebraicity of the PDE $\E$ throughout, which means the fibers of $\pi_{k,0}:\E^k\to M$
are algebraic submanifolds in jets. 

The two assumptions of transitivity by base and algebraicity in jet-fibers 
are central for the global version of Lie-Tresse theorem, as follows. 

Let us restrict the algebra of invariants to $\mathfrak{A}^\ell$, consisting of
``rational-polynomial'' functions $\mathfrak{P}_\ell$ on $J^\infty(M,n)$ that are 
smooth by the base variables, rational by the fibers of $\pi_{\ell,0}:J^\ell(M,n)\to M$ and 
polynomial by higher jets of order $>\ell$
(rational differential invariants will be denoted by $\AbsInv$).
A closed subset $S\subset\E^k$ is called Zariski closed if $S_a=\E^k\cap\pi_{k,0}^{-1}(a)$ 
is Zariski closed $\forall a\in M$. 

 \begin{theorem*}[Global Lie-Tresse \cite{KL2}]
Consider an algebraic action of a pseudogroup $G$ on a formally integrable irreducible
differential equation $\E$ over $M$ such that $G$ acts transitively on $M$.
Then $\exists$ $\ell\in\mathbb{N}$ and a Zariski closed invariant proper subset $S_\ell\subset\E^\ell$ 
s.t.\ the action is regular in the complement, i.e.\
$\forall k$ there exists a rational geometric quotient
$\bigl(\E^k\setminus\pi_{k,\ell}^{-1}(S_\ell)\bigr)/G^k$.

\smallskip

There exists a finite number of functions $I_1,\dots,I_t\in\mathfrak{A}^\ell$ 
and a finite number of rational invariant derivations 
$\nabla_1,\dots,\nabla_s:\mathfrak{A}^\ell\to\mathfrak{A}^\ell$
such that any function from $\mathfrak{A}^\ell$ is a polynomial of differential
invariants $\nabla_{J}I_i$, where $\nabla_J=\nabla_{j_1}\cdots\nabla_{j_r}$
for some multi-indices $J=(j_1,\dots,j_r)$, with coefficients being rational functions of $I_i$.
 \end{theorem*}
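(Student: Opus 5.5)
The plan is to reduce everything to the fibres over a single point $a\in M$ and then argue order by order in the jet filtration. By transitivity of $G$ on $M$ (or on $\E^0$), an invariant on $\E^k$ is determined by its restriction to the fibre $\E^k_a=\E^k\cap\pi_{k,0}^{-1}(a)$. There it is an invariant of the stabilizer $G^{k+1}_a$ acting through a finite-dimensional quotient. Since $\G$ and $\E$ are algebraic in jet variables, this quotient is an algebraic group acting algebraically on the irreducible affine variety $\E^k_a$. Rosenlicht's theorem \cite{R1} then gives, for each fixed $k$, an invariant Zariski closed proper subset $S_k\subset\E^k_a$ outside of which a rational geometric quotient exists. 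Its field of rational invariants is finitely generated and separates generic orbits. Transporting by $G$ extends these objects over all of $M$.

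The first main step is to show that the exceptional sets stabilize: there should be an $\ell$ with $S_k\subset\pi_{k,\ell}^{-1}(S_\ell)$ for all $k\ge\ell$. I would combine two facts.
\begin{itemize}
\item For $k$ large, the prolonged action on the fibres of $\pi_{k+1,k}$ is affine. The kernel of $G^{k+2}_a\to G^{k+1}_a$ is a vector group acting by translations determined by the symbol of $\g$.
\item The symbol of $\E$ and of $\G$ form finitely generated (co)modules over the polynomial ring $S^\bullet T^*$. By the Hilbert basis theorem and Poincar\'e $\delta$-lemma (Spencer $\delta$-regularity), generic orbit dimensions and stabilizers stabilize for $k\ge\ell$. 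In particular the action of the kernel group on the affine fibre becomes free modulo a fixed linear subspace.
\end{itemize}
Once this holds, an affine action on affine fibres over the regular set introduces no new singular locus, so $S_k$ can be taken to be $\pi_{k,\ell}^{-1}(S_\ell)$. I expect this to be the main obstacle. Controlling where the generic stabilizer jumps uniformly in $k$ requires the Noetherian property of the symbolic module together with a careful treatment of the case $m=1$, where the prolongation is linear fractional in second jets.

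For the generation statement, I would take invariant derivations to be Tresse derivatives $\nabla_j=\hat\p/\hat\p I_j$. Here $I_1,\dots,I_n$ are rational invariants of order $\le\ell$ that are horizontally independent outside $S_\ell$; in degenerate cases one instead uses an invariant horizontal coframe built from a rational moving frame. These $\nabla_j$ map $\mathfrak{A}^\ell$ to itself. Moreover, $\nabla_j$ raises order by one and is affine in the top jets with coefficients the invariant symbol. Consequently, for $k\ge\ell$, the new invariants of order $k+1$ modulo those of order $k$ are spanned by the $\nabla_j$ applied to order-$k$ invariants. This is again a consequence of the symbol module being generated in degree $\le\ell$. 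Finally, I would take $I_1,\dots,I_t$ to be generators of the field of rational invariants of order $\ell$, together with polynomial-in-top-jets generators up to the stabilization order. Induction on order then expresses every element of $\mathfrak{A}^\ell$ as a polynomial in the $\nabla_J I_i$ with coefficients rational in the $I_i$.
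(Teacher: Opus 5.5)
The paper gives no proof of this statement. It is quoted from \cite{KL2} and used as a black box, e.g.\ in the proof of Theorem~\ref{th:reldiff}. Your outline broadly follows the strategy there: reduction by transitivity to the stabilizer acting on $\E^k_a$, Rosenlicht \cite{R1} at each finite order, stabilization from the affine structure of prolongations plus Noetherianity of symbolic modules, and generation by invariant derivations via finite generation of the module of symbols of invariants. However, the step that carries the content of the first half of the theorem is not established, and the reason you give for it is wrong.

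\textbf{The gap.} It is the inference ``an affine action on affine fibres over the regular set introduces no new singular locus''. The group acting on $F=\pi_{k+1,k}^{-1}(x_k)\cap\E^{k+1}_a$ is not the kernel of $G^{k+1}_a\to G^k_a$. It is the full preimage of the isotropy $\op{St}(x_k)\subset G^k_a$. The kernel only contributes translations by a subspace $V(x_k)$. The isotropy acts on $F/V(x_k)$ by affine maps whose linear part is in general nontrivial, and such an action can have fixed points and orbits of different type, hence new exceptional strata at order $k+1$.

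The paper's own $\mathfrak{saff}(2)$ example shows this. The isotropy of the $3$-jet $(y_1,y_2,y_3)=(0,1,0)$ is $\mathbb{Z}_3=\{\op{diag}(\epsilon,\epsilon^{-1}):\epsilon^3=1\}$. It acts on the fibre of $J^4\to J^3$ by $y_4\mapsto\epsilon y_4$. Inside $\{y_2\neq0\}$, the locus of jets with nontrivial isotropy is $\{R_4=0\}$, which is the component of $\Sigma_4$ not pulled back from $J^3$. (Here the kernel is trivial, so your ``free modulo a fixed subspace'' condition holds, yet a new stratum appears.)

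\textbf{What must be shown.} For all $k\ge\ell$ and \emph{every} $x_k$ over $\E^\ell\setminus S_\ell$, not just generic ones, two things are needed:
\begin{itemize}
\item the isotropy of $x_k$ acts trivially on $F/V(x_k)$, i.e.\ the stabilizer of each $x_{k+1}\in F$ projects onto $\op{St}(x_k)$;
\item $\dim V(x_k)$ is constant.
\end{itemize}
Only then does every orbit meet $F$ in a coset of $V(x_k)$, making $\E^{k+1}/G\to\E^k/G$ an affine bundle over the regular part.

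Hilbert-polynomial and $\delta$-lemma arguments give stabilization only at generic points of each $\E^k$. By itself that still permits a new exceptional set at every order, which is exactly what the theorem excludes. You need a pointwise propagation. For example, suppose $\dim\op{St}(x_k)$ and $\dim\ker\phi_{x_k}$ take their generic values, and grant generic stabilization. Then $\dim\op{St}(x_{k+1})\le\dim\op{St}(x_k)+\dim\ker\phi_{x_k}$ is at most the generic, hence minimal, value on $\E^{k+1}_a$. This forces $\pi(\op{St}(x_{k+1}))^0=\op{St}(x_k)^0$ for every $x_{k+1}\in F$. Beyond this you must still control two things: the ranks of the symbol maps in all orders, uniformly on the regular set; and the finite component groups of the isotropy, such as the $\mathbb{Z}_3$ above.

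\textbf{Consequences for the generation step.} It inherits the gap, because polynomiality of elements of $\mathfrak{A}^\ell$ in the affine coordinates $\nabla_JI_i$ is precisely this affine-bundle structure. Separately, a Tresse derivative $\hat\p/\hat\p I_j$ has denominators of order $\op{ord}(I_j)+1$. So to map $\mathfrak{A}^\ell$ into itself the $I_j$ must have order $<\ell$, and the vanishing locus of $\hat dI_1\wedge\dots\wedge\hat dI_n$ must be put into $S_\ell$.
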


Note that the first part of the theorem states a stabilization of singularities, which will be crucial for our work
on relative invariants. Moreover, it states that absolute differential invariants are coordinates on the quotient, 
where they separate orbits in the complement of the singularities.

The second part of the theorem gives a convenient form of generating all absolute differential invariants, 
which we aim to generalize for relative differential invariants.

\subsection{Polynomial relative differential invariants: finite generation}\label{S1.4}

Let us start with the following simple observation, which justifies the setup for our main results.

 \begin{prop}\label{RatioRelInv}
Let $R=P/Q$ be a rational absolute invariant of the action of a Lie algebra $\g$ 
or Lie (pseudo) group $G$ on an affine or projective space. 
Then both polynomials $P$ and $Q$ (assumed relatively prime) are relative invariants with the same weight.
 \end{prop}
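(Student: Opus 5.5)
Fix $v\in\g$, a polynomial vector field on the affine space (for the projective case we work in homogeneous coordinates or on an affine chart, see below). The plan is to apply $L_v$ to $R=P/Q$ and read off relative invariance from unique factorization in the polynomial ring. Invariance $L_vR=0$ gives
$$Q\,L_v(P)=P\,L_v(Q).$$
Since $v$ has polynomial coefficients, $L_v(P)$ and $L_v(Q)$ are again polynomials. Because $\gcd(P,Q)=1$, the polynomial $P$ divides $Q\,L_v(P)$ and hence divides $P\,L_v(Q)$; the useful consequence is that $P$ divides $L_v(P)$. Comparing the two sides directly: $P\mid Q\,L_v(P)$ together with $\gcd(P,Q)=1$ gives $P\mid L_v(P)$. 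Symmetrically, $Q\mid L_v(Q)$.

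Define $\lambda(v)=L_v(P)/P$, which is a polynomial. Substituting back into the displayed identity gives $L_v(Q)=\lambda(v)Q$ with the same $\lambda(v)$. Linearity of $v\mapsto\lambda(v)$ is immediate. Thus $P$ and $Q$ satisfy \eqref{algrel} with a common multiplier $\lambda\in\op{Hom}(\g,\F)$, so they are sections of the same $\g$-equivariant (trivial) line bundle. Their weights therefore coincide in $\mathrm{Pic}_\g(M)$. The cocycle property is automatic, as noted after \eqref{algrel}.

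For a group $G$ acting algebraically, the same argument runs with $g^*P/g^*Q=P/Q$: $g^*P$ and $g^*Q$ are again coprime (pullback by an automorphism preserves coprimality), so $g^*P=\Lambda_g P$ and $g^*Q=\Lambda_g Q$ with $\Lambda_g$ a unit, that is, a nonzero constant on affine space. This gives \eqref{grprel}. For a pseudogroup one passes to the Lie algebra, or argues locally, as above.

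The main obstacle is the setting. In the projective case one must interpret $P,Q$ as homogeneous polynomials of equal degree in homogeneous coordinates, with $\g$ lifted to linear-homogeneous vector fields on the cone. The divisibility argument then works in the graded polynomial ring. Alternatively, one restricts to an affine chart, where the multiplier may acquire poles along the hyperplane at infinity; in that case the weight lives in the equivariant line bundle $\O(d)$, and equality of weights follows because the same $\lambda$ appears. In the differential (jet) setting, the point to check is that total derivatives and prolonged fields preserve the relevant polynomial ring, so that $L_v$ of a polynomial is polynomial. This holds for polynomial dependence on higher jets by the affine prolongation formulas mentioned in Section~\ref{S1.3}, provided $P,Q$ are polynomial in the jet fibers.
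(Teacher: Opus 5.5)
Your proposal is correct and follows the same route as the paper. You cross-multiply $L_v(P/Q)=0$ (resp.\ $g^*(P/Q)=P/Q$) and use $\gcd(P,Q)=1$ to get $P\mid L_v(P)$, which yields a common multiplier $\lambda(v)$ for $P$ and $Q$ (resp.\ a common factor $\Lambda_g$); your remarks on the projective case and on polynomiality of prolonged fields only make explicit what the paper's short proof leaves implicit.
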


 \begin{proof}
With respect to a Lie algebra $\g$ of vector fields, we have $L_vR=0$ $\Leftrightarrow$ $L_v(P)Q=L_v(Q)P$ 
for all $v\in\g$. We can assume without loss of generality that
$P$ and $Q$ are relatively prime. Therefore $L_v(P)=\lambda(v)P$ for some 
$\lambda\in\op{Hom}(\g,\F)$, and then $L_v(Q)=\lambda(v)Q$ for the same weight $\lambda$.

For Lie group actions we have: $g^*R=R$ $\Leftrightarrow$ $g^*(P)Q=g^*(Q)P$ for all $g\in G$.
The conclusion is the same: $g^*(P)=\Lambda_gP$, $g^*(Q)=\Lambda_gQ$ for some weight $\Lambda$.
The argument can be localized and implies the same claim for a Lie pseudogroup.
 \end{proof}

Thus absolute invariants are ratios of relative polynomial differential invariants\footnote{Beware that conditionally (on an equation) the claim 
of the proposition may be wrong, if understood literally, as we will see in 
Examples \ref{sect:joint2D} and \ref{sect:joint3D},
but it holds true if relative invariants are understood as divisors.}.  
Since by the global Lie-Tresse theorem the algebra of differential invariants 
$\mathfrak{A}^\ell$ consists of nonlinear differential operators that are rational in jets 
up to order $\ell$ but polynomial in higher jets, we conclude: 

 \begin{cor}
Absolute differential invariants of algebraic pseudogroup actions on jet-spaces 
are ratios of  polynomial relative differential invariants.  
 \end{cor}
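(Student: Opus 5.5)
The plan is to combine the global Lie-Tresse theorem with Proposition \ref{RatioRelInv}, applied fiberwise in jets. Let $I\in\mathfrak{A}^\ell$ be an absolute differential invariant. By the global Lie-Tresse theorem it is a polynomial in the $\nabla_J I_i$, with coefficients rational in the $I_i$. Each $\nabla_J I_i$ is rational in jets of order $\le \ell$ and polynomial in higher jets. Hence, over any point $a\in M$ (and locally over a domain in $M$, smoothly in base coordinates), $I$ is a rational function in the fiber coordinates of $J^k(M,n)\to M$ for $k$ large enough. I will therefore write $I=P/Q$, where $P$ and $Q$ are polynomial in the jet variables of $\pi_{k,0}$ and have coefficients depending on the base. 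I will reduce this to lowest terms, i.e.\ take $P,Q$ relatively prime in the polynomial ring over the field of functions of the base variables. When $\E$ is a proper equation, the restriction to $\E^k$ is treated as in the footnote, with relative invariants understood as divisors.

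Next I apply the argument of Proposition \ref{RatioRelInv} to the prolonged action. The key point is that the prolonged vector fields $v^{(k)}$, for $v\in\g$, act on jet-polynomials as derivations whose coefficients are polynomial in higher jets and at most rational (linear fractional) in first jets, or in second jets when $m=1$. Invariance $L_{v^{(k)}}I=0$ gives $L_{v^{(k)}}(P)\,Q=L_{v^{(k)}}(Q)\,P$. Since $P$ and $Q$ are coprime, $P$ divides $L_{v^{(k)}}P$. This yields $L_{v^{(k)}}P=\lambda(v)P$ and $L_{v^{(k)}}Q=\lambda(v)Q$ with a common multiplier $\lambda(v)$. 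The cocycle property of $\lambda$ follows from the homomorphism rule, exactly as in Section \ref{S1.1}. Thus $P$ and $Q$ are relative differential invariants of the same weight, which is the claim.

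The main obstacle is the presence of denominators from the low-order jets. The prolonged fields are only rational in $1$- or $2$-jets, and the generators in $\mathfrak{A}^\ell$ have rational dependence up to order $\ell$, so the divisibility argument must be run in a setting where $P$ divides $L_vP$ still makes sense. First I will clear the universal denominators coming from the linear fractional prolongation formulae. These are powers of a fixed determinant-type expression, which is itself a relative invariant, so multiplying $P$ and $Q$ by a common power keeps them relative invariants of a common weight. Then $\lambda(v)$ is a priori a rational function, and I will show it is polynomial by degree comparison. If this fails, I will settle for $\lambda(v)$ lying in the localized coefficient ring, which matches the paper's choice of multiplier space $\F$.

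For pseudogroup actions I will localize as in the proof of Proposition \ref{RatioRelInv}, working with the infinitesimal action on jets at points of a transitive domain.
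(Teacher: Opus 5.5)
This is essentially the paper's argument. The global Lie--Tresse theorem makes the invariant rational in jets up to order $\ell$ and polynomial above, and the coprime numerator/denominator divisibility argument of Proposition~\ref{RatioRelInv}, run for the prolonged action, then makes $P$ and $Q$ relative invariants of a common weight.

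The ``main obstacle'' you identify does not exist, though. In any chart the prolonged fields $v^{(k)}$ have coefficients polynomial in the jet variables; linear-fractional dependence concerns the finite transformations $\varphi^{(k)}$, not the infinitesimal generators. So $L_{v^{(k)}}$ preserves the polynomial ring and $\lambda(v)$ is automatically polynomial. You should drop the proposed clearing of denominators: that Jacobian-type factor depends on the transformation, is not a relative invariant on jet space, and multiplying $P,Q$ by a common factor would destroy their coprimality anyway.
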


Consequently, in studying relative differential invariants, we can restrict, 
without loss of generality, to functions
that are smooth in base variables and polynomial in higher jet variables (due to affine structure in the
fibers, this is well-defined). In that case, we can assume that the weights $\lambda\in\g^*\ot\mathfrak{P}_\ell$
are also polynomial, meaning the space of values $\mathfrak{P}_\ell$ consists of 
nonlinear differential operators that are polynomial in higher jets, with the same specification as in \S\ref{S1.3}.

 \begin{definition}
For a Lie algebra $\g$ of vector fields (or a pseudogroup $G$) denote the algebra of 
{\em polynomial relative differential invariants\/} by $\mathcal{R}$ and 
the space of their {\em polynomial weights\/} by $\Wd\subset\W$. In other words\footnote{We restrict for Chevalley-Eilenberg cocycles $\lambda$ in introduction for
simplicity, however starting from Section \ref{S2} we
adapt the more general notion of weight $w\in\W$ from \cite{KS2} taking values in the equivariant Picard group.},
$\lambda\in\Wd$ if there exists a (nonzero) polynomial relative differential invariant 
$P\in\mathcal{R}^\times$ such that $L_vP=\lambda(v)P$ $\forall v\in\g$.
The space of such $P$ (including zero) will be denoted by 
$\mathcal{R}_\lambda\subset\mathcal{R}$.
 \end{definition}
 
More precisely, the weight of a relative differential invariant of order $k$ 
is an element of $\mathrm{Pic}_{\g^{(k)}}(\E^k)$. 
In Section \ref{sect:polynomial} we show that for rational 
differential invariants on an analytic PDE $\E$ of order $l$, 
the weight can be considered as an element in $\mathrm{Pic}_{\g^{(l)}}(\E^l)$, even if $k>l$. 
More restrictively, if the PDE $\E$ is algebraic and the fibers of the bundle $\E^l \to \E^1$ are nonsingular affine varieties,
then all weights of rational
relative differential invariants have first order.

 \begin{theorem} \label{th:rational}
Let $\E$ be an algebraic formally integrable irreducible nonsingular PDE of order $l\geq 1$ 
with fibers of $\E^l\to\E^1$ being nonsingular affine varieties
and $\g\subset\vf(\E)$ 
a Lie (sub)algebra of point symmetries of $\E$. If $R$ is a rational relative 
differential invariant of order $k\geq l$, then $\wt^k(R) = \pi_{k,1}^*(L)$ 
for some $\g^{(1)}$-equivariant line bundle 
$L\to\E^1$ (where $\E^1=\J^1$ if $\E$ contains no first order equations).  
 \end{theorem}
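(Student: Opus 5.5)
Our plan has two stages. First we push the weight down from $\E^k$ to $\E^l$, which holds in the analytic setting. Then we use the affine structure of the fibers of $\E^l\to\E^1$ to push it down further to $\E^1$.

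\emph{First stage.} Let $R$ be a rational relative differential invariant of order $k\geq l$. Its weight $\wt^k(R)\in\mathrm{Pic}_{\g^{(k)}}(\E^k)$ is the class of the equivariant line bundle $\mathcal{O}(\operatorname{div}R)$ with its natural $\g^{(k)}$-lift. Write $\operatorname{div}R=\sum_i n_iD_i$ with $D_i$ irreducible invariant hypersurfaces in $\E^k$.

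For $k>l$ the fibers of $\pi_{k,k-1}\colon\E^k\to\E^{k-1}$ are affine spaces, by formal integrability and because prolongations are affine in higher jets. So $\pi_{k,k-1}^*$ induces an isomorphism $\mathrm{Pic}(\E^{k-1})\to\mathrm{Pic}(\E^k)$, and every divisor on $\E^k$ is linearly equivalent to a pullback. At the level of multipliers, $R$ is polynomial in the top-order jets, and the action of $\g^{(k)}$ on these jets is affine with linear part determined by $\g^{(k-1)}$. Comparing top-degree terms in the top-order jets in $L_vR=\lambda(v)R$ therefore shows that $\lambda(v)$ is independent of the top jets. Hence, up to a coboundary, the cocycle descends one order. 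By induction we obtain $\wt^k(R)=\pi_{k,l}^*(L')$ with $L'\in\mathrm{Pic}_{\g^{(l)}}(\E^l)$, which matches the statement from Section \ref{sect:polynomial}.

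The leading coefficient of $R$ in the top jets (with respect to a fixed monomial order, or the leading form) is itself a relative invariant of lower order and of the same weight. This gives a clean way to organize the induction: we replace $R$ by its leading coefficient, which has the same weight but lower order.

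\emph{Second stage.} This is the main obstacle, since the fibers $F$ of $\E^l\to\E^1$ are now only nonsingular affine varieties, not affine spaces. We localize over a point $b\in\E^1$ and use that the stabilizer action on $F$ is algebraic. Its unipotent part acts by the affine (translation-type) transformations of jets of order $\geq2$.

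We would first show that the restriction of $L'$ to each fiber is trivial. Let $D\subset\E^l$ be an invariant hypersurface whose restriction $D\cap F$ is nonempty and not all of $F$. The group generated by the prolonged stabilizer acts on $F$ with orbits of codimension controlled by the symbol. Meanwhile, invariant divisors on $F$ are cut out by relative invariants of the fiber action. We would argue that an invariant divisor restricted to a nonsingular affine fiber is principal: it is the zero set of the restriction of a regular function, namely a relative invariant of the algebraic fiber action. This uses that $\mathrm{Pic}$ of the fiber is generated by such invariant divisors, together with Rosenlicht-type stabilization from the global Lie--Tresse theorem.

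Next, the multiplier restricted to $F$ is a regular function on an affine variety satisfying the cocycle identity for the unipotent-type action. Regular characters of such actions are constant along the fibers, so the multiplier depends only on $\E^1$. Combining this with the triviality on fibers, a coboundary adjustment shows that $L'\cong\pi_{l,1}^*L$ equivariantly, where $L\to\E^1$ is a $\g^{(1)}$-equivariant line bundle.

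The delicate point in the second stage is the passage from fiberwise triviality to global pullback. For this we would use the nonsingularity and irreducibility of $\E$ to invoke the seesaw principle.
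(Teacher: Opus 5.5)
The genuine gap is your second stage, which is where the content of the theorem lies, and none of its three steps holds up. (a) Fiberwise triviality of $L'$ is asserted, not proved. A nonsingular affine variety can have nontrivial Picard group, and the claim that an invariant divisor on a fiber $F$ of $\E^l\to\E^1$ is cut out by a regular relative invariant of the fiber action is exactly what would need proof; Lie--Tresse/Rosenlicht stabilization says nothing about principality. (b) The multiplier is a $1$-cocycle on all of $\g$ with values in functions, not a character of the unipotent part of a stabilizer. So the remark about regular characters does not control its dependence on the fiber variables. (c) The seesaw principle needs complete fibers, since it uses $H^0(F,\mathcal{O}_F)=\C$. For affine fibers, fiberwise triviality does not make a line bundle a pullback. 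Seesaw is also blind to the $\g$-lift, whereas you must exhibit a cocycle $(g,\lambda)$ with \emph{both} components pulled back.

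Your first stage also iterates incorrectly. The leading coefficient of $R$ in the top jets is in general \emph{not} of the same weight, because the top-jet variables carry weight themselves. In the special affine example of Section \ref{Snonfin}, $R_4=3y_2y_4-5y_3^2$ has weight $8$, while its leading coefficient $3y_2=3R_2$ has weight $3$. Moreover, a degree count in the top jets cannot simply be repeated order by order, because prolonged point fields do not preserve the degree in lower jets. For $X=\xi\partial_x+\eta\partial_y$ with $\xi_y\neq0$, the coefficient of $\partial_{y_k}$ in $X^{(k)}$ contains $y_2y_{k-1}$. Finally, matching $\mathrm{Pic}$ classes is not enough: changing $g$ by a coboundary $\mu$ shifts $\lambda$ by $d\log\mu$, and $\mu$ need not be polynomial in the fiber variables.

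The missing idea is to work with the concrete cocycle of $R$ and a weighted degree, which treats all jet orders $\geq2$ at once. This is the paper's argument: Proposition \ref{pr1outof4} run with $1$-polynomials, as in the second bullet after it. It makes your two-stage structure unnecessary.

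\begin{itemize}
\item Write $R$ as a ratio of $1$-polynomial relative invariants, take polynomial representatives $f_\sigma$ on the charts $\pi_{k,1}^{-1}(V_\sigma)$, and set $g_{\sigma\rho}=f_\sigma/f_\rho$ and $\lambda_\sigma(X)=X^{(k)}(f_\sigma)/f_\sigma$.
\item The $g_{\sigma\rho}$ are nowhere vanishing and polynomial in the jets of order $\geq2$. Hence the polynomial factors cancel and $g_{\sigma\rho}$ is pulled back from $\E^1$; this is where the hypothesis on the fibers of $\E^l\to\E^1$ is used.
\item The $\lambda_\sigma(X)$ are $1$-polynomials by Remark \ref{rk:cancellation}. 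Since prolongation of a point field raises $\mathrm{wdeg}_1$ by at most one, $\mathrm{wdeg}_1(\lambda_\sigma(X))\leq1$. Every jet variable of order $\geq2$ has weight $\geq2$, so the weighted degree is $0$, i.e.\ $\lambda_\sigma(X)$ is a function on $\E^1$.
\end{itemize}

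Thus $\wt^k(R)=\pi_{k,1}^*L$ follows directly, with no fiberwise Picard or seesaw argument.
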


This means that the weight of {\it any} rational (and polynomial) relative invariant can be considered as an element in $\mathrm{Pic}_{\g^{(1)}}(\E^1)$, and we have a map 
 \[ 
\wt:\RelInv_{\mathrm{rat}}^\times\to\mathrm{Pic}_{\g^{(1)}}(\E^1).
 \] 
In this context we can define the group $\W :=\wt(\RelInv_{\mathrm{rat}}^\times)$ 
of weights of rational differential invariants. 
$\W$ is the group completion of the semigroup $\Wd$ inside
$\mathrm{Pic}_{\g^{(1)}}(\E^1)$.

Polynomial weights $\lambda\in\Wd$ are also cocycles in $\op{Hom}
(\g,\mathfrak{P})$ but the corresponding cohomology theory is more restrictive. 
Indeed, the coboundary operator $d\log$ never takes values in 
$\op{Hom}(\g,\mathfrak{P})$, while the polynomiality constrain the cocycles.
Yet a much stronger constraint comes from the condition that
$\lambda$ is a weight of a polynomial invariant,
and we claim finite rank for weight lattice
also in the case of relative differential 
invariants of arbitrary jet-order.

 \begin{theorem} \label{th:2}
For algebraic and transitive pseudogroup action the weight space $\Wd$ 
is a sub-semigroup of finite rank, equivalently
the vector space $\W\otimes\mathbb{Q}$ over $\mathbb{Q}$ 
is finite-dimensional.
 \end{theorem}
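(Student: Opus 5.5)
The plan is to reduce Theorem \ref{th:2} to the finite-dimensional Theorem \ref{Th1}. We use two earlier results: Theorem \ref{th:rational}, which puts all weights on a fixed finite jet level, and the stabilization of singularities in the global Lie-Tresse theorem.

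First, by Theorem \ref{th:rational}, or in general by the analytic version mentioned before it that places weights in $\mathrm{Pic}_{\g^{(l)}}(\E^l)$, every weight of a rational (in particular polynomial) relative differential invariant, of any order $k$, is the pullback $\pi_{k,l'}^*$ of an element of $\mathrm{Pic}_{\g^{(l')}}(\E^{l'})$ for a fixed finite $l'$. So $\W$ lies in the equivariant Picard group of a single finite-dimensional algebraic $G^{l'}$-manifold. This alone does not give finite rank, since an invariant of high order $k$ need not restrict to a relative invariant on $\E^{l'}$. We must therefore control the divisors of relative invariants on every $\E^k$ uniformly in $k$.

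Second, we use the mechanism behind Theorem \ref{Th1}. Take the level $\ell$ and the singular set $S_\ell$ from the global Lie-Tresse theorem, and let $k\ge\max(\ell,l')$. Let $R$ be a rational relative invariant of order $k$. Its zero and pole divisor is $G^k$-invariant. Each irreducible component is either contained in $\pi_{k,\ell}^{-1}(S_\ell)$ or meets the regular part. In the regular part a rational geometric quotient exists, so an invariant irreducible hypersurface there is the preimage of a hypersurface in the quotient. By Rosenlicht/Lie-Tresse it is then cut out, up to a unit, by a rational absolute invariant $\Phi$ whose numerator and denominator have the same weight (Proposition \ref{RatioRelInv}). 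Dividing $R$ by suitable powers of such factors, and using multiplicativity of $\wt$, kills the divisor in the regular part without changing the class in $\W$ modulo weights of absolute invariants, which are trivial. Hence $\W$ is generated by weights of relative invariants whose divisors lie in $\pi_{k,\ell}^{-1}(S_\ell)$. The components of this set are pullbacks of the finitely many irreducible components of $S_\ell\subset\E^\ell$, which is Zariski closed in each fiber and invariant, with transitivity on the base making the fiberwise count uniform. So these divisors span a free group of rank at most the number of components, independent of $k$. An invariant with empty divisor is a nonvanishing regular function on the irreducible variety $\E^k$, fiberwise algebraic with transitive base action. Its weight is either trivial, or it lies in a finitely generated group of units modulo constants (Rosenlicht's unit theorem applied fiberwise). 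This gives a finitely generated group that surjects onto $\W$ up to a finitely generated kernel piece, so $\W\otimes\mathbb Q$ is finite-dimensional. $\Wd\subset\W$ is a sub-semigroup because products of polynomial invariants are polynomial; it therefore has finite rank, and $\W$ is its group completion.

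The main obstacle is the step that divides by absolute invariants in the regular region. We must make sure that an invariant irreducible hypersurface there is really given by a rational absolute invariant at the same jet order. We must also make sure the number of boundary components does not grow with $k$. For the first point I would use that the quotient of the regular part is a rational geometric quotient, so invariant divisors descend to Weil divisors on it, locally principal after further localization. For the second point I would rely on stabilization at order $\ell$: for $k>\ell$ the fibers of $\pi_{k,\ell}$ are affine spaces, so preimages of irreducible sets stay irreducible. The units-modulo-constants step needs some care because base variables are only smooth. Transitivity reduces it to a single fiber over a point, where Rosenlicht's unit theorem applies.
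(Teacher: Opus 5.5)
You follow the paper's own route (Theorem \ref{th:reldiff} $\Rightarrow$ Corollary \ref{CorTh8}): take the stabilized singular set $S_\ell$, absorb divisor components in the regular part by absolute invariants, and count the finitely many boundary components. Your handling of the boundary via stabilization at order $\ell$ is fine.

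The step you flag as the main obstacle is, however, a genuine gap, and the fix you sketch does not close it. Separation of orbits (Rosenlicht, Lie--Tresse) only gives a rational quotient $Y_k$ of the regular part with $\C(Y_k)=\AbsInv^k$. An invariant prime divisor $H$ in the regular part descends to a prime divisor $\bar H\subset Y_k$. What you need is that the class of $\bar H$ in $\op{Cl}(Y_k)$ vanishes, or at least that $\op{Cl}(Y_k)$ has finite rank bounded independently of $k$, so that finitely many extra relative invariants account for it. Local principality of $\bar H$ after shrinking is beside the point. An absolute invariant $\Phi$ vanishing on $H$ generally has further zeros and poles inside the regular part, so dividing $R$ by $\Phi$ moves the divisor rather than removing it. Separately, a relative invariant with empty divisor trivializes its own equivariant bundle, so its weight is zero; no unit theorem is needed there.

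This gap cannot be closed in the stated generality. Let $\Gamma\subset\P(T_a\C^3)\simeq\P^2$ be a smooth cubic. Let $\E^1\subset J^1(\C^3,1)$ consist of the $1$-jets whose tangent direction lies on $\Gamma$, and let $G$ be the translations of $\C^3$ (algebraic, transitive, trivial isotropy).
\begin{itemize}
\item Then $\E^1=\C^3\times\Gamma$ is a formally integrable, irreducible, nonsingular equation.
\item For each $p\in\Gamma$, the invariant divisor $\C^3\times\{p\}$ is the zero set of a regular relative invariant $R_p$ with zero multiplier.
\item The line bundle underlying $\wt(R_p)$ restricts to $\O_\Gamma(p)$ on every fiber $\{a\}\times\Gamma$.
\end{itemize}
Hence the image of $\W\to\op{Pic}(\E^1)\to\op{Pic}(\Gamma)$ contains $\op{Pic}^0(\Gamma)\simeq\Gamma$, and $\W\otimes\mathbb{Q}$ is infinite-dimensional. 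Already $\g=0$ on $M=\Gamma$ gives $\W=\op{Pic}(\Gamma)$, contrary to Theorem \ref{Th1}.

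The paper's proof rests on the same unproved assertion (``any $G$-invariant hypersurface in this Zariski-open set is given by an equation of the form $F(I_1,\dots,I_r)=0$''), so the missing step is not supplied there either. The argument can be rescued under an extra hypothesis that makes $\op{Cl}(Y_\ell)$ finitely generated. An example is unirational fibers $\E^\ell_a$, which holds for $\E=\J$ because jet fibers are rational. Then a smooth projective model of $Y_\ell$ has $H^1(\O)=0$, so its Picard group equals $\mathrm{NS}$ and is finitely generated. You would then add to your list finitely many relative invariants whose divisors generate $\op{Cl}(Y_\ell)$. You would still have to check, using the affine structure of the Lie--Tresse tower, that no new classes appear for $k>\ell$.
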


The main purpose of this paper is to describe the algebra of polynomial 
differential invariants, which is graded by the weight space
 $$
\mathcal{R}=\bigoplus_{\lambda\in\Wd}\mathcal{R}_\lambda.
 $$
We address finite generation of this algebra, however not as in the Hilbert basis 
theorem but via localization on subsets of invariants of order $\leq\ell$ 
(which means that it is allowed to divide by some lower order relative invariants).

Similar to the Lie-Tresse approach, one needs an invariant derivation.

 \begin{definition}
A relative invariant derivation of weight $m\in\W$ is an operator $\nabla$ in total derivatives with polynomial in jets
coefficients such that $\nabla:\mathcal{R}_\lambda\to\mathcal{R}_{\lambda+m}$ for $\lambda\in\Wd$.
 \end{definition}

Let us note that $\mathcal{R}$ is a graded algebra: 
for $R_1\in\mathcal{R}_{\lambda_1}$ and $R_2\in\mathcal{R}_{\lambda_2}$ we have
$R_1R_2\in\mathcal{R}_{\lambda_1+\lambda_2}$, however $R_1+R_2$ can be interpreted 
as a relative invariant if and only if $\lambda_1=\lambda_2$ 
(yet finite combinations $\sum R_i$ are formally defined 
and represent a general element of $\mathcal{R}$).

 \begin{theorem}\label{th:finite}
Under the assumptions of the global Lie-Tresse theorem, there exists 
a finite number of polynomial relative differential invariants $R_i\in\mathcal{R}$
and polynomial relative invariant derivations $\nabla_j$ such that 
$\mathcal{R}$ is generated by those under localization on 
a subset of invariants $R_i$.

In other words, denoting $\nabla_J=\nabla_{j_1}\cdots\nabla_{j_s}$ for $J=(j_1,\dots,j_s)$
and $R^I=R_1^{i_1}\cdots R_r^{i_r}$ for $I=(i_1,\dots,i_r)$, 
every relative invariant $R\in\mathcal{R}$ is a polynomial of a finite number of invariants
$\nabla_JR^I$, possibly divided by some $R^K$.
 \end{theorem}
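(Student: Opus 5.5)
Our plan is to reduce the statement to the global Lie-Tresse theorem for absolute invariants, using Theorem~\ref{th:2} (finite rank of $\Wd$) and Proposition~\ref{RatioRelInv} to control the weights.

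\emph{Step 1: a finite family of weight representatives.}
By Theorem~\ref{th:2}, $\W\otimes\mathbb{Q}$ is finite-dimensional. Hence we can choose finitely many polynomial relative invariants $R_1,\dots,R_r$ whose weights $\lambda_1,\dots,\lambda_r$ generate $\W$ up to finite index. Enlarging the list, we may also arrange that their weights generate the semigroup $\Wd$ up to finite index. By Theorem~\ref{th:rational} all these weights come from $\mathrm{Pic}_{\g^{(1)}}(\E^1)$, so they live in a fixed low order, and we may take the $R_i$ of order $\le\ell$ with $\ell$ as in the global Lie-Tresse theorem. We also add to the list the polynomial relative invariants defining the components of the singular set $S_\ell$; there are finitely many of them by Zariski closedness. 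The same Proposition~\ref{RatioRelInv} argument shows that the denominators of the rational invariants $I_i$ and the coefficients of the $\nabla_j$ produced by the global Lie-Tresse theorem are relative invariants, and we include them as well.

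\emph{Step 2: reduction to absolute invariants.}
Let $R\in\mathcal{R}_\lambda$. Some multiple $N\lambda$ equals $\sum k_i\lambda_i$ with $k_i\in\Z$. Then $R^N/R^K$ is a rational absolute invariant, where $R^K$ denotes the corresponding monomial. To avoid the power $N$, I would rather enlarge $\{R_i\}$ by one representative from each of the finitely many cosets of $\W$ modulo the sublattice $\sum\Z\lambda_i$. Then $N=1$, and $R=R^K\cdot A$ with $A$ absolute. By the global Lie-Tresse theorem, $A$ is a polynomial in $\nabla_J I_i$ with coefficients rational in the $I_i$. The chosen localization set contains the denominators of the $I_i$ and of the derivations, because by Proposition~\ref{RatioRelInv} these are relative invariants vanishing only on invariant loci.

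\emph{Step 3: relative invariant derivations.}
Multiply each rational invariant derivation $\nabla_j$ by a suitable monomial $R^{K_j}$ clearing its denominators. This gives a polynomial operator $\tilde\nabla_j$ of weight $\sum K_j\lambda$. Similarly each $I_i$ is written as $P_i/Q_i$ with $P_i,Q_i$ relative invariants, which we adjoin to the generating set. Then every $\nabla_J I_i$ is a polynomial expression in the $\tilde\nabla_J P_i$, $\tilde\nabla_J Q_i$ and the $R_i$, divided by monomials in the $R_i$ and $Q_i$. Commuting the monomial factors through the derivations by the Leibniz rule keeps everything in this form.

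\emph{Main obstacle.}
The delicate point is that the denominators appearing when expressing $A$ must be monomials in the finite fixed set of $R_i$, rather than arbitrary relative invariants. Rational functions of the $I_i$ may have denominators that are arbitrary polynomials in the $I_i$. I would handle this as follows. Since $R=R^K A$ is polynomial, I would use irreducibility of $\E$ and unique factorization in the polynomial jet variables of order $>\ell$. Any irreducible factor of such a denominator that is not among the $R_i$ must then cancel. The factors of order $\le\ell$ are controlled by the stabilization of singularities in the global Lie-Tresse theorem: only finitely many invariant divisors of order $\le\ell$ can appear, and these we include in the list.
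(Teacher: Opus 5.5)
Your overall plan (finitely many weight representatives, reduction to absolute invariants, clearing denominators of derivations) matches the paper's. There is, however, a genuine gap in your ``main obstacle'' paragraph, and that paragraph carries the whole content of the theorem.

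First, the claim that only finitely many invariant divisors of order $\le\ell$ can appear is false. Every level set of a rational absolute invariant of order $\le\ell$ is an invariant divisor: for \eqref{eq:saff}, with singular locus $\{R_2R_4=0\}$ at order $4$, the divisors $\{R_4^3=cR_2^8\}$, $c\in\C^\times$, form an infinite family. Only the divisors contained in the singular locus form a finite set. You give no reason why the denominator $K$ of the coefficients $F=H/K$, which are rational in the low-order $I_i$, should be supported there.

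Second, unique factorization in the jet variables does not help. It only shows that a jet-factor of the denominator divides the numerator in the jet ring. Cancelling there produces jet polynomials that need not lie in the algebra generated by your $P_i,Q_i,\tilde\nabla_JP_i,\dots$. Compare Example \ref{Ex1}: there $3I_3^2-4I_2I_4$ is divisible by $I_1$ in $\C[a,b,x,y,z]$ but not in $\C[I_1,\dots,I_4]$.

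What you actually need is this: with $H,K$ coprime in the $I$-variables, the cleared denominator $\bar K=Q^NK(P/Q)$ has \emph{no} irreducible jet-factor outside the list. The paper gets this from the separation property (Rosenlicht, resp.\ the first part of the global Lie--Tresse theorem), in four steps.
\begin{enumerate}
\item Such a factor is a relative invariant, since $G$ is connected.
\item It cuts out an invariant hypersurface meeting the regular set, because you have already put the codimension-one components of the singular set into the list.
\item Near regular points the $I_i$ separate orbits with maximal rank, and polynomiality of $R$ forces $H(I)$ to vanish on that hypersurface as well.
\item This gives a common factor of $H$ and $K$ in the $I$-variables, a contradiction.
\end{enumerate}
Hence $\bar K$ is a monomial in the $R_j$, and $R=\bar H(P_i,Q_i,\dots)R_1^{d_1}\cdots R_q^{d_q}$. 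For infinite jets the paper then only uses that, by global Lie--Tresse, denominators involve invariants of order $\le\ell$, so this finite-jet argument suffices.

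Three smaller points also need repair.
\begin{itemize}
\item Multiplying an absolute invariant derivation by a monomial $R^{K_j}$ does not give a relative invariant derivation, since $\hat\p$ does not map $\RelInv_w$ into a graded component. You must first pass to the connection $\nabla^w_{\hat\p}=\hat\p-\sum_iw_i\hat\p(\log R_i)$ of Proposition \ref{prop:connection}, then clear denominators as in \eqref{nablaj}. With this, $\hat\p(P/Q)=\bigl(\nabla^wP\cdot Q-P\,\nabla^wQ\bigr)/Q^2$ for $P,Q$ of equal weight $w$, which is what makes your Step 3 work.
\item Trivial weight only gives $R/R^K\in\AbsInv\cdot\Hcech^0(\E^\ell,\O^\times)$, so the nonvanishing factor $C$ must be accounted for. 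The paper does this by adding generators of that group to the list.
\item The bound on the order of the $R_i$ comes from Theorem \ref{th:reldiff} (codimension-one components of $\Sigma\subset\E^\ell$). It does not come from Theorem \ref{th:rational}, which bounds the order of weights, not of the invariants realizing them.
\end{itemize}
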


In regard to the Hilbert finite generation for the algebra of invariants 
of reductive Lie groups, there was a celebrated Nagata's counter-example 
and its further simplifications, see \cite{Fr,PV}.
The above finite generation property is restrictive in this sense: 
the localization property is crucial.

 \begin{example}\label{Ex1}
Let us consider one of the simplest examples \cite{DF}
where the algebra of polynomial absolute invariants is not finitely generated. Consider the manifold $M=\C^5$ with coordinates $a,b,x,y,z$ 
and the 1-dimensional Lie algebra spanned by 
 \[ 
\xi=a^2 \partial_x+(ax+b)\partial_y+y\partial_z.
 \]
This easily integrates to the action of the Abelian group $\C$ with polynomial invariants:
 \[ 
I_1=a,\quad I_2=b,\quad I_3=ax^2+2bx-2a^2y,\quad I_4=2ax^3+3bx^2-6a^2xy+6a^4z.
 \]
These generate the field of rational absolute invariants, 
but 
not the algebra of polynomial invariants (which is not  finitely generated). 
Take, for example, the polynomial invariant
 \[ 
I_5= 3a x^4+4b x^3-12 a^2 x^2 y+12 a^3 y^2-24 a^3 b z.
 \] 
It satisfies the syzygy $I_1 I_5 + 4 I_2 I_4 - 3 I_3^2=0$ and can therefore be written as
 \[ 
I_5 = \frac{3I_3^2-4 I_2 I_4}{I_1}.
 \]
So even though $I_5$ is not generated by $I_1,\dots, I_4$ in the polynomial algebra, 
it is generated by them upon localization at $I_1$. 
 Indeed, setting $a=I_1$, $b=I_2$, $y=\frac{I_1x^2+2I_2x-I_3}{2I_1^2}$,
 $z=\frac{I_1x^3+3I_2x^2-3I_3x+I_4}{6I_1^4}$, we express a polynomial invariant
 as $J(I_1,I_2,I_3,I_4,x)I_1^m$ for a polynomial $J$ and $m\in\Z$; then
 we easily observe $\p_xJ=0$. This explains why it's sufficient to localize at $I_1$. 
 Notice also that
 \[ 
dI_1 \wedge dI_2 \wedge dI_3 \wedge dI_4 = -12 a^4 da \wedge db \wedge  \left(y  dx \wedge dy-(ax+b)  dx \wedge dz+a^2 dy \wedge dz\right)
 \]
is zero either when $I_1=0$ or when the vector field $\xi$ vanishes, and 
this in turn implies $I_1=0$. 
 \end{example}

The invariants in this example are absolute algebraic; 
we will consider more general relative differential invariants. 
Because their algebraic generation is infinite, we treat them as a 
differential algebra. Polynomial generation of it has a similar 
algebraic problem, but also a differential counter-part: even though 
the higher order differential invariants are affine in top-jets, they fail 
to form a finite set of polynomial-differential generators.

 \begin{theorem}\label{th:infinite}
Without localization in the previous theorem, the finite generation property is generally false:
there exist examples when the algebra $\mathcal{R}$ is not finitely generated as a polynomial algebra of the
relative differential invariants $\nabla_JR^I$.
 \end{theorem}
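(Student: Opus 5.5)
Since the statement is existential, the plan is to exhibit one explicit example in which $\mathcal{R}$ fails to be finitely generated as a polynomial algebra in the invariants $\nabla_JR^I$. The most economical choice is to embed Example~\ref{Ex1} into a jet-space setting. I would take a pseudogroup action, finite-dimensional for simplicity, on $J^\infty(M,n)$ whose invariants in some fixed order reproduce the Abelian action of $\xi$ on $\C^5$. Concretely, take functions $u(t)$ of one variable ($n=m=1$). Let $\g$ be the one-dimensional algebra spanned by the prolongation of a vector field chosen so that, on the jet coordinates $(u,u_1,u_2,u_3,u_4)$, its action is $\xi$. Arranging this exactly may force adding translations in $t$ so that the action becomes transitive on the base. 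An alternative is to work directly in a "trivial" differential setting: let the pseudogroup act only on the fibers of a bundle with five-dimensional fibers, and take the PDE $\E$ to be all sections. Then every jet coordinate of order $k$ transforms by the linearized action, and the relative differential invariants of order $0$ are exactly the invariants of Example~\ref{Ex1}.

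Next I reduce to the order-$0$ part. The grading by jet-order and by weight lets me restrict attention to invariants depending only on $0$-jets, that is, to relative invariants of $\xi$ on $\C^5$. Since $\g$ is spanned by a single nilpotent field, with $\xi$ acting locally nilpotently on polynomials, the weights must be trivial on polynomials. So the relative invariants of order $0$ are precisely the absolute polynomial invariants $\C[a,b,x,y,z]^\xi$. By the Daigle–Freudenburg result cited in Example~\ref{Ex1}, this algebra is not finitely generated.

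Then I show that any generating set of polynomial relative invariants $\nabla_JR^I$ yields a finite generating set for the order-$0$ subalgebra. Suppose finitely many $R_i$ and derivations $\nabla_j$ generate $\mathcal{R}$ polynomially. Any element of $\mathcal{R}$ of order $0$ is then a polynomial in the $\nabla_JR^I$. Applying the projection that sets all jets of order $\ge1$ to zero, which is a ring homomorphism on polynomial-in-jets functions commuting with the group action because the action is fiberwise linear in higher jets, sends each $\nabla_JR^I$ to an order-$0$ invariant. The expression stays polynomial. The danger is that infinitely many $\nabla_JR^I$ appear, so I must bound $|J|$ and the degrees. I would do this with a filtration: total derivatives raise order, so a term with large $|J|$ has a top-jet dependence that cannot cancel in the order-$0$ invariant unless it vanishes under the projection. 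Grading by the polynomial degree in jets, where $\xi$ preserves degree, bounds the total degree, so only finitely many images occur. Those images would then finitely generate $\C[a,b,x,y,z]^\xi$, which is a contradiction.

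The main obstacle is precisely this last step: making the projection to order $0$ compatible with the $\nabla_j$, which involve total derivatives $D_t$ whose $0$-jet parts are nontrivial. If a clean homogeneity argument is unavailable, I would first try assigning each jet $u^{(k)}$ an extra grading weight $k$. Total derivatives are homogeneous of degree $1$ in this weight, and the fiberwise action preserves it, so an invariant of weight $0$ can only involve generators of weight $0$, namely the genuinely order-$0$ invariants $R_i$ (since $\nabla_J$ with $J\neq\emptyset$ has positive weight). Hence the order-$0$ algebra would be generated by the finitely many $R_i$ of order $0$, contradicting non-finite generation.
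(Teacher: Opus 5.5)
\textbf{The gap: your example violates the transitivity hypothesis.} Theorem~\ref{th:infinite} says the localization in Theorem~\ref{th:finite} cannot be dropped. Theorem~\ref{th:finite} is stated under the assumptions of the global Lie--Tresse theorem, which the paper imposes throughout: the pseudogroup is algebraic and acts \emph{transitively on} $\J^0$ (see \S\ref{S1.3} and the standing assumption that $G$ is transitive on $\E^0=\J^0$).

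In your vertical example, $\J^0$ is the total space of a bundle with fibre $\C^5$. There $\xi$ has the nonconstant invariants $I_1,\dots,I_4$ of Example~\ref{Ex1}, so the action on $\J^0$ is far from transitive, with or without $t$-translations. Your first variant fails the same way. The $\p_u$-coefficient of a point field depends only on $(t,u)$, which forces $u$ to be one of the $\xi$-invariant coordinates $a,b$, so nothing moves $u$.

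Within the intransitive setting your argument is essentially sound, once $t$-translations are included so that $D_t$ is homogeneous for the jet-order grading. Jets of constant sections form an invariant submanifold, the prolonged vertical action preserves the grading, and derivations acting on weight-zero invariants have no zeroth-order term. But what this proves is Example~\ref{Ex1} with trivial jets attached. The paper mentions that only as a non-differential illustration, not as a proof.

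Nor can the mechanism be transplanted. Under transitivity on $\J^0$ the order-$0$ invariants are constants, so your reduction to $0$-jets is vacuous. You would need two things you do not have:
\begin{itemize}
\item a transitive algebraic pseudogroup whose isotropy reproduces a non-finitely-generated invariant ring on some higher jet fibre;
\item a replacement for your projection argument on that fibre. There the prolonged action is affine in higher jets with coefficients depending on lower jets, and relative invariant derivations carry zeroth-order terms $w_i\beta^i$ (e.g.\ $-\tfrac w3y_3$ in $y_2D_x-\tfrac w3y_3$).
\end{itemize}

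\textbf{The missing idea is the genuinely differential one used in the paper.} Consider $\mathfrak{saff}(2)$ acting on curves in $\C^2$. It is transitive on $J^0$ and $J^1$, and its polynomial absolute invariants are constants, so no Hilbert-type failure is available. The grading by the Cartan element $-x\p_x+y\p_y$ shows that every polynomial relative invariant has positive weight and every polynomial relative invariant derivation has weight $\ge4$.

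Hence, for finitely many generators, the lowest weight $w_j$ attainable in order $j$ satisfies $w_{k+2i}\ge w_k+8i$ from some order $k$ on. Meanwhile the polynomial invariants $Q_{2i}$ of Lemma~\ref{LemQ} have order $2i$ but weight only $5i-2$. They are obtained from $\Delta^2Q_{2(i-1)}$ plus a correction term, followed by division by $R_2$. This comparison of weight growth against order growth proves the theorem in the required setting.
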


Of course, Example \ref{Ex1}, even though non-differential, 
provides an illustration of this statement since in that case 
all relative invariants are absolute. Yet in the differential context
a more subtle phenomenon is observed in Section \ref{Snonfin}.
It concerns the special affine action given by \eqref{eq:saff}, 
whose stabilizer of a point acting on jets is $SL(2)$. 
This simple algebraic action will be shown to fail 
finite generation as per Theorem \ref{th:infinite}.

We will give several examples of computations of the algebra $\mathcal{R}$, 
for finite and infinite dimensional $\g$, which demonstrate the claims
and illustrate aspects of the developed theory. 

 \stoptoc
\subsection*{Structure of the paper}

Section \ref{S2} of the paper deals with algebraic invariants, both absolute and relative. 
While the setup can be more general, for the purposes of the current paper we focus on 
quasi-projective varieties (Zariski-open subsets of  projective varieties; all varieties are assumed irreducible throughout). 
Next we consider differential invariants in Section \ref{S3} and establish the main 
concepts and preliminary results. The space of jets is a tower of affine bundles over
classical or Lagrangian Grassmanians, and we allow a system of algebraic equations as a constraint. Thus (quasi-)projective varieties arise naturally in lower-order jets, while prolongation yields affine subbundles of higher-order jets. 

Let us warn about notations: the algebra of polynomial relative invariants $\RelInv$ 
will appear first in the algebraic context, and then in the differential context, 
but will be denoted similarly (the same for the field of absolute 
invariants $\AbsInv$). The main result concern polynomial relative invariants,
but as the main tool we use rational relative invariants,
denoted $\RelInvRat$. The graded component $\RelInv_w$ of invariants of weight
$w$ will be taken in $\RelInv$ or $\RelInvRat$ depending on the context.

For definiteness, in this paper, we work over $\C$ in complex analytic category, but the structural results about relative differential invariants 
hold true over $\R$ 
via complexification.
In the context of differential invariants, we assume throughout that the pseudogroup $G$ is transitive on the base manifold
$M$ and is algebraic in jets in the sense of \cite{KL2}.
If generated by Lie algebra (sheaf) $\g$, 
we assume $G$ to be connected in Zariski topology.

We prove the main results on finite generation and non-generation in Section 
\ref{S4} and illustrate them with examples in Section \ref{S5}. 
For a selection of geometric problems invariant under finite-dimensional 
Lie groups or infinite type Lie pseudogroups, we compute weights, 
scalar relative and absolute differential invariants. 
The examples serve to demonstrate specific features of the theory,
thus we restrict to particular classical problems (yet with novel results).
We mostly work with Lie algebras instead of groups, keeping 
the global setup (the results also apply to Lie algebra sheaves,
but we concentrate on Lie algebras for definiteness). 
We finish with an overview, generalizations and
a short discussion of open problems.

While our main focus is the algebra of relative invariants,
in particular dependence of their weights on the jet-order, 
an important role is played by the equivariant Picard group of an equation, 
so we discuss in Appendix \ref{SecA} the behavior of 
$\op{Pic}_{\g^{(k)}}(\E^k)$ under jet-prolongation.
In Appendix \ref{SecB} we briefly discuss the case of reducible equation-varieties
(algebraic sets).

 \resumetoc
\section{Algebraic invariants, hypercohomology and weights}\label{S2}

In this section we develop the algebraic theory of relative invariants. 
It starts with a summary of \cite{KS2}, mostly developed in the analytic context, 
where the following vocabulary was elaborated:
 \begin{align*}
\text{Relative $\g$-invariant}  & \quad\text{---}\quad
    \text{A $\g$-invariant divisor} \\
\text{Weight of such invariant} & \quad\text{---}\quad 
    \text{The associated $\g$-equivariant line bundle} \\
\text{Absolute $\g$-invariant} & \quad\text{---}\quad
    \text{Invariant section of the $\g$-trivial line bundle}
 \end{align*}
The (invariant) divisors form a group. 
For the purposes of this paper, a relative invariant will be identified with
an invariant section of a $\g$-equivariant line bundle, which is a finer 
concept than a divisor. 
In the algebraic setup rational relative invariants
form an algebra that is the main object of our study.
Here is an overview of notations for this section:
 \begin{align*}
&\AbsInv &&\text{Field of rational/meromorphic absolute invariants} \\ 
&\mathrm{Pic}_\g(M) &&\text{Group of $\g$-equivariant line bundles over $M$} \\ 
&\RelInv_w = \RelInv_{\pi,\hat \g} && \text{$\AbsInv$-module of $\hat \g$-invariant sections of $\pi$ (of weight $w$)} \\
&\RelInvRat := \bigoplus_{w \in \mathrm{Pic}_\g(M)} \RelInv_{w} && \text{Graded $\AbsInv$-algebra of rational relative invariants in $\mathcal{M}(M)$} \\ 
& \RelInvRat^\times\subset\RelInvRat && \text{Multiplicative group of nonzero homogeneous elements in $\RelInv$}\\ 
& \mathrm{Div}_\g(M) && \text{Group of $\g$-invariant (Cartier) divisors}
 \end{align*}

\subsection{Relative invariants are sections of equivariant line bundles} \label{sect:RelativeInvariants}

We begin with analytic setup.
Let $M$ be a connected analytic manifold over $\mathbb C$, $G$  an analytic Lie group acting analytically on $M$. Our interest in relative invariants comes from the fact that they describe the $G$-invariant hypersurfaces of $M$.  We denote the sheaf of analytic functions on $M$ by $\mathcal{O} = \mathcal{O}_M$ and the sheaf of meromorphic functions by $\mathcal{M} = \mathcal{M}_M$. (Recall that a function $h$ is meromorphic if there exists an open cover $\{U_\alpha\}$ of $M$ such that $h|_{U_\alpha} = f_\alpha/g_\alpha$ for relatively prime functions $f_\alpha, g_\alpha \in \mathcal{O}(U_\alpha)$ satisfying $f_\alpha/g_\alpha = f_\beta/g_\beta$ on $U_\alpha \cap U_\beta$ for each $\alpha,\beta$.)

The Lie group action is described infinitesimally by its Lie algebra $\g \subset \vf(M)$ of analytic vector fields. Any Lie group action gives rise to a unique Lie algebra of vector fields, but there are Lie algebras of vector fields that do not correspond to actions of Lie groups.

 \begin{remark}
This subsection uses the analytic setup, but it can also be read in algebraic 
perspective, where $G$ is an algebraic group acting on an algebraic variety $M$. 
In that case one can change ``meromorphic'' to ``rational'' and 
``analytic'' to ``regular'' (but we do not overload notations).
The Lie algebra $\g \subset \vf(M)$ is {\it algebraic} if it corresponds to an algebraic action of the algebraic Lie group $G$.
 \end{remark}

\subsubsection{Absolute invariants} 

Global invariants are chosen among meromorphic functions $\mathcal{M}(M)$
in the analytic context, or rational functions (we keep the same notation) 
in the algebraic setup.

 \begin{definition}\label{def:AbsInv}
(1) An absolute $G$-invariant on $M$ is a function $I\in\mathcal{M}(M)$ 
constant on $G$-orbits. 
(2) An absolute $\g$-invariant on $M$ is a function $I\in\mathcal{M}(M)$ satisfying $X(I)=0$ $\forall X\in\g$. 
 \end{definition}

We will refer to them simply as absolute invariants if it is clear from the 
context what the underlying Lie algebra or Lie group action is. 

In the case $M$ is a compact K\"ahler manifold or a Stein space,
and $G$ is an algebraic subgroup of complex-reductive group acting analytically 
on $M$, an analog of the Rosenlicht theorem was established in \cite{Fuj}
and \cite{GM}, respectively. 
Namely, the field $\AbsInv\subset\mathcal{M}(M)$ of absolute invariants 
separates orbits in the complement of a Zariski closed proper invariant subset 
$\Sigma\subset M$, and $M\setminus\Sigma$ admits a geometric quotient. 
The level sets of a $G$-invariant are invariant hypersurfaces, and any such 
hypersurface in $M\setminus\Sigma$ is given by $I=0$ for some $I\in\AbsInv$. 
The remaining invariant hypersurfaces  of $M$ (that are not given 
by absolute invariants) are contained in $\Sigma$. These are described by 
(scalar) relative invariants. Their definition is based on the following notion.

\subsubsection{Equivariant line bundles}

While absolute invariants are functions on $M$, relative invariants should be understood as more general objects, namely invariant sections of line bundles over $M$. This generalizes the classical notion of relative invariant {\it functions} which where treated in \cite{FO}. To talk about invariant sections of a line bundle, we need to lift the $G$-action on $M$ (or the Lie algebra $\g\subset \vf(M)$ of vector fields) to the line bundle. 

 \begin{definition} \label{def:equivariantbundles}
(1) Let $\rho \colon G \times M \to M$ be an analytic group action. 
A $G$-equivariant line bundle is a pair $(\pi,\hat\rho)$ consisting of 
a line bundle $\pi\colon L\to M$ and a lift of the action $\hat\rho \colon G\times L\to L$, 
that is, a vector bundle automorphism $\hat\rho_g\colon L\to L$ and 
$\pi(\hat\rho(g,l)) = \rho(g,\pi(l))$ for every $(g,l)\in G\times L$. 

(2) Let $\g \subset \vf(M)$ be a Lie algebra of analytic vector fields. A $\g$-equivariant line bundle is a pair $(\pi, \hat \g)$ consisting of a line bundle $\pi\colon L \to M$ and a Lie algebra $\hat \g \subset \vf_{\mathrm{proj}}(L)$ of $\pi$-projectable vector fields preserving the linear structure on fibers, 
with $d\pi \colon \hat \g \to \g$ a Lie algebra isomorphism. 
 \end{definition}

Let $\mathrm{Pic}(M)$ denote the set of  (isomorphism classes of) line bundles over $M$, 
and let $\mathrm{Pic}_G(M)$ and $\mathrm{Pic}_\g(M)$ denote, respectively, the sets of  
$G$-equivariant and $\g$-equivariant line bundles. The set $\mathrm{Pic}(M)$ forms 
a group with respect to the tensor product, and is called the Picard group. 
Similarly, $\mathrm{Pic}_G(M)$ and $\mathrm{Pic}_\g(M)$ are called the $G$-equivariant 
and $\g$-equivariant Picard groups, respectively, cf.\ \cite{Mum}. 
The group operation on $\mathrm{Pic}_G(M)$ can be described as follows: 
$(\pi_1,\hat \rho_1) \cdot (\pi_2,\hat \rho_2) = (\pi_{12},\hat \rho_{12})$ with 
$\pi_{12}$ denoting the projection $\pi_{12} \colon L_1 \otimes L_2 \to M$ and 
$\hat \rho_{12} \colon G \times  L_1 \otimes L_2  \to  L_1 \otimes L_2$ being given by 
$\hat \rho_{12}(g,l_1 \otimes l_2) = \hat \rho_1(g,l_1) \otimes \hat \rho_2(g,l_2)$. 


For computational purposes we mostly focus on $\g$-equivariant line bundles, and
we give a cohomological description of $\mathrm{Pic}_\g(M)$. Let $\pi:L\to M$ be 
a line bundle over $M$, and $\{U_\alpha\}$ an open cover of $M$ such that 
$\pi^{-1}(U_\alpha)\simeq U_\alpha \times\C$. In (linear) fiber coordinate $u_\alpha$, 
a lift of $\g$ to the line bundle $L$ is given locally by 
 \begin{equation}
\hat \g^\lambda|_{U_\alpha \times \mathbb C} = \{\hat X|_{U_\alpha \times \mathbb C} =  
X|_{U_\alpha} + \lambda_\alpha(X) u_\alpha \partial_{u_\alpha} \mid X \in \g\} \subset 
\vf(U_\alpha \times \mathbb C),\label{eq:glift}
 \end{equation}
where $\lambda_\alpha\in\mathrm{Hom}(\g,\mathcal{O}(U_\alpha))$, and we define 
$\lambda=\{\lambda_\alpha\}$. The fiber coordinates on $U_\alpha\cap U_\beta$ 
are related by $u_\alpha = g_{\alpha\beta}u_\beta$, where 
$g=\{g_{\alpha\beta}\in\mathcal{O}^\times (U_\alpha\cap U_\beta)\}$ 
is the collection of transition functions on $L$ with respect to the chosen 
chart and fiber coordinates. 
The data $(g,\lambda)$ uniquely determines a $\g$-equivariant line bundle, 
provided some compatibility conditions are satisfied as follows. 

The condition for $\g|_{U_\alpha} \to \hat \g^\lambda|_{U_\alpha \times \mathbb C}$ to be a Lie algebra isomorphism is equivalent to
 \begin{equation}\label{eq:CEd1}
(d^1\lambda_\alpha)(X,Y):= X(\lambda_\alpha(Y))-Y(\lambda_\alpha(X))-\lambda_\alpha([X,Y])=0. 
 \end{equation}
It follows that $\lambda_\alpha\in\mathrm{Hom}(\g,\mathcal{O}(U_\alpha))$ is 
a $1$-cocycle of the modified Chevalley-Eilenberg complex 
 \[ 
\mathcal{O}^\times(U_\alpha) \xrightarrow{d^0} \mathrm{Hom}(\g,\mathcal{O}(U_\alpha)) \xrightarrow{d^1} \mathrm{Hom}(\Lambda^2 \g,\mathcal{O}(U_\alpha)) \xrightarrow{d^2} \cdots 
 \]
that differs from the standard Chevalley-Eilenberg complex of the Lie algebra $\g$ 
with coefficients in the $\g$-module $\mathcal{O}(U_\alpha)$ only in the first nontrivial term, 
where $d^0$ is defined by 
 \[ 
(d^0\mu_\alpha)(X) := X(\mu_\alpha)/\mu_\alpha.
 \] 

Furthermore, for the collection $\{\hat \g^\lambda|_{U_\alpha \times \C}\}$ to define a global lift on the line bundle with transition functions $g_{\alpha \beta}$, the following compatibility condition must be satisfied on $U_\alpha \cap U_\beta$: 
 \begin{equation}\label{eq:compatibility}
\lambda_\alpha(X)-\lambda_\beta(X) = X(g_{\alpha\beta})/g_{\alpha\beta}\qquad \forall X\in\g. 
 \end{equation}

Lastly, since $g=\{g_{\alpha\beta}\}$ is a collection of transition functions, 
we have
 \begin{equation}\label{eq:Cechd1}
\delta^1(g)_{\alpha\beta\gamma} =
g_{\alpha\gamma}/(g_{\alpha\beta}g_{\beta\gamma}) = 1. 
 \end{equation}
 In other words, $g$ is a cocycle in the \v{C}ech complex over the fixed cover $\{U_\alpha\}$: 
 \[ 
\prod_{\alpha} \mathcal{O}^\times(U_{\alpha})  \xrightarrow{\delta^0} \prod_{\alpha \neq \beta} \mathcal{O}^\times(U_\alpha \cap U_\beta)  \xrightarrow{\delta^1} \prod_{\alpha \neq \beta \neq \gamma \neq \alpha } \mathcal{O}^\times(U_\alpha \cap U_\beta \cap U_\gamma)   \xrightarrow{\delta^2} \cdots.
 \]
The Chevalley-Eilenberg and \v{C}ech complexes unite in a double complex $C^{\bullet,\bullet}$ as follows. Define 
\begin{align*}
C^{0,q} &= \prod_{\alpha_0, \dots, \alpha_q} \mathcal{O}^\times(U_{\alpha_0 \cdots \alpha_q}), \\
C^{p,q} &= \prod_{\alpha_0, \dots, \alpha_q} \mathrm{Hom}(\Lambda^p \g, \mathcal{O}(U_{\alpha_0 \cdots \alpha_q})), \quad p\geq 1, 
\end{align*}
as in \cite{KS2}, together with the differentials 
\begin{gather*}
(\delta^{0,q} \mu)_{\alpha_0 \cdots \alpha_{q+1}} = \prod_{i=0}^{q+1} \mu_{\alpha_0 \cdots \hat{\alpha}_i \cdots \alpha_{q+1}}^{(-1)^{i+1}} \Big|_{U_{\alpha_0} \cap \cdots \cap U_{\alpha_{q+1}}}, \qquad (d^{0,q} \mu_{\alpha_0 \cdots \alpha_q})(X) = X(\mu_{\alpha_0 \cdots \alpha_q})/ \mu_{\alpha_0 \cdots \alpha_q},\\
(\delta^{p,q} \mu)_{\alpha_0 \cdots \alpha_{q+1}} = \sum_{i=0}^{q+1} (-1)^{i+1} \mu_{\alpha_0 \cdots \hat{\alpha}_i \cdots \alpha_{q+1}} \Big|_{U_{\alpha_0} \cap \cdots \cap U_{\alpha_{q+1}}}, \quad p>0, \\
(d^{p,q} \mu_{\alpha_0 \cdots \alpha_q})(X_0,\dots, X_p) = \sum_{i=0}^p X_i( \mu_{\alpha_0 \cdots \alpha_q} (X_0,\dots, \hat{X}_i, \dots, X_p)) \\
+ \sum_{i<j} (-1)^{i+j} (\mu_{\alpha_0 \cdots \alpha_q}([X_i,X_j],X_0,\dots,\hat{X}_i, \dots, \hat{X}_j, \dots, X_p)), \quad p>0. 
\end{gather*}
Here, the hat over an index or element signifies ommision. 

The total complex $\mathrm{Tot}^\bullet(C)$ of the double complex is defined by 
 \[
\mathrm{Tot}^r(C) = \prod_{p+q=r} C^{p,q}, \qquad \partial^r = \sum_{p+q=r} (d^{p,q}+(-1)^p \delta^{p,q})\colon \mathrm{Tot}^r(C) \to \mathrm{Tot}^{r+1}(C).
 \] 

We refer to \cite{KS2} for more details on the following observations: 
 \begin{itemize}
 \item The conditions  \eqref{eq:CEd1}, \eqref{eq:compatibility} and 
\eqref{eq:Cechd1} hold for a pair $(g,\lambda) \in \mathrm{Tot}^1(C)$ 
if and only if $\partial^1(g,\lambda) = 0$. 
 \item The pair $(g,\lambda)$ can be mapped to $(\tilde g,\tilde\lambda)$ 
by changes in local fiber coordinates of the line bundle $L$ if and only if 
$\tilde g_{\alpha \beta} = g_{\alpha \beta} \mu_\alpha/\mu_\beta$ and 
$\tilde \lambda_\alpha = \lambda_\alpha + X(\mu_\alpha)/\mu_\alpha$ 
for some $\mu_\alpha\in\mathcal{O}^\times(U_\alpha)$ and all $X\in\g$, 
that is, if and only if $(\tilde g,\tilde\lambda) = (g,\lambda)+\partial^0(\mu)$ 
for some $\mu\in\mathrm{Tot}^0(C)$. 
 \end{itemize} 
Thus we are led to consider the cohomology group
 \[
\HH^1(\mathrm{Tot}^\bullet(C))= \frac{ \ker(\partial^1)}{\mathrm{im}(\partial^0)}.
 \] 
Notice that the complexes defined above depend on the open cover $\{U_\alpha\}$. 
However, as the cover becomes finer, we have in the direct limit that 
 \[
\mathrm{Pic}_\g(M) \simeq \varinjlim  \HH^1(\mathrm{Tot}^\bullet(C)).
 \] 
This direct limit can also be referred to as the {\it hypercohomology} 
(see for example \cite[Ch.~3.5]{GH}) of the modified Chevalley-Eilenberg sheaf complex, 
which we will denote by $\mathbb{H}^1(\g,\mathcal{O}_M)$. 

 \begin{remark}
The group structure on $C^{0,1}$ and on $C^{1,0}$ induces a group structure on 
$\mathrm{Tot}^1(C)=C^{0,1} \times C^{1,0}$, and thus on $\mathrm{Pic}_\g(M)$. 
Notice that the group operation is written in terms of multiplicative notation in 
the first component and additive notation in the second. For elemens 
$w_1,w_2\in\mathrm{Pic}_\g(M)$ we will simply use the mutliplicative notation: 
$w_1w_2\in \mathrm{Pic}_\g(M)$. 
 \end{remark}

 \begin{prop}
Let $\mathrm{Pic}_\g(M)$ denote the group of $\g$-equivariant line bundles over $M$. There is a group isomorphism
 \[ 
\mathrm{Pic}_\g(M) \simeq \varinjlim \HH^1(\mathrm{Tot}^\bullet(C)) = \mathbb{H}^1(\g,\mathcal{O}_M).
 \]
 \end{prop}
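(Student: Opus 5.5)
The plan is to build explicit maps in both directions between $\mathrm{Pic}_\g(M)$ and $\HH^1(\mathrm{Tot}^\bullet(C))$ for a fixed trivializing cover, check that they are mutually inverse group homomorphisms, and then pass to the direct limit over refinements.

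First, given a $\g$-equivariant line bundle $(\pi,\hat\g)$, I choose a cover $\{U_\alpha\}$ trivializing $L$ and fiber coordinates $u_\alpha$. This gives transition functions $g_{\alpha\beta}$. Since $\hat\g$ consists of projectable vector fields that are linear on fibers, each lift $\hat X$ has the local form \eqref{eq:glift}. This defines $\lambda_\alpha(X)\in\mathcal{O}(U_\alpha)$, linear in $X$. The pair $(g,\lambda)\in\mathrm{Tot}^1(C)=C^{0,1}\times C^{1,0}$ then satisfies three conditions, and I will verify each in turn.
\begin{itemize}
\item Condition \eqref{eq:Cechd1} holds because the $g_{\alpha\beta}$ are transition functions.
\item Condition \eqref{eq:CEd1} holds because $d\pi:\hat\g\to\g$ is a Lie algebra isomorphism. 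To see this, compute $[\hat X,\hat Y]$ in a chart: its vertical part is $(X\lambda_\alpha(Y)-Y\lambda_\alpha(X))u_\alpha\p_{u_\alpha}$, and this must equal $\lambda_\alpha([X,Y])u_\alpha\p_{u_\alpha}$.
\item Condition \eqref{eq:compatibility} holds because $\hat X$ is globally defined. Rewriting $\hat X$ in the coordinate $u_\alpha=g_{\alpha\beta}u_\beta$ gives $\lambda_\alpha(X)=\lambda_\beta(X)+X(g_{\alpha\beta})/g_{\alpha\beta}$.
\end{itemize}
By the first bullet observation above, these three conditions say exactly that $\partial^1(g,\lambda)=0$. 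Changing the fiber coordinates by $u_\alpha\mapsto\mu_\alpha u_\alpha$ changes $(g,\lambda)$ by $\partial^0\mu$. So the class $[(g,\lambda)]$ is well defined.

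Conversely, I start from a cocycle $(g,\lambda)$. Condition \eqref{eq:Cechd1} lets me glue the pieces $U_\alpha\times\C$ into a line bundle $L$. Condition \eqref{eq:compatibility} says that the local fields $X+\lambda_\alpha(X)u_\alpha\p_{u_\alpha}$ agree on overlaps, so they define a global $\hat X$. Condition \eqref{eq:CEd1} makes $X\mapsto\hat X$ a Lie algebra homomorphism. This homomorphism is injective since it is a section of $d\pi$, so its image $\hat\g$ is isomorphic to $\g$. Next I check that isomorphic equivariant bundles give cohomologous cocycles, and vice versa. An isomorphism of equivariant bundles is given locally by multiplication by $\mu_\alpha\in\mathcal{O}^\times(U_\alpha)$ intertwining the lifts, and this is exactly the coboundary relation. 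Hence the two constructions are inverse bijections for a fixed cover.

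For the group structure, I take tensor products in local coordinates, $u_\alpha^{(12)}=u_\alpha^{(1)}u_\alpha^{(2)}$. Then the transition functions multiply and the multipliers add, since the vertical part of the tensor-product lift acts on the product coordinate by the sum of the two multipliers. This matches the group law on $\mathrm{Tot}^1$ described in the Remark.

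Finally I pass to the limit. Refinement maps are compatible with the constructions above, and every bundle trivializes on some cover, so the limit map is surjective. For injectivity, if a cocycle becomes a coboundary on some refinement, then the associated equivariant bundle is already trivial as an equivariant bundle, so its class is zero. The main subtlety I expect is this injectivity under refinement. I will handle it through the bundle interpretation rather than by a direct \v{C}ech argument, since that interpretation is independent of the cover.
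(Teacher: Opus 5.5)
Your proposal is correct and takes the same route as the paper. The paper reads off $(g,\lambda)$ from local fiber coordinates, identifies the three compatibility conditions with $\partial^1(g,\lambda)=0$ and changes of fiber coordinates with $\partial^0\mu$, and then passes to the direct limit over refinements; you additionally supply what the paper leaves implicit: the gluing inverse, the match of tensor products with the group law on $\mathrm{Tot}^1$, and injectivity on the limit. That injectivity indeed follows directly from the fixed-cover bijection being compatible with refinement.
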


Note that in practice, we often work with sufficiently good covers on which all line bundles trivialize, in which case $\HH^1(\mathrm{Tot}^\bullet(C)) \simeq \mathbb{H}^1(\g,\mathcal{O}_M)$ (without the limit).

\subsubsection{Relative invariants}

Any meromorphic section $s$ of a line bundle $\pi \colon L \to M$ is uniquely determined by a set of meromorphic functions $s_\alpha \in \mathcal{M}(U_\alpha)$, by setting $u_\alpha = s_\alpha(x)$. Here $\mathcal{U}=\{U_\alpha\}$ is again an open cover of charts over which $L$ is trivial: $\pi^{-1}(U_\alpha) \simeq U_\alpha \times \mathbb C$. We will often write $s=\{s_\alpha\}$ to signify that the section $s$ is defined by the collection of local functions. Note that the representative functions depend on the choice of fiber coordinates and are therefore not uniquely defined. 

 \begin{definition} \label{def:RelInv}
A relative $\g$-invariant on $M$ is a nonzero global meromorphic section $R$ of some line bundle $\pi \colon L \to M$ whose local defining functions $R_\alpha$ are meromorphic and satisfy 
 \[ 
X(R_\alpha) = \lambda_\alpha(X) R_\alpha, \qquad \forall X \in \g.
 \] 
The collection $\lambda=\{\lambda_\alpha \in \mathrm{Hom}(\g,\mathcal{O}(U_\alpha))\}$ is called the multiplier of $R=\{R_\alpha\}$. If the defining functions $R_\alpha$ are analytic, we call $R$ an analytic relative invariant. In the case of an algebraic Lie algebra of vector fields on an algebraic manifold, we call $R$ rational or regular if there exists a collection $\{R_\alpha\}$ of rational or regular functions, respectively. 
 \end{definition}

The relative invariant $R$ is an invariant section of the $\g$-equivariant line bundle defined by 
 \[ 
g_{\alpha \beta} = \frac{R_\alpha}{R_\beta}, \qquad \lambda_\alpha\colon X \mapsto  \frac{X(R_\alpha)}{R_\alpha} \quad \forall X \in \g.
 \] 
We refer to the pair $(g=\{g_{\alpha \beta}\}, \lambda=\{\lambda_\alpha\})$, 
or the corresponding $\g$-equivariant line bundle, as the {\it weight} of $R$. 
A meromorphic section $s$ of the $\g$-equivariant line bundle $(\pi,\hat \g)$ 
is a relative invariant with respect to $\g \subset \vf(M)$ if and only if 
$\hat \g_{s(x)} \subset T_{s(x)} s(M)$ for every $x\in M\setminus S$, 
where $S$ is the set of poles of $s$. 

As mentioned, the defining functions $R_\alpha$ are not uniquely determined 
by the section $R$, but depend on the fiber coordinates $\{u_\alpha\}$. 
A change in fiber coordinates $\tilde u_\alpha = \mu_\alpha u_\alpha$ produces 
new representatives $\tilde R_\alpha = \mu_\alpha R_\alpha$, for any 
$\mu_\alpha \in \mathcal{O}^\times(U_\alpha)$. 
Note that this change in representatives $R_\alpha$ does not change 
the zeros/poles of $R$. In general, such a change leads to a change in both 
$g$ and $\lambda$, as explained in the previous section: 
 \[
\tilde g_{\alpha \beta} = g_{\alpha \beta} \mu_\alpha/\mu_\beta, \qquad 
\tilde \lambda_\alpha(X) = \lambda_\alpha(X) + X(\mu_\alpha)/\mu_\alpha 
\quad \forall X \in \g. 
 \] 
If $\mu_\alpha|_{U_\alpha \cap U_\beta} =\mu_\beta|_{U_\alpha \cap U_\beta}$ 
for each $(\alpha,\beta)$, then the transition functions remain the same, 
and if $\mu_\alpha$ is $\g$-invariant for each $\alpha$, then the multiplier 
remains the same. The only way we can change $\{R_\alpha\}$ like this while 
keeping $g$ and $\lambda$ fixed is by multiplying every $R_\alpha$ by the same 
$\g$-invariant nonvanishing analytic function 
$\mu\in\AbsInv\cap\mathcal{O}^\times(M)$. 
In fact, $u_\alpha/R_\alpha = C\mu$ defines a foliation of $L$ by varying the 
constant $C$. In particular, if $\g_x = T_x M$ almost everywhere (so that all absolute invariants are constant), then the foliation is unique. If it were not for the singularities of this foliation at points where $R_\alpha$ vanish, $R$ would define an Ehresmann connection on the bundle $L$. 

The invariant meromorphic sections on a fixed $\g$-equivariant line bundle  
$w=(\pi,\hat \g)$ form an $\AbsInv$-module, which we denote $\RelInv_w$. 
It may happen that $\RelInv_w=\{0\}$ for a nontrivial 
$\g$-equivariant line bundle $(\pi, \hat \g)$. In particular, if $\g$ 
acts transitively ($\g_x = T_x M$ for every $x\in M$), then there are no 
nontrivial relative invariants. In the algebraic setting we have the following statement, based on Propositions 6 and 8 of \cite{KS2} (also related to \cite{FO}):

 \begin{prop}\label{prop:transversal}
Let $\g \subset \vf(M)$ be an algebraic Lie algebra of vector fields.
Consider a $\g$-equivariant line bundle $(\pi:L\to M,\hat\g\subset\vf(L))$, and 
assume that $\hat\g$ is also algebraic. Then there exists a positive integer $m$ 
such that  $L^{\otimes m}$ admits a rational $\hat \g$-invariant section 
if and only if generic orbits of $\g$ and $\hat \g$ have equal dimension. 
 \end{prop}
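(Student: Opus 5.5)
The plan is to pass to the Fels--Olver picture and treat $\hat\g$ as an algebraic Lie algebra of vector fields on the total space $L$, taking the vertical scaling field $E=u\partial_u$ (in any local fiber coordinate) into account. A rational section $s$ of $L^{\otimes m}$ is the same as a rational function $F_s$ on $L\setminus 0$ that is homogeneous of degree $-m$ in the fibers: locally $F_s=s_\alpha(x)/u_\alpha^m$, which is well defined because $s_\alpha/s_\beta=g_{\alpha\beta}^m$. Since $\hat\g$ preserves the linear structure, $[\hat\g,E]=0$. Moreover $s$ is $\hat\g$-invariant iff $\hat X(F_s)=0$ for all $\hat X\in\hat\g$; this follows by expanding with \eqref{eq:glift}, which gives $\hat X(s_\alpha u_\alpha^{-m})=(X(s_\alpha)-m\lambda_\alpha(X)s_\alpha)u_\alpha^{-m}$. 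So the statement reduces to the question of when the field of rational $\hat\g$-invariants on $L$ contains a nonconstant element of nonzero $E$-degree.

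For the direction ``$\Rightarrow$'', take an invariant rational section $s\neq0$ of $L^{\otimes m}$. Its graph $\{u_\alpha^m=s_\alpha\}$ is then a $\hat\g$-invariant hypersurface of $L$ (up to the $m$-fold cover), and it projects finitely onto $M$. Over a generic $x$, the $\hat\g$-orbit through a point $(x,u)$ with $u^m=s_\alpha(x)\ne0$ lies in this hypersurface. Hence its tangent space contains no vertical vector, $d\pi$ is injective on it, and its dimension equals $\dim\g_x$. Since orbits of $\hat\g$ always project onto orbits of $\g$, generic orbits have equal dimension.

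For ``$\Leftarrow$'', assume generic orbits have equal dimension $r$. Then for generic $p\in L$ the map $d\pi:\hat\g_p\to\g_{\pi(p)}$ is an isomorphism, so $E_p\notin\hat\g_p$, and $\hat\g$ has generic orbits of codimension $\dim M-r+1$ in $L$. By Rosenlicht's theorem (we use algebraicity of $\hat\g$ and $\g$), rational $\hat\g$-invariants on $L$ separate generic orbits, so their transcendence degree is $\dim M-r+1$. The $\g$-invariants pulled back via $\pi^*$ account only for $\dim M-r$ of this. Hence there is a rational $\hat\g$-invariant $F$ with $E(F)\neq0$. Because $E$ commutes with $\hat\g$ and generates an algebraic $\C^\times$-action, the field $\C(L)^{\hat\g}$ is $\C^\times$-stable. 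Decomposing $F$ into homogeneous components therefore yields $\hat\g$-invariant homogeneous rational functions, and at least one of them has nonzero degree $-m$. Replacing $F$ by $1/F$ if necessary, we may take $m>0$, and this component gives the desired $\hat\g$-invariant section of $L^{\otimes m}$.

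The main obstacle is the decomposition step. A rational function on $L$ is not a finite sum of homogeneous rational functions in the naive sense. I would handle it by using the $\C^\times$-action on the field of invariants. Since the invariant field is a finitely generated extension of $\pi^*\AbsInv$ of transcendence degree one, an argument on the Laurent expansion in $u_\alpha$ should show that a homogeneous invariant of nonzero degree exists. This in turn relies on the torus $\C^\times$ acting algebraically on a rational quotient of dimension one over the quotient $M/\!/\g$.
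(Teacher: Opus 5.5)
Your strategy is right, and it uses the same lifting picture the paper relies on. The paper gives no proof of this proposition: it quotes Propositions 6 and 8 of \cite{KS2} and the Fels--Olver correspondence \cite{FO} between relative invariants and absolute invariants of the lift on the total space.

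Three parts of your argument are fine:
\begin{itemize}
\item The dictionary $s\leftrightarrow F_s=s_\alpha u_\alpha^{-m}$ is correct.
\item The ``$\Rightarrow$'' direction works. You only argue on the level set $\{F_s=1\}$, but every level set $\{F_s=c\}$ with $c\neq0$ is invariant. More directly, $dF_s(E)=-mF_s\neq0$ gives $E_p\notin\hat\g_p$ wherever $F_s\neq0,\infty$.
\item Rosenlicht does yield a rational $\hat\g$-invariant $F$ with $E(F)\neq0$, since $E(F)=0$ would force $F\in\pi^*\C(M)^{\g}$.
\end{itemize}

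The gap is the step you flag yourself. ``Decomposing $F$ into homogeneous components therefore yields invariants'' is not justified as written. $\C^\times$-stability of $\C(L)^{\hat\g}$ only says that every fiber-rescaling of $F$ is again invariant. A rational function on $L$ is in general not a finite sum of homogeneous pieces, so this stability alone gives you no homogeneous element. The appeal to a torus acting on a one-dimensional rational quotient over $M/\!/\g$ is vague and not needed.

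The Laurent expansion you mention closes the gap directly. The key point is that $\hat\g$ preserves fiber degree.
\begin{itemize}
\item On a Zariski open algebraic trivialization $U_\alpha$, expand $F=\sum_{k\geq k_0}f_k\,u_\alpha^k$ in $\C(M)((u_\alpha))$. The $f_k$ are rational on $M$ because $F\in\C(M)(u_\alpha)$.
\item By \eqref{eq:glift}, $\hat X(f\,u_\alpha^k)=\bigl(X(f)+k\lambda_\alpha(X)f\bigr)u_\alpha^k$, with $\lambda_\alpha(X)$ independent of $u_\alpha$.
\item This coefficientwise derivation of $\C(M)((u_\alpha))$ restricts to $\hat X$ on $\C(M)(u_\alpha)$, because the two agree on $\C(M)$ and on $u_\alpha$.
\item Hence $\hat XF=0$ gives $\hat X(f_ku_\alpha^k)=0$ for each $k$ separately.
\item Since $E(F)\neq0$, some $f_k\neq0$ has $k\neq0$. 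Then $\Phi=f_ku_\alpha^k$ is a rational function on $L$, because $\pi^{-1}(U_\alpha)$ is dense. It is $\hat\g$-invariant and homogeneous of degree $k$ for the global Euler field $E$.
\item Put $\Psi=\Phi$ if $k<0$ and $\Psi=\Phi^{-1}$ if $k>0$, and let $m=|k|$. Then $s_\beta:=\Psi\,u_\beta^m$ is independent of $u_\beta$ in every chart, and $s_\alpha/s_\beta=g_{\alpha\beta}^m$.
\item So $s$ is a nonzero rational $\hat\g$-invariant section of $L^{\otimes m}$.
\end{itemize}

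There is also a finite alternative. Write $F=P/Q$ with coprime $P,Q\in\C(M)[u_\alpha]$. As in Proposition \ref{RatioRelInv}, $\hat XP=\mu(X)P$ with $\mu(X)\in\C(M)$. Two nonzero coefficients $p_i,p_j$ of $P$ or of $Q$ then give the invariant $(p_i/p_j)u_\alpha^{i-j}$. If $P$ and $Q$ are both monomials, $F$ itself is homogeneous of nonzero degree.

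With either version, algebraicity of $\hat\g$ enters only through Rosenlicht. The integer $m$ is simply the degree of a nonzero-degree homogeneous component of $F$.
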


Notice the requirement that $\hat \g$ is algebraic. This is the natural 
requirement in the algebraic setting. When $M$ is an algebraic variety and 
$\g\in\vf(M)$ is an algebraic Lie algebra, we introduce the notation $\mathrm{Pic}^{\mathrm{alg}}_\g(M) \subset \mathrm{Pic}_\g(M)$ for the subgroup of $\g$-equivariant line bundles $(\pi,\hat \g)$ with algebraic transition functions and algebraic $\hat \g$. 

\smallskip

If $R_i$ are relative invariants with weight $w_i$ for $i=1,2$, then $R_1 R_2$ is a relative invariant with weight $w_1 w_2 \in \mathrm{Pic}_\g(M)$. 
We define the graded $\AbsInv$-algebra\footnote{We use subscript ``${\mathrm{rat}}$'' 
because rational invariants are one of the main target of this paper; for the purposes of
this subsection, meromorphic invariants can be used instead.}
 \[ 
\RelInvRat = \bigoplus_{w \in \mathrm{Pic}_\g(M)} \RelInv_w,
 \] 
and refer to this as the $\AbsInv$-algebra of rational relative invariants, but note that only the homogeneous elements are actual relative invariants.

\subsubsection{On torsion} \label{sect:torsion}
The classical Picard group $\op{Pic}(M)$ can have torsion and 
the same is true for the equivariant Picard group $\op{Pic}_\g(M)$. 
Of course, $\W_\tau\subset\op{Pic}(M)_\tau$ and hence $\W_{tf}\subset\op{Pic}_g(M)_{tf}$. 
We however will see that in most situations the torsion is absent.

 \begin{remark}\label{RkH1tor}
For (quasi) projective varieties 
$\Hcech^1(M,\O_M)=\HH^{0,1}_{\bar\p}(M)\subset \HH^1(M,\C)$ 
and so the exponential 3-sequence gives the following exact sequence
 \[
\frac{\HH^1(M,\C)}{\HH^1(M,\Z)_{tf}}\to\op{Pic}(M)\to \HH^2(M,\Z),
 \]
implying that provided the Betti number $b_1(M)=0$ then $\op{Pic}(M)$ is finitely generated. 
Moreover by the universal coefficient formula  $\HH^1(M,\Z)\simeq \HH_1(M,\Z)_{tf}^*$ and 
$\HH^2(M,\Z)\simeq \HH_2(M,\Z)_{tf}^*\oplus \HH_1(M,\Z)_\tau$,
whence the Picard group is torsion-free $\op{Pic}(M)_\tau=0$ iff $\HH_1(M,\Z)=0$.
 \end{remark}

Recall \cite{KS2} the natural homomorphism $p_1\colon \op{Pic}_\g(M)\to\op{Pic}(M)$.

 \begin{prop}
The map $p_1$ is injective on torsion: if $w\in\op{Pic}_\g(M)$ is a torsion, so is $p_1(w)$.
 \end{prop}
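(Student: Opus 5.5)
Since $p_1$ is a group homomorphism, the implication as literally stated is immediate: if $w^m=1$ in $\op{Pic}_\g(M)$ then $p_1(w)^m=p_1(w^m)=1$ in $\op{Pic}(M)$. I would record this in one line and spend the proof on the substantive content suggested by the phrase ``injective on torsion'', namely understanding torsion elements of $\ker p_1$. The plan is to write such an element explicitly in the double complex $C^{\bullet,\bullet}$. Then I would show that the only obstruction to its triviality is topological and vanishes when $\HH_1(M,\Z)=0$. That is the hypothesis under which the paper later wants $\W_\tau=0$, and it combines with Remark~\ref{RkH1tor}.

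First, let $w\in\ker p_1$. Then the underlying line bundle is trivial, so after a coboundary $\partial^0$ we may take $g_{\alpha\beta}\equiv 1$. Then \eqref{eq:compatibility} forces the $\lambda_\alpha$ to glue to a global cocycle $\lambda\in\op{Hom}(\g,\O(M))$ with $d^1\lambda=0$. The remaining freedom preserving $g\equiv1$ is $\lambda\mapsto\lambda+d^0\mu$ with $\mu\in\O^\times(M)$ a global unit. Hence $\ker p_1\simeq Z^1/d^0\O^\times(M)$, where $Z^1$ denotes the global Chevalley--Eilenberg $1$-cocycles.

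Second, suppose $w^m=1$. Then $m\lambda=d^0\mu=X(\mu)/\mu$ for some global unit $\mu$. If $\mu$ admits a global $m$-th root $\nu\in\O^\times(M)$, then $\lambda'=\lambda-d^0\nu$ satisfies $m\lambda'=0$. Since $\lambda'$ takes values in $\O(M)$ over a field of characteristic zero, this gives $\lambda'=0$, so $w=1$. In general $\mu$ has $m$-th roots locally, and the obstruction to a global root is the class of $\mu$ in $\O^\times(M)/(\O^\times(M))^m$. Via the Kummer sequence $1\to\Z/m\to\O^\times\xrightarrow{(\cdot)^m}\O^\times\to1$, this class maps injectively into $\HH^1(M,\Z/m)$.

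The main obstacle is exactly this root-extraction step. I would handle it by noting that $\HH^1(M,\Z/m)=\op{Hom}(\HH_1(M,\Z),\Z/m)$, which vanishes when $\HH_1(M,\Z)=0$. In that case every torsion element of $\ker p_1$ is trivial, so $p_1$ restricted to $\op{Pic}_\g(M)_\tau$ is injective. Together with Remark~\ref{RkH1tor} this gives $\op{Pic}_\g(M)_\tau=0$, and hence $\W_\tau=0$.

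I would end by checking the example $M=\C^\times$, $\g=\langle z\p_z\rangle$ from \S\ref{S1.2}. There $\lambda(z\p_z)=1/m$ and $\mu=z$ has no global $m$-th root. This shows the topological hypothesis cannot be dropped, and it is consistent with $\op{Pic}_\g(\C^\times)=\C/\Z$ having torsion while $\op{Pic}(\C^\times)=0$.
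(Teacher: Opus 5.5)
Your proposal is correct, and on the substantive claim it is more careful than the paper. The paper runs essentially your computation: normalize to $g_{\alpha\beta}\equiv1$, write $k\lambda=d^0f$, and try to remove $\lambda$ by $\p^0 f^{1/k}$. However, it takes local branches $f_\alpha^{1/k}$ and then asserts that, ``since $p_1(w)=1$'', the branches can be chosen so that the induced root-of-unity cocycle $f_\alpha^{1/k}/f_\beta^{1/k}$ becomes trivial.

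That step does not follow. Triviality of the underlying line bundle only writes this cocycle as $\nu_\alpha/\nu_\beta$ with local units $\nu_\alpha$ that need not be $\g$-invariant. Rechoosing branches succeeds exactly when $f$ has a global $k$-th root, i.e.\ when the Kummer class you isolate in $\HH^1(M,\Z/k)$ vanishes.

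Your example shows this is a genuine failure, not a presentational one. Take $M=\C^\times$, $\g=\langle z\p_z\rangle$ and $m\geq2$. The class $\lambda(z\p_z)=1/m$ is a nontrivial $m$-torsion element of $\op{Pic}_\g(\C^\times)=\C/\Z$ lying over the trivial bundle, since $\op{Pic}(\C^\times)=0$. So $p_1$ is not injective on torsion in general. The corollary that follows in the paper ($\op{Pic}(M)_\tau=0\Rightarrow\op{Pic}_\g(M)_\tau=0$) also fails for this $M$.

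Your route therefore buys the correct formulation. The literal implication ``$w$ torsion $\Rightarrow$ $p_1(w)$ torsion'' is just the homomorphism property. Genuine injectivity on torsion holds once $\HH^1(M,\Z/m)=0$ for all $m$ (e.g.\ when $\HH_1(M,\Z)=0$). That is precisely the hypothesis the paper invokes right afterwards to get $\W_\tau=0$.
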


 \begin{proof}
Assume the contrary, so $w$ is a torsion, while $p_1(w)$ is not. Represent $w$ by a hypercohomology 1-cocycle $(g,\lambda)$, 
where $g_{\alpha\beta}$ is a \v{C}ech cocycle and $\lambda_\sigma$ a multiplier
(recall that we use multiplicative notation for $g$ and additive notation for $\lambda$). 
Since $w^k=1$ implies $p_1(w)^k=1$, we conclude that the line bundle $p_1(w)$ 
is trivial and we may change the representative cocycle to have $g_{\alpha\beta}=1$.
For the multiplier, we have $k\lambda_\sigma=L_X\log f_\sigma$. 

We may refine the cover to have
trivial cohomology for $U_\alpha$, whence $\lambda_\sigma=L_X\log f^{1/k}_\sigma$
for some choice of branches of roots in $U_\alpha$. This modification of
$(g,\lambda)$ by $\p^0f^{1/k}$ eliminates the component $\lambda$ in $C^{1,0}$
but perturbs the component $g$ in $C^{0,1}$ by locally constant $k$-roots of unity.
However, since $p_1(w)=1$, the radicals $f^{1/k}_\sigma$ may be chosen to keep triviality
of $g$ in $C^{0,1}$. Thus $w$ is trivial in hypercohomology $\HH^1(\mathrm{Tot}^\bullet(C))$.
 \end{proof}

 \begin{cor}
If $\op{Pic}(M)_\tau=0$ then $\op{Pic}_\g(M)_\tau=0$. 
 \end{cor}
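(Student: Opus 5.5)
The corollary follows from the preceding proposition, that $p_1\colon \op{Pic}_\g(M)\to\op{Pic}(M)$ is injective on torsion. I would take that statement in the precise form its proof establishes: if $w\in\op{Pic}_\g(M)$ is torsion and $p_1(w)$ is trivial in $\op{Pic}(M)$, then $w$ is trivial in $\mathbb{H}^1(\g,\O_M)$. This is the form I would invoke, rather than the weaker wording ``torsion maps to torsion''.

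The argument runs as follows. Let $w\in\op{Pic}_\g(M)_\tau$, so $w^k=1$ for some $k\ge1$. Since $p_1$ is a group homomorphism, $p_1(w)^k=p_1(w^k)=1$, so $p_1(w)\in\op{Pic}(M)_\tau$. The hypothesis $\op{Pic}(M)_\tau=0$ then forces $p_1(w)=1$. Now we are exactly in the situation treated in the proof of the proposition. We represent $w$ by a cocycle $(g,\lambda)$ and normalize $g_{\alpha\beta}=1$. From $k\lambda_\alpha=X(f_\alpha)/f_\alpha$ we pass to $k$-th roots $f_\alpha^{1/k}$ on a refined cover with simply connected $U_\alpha$. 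Modifying by $\partial^0(f^{1/k})$ kills $\lambda$ and keeps $g$ trivial. Hence $w=1$, and $\op{Pic}_\g(M)_\tau=0$.

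The only delicate point is the last step: the roots $f_\alpha^{1/k}$ must be chosen so that the induced \v{C}ech cocycle of $k$-th roots of unity, $f_\alpha^{1/k}/f_\beta^{1/k}$, is a coboundary. This is where triviality of $p_1(w)$ is used, and it is the step I would check most carefully. If the branches cannot be matched globally, that cocycle defines a flat line bundle with $k$-torsion class. Such a bundle is trivial in $\op{Pic}(M)$ by the hypothesis $\op{Pic}(M)_\tau=0$. A trivialization of it gives nonvanishing functions $\nu_\alpha$ with $\nu_\alpha/\nu_\beta$ equal to the root-of-unity cocycle. Moreover, each $\nu_\alpha^k$ glues to a global invertible function. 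Replacing $f_\alpha^{1/k}$ by $f_\alpha^{1/k}\nu_\alpha^{-1}$ restores trivial transition functions. The resulting multiplier is then of the form $X(h)/h$ for a global $h$, so it is a pure coboundary, and $w$ is trivial.
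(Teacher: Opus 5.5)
Your reduction to the preceding proposition is the same as the paper's. The step you flag as delicate is exactly where the argument breaks, and your repair does not close it.

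After normalizing $g=1$, $\lambda$ is a global multiplier with $k\lambda=d^0(f)$ for a \emph{global} $f\in\O^\times(M)$; the \v{C}ech part of the coboundary forces the local $f_\alpha$ to glue. The local roots $h_\alpha=f^{1/k}$ then give the root-of-unity cocycle $\epsilon_{\alpha\beta}=h_\alpha/h_\beta$. This cocycle is already trivialized holomorphically by the $h_\alpha$ themselves, so the hypothesis $\op{Pic}(M)_\tau=0$ adds nothing here: you may simply take $\nu_\alpha=h_\alpha$.

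For any trivialization, $h=f^{1/k}\nu_\alpha^{-1}$ is global, but $X(h)/h=\lambda(X)-X(\nu_\alpha)/\nu_\alpha$. So after modifying by $\p^0(h)$, $w$ is represented by $\bigl(1,\,X(\nu_\alpha)/\nu_\alpha\bigr)=\bigl(1,\tfrac1k d^0(u)\bigr)$ with $u=\nu_\alpha^k$ global. This is exactly the form you started from; with $\nu_\alpha=h_\alpha$ you get $h=1$ and the original $\lambda$ back.

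Your claim that the resulting multiplier is $X(h)/h$ for a global $h$ would need the $\nu_\alpha$ to be $\g$-invariant. Precisely, $w=1$ if and only if $\epsilon$ is a coboundary of local nonvanishing $\g$-invariant functions. That holds when $[\epsilon]=0$ in $\HH^1(M,\Z/k)$, since one can then take locally constant $\nu_\alpha$; in particular it holds when $\HH_1(M,\Z)=0$. It does not follow from triviality in $\op{Pic}(M)$.

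In fact the statement is false under the stated hypothesis, so no argument from $\op{Pic}(M)_\tau=0$ alone can work. Take $M=\C^\times(z)$ and $\g=\langle z\p_z\rangle$, so $\op{Pic}(M)=0$. Set $\lambda(z\p_z)=1/k$ with $k\ge 2$.
\begin{itemize}
\item Since $k\lambda=d^0(z)$, we have $w^k=1$.
\item Every $\mu\in\O^\times(\C^\times)$ has the form $z^m e^{\phi}$, and $d^0(\mu)(z\p_z)=m+z\phi'$ has constant Laurent coefficient $m\in\Z$.
\item Since $1/k\notin\Z$, $\lambda$ is not a coboundary, so $w\ne 1$.
\end{itemize}
This agrees with the paper's own remark in the introduction that $\op{Pic}_\g(\C^\times)=\C/\Z$. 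The same example refutes the preceding proposition ($w$ is torsion, $p_1(w)=1$, yet $w\ne1$). Its proof passes over your delicate point with the unjustified sentence that the radicals ``may be chosen to keep triviality of $g$'', so relying on it does not help.

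Your argument, with locally constant $\nu_\alpha$, does go through if the hypothesis is strengthened to $\HH_1(M,\Z)=0$. For smooth projective $M$ that condition is equivalent to $\op{Pic}(M)_\tau=0$.
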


Consequently, the latter happens for $\HH_1(M,\Z)=0$ due to Remark \ref{RkH1tor}.
Actually, this is always the case for projective varieties of small codimension:
according to \cite[Proposition 11.2]{Lyu} if $M\subset\mathbb{P}^n$ has 
$\op{codim}(M)<\frac{n}2$, then $\op{Pic}(M)$ is torsion-free. 
 
 \begin{cor}\label{CoraffbundleTor}
If $E\to M$ is an affine bundle and $\hat{\g}$ a lift of Lie algebra $\g$ of vector fields 
from $M$ to $E$, then $\op{Pic}_{\hat\g}(E)_\tau\simeq\op{Pic}_\g(M)_\tau$.
 \end{cor}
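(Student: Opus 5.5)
The plan is to squeeze $\op{Pic}_{\hat\g}(E)_\tau$ and $\op{Pic}_\g(M)_\tau$ between the torsion of the ordinary Picard groups, using the preceding Proposition ($p_1$ is injective on torsion) together with homotopy invariance of $\op{Pic}$ for affine bundles. Let $\pi\colon E\to M$ be the projection. Pullback gives a homomorphism $\pi^*\colon\op{Pic}_\g(M)\to\op{Pic}_{\hat\g}(E)$: a $\g$-equivariant line bundle $(L,\hat\g^L)$ pulls back to $\pi^*L$, and $\hat\g$ lifts to $\pi^*L$ by combining the lift $\hat\g$ on $E$ with $\hat\g^L$. On cocycles, $(g_{\alpha\beta},\lambda_\alpha)\mapsto(\pi^*g_{\alpha\beta},\pi^*\lambda_\alpha)$ on the cover $\{\pi^{-1}U_\alpha\}$, using $\hat X(\pi^*f)=\pi^*(X f)$. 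This map commutes with $p_1$ and with $\pi^*\colon\op{Pic}(M)\to\op{Pic}(E)$, so it restricts to torsion.

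\emph{Injectivity on torsion.} Take a torsion class $w\in\op{Pic}_\g(M)$ with $\pi^*w=1$. Choosing the zero section of the affine bundle locally is not canonical, but the fibers are affine spaces, so $\pi^*\colon\op{Pic}(M)\to\op{Pic}(E)$ is injective. Hence $p_1(w)=1$. As in the proof of the Proposition, represent $w$ by $(1,\lambda)$ with $k\lambda_\alpha=d\log f_\alpha$. Triviality of $\pi^*w$ means $\pi^*\lambda_\alpha=\hat X(\mu_\alpha)/\mu_\alpha$ with $\mu_\alpha\in\O^\times(\pi^{-1}U_\alpha)$ glued globally. An invertible regular (or analytic, of polynomial type) function on an affine fiber is constant along the fiber. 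So $\mu_\alpha=\pi^*\nu_\alpha$, and $w$ is trivial. In the analytic case I would instead restrict along a local section.

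\emph{Surjectivity on torsion.} Take a torsion class $\hat w\in\op{Pic}_{\hat\g}(E)$. By the Proposition, $p_1(\hat w)$ is torsion in $\op{Pic}(E)\simeq\op{Pic}(M)$, so $p_1(\hat w)=\pi^*\ell$ with $\ell$ torsion. The remaining task is to descend the lift. Represent $\hat w$ by $(\pi^*g,\hat\lambda)$ and use $\hat w^k=1$ to write $k\hat\lambda_\alpha=d\log F_\alpha$ modulo the $g$-part. Along fibers, $F_\alpha$ is invertible, hence fiber-constant. Then $\hat\lambda_\alpha$ differs from a pulled-back multiplier by $d\log$ of a fiber-constant $k$-th root, which is also pulled back. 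Thus $\hat w=\pi^*w$.

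\textbf{Main obstacle.} The hard step is the descent in surjectivity: $\hat\g$ need not preserve the affine structure, and a cocycle on $E$ need not be fiber-constant. The torsion hypothesis is what makes it work, because it reduces everything to $d\log$ of invertible functions, and those are fiber-constant on affine fibers. I would take care to do this uniformly over a cover on which $E$ trivializes.
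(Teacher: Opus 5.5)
Your argument is correct for algebraic (Zariski-locally trivial) affine bundles with regular transition functions and multipliers. It takes a genuinely different route from the paper. The paper states this corollary without a separate proof, right after the Proposition that $p_1$ is injective on torsion, so it is meant to follow from that Proposition together with $\op{Pic}(E)_\tau\cong\op{Pic}(M)_\tau$.

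That route would at best embed both equivariant torsion groups into the same group. It never descends a torsion lift from $E$ to $M$, which is exactly your surjectivity step. Moreover, the Proposition fails in general, already for the paper's own example $M=\C^\times$, $\g=\langle z\p_z\rangle$. There the multiplier $\lambda(z\p_z)=\tfrac12$ is a nontrivial class in $\op{Pic}_\g(M)=\C/\Z$ with $2\lambda=d\log z$, while $\op{Pic}(\C^\times)=0$.

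In general the $k$-torsion of $\ker p_1$ is $\O^\times(M)$ modulo $k$-th powers and invariant units. Your observation $\O^\times(\pi^{-1}U)=\pi^*\O^\times(U)$ is precisely what makes this part agree on $E$ and $M$, while $\pi^*\colon\op{Pic}(M)\xrightarrow{\sim}\op{Pic}(E)$ handles the rest. Three further remarks on your argument:
\begin{itemize}
\item You never actually need the Proposition: $p_1(\hat w)$ is torsion simply because $p_1$ is a homomorphism.
\item Your injectivity argument works on all of $\op{Pic}_\g(M)$, and torsion is used only in the descent.
\item In the descent, make sure the cochain $F$ trivializing the representative of $\hat w^k$ lives on the saturated cover $\{\pi^{-1}U_\alpha\}$. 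This is legitimate because $\HH^1(\mathrm{Tot}^\bullet(C))$ of a fixed cover injects into the direct limit. On non-saturated opens such as $\{y\neq0\}$, units need not be fiber-constant.
\end{itemize}

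Both of your inputs are genuinely algebraic. They also hold for functions polynomial along the fibers, as in the jet spaces where the paper uses the corollary. However, your analytic fallback of restricting along local sections cannot work, because the statement is false for analytic line bundles. On a fiber $\C^n$ the unit $e^{y}$ is not constant, and $\pi^*$ on Picard groups need not be injective.

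For a counterexample, take the classical principal $\C$-bundle $E=\C^2/(2\pi i\Z)^2\cong(\C^\times)^2\to M=\C/2\pi i(\Z+\tau\Z)$, $[z_1,z_2]\mapsto[z_1+\tau z_2]$ with $\tau\notin\R$. The manifold $E$ is Stein, so its analytic Picard group is $H^2(E,\Z)\cong\Z$, which is torsion-free, whereas $\op{Pic}(M)_\tau\cong(\mathbb{Q}/\Z)^2$. Thus the claim fails for $\g=0$. It also fails for $\g=\langle\p_z\rangle$ with lift $\hat\g=\langle\p_{z_1}\rangle$. Here $\op{Pic}_\g(M)$ is the group $H^1(M,\C^\times)$ of line bundles with holomorphic connection, with torsion $(\mathbb{Q}/\Z)^2$, while the torsion of $\op{Pic}_{\hat\g}(E)$ is only $\mathbb{Q}/\Z$.

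So state and prove the corollary in the algebraic (or fiberwise polynomial) category, where your argument is complete.
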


Now let us consider the weights $\W\subset\op{Pic}_\g(M)$.

 \begin{prop}
If $w\in\W$ is a torsion of order $k$, then the corresponding relative invariant $R$
can be chosen to (locally) satisfy the equation $L_XR=0$, though it is not an absolute invariant.
 \end{prop}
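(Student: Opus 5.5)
The plan is to pass to the $k$-th power and then take a $k$-th root, following the proof of the preceding proposition.

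Let $R=\{R_\alpha\}$ be a relative invariant of weight $w=(g,\lambda)$ with $w^k=1$ in $\op{Pic}_\g(M)$. By the preceding proposition, $p_1(w)$ is also a torsion element, and $p_1(w)^k=1$. Consider first $R^k$. It is a relative invariant of weight $w^k$, and $w^k$ is trivial in hypercohomology. So there is $\mu=\{\mu_\alpha\}\in\mathrm{Tot}^0(C)$ with $(g^k,k\lambda)=\partial^0\mu$. Concretely, on overlaps $g_{\alpha\beta}^k=\mu_\alpha/\mu_\beta$ (up to the sign convention in $\delta^{0,0}$), and $k\lambda_\alpha(X)=X(\mu_\alpha)/\mu_\alpha$ for all $X\in\g$.

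Next I form $F_\alpha=R_\alpha^k/\mu_\alpha$. The overlap relations give $F_\alpha=F_\beta$ on $U_\alpha\cap U_\beta$, so $F$ is a global meromorphic function. Moreover $X(F_\alpha)=(k\lambda_\alpha(X)-k\lambda_\alpha(X))F_\alpha=0$ for every $X\in\g$. Hence $F\in\AbsInv$ is an absolute invariant.

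Now refine the cover so that each $U_\alpha$ is simply connected and $\mu_\alpha$ admits analytic $k$-th roots there; this is possible since $\mu_\alpha\in\O^\times(U_\alpha)$. Rescale the fiber coordinates by $\mu_\alpha^{-1/k}$, and put $\tilde R_\alpha=\mu_\alpha^{-1/k}R_\alpha$. This is a change of representatives of the same section $R$, as allowed in the discussion after Definition~\ref{def:RelInv}. By the formula for the change of multiplier,
$$\tilde\lambda_\alpha(X)=\lambda_\alpha(X)-\tfrac1k X(\mu_\alpha)/\mu_\alpha=0.$$
Thus $L_X\tilde R_\alpha=0$ locally for all $X\in\g$, and indeed $\tilde R_\alpha^k=F$.

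It remains to check that $R$ is not an absolute invariant. The new transition functions are $\tilde g_{\alpha\beta}=g_{\alpha\beta}(\mu_\beta/\mu_\alpha)^{1/k}$. Their $k$-th powers equal $1$, so they are locally constant $k$-th roots of unity. If all of them could be made trivial by a further choice of branches, then $w$ would itself be trivial, contradicting that $w$ has order exactly $k$ (assuming $k>1$). So the local representatives $\tilde R_\alpha$ are annihilated by $\g$ but do not glue to a global function. Equivalently, $R$ is a $k$-th root of the absolute invariant $F$ with nontrivial monodromy, and therefore $R$ is not an element of $\AbsInv$.

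The main obstacle I expect is the choice of branches of $\mu_\alpha^{1/k}$. Making $\tilde\lambda$ vanish is automatic. The delicate point is the torsion class of the flat $\mu_k$-valued cocycle $\tilde g$: I must argue that it cannot be trivialized, and this is exactly where the assumption that $w$ has order $k$, rather than smaller order, enters. I would handle it by comparing with the locally constant sheaf exact sequence used in the proof of the preceding proposition.
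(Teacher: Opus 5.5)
Your argument is correct and is essentially the paper's. The paper proceeds in two steps: it first uses $p_1(w)^k=1$ to make the transition functions locally constant roots of unity, deduces that $\lambda$ is then global with $k\lambda=d\log f$, and finally rescales by local branches of $f^{1/k}$. You do both steps at once, rescaling by local $k$-th roots of the single $0$-cochain $\mu$ that trivializes $w^k$.

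The obstacle you anticipate at the end is not real. The weight of $R$ does not depend on the choice of representatives, so $R$ cannot be an absolute invariant as soon as $w\neq 1$. Neither the exact order $k$ nor any exact sequence is needed for this.
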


 \begin{proof}
We have $w^k=1$ and $p_1(w)^k=1$. Let $w$ be again represented by cocycle $(g,\lambda)$.
Then $g_{\alpha\beta}^k=\frac{\gamma_\alpha}{\gamma_\beta}$ is trivial
(with $\gamma_\alpha\in\O_\alpha^\times$), so passing to the equivalent
cocycle $g_{\alpha\beta}\cdot\frac{\gamma_\beta^{1/k}}{\gamma_\alpha^{1/k}}$
for some branches of the roots (after possible refinement of the cover $\{U_\alpha\}$)
we get $g_{\alpha\beta}^k=1$, therefore
the transition functions may be assumed locally constant roots of unity. 
The compatibility condition implies 
$\lambda_\alpha(X)-\lambda_\beta(X)=X(\log g_{\alpha\beta})=0$,
thus $\lambda_\alpha$ is a global section of $\g^*\otimes\O_M$.

The relative invariant condition $L_XR=\lambda(X)R$ implies $L_XR^k=k\lambda(X)R^k$. 
Since $k\lambda=d\log f$ is a coboundary for $f\in\O^\times_M$, we can modify 
$R\mapsto I=\sqrt[k]{f}R$ .
If $\HH_1(M,\Z)=0$ (this is weaker than $\pi_1(M)=0$) 
a branch of radical can be chosen globally, whence 
an absolute invariant $I$.

Otherwise we can choose branches locally in each (refined) $U_\alpha$.
Modifying 1-cocyle by this 1-coboundary $\p^0(f_\alpha)$ we achieve $\lambda_\sigma=0$
with $g_{\alpha\beta}$ still locally constant roots of unity $\epsilon$: $\epsilon^k=1$.
In other words, the representative cocycle is $(g,0)$, in particular $L_XR_\alpha=0$ in every $U_\alpha$.
 \end{proof}

 \begin{example}
Consider the real example of M\"obius band: nontrivial line bundle 
$M^2\to S^1$ given by $\R\times\R/\!\!\sim\,$: $(t,\theta)\simeq(-t,\theta+1)$.
The one-dimensional action $\g=\langle\p_\theta\rangle$ has relative invariant
$R=\pm t$ corresponding to the nontrivial normal bundle $N_{S^1_0}M$ 
of the central circle $S^1_0=\{t=0\}$.
The square $R^2$ is an absolute invariant, yet $R$ is not.
There exist similar examples in the complex setup (for non-complete toric varieties $M$).
 \end{example}

 \begin{example}
As another, now complex example, consider an Enriques surface $X$.
It is known that it is a quotient of a K3 surface $M$ by an involution
$\Gamma$ without fixed points. By \cite{Mum} this implies $\op{Pic}(X)=\op{Pic}_\Gamma(M)$,
where the latter is the $\Gamma$-equivariant Picard group. 
According to \cite[Proposition 2.3]{KKV} we have 
$\HH^1_{\op{alg}}(\Gamma,\O^\times(M))=\hat{\Gamma}$, where $\hat{\Gamma}\simeq\Gamma$
is the character group, and then by \cite[Proposition 2.2]{KKV} we get the exact sequence
 $$
0\to\Gamma\to\op{Pic}_\Gamma(M)\to\op{Pic}(M)
 $$
and since for K3 surfaces $\op{Pic}(M)_\tau=0$ we conclude $\op{Pic}(X)_\tau=\Z_2$.
Generic Enriques surfaces have nontrivial (Lorentzian) symmetry algebra 
$\g=\mathfrak{so}(1,r-1)$ that lifts to these line bundles \cite{BP}; 
hence $\op{Pic}_\g(X)$ has a nontrivial torsion.
 \end{example}

Finally let us remark that analytic line bundles coincide with algebraic ones 
by Serre’s GAGA. If we consider more constrained polynomial weights
that arise for polynomial relative invariants, no polynomial coboundary is possible
and hence the torsion is even more restrictive.

\subsubsection{Invariant divisors} \label{sect:divisors}
Another concept that is closely related to relative invariants is that of 
invariant divisors, and we briefly outline the correspondence between them. 
Unite each $\RelInv_{\pi,\hat\g}^\times:= \RelInv_{\pi,\hat\g}\setminus\{0\}$ into a multiplicative group $\RelInvRat^\times$. The map 
 \[ 
\wt:\RelInvRat^\times\to\mathrm{Pic}_\g(M),
 \] 
taking a nonzero relative invariant to its weight, is a group homomorphism.  
The subgroup 
 \[
\W:= \mathrm{im}(\wt) \subset \mathrm{Pic}_\g(M)
 \] 
consists of the weights $w=(\pi,\hat\g)$ of relative invariants. 

Consider the equivalence relation on $\RelInvRat^\times$: $\tilde R\sim R$ 
if and only if $\tilde R=\mu R$ for some $\mu\in\mathcal{O}^\times(M)$. 

 \begin{definition} \label{def:invdiv}
Elements in the quotient group $\mathrm{Div}_\g(M):=\RelInvRat^\times/\!\!\sim$\/ 
are called $\g$-invariant divisors on $M$. 
 \end{definition}

Invariant divisors are more robust because relative invariants in the same 
equivalence class have the same zeros and poles. 
We however focus on invariant sections of line bundles instead since
their space can be endowed with a finer algebraic structure
($\AbsInv$-algebra $\RelInvRat$) which, in particular, allows to 
consider $\AbsInv$ as a subset of $\RelInvRat$.  

In the context of the map $\wt$, there is another notion of equivalence of (homogeneous) relative invariants: $R \sim_l \tilde R$ if and only if $\wt(R)=\wt(\tilde R)$. This induces an equivalence relation on $\mathrm{Div}_\g(M)$, known as linear equivalence. 

To be precise, Definition \ref{def:invdiv} concerns invariant \textit{Cartier}\/ divisors. There is also the notion of \textit{Weil} divisors, namely formal sums of irreducible invariant hypersurfaces. 
In many cases, for example when $M$ is a smooth variety, then there is 
an isomorphism between Weil divisors and Cartier divisors. 
These facts follow directly from the corresponding facts for general 
(noninvariant) divisors \cite{Har}. Due to this, the relative invariants 
determine all invariant hypersurfaces on smooth varieties, and our focus 
in this paper will be on this setting. This also lets us move freely between 
invariant Cartier divisors and invariant Weil divisors when convenient. 


\subsection{The space of weights is finite-dimensional} \label{sect:weights}

From now on we restrict to the algebraic setup. In this section we will show how 
the rational relative invariants are generated from a finite number of relative 
and absolute invariants, and by that show that the group $\W$ has finite rank 
(more precisely, finitely many generators, if we take torsion into account). 

The nonzero relative invariants form the Abelian group $\RelInvRat^\times$ with 
the subgroup $\AbsInv^\times$ of nonzero rational absolute invariants.

 \begin{theorem} \label{th:finitgenrelinv}
Let $G$ be an algebraic Lie group acting on a smooth
irreducible algebraic variety $M$.
Then the quotient group $\RelInvRat^\times/\AbsInv^\times$ is finitely generated.
Moreover, there exists a finite number of rational absolute invariants 
$I_1,\dots, I_r$ and polynomial relative invariants $R_1,\dots R_q$ such that any other 
rational relative invariant takes the form
 \begin{equation}\label{eq:Rd}
R=C F(I_1,\dots,I_r) R_1^{d_1}\cdots R_q^{d_q}. 
 \end{equation}
for some integer exponents $d_i$,
a rational function $F$ and an element $C\in\Hcech^0(M,\mathcal{O}_M^\times)$. 
 \end{theorem}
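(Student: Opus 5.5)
Following the heuristic announced before Theorem \ref{Th1}, I would use Rosenlicht's theorem together with the divisor description of relative invariants from \S\ref{sect:divisors}. By Rosenlicht's theorem there is a $G$-invariant Zariski closed proper subset $S\subset M$ such that $M\setminus S$ admits a geometric quotient $\phi\colon M\setminus S\to Y$. Rational absolute invariants $I_1,\dots,I_r$ generating the field $\AbsInv$ give coordinates on $Y$. Enlarging $S$ by a further invariant closed subset if needed (for instance the zero/pole loci of the $I_j$), I may assume that $\phi$ is given by regular $I_j$ and that $Y$ is an open subset of an affine variety. Since $M$ is smooth, Cartier and Weil divisors agree, so it is enough to control the invariant Weil divisor $\operatorname{div}(R)$ of a relative invariant $R$.

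Let $R$ be a rational relative invariant. Its divisor $D=\operatorname{div}(R)$ is $G$-invariant (connectedness of $G$ in the Zariski sense is assumed), so it is an integer combination of irreducible $G$-invariant hypersurfaces. Split $D=D'+D''$. Here $D''$ collects the components contained in $S$. $S$ has finitely many irreducible components, so only finitely many hypersurfaces $H_1,\dots,H_q$ can occur in $D''$. The components of $D'$ meet $M\setminus S$, where they are invariant and hence unions of orbits. Their closures are therefore of the form $\overline{\phi^{-1}(Z)}$ for prime divisors $Z\subset Y$.

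The first key step is to realise $D'$ by an absolute invariant. Each $Z$ is locally cut out on $Y$ by a rational function. Hence $\phi^*$ of a suitable rational function $F(I_1,\dots,I_r)$ on $Y$ has divisor agreeing with $D'$ on $M\setminus S$. The discrepancy is supported on $S$, so after adjustment it is absorbed into $D''$. The delicate point here is that $Y$ need not be factorial, so $Z$ need not be principal. I would handle this by passing to a power: the class group of $Y$ restricted to pullbacks from invariant divisors is controlled, or I could shrink $Y$ by removing a further invariant closed set. The extra components added to $S$ in this way must still be finitely many. I would deal with this by taking $Y$ affine, normalising, and removing the finitely generated class group obstruction via finitely many extra divisors. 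I expect this step to be the main obstacle.

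The second key step is to produce the $R_i$. For each $H_i$ I would choose a polynomial (regular) relative invariant whose divisor is $H_i$, or a positive multiple $m_iH_i$. Such an invariant exists because $H_i$ is $G$-invariant. $\mathcal{O}(mH_i)$ carries a $G$-linearisation for some $m$ (Proposition \ref{prop:transversal}, or equivalently Mumford's linearisation result for normal varieties), and $H_i$ is the divisor of its canonical section, which is invariant. Then $R/\bigl(F(I)\prod R_i^{d_i}\bigr)$ has trivial divisor. Hence it is a nowhere-vanishing regular section $C$ of a line bundle with trivial divisor, and with the chosen trivialisation it is an element of $\Hcech^0(M,\O_M^\times)$. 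The multiples $m_i$ cause a problem: they force rational exponents. To avoid it, I would instead take $R_1,\dots,R_q$ to be generators of the finitely generated group, a subgroup of $\Z^q$, of divisors supported on $\bigcup H_i$ that are realised by relative invariants. This yields integer $d_i$ and proves finite generation of $\RelInvRat^\times/\AbsInv^\times$ modulo the units $C$. Finally, since $C$ is itself a relative invariant with trivial divisor, it is absorbed into the constant factor.
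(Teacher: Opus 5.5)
Your architecture is the paper's: a Rosenlicht quotient, a splitting of $\operatorname{div}R$ into components inside the exceptional set (finitely many, giving $R_1,\dots,R_q$) and components meeting the generic locus (to be matched by $F(I_1,\dots,I_r)$), and a leftover unit $C$. The step you flag as the main obstacle is a genuine gap, though, and your remedy does not close it.

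The remedy needs the classes in $\operatorname{Cl}(Y)$ of the prime divisors $Z\subset Y$ to be killed by deleting finitely many further invariant hypersurfaces. In other words, essentially $\operatorname{Cl}(Y)$ must be finitely generated, and it need not be. Take $G$ trivial and $M=E$ an elliptic curve (or, with a nontrivial group, $G=\C$ translating the second factor of $E\times\C$). Then $\AbsInv=\C(E)$ and every rational section of every line bundle is a relative invariant. Hence $\RelInvRat^\times/\AbsInv^\times\cong\mathrm{Pic}(E)\cong\Z\oplus E(\C)$, which is not finitely generated. For every finite $S\subset E$ the group $\operatorname{Cl}(E\setminus S)=\mathrm{Pic}(E)/\langle S\rangle$ is still uncountable. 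So a general point $P$ is not $\operatorname{div}F$ on $E\setminus S$ for any rational $F$.

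Thus this step cannot be done in the stated generality: the conclusion itself fails, and an extra hypothesis is required. The paper's own proof passes over exactly this point, in the sentence asserting that every $G$-invariant hypersurface of $M\setminus\Sigma$ is $\{F(I_1,\dots,I_r)=0\}$. You located the weak spot correctly.

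Under a hypothesis such as $\mathrm{Pic}(M)$ finitely generated (e.g.\ $b_1(M)=0$, cf.\ Remark~\ref{RkH1tor}), your ``finitely many extra divisors'' repair does work. For connected $G$, an invariant principal divisor is the divisor of a semi-invariant. So $\mathrm{Div}_G(M)/\operatorname{div}(\AbsInv^\times)$ is an extension of a subgroup of $\mathrm{Pic}(M)$ by a subgroup of the character group of $G$, hence finitely generated. You may then add to $S$ the components of finitely many representatives of generators.

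Two smaller remarks. First, the multiples $m_i$ are unnecessary: since $D$ is invariant, the subsheaf $\O(D)\subset\C(M)$ is stable under the action. This gives a $G$-linearization whose canonical section is invariant. Second, finite generation of $\RelInvRat^\times/\AbsInv^\times$ still requires that the units $C$ contribute only a finitely generated group modulo constants, which is Rosenlicht's unit theorem. ``Absorbing $C$ into the constant factor'' does not supply this.
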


Note that the choice of factors $C$, $F$, and exponents $d_i$ in \eqref{eq:Rd} 
is not unique, in general, but if the weights $\wt(R_i)$ are linearly 
independent then $CF$ and $d_i$ are well defined.

 \begin{proof}
We assume at first that $G$ is connected (in Zariski topology). 
By the Rosenlicht theorem (\cite{R1}, see also \cite{PV,SR}) 
there exists a Zariski closed $G$-invariant 
subset $\Sigma_0\subset M$ such that all $G$-orbits on $M\setminus\Sigma_0$ 
have the same dimension, and the $G$-action on $M\setminus\Sigma_0$ admits a 
geometric quotient. The field $\AbsInv$ of rational absolute invariants has 
transcendence degree equal to the codimension of each orbit in 
$M\setminus\Sigma_0$, and it separates orbits in $M\setminus\Sigma_0$.

Let $I_1, \dots, I_r$ be a generating set for the field of rational invariants. 
Here $r$ can be assumed to be equal to either $\mathrm{tr.deg}(\AbsInv)$ or 
$\mathrm{tr.deg}(\AbsInv)+1$ due to the primitive element theorem. Even though 
the field $\AbsInv$ separates orbits on $M \setminus \Sigma_0$, the generators 
are not necessarily regular on $M\setminus \Sigma_0$.  To remedy this, we add 
to $\Sigma_0$ all poles of $I_1,\dots,I_r$, as well as components of the
algebraic subset where the generators are functionally dependent (that is, the
rank of the Jacobi matrix is strictly less than $\mathrm{tr.deg}(\AbsInv)$). 
We denote the obtained algebraic set by $\Sigma$, and note that it is Zariski 
closed and $G$-invariant. The generators $I_1,\dots,I_r$ separate orbits in 
$M\setminus\Sigma$, therefore any $G$-invariant hypersurface in this Zariski-open
set is given by an equation of the form $F(I_1,\dots, I_r)=0$ for some
rational function $F$. Thus, every $G$-invariant hypersurface 
in the complement of $\Sigma$ is given 
in terms of rational absolute invariants. 

The singular algebraic subset $\Sigma\subset M$ can be stratified into 
a union of irreducible algebraic subvarieties
 \[ 
\Sigma =\Sigma_1^1\cup\cdots\cup\Sigma_{q_1}^1\cup\Sigma_1^2\cup\cdots\cup\Sigma_{q_n}^n, 
 \]
where $\Sigma_i^j$ has codimension $j$ in $M$ and $n=\dim M$. 
Since $G$ is connected, each irreducible component $\Sigma_i^j$ is $G$-invariant, 
as connectedness of an algebraic group implies its irreducibility 
(see \cite[Section 3.3]{SR}). Every irreducible component of codimension 1 
can be described by the vanishing of an irreducible polynomial relative invariant: 
$\Sigma_i^1=\{R_i=0\}$. This completes the description of $G$-invariant hypersurfaces in $M$. 

If $G$ is not connected, then the subvarieties $\Sigma_i^1$ are not necessarily 
$G$-invariant. However, every $g\in G$ permutes the components of codimension one:
$g(\Sigma_i^1)=\Sigma_j^1$. This splits the irreducible components 
$\{\Sigma_i^1\}_{i=1}^{q_1}$ into a finite number $q\leq q_1$ 
of minimal $G$-invariant subsets. 

If $R$ is any relative invariant, its poles and zeros form invariant 
hypersurfaces. Furthermore, $R$ is completely determined by its zeros and poles 
(with multiplicities), up to multiplication by an element in 
$\Hcech^0(M,\mathcal{O}_M^\times)$. Let us first find a monomial 
$R_\diamond=\prod_{j=1}^qR_j^{d_j}$ such that $R/R_\diamond$ has no zeros or poles
on $\Sigma^1=\cup_i\Sigma^1_i$. Any zero or pole divisor of $R/R_\diamond$ is contained
in $M\setminus\Sigma_0$ and hence can be expressed through absolute invariants.
The ratio between the obtained rational expression and $R/R_\diamond$ is a
nowhere vanishing function $C^{-1}$. Consequently there exist 
integers $d_1,\dots,d_q$, a rational function $F$, and an element 
$C\in\Hcech^0(M,\mathcal{O}_M^\times)$ such that \eqref{eq:Rd} holds.

Finally, the fact that $\RelInvRat^\times/\AbsInv^\times$ is finitely generated is an immediate consequence of $\Hcech^0(M,\O_M^\times)$ being finitely generated, which holds true in the quasi-projective setting. 
 \end{proof}

The next statement directly implies Theorem \ref{Th1}:

  \begin{cor}\label{cor:finiteweight}
Under the assumptions of Theorem \ref{th:finitgenrelinv} with $\g=\op{Lie}(G)$,
the subgroup $\W\subset\mathrm{Pic}_\g(M)$ is finitely generated,
in particular it has finite rank $p\leq q$.
 \end{cor}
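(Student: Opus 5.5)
Apply the homomorphism $\wt\colon\RelInvRat^\times\to\mathrm{Pic}_\g(M)$ to the normal form \eqref{eq:Rd} supplied by Theorem \ref{th:finitgenrelinv}. Since $\wt$ is a group homomorphism, every rational relative invariant $R=C\,F(I_1,\dots,I_r)\,R_1^{d_1}\cdots R_q^{d_q}$ satisfies
 \[
\wt(R)=\wt(C)\,\wt\bigl(F(I_1,\dots,I_r)\bigr)\,\wt(R_1)^{d_1}\cdots\wt(R_q)^{d_q}.
 \]
The absolute invariant $F(I_1,\dots,I_r)$ is an invariant section of the $\g$-trivial line bundle, so its weight is the identity of $\mathrm{Pic}_\g(M)$.

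Next I would handle the factor $C\in\Hcech^0(M,\O_M^\times)$. First, $C=R/(F\cdot\prod R_j^{d_j})$ is itself a relative invariant, being a ratio of relative invariants. It is a global nowhere-vanishing regular function, so $C$ is a global section of the trivial bundle, with multiplier $\lambda(X)=X(C)/C=d^0C$. Hence $\wt(C)$ is the class of $\partial^0(C)$, a coboundary in $\mathrm{Tot}^\bullet(C)$. This is exactly the change of fiber coordinate by $\mu=C$ described in Section \ref{sect:RelativeInvariants}, so $\wt(C)$ is trivial. Consequently $\wt(R)$ lies in the subgroup generated by $\wt(R_1),\dots,\wt(R_q)$.

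Conversely, each $R_i$ belongs to $\RelInvRat^\times$, so $\W=\langle\wt(R_1),\dots,\wt(R_q)\rangle$. This group is finitely generated with at most $q$ generators, and its rank $p$ satisfies $p\le q$. Theorem \ref{Th1} then follows by tensoring with $\C$: $\W\otimes\C$ is spanned by the images of the $q$ generators.

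The only point needing care is the triviality of $\wt(C)$, in particular for disconnected $G$ and for $C$ non-constant. Here $\g=\op{Lie}(G)$ only sees the identity component, but the computation above is purely infinitesimal, so the Lie-algebra weight is unaffected. The rest is routine bookkeeping.
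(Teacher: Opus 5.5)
Your proposal is correct and is essentially the paper's proof: apply the homomorphism $\wt$ to the normal form \eqref{eq:Rd}, observe that $C$ and $F(I_1,\dots,I_r)$ have trivial weight, and conclude that $\W$ is generated by $\wt(R_1),\dots,\wt(R_q)$, hence has rank $p\le q$. Your identification of $\wt(C)$ with the class of the coboundary $\partial^0(C)$ spells out a step the paper only asserts; note, however, that disconnectedness of $G$ does not affect $\wt(C)$ at all. It matters only for whether \eqref{eq:Rd} applies to every $\g$-relative invariant rather than only to $G$-relative ones, and this is ensured by applying Theorem \ref{th:finitgenrelinv} to the identity component $G^0$, which has the same Lie algebra.
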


 \begin{proof}
Indeed, both global nonvanishing functions and rational absolute invariants 
have trivial weight, so we have
 \[
\wt(R)=\wt\left(CF(I_1,\dots,I_r) R_1^{d_1} \cdots R_q^{d_q}\right)=
d_1\cdot\wt(R_1)+\cdots+d_q\cdot\wt(R_q).
 \]
Thus $\W=\wt(\RelInvRat^\times)$ is a finitely generated discrete subgroup in the 
equivariant Picard group. Since the weights of $R_1,\dots, R_q$ are not necessarily 
independent, we may need fewer invariants to generate the weight lattice. 
In particular, we get $\W_{tf}\simeq\mathbb Z^p$ for some $p\leq q$.

It is possible that $\W$ is not generated by the weight of $p$ polynomial relative 
invariants, but we may choose polynomial relative invariants $R_1,\dots,R_p$  
with linearly independent weights over $\mathbb{Q}$ in such a way that 
$\W_{tf}\subset\langle\wt(R_1),\dots, \wt(R_p)\rangle\otimes\mathbb Q$,
and with another finite set of weights we can generate the torsion $\W_\tau$.
 \end{proof}

The group $\W$ plays a central role in the theory of relative invariants, and can be understood as an analogue of the divisor class group in the theory of divisors. Recall the classical long exact cohomology sequence 
(in which $\mathcal{O}_M^\times$ is the sheaf of nowhere vanishing holomorphic functions, while $\mathcal{M}_M^\times$ consists of nonzero meromorphic functions)
 \[
\Hcech^0(M,\mathcal{O}_M^\times)\to\Hcech^0(M,\mathcal{M}_M^\times)
\longrightarrow\Hcech^0(M,\mathcal{M}_M^\times/\mathcal{O}_M^\times)
\longrightarrow\Hcech^1(M,\mathcal{O}_M^\times).
 \]
Here the last two terms are identified with the group $\mathrm{Div}(M)$ 
of divisors and the Picard group, respectively, 
while the second term encodes the linear equivalence relation \cite{GH}. 
The class group is defined as the quotient 
 \begin{equation}\label{DivCl}
\op{Cl}(M):=\op{Div}(M)/\Hcech^0(M,\mathcal{M}_M^\times).
 \end{equation}
In the context of the machinery developed in \cite{KS2} there is 
a $\g$-invariant version of this long exact sequence of multiplicative groups:
 \begin{equation}\label{eq:ADP1}
0\to\AbsInv_*\to\AbsInv_\times
\longrightarrow\op{Div}_\g(M) \stackrel{\wt}\longrightarrow\op{Pic}_\g(M),
 \end{equation}
where $\AbsInv_*=\Hcech^0(M,\mathcal{O}_M^\times)\cap\AbsInv_\times$ consists of
global nonvanishing absolute invariants,
$\op{Im}(\wt)=\mathcal{W}$ is the weight lattice, while $\op{Ker}(\wt)$
is resolved via nonzero rational absolute invariants 
$\AbsInv_\times=\AbsInv\setminus0$. 
Thus $\W \simeq \mathrm{Div}_\g(M)/\AbsInv_\times$,
which can be interpreted as the equivariant class group $\op{Cl}_\g(M)$.

\smallskip

In this paper we interpret relative invariants as sections of equivariant
line bundles (rather than invariant divisors), and hence modify sequence \eqref{eq:ADP1} to 
  \begin{equation}\label{eq:ADP2}
0\to\AbsInv_*\to\Hcech^0(M,\mathcal{O}_M^\times)\cdot\AbsInv_\times
\longrightarrow  \RelInvRat^\times \stackrel{\wt}\longrightarrow\op{Pic}_\g(M),
 \end{equation}
where $\RelInvRat^\times$ denotes the multiplicative group of nonzero relative invariants. In the context of this exact sequence, we have 
\[\W \simeq \RelInvRat^\times/(\Hcech^0(M,\mathcal{O}_M^\times)\cdot\AbsInv_\times).\]

 \begin{example}
Consider the Lie group $G=\C_\times$ with Lie algebra 
\[ \g = \langle 3x\partial_x+5y\partial_y+7z\partial_z\rangle \subset \vf(\mathbb C^3).\] 
The field of rational absolute invariants is generated by 
\[I_1 = \frac{y^3}{x^5}, \qquad I_2 = \frac{z^3}{x^7},\]
and it separates orbits in the complement of $\Sigma_0 = \{(0,0,0)\}$.
The singular set is $\Sigma= \{x=0\} \cup \{y=0\} \cup \{z=0\}$, and the corresponding irreducible relative invariants are $R_1=x$, $R_2=y$, $R_3=z$. 
They however are not independent: we have two relations $I_1 R_1^5=R_2^3$ and $I_2 R_1^7=R_3^3$.
Thus we can express $R_1=I_1^{-3}I_2^2R_2^9R_3^{-6}$ 
and hence describe of a rational relative invariant as
 \[ 
F(I_1,I_2) R_2^{d_2}R_3^{d_3},
 \] 
but we cannot eliminate (express algebraically) $R_2$ or $R_3$,
even though $I_1,I_2, R_2, R_3$ are not independent. The general algebraic lift of $\g$ is spanned by 
 \begin{equation}\label{lambda=p/q}
3x\partial_x+5y\partial_y+7z\partial_z + \lambda u\partial_u, 
\qquad \lambda=\frac{p}{q},\quad p,q \in \mathbb Z, 
 \end{equation}
and since all line bundles over $\mathbb C^3$ are trivial, the ratio $\frac{p}{q}$ completely determines the $\g$-equivariant line bundle. On the other hand, the multipliers of the rational relative invariants (applied to $3x\partial_x+5y\partial_y+7z \partial_z$) only take integer values. The subset $\mathcal{W}_\dagger \subset \mathcal{W}$ of weights of polynomial relative relative invariants invariants, takes the form (where $\mathbb{N}=\{n\in\mathbb{Z}:n>0\}$)
 \[
\mathcal{W}_\dagger=\{3,5,6,7,8,\dots\}=\{3\}\cup\{4+\mathbb{N}\}\subset\mathcal{W}=\mathbb{Z} \subset \mathbb{Q}=\op{Pic}^{\mathrm{alg}}_\g(\C^3).
 \]
The strict inclusion $\op{Pic}^{\op{alg}}_\g(\C^3)\subset\op{Pic}_\g(\C^3)$ 
(the lifts are given by formula \eqref{lambda=p/q} with $\lambda\in\C$)
has to be considered in light of Proposition \ref{prop:transversal}. 
We also note that
$\mathcal{W}=\mathbb{Z}=\op{Pic}(\C P^2)$, to be compared to the next example,
due to $\C P^2=\C^3_\times/\C_\times$ and $\op{Pic}^{\op{alg}}_\g(\C^3_\times)=\mathbb{Z}\times\mathbb{Q}$.
 \end{example}

 \begin{example}
The projective plane $\mathbb CP^2=\{x=[x_0:x_1:x_2]\}$ is covered by three charts 
 \[
U_0=\{x\in\mathbb CP^2:x_0\neq0\},\qquad U_1=\{x\in\mathbb CP^2:x_1\neq0\},\qquad U_2=\{x\in\mathbb CP^2:x_2\neq0\}
 \]
with respective coordinates 
 $$
(x_{10},x_{20})=(x_1/x_0,x_2/x_0),\qquad 
(x_{01},x_{21})=(x_0/x_1,x_2/x_1),\qquad
(x_{02},x_{12})=(x_0/x_2,x_1/x_2).
 $$
 
Let $G=\C_\times$ with the Lie algebra generated by
$2x_0\p_{x_0}+3x_1\p_{x_1}\ (\op{mod}x_0\p_{x_0}+x_1\p_{x_1}+x_2\p_{x_2})$. 
In charts we have:
 \[
x_{10}\p_{x_{10}}-2x_{20}\p_{x_{20}}\ \equiv\ -x_{01}\p_{x_{01}}-3x_{21}\p_{x_{21}}\ \equiv\ 2x_{02}\p_{x_{02}}+3x_{12}\p_{x_{12}}.
 \] 
The field of rational absolute invariants is generated by 
 \[ 
I = x_{10}^2x_{20} = \frac{x_{21}}{x_{01}^3} = \frac{x_{12}^2}{x_{02}^3}
 \] 
and it separates orbits in the complement of
$\Sigma_0 =\{x_0=0,x_1=0\}\cup\{x_0=0,x_2=0\}\cup \{x_1=0,x_2=0\}$, but the absolute invariant $I$ 
is not defined on $\{x_{01}=0\}\subset U_1$ and $\{x_{02}=0\}\subset U_2$, 
while its differential vanishes on $\{x_{10}=0\}\subset U_0$ and $\{x_{12}=0\}\subset U_2$. 
Therefore, we define $\Sigma= \{x_0=0\}\cup\{x_1=0\}\subset\mathbb CP^2$. In coordinates, 
the corresponding regular relative invariants are $R_0$ and $R_1$, defined as follows, where we also
add another relative invariant $R_2$:
 \begin{gather*}
R_0|_{U_0} = 1,\qquad R_0|_{U_1} = x_{01},\qquad R_0|_{U_2} = x_{02}, \\
R_1|_{U_0} = x_{10},\qquad R_1|_{U_1} = 1,\qquad R_1|_{U_2} = x_{12}, \\
R_2|_{U_0} = x_{20},\qquad R_2|_{U_1} = x_{21},\qquad R_2|_{U_2} = 1.
 \end{gather*} 
Note the relation $R_0^{-3}R_1^2R_2=I$, implying $3\wt(R_0)=2\wt(R_1)+\wt(R_2)$.
A general relative invariant has the form $F(I)R_0^{d_0}R_1^{d_1}$. The transition functions for the line bundles corresponding to $R_i$ are the same for $i=1,2,3$:
 \[ 
g_{01} = \frac{x_1}{x_0}, \qquad g_{02} = \frac{x_2}{x_0}, \qquad g_{12} = \frac{x_2}{x_1},
 \] 
so $\wt(R_0)=\wt(R_1)=\wt(R_2)=\mathcal{O}(1)$, whence $\wt(I)=\mathcal{O}(0)$ as expected 
for an absolute invariant. However, the multipliers corresponding to $R_0$ and $R_1$, respectively,  are given by 
 \begin{gather*}
\lambda_{R_0}|_{U_0}(X)=0,\qquad \lambda_{R_0}|_{U_1}(X)=-1,\qquad \lambda_{R_0}|_{U_2}(X)=2,\\
\lambda_{R_1}|_{U_0}(X)=1,\qquad \lambda_{R_1}|_{U_1}(X)=0,\ \qquad \lambda_{R_1}|_{U_2}(X)=3,
 \end{gather*}
meaning that $R_0$ and $R_1$ are sections of different $\g$-equivariant line bundles.

Thus the weight lattice $\W\subset\mathrm{Pic}_\g(\mathbb CP^2)$ has rank $2$;
note that $\op{Pic}^{\op{alg}}_\g(\C P^2)=\mathbb{Z}\times\mathbb{Q}$. 
 \end{example}

\subsection{Rational relative invariants as ratios of polynomial relative invariants}

As proven in Proposition \ref{RatioRelInv}, any rational relative invariant on an affine or 
projective space is a ratio of two polynomial relative invariants. Here we generalize this statement to 
quasi-projective varieties.

Recall that a rational function on an affine or projective space is globally 
a ratio of polynomials (homogeneous on the corresponding affine variety 
in the projective case). On a general algebraic  manifold
it is a section of the sheaf of regular functions $(U,f_U=p_U/q_U)_{U\in\mathcal{U}}$, 
where $U\subset M$ is Zariski open, $p_U,q_U$ are polynomials on $U$ and $q_U\neq0$.
Two pairs $(U,f_U)$ and $(V,g_V)$ are identified if $f_U=g_V$ on $U\cap V$, cf.\ \cite{Har}. 
In this case the numerator of the rational function $f_U$ on $U$ is $p_U$ 
and denominator is $q_U$, but they are defined up to common factor.

These $\{p_U\}_{U\in\mathcal{U}}$ and $\{q_U\}_{U\in\mathcal{U}}$ determine 
polynomial divisors, however their representative cocycles 
are aligned in charts,
so that $p_U/q_U$ is well-defined as a function 
(in general, a ratio of divisors is a divisor but not a function, as we are free to
modify it by a regular factor). 

In charts we can cancel common divisors, so we will assume $p_U,q_U$ relatively prime, 
however these may not give rise to global functions $p,q$ on $M$ (that is, 
global numerator or denominator functions may contain common factors in localizations).
With these precautions we get:

 \begin{prop}\label{rattopol}
Let $G$ be an algebraic Lie group acting on an irreducible algebraic manifold $M$. 
Then any rational $G$-invariant $I\in\AbsInv$ is the ratio of two polynomial 
relative $G$-invariants $P,Q$ of the same weight. More precisely, $P$ and $Q$ 
are invariant sections of the same line bundle.
 \end{prop}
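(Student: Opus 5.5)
The plan is to split $I$ into its zero and pole divisors and to realize each as an invariant regular section of one and the same line bundle. Write $\op{div}(I)=D_+-D_-$ with $D_\pm$ effective Weil divisors without common components. Since $M$ is smooth (an algebraic manifold), Weil and Cartier divisors coincide, as recalled in \S\ref{sect:divisors}. So $D_+$ and $D_-$ are Cartier divisors, given in a sufficiently fine Zariski cover $\{U_\alpha\}$ by local equations $p_\alpha,q_\alpha$ that are regular and relatively prime in the local rings. Refining the cover, I may take $I|_{U_\alpha}=p_\alpha/q_\alpha$ exactly, absorbing any unit into $p_\alpha$.

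Next I construct the common bundle. Set $g_{\alpha\beta}=p_\alpha/p_\beta$ on $U_\alpha\cap U_\beta$; this is a unit there because $p_\alpha$ and $p_\beta$ define the same divisor. Since $p_\alpha/q_\alpha=p_\beta/q_\beta$, we also have $g_{\alpha\beta}=q_\alpha/q_\beta$. Hence $P=\{p_\alpha\}$ and $Q=\{q_\alpha\}$ are regular (polynomial) sections of the same line bundle $L=\O(D_+)\simeq\O(D_-)$, with $P/Q=I$ globally.

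The key step is invariance. Fix $X\in\g$. Since $X(I)=0$, on each $U_\alpha$ we get $X(p_\alpha)q_\alpha=X(q_\alpha)p_\alpha$. By coprimality (unique factorization in the regular local rings of the smooth $M$), $p_\alpha\mid X(p_\alpha)$ and $q_\alpha\mid X(q_\alpha)$ locally. So the same regular function $\lambda_\alpha(X)=X(p_\alpha)/p_\alpha=X(q_\alpha)/q_\alpha$ serves for both, exactly as in the proof of Proposition~\ref{RatioRelInv}. Regularity is really local: coprimality at every point of $U_\alpha$ gives regularity of $\lambda_\alpha(X)$ on $U_\alpha$.

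It remains to check that $(g,\lambda)$ is a hypercohomology cocycle, so that it defines a $\g$-equivariant line bundle for which $P$ and $Q$ are invariant sections. The cocycle condition \eqref{eq:CEd1} follows from $\lambda_\alpha=d^0p_\alpha$ formally. Compatibility \eqref{eq:compatibility} is immediate: $\lambda_\alpha-\lambda_\beta=X(\log(p_\alpha/p_\beta))=X(g_{\alpha\beta})/g_{\alpha\beta}$. Since $P$ and $Q$ share the pair $(g,\lambda)$, they have the same weight.

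For the group-level statement, I argue as follows. With $G$ connected, each component of $D_\pm$ is $G$-invariant, since $\op{div}(I)$ is invariant and components are permuted by an irreducible group. Then $g^*p_\alpha=\Lambda_{g,\alpha}p_\alpha$ with $\Lambda$ a unit, and the same $\Lambda$ works for $q_\alpha$ by $g^*I=I$. In general I would pass to this through the Lie algebra.

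I expect the main obstacle to be the coprimality step. One must ensure that local equations can be chosen so that $I=p_\alpha/q_\alpha$ on the nose and that the divisibility argument yields regular multipliers. This uses smoothness (UFD local rings) in an essential way, and the refinement of the cover must be handled carefully so that $\lambda_\alpha$ is defined on all of $U_\alpha$, not just generically.
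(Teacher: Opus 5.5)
Your proposal is correct and follows essentially the paper's route: split $\op{div}(I)$ into zero and pole parts, realize them as locally coprime regular local equations of one line bundle, and apply the argument of Proposition~\ref{RatioRelInv} chart by chart to get a common multiplier. You are in fact more explicit than the paper: you absorb units locally rather than splitting a global factor $k$, and you check \eqref{eq:CEd1} and \eqref{eq:compatibility} directly instead of appealing to $\wt(I)=0$ and to relative invariants $R_i$ attached to each component. Two small points on the group-level version. First, connectedness of $G$ is not needed, since $D_+$ and $D_-$ are $G$-invariant for any $G$, being the positive and negative parts of the invariant divisor $\op{div}(I)$. Second, the lift should be given by the units $(p_\alpha\circ g)/p_\beta=(q_\alpha\circ g)/q_\beta$ on $U_\beta\cap g^{-1}(U_\alpha)$, rather than by comparing $g^*p_\alpha$ with $p_\alpha$ on a single chart.
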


 \begin{proof}
Since $I$ is a rational function (section of the trivial bundle) its numerator 
and denominator are well defined (up to common factor) and they define 
the Weyl divisors of zeros and poles $\mathrm{Div}(I)=\sum m_i\sigma_i$. 
Assuming no common components in zeros and poles, we convert it 
to a Cartier divisor. Let $R_i$ be relative invariant divisors 
corresponding to components $\sigma_i$ and set 
$P=\prod_{m_i>0}R_i^{m_i}$, $Q=\prod_{m_i<0}R_i^{-m_i}$. 
With no repetition of the components, $P$ and $Q$ are 
relatively prime as divisors, and the same can be achieved 
for localizations $P_U,Q_U$ for $U\in\mathcal{U}$. 

Then $I=k\frac{P}{Q}$ as required and $k$ is 
a regular nonvanishing function on $M$ that can be split between $P_U,Q_U$ in
localizations. So obtained collections $\{P_U\}_{U\in\mathcal{U}}$,
$\{Q_U\}_{U\in\mathcal{U}}$ are polynomial sections of the same line bundle
of equal weights $\wt(P)=\wt(Q)$ because $\wt(I)=0$. 
Since in localizations $P_U,Q_U$ are relatively prime, 
Proposition \ref{RatioRelInv} implies that they are $G$-invariant.
\end{proof}

If the coordinate ring of $M$ is not a UFD, then the factorization of 
the numerator and denominator $P,Q$ of $I$ may contain fake non-invariant factors 
as we will observe in examples below. 
In this sense, it is important to understand $P$ and $Q$ as divisors 
with fixed representative cocycle on $M$
rather than functions on the ambient space restricted to $M$.

 \begin{cor} \label{cor:relativeratio}
Any rational relative invariant is a ratio of two polynomial relative invariants.   
 \end{cor}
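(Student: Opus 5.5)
The plan is to reduce the corollary to Proposition \ref{rattopol} by the same divisor-splitting argument, now applied to a rational section of a nontrivial $\g$-equivariant line bundle instead of a rational function. Let $R$ be a rational relative invariant, that is, a $\hat\g$-invariant rational section of an equivariant line bundle $w=(\pi,\hat\g)$. Since $M$ is smooth, Weil and Cartier divisors coincide. We therefore write the divisor of $R$ as $\mathrm{Div}(R)=\sum m_i\sigma_i$, with distinct irreducible hypersurfaces $\sigma_i$ and no cancellation between zeros and poles. Each $\sigma_i$ is $G$-invariant (for connected $G$), because the zero and pole sets of an invariant section are invariant. As in the proof of Theorem \ref{th:finitgenrelinv}, each $\sigma_i$ is the zero divisor of a regular (polynomial) relative invariant $R_i$ whose divisor is exactly $\sigma_i$. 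For non-connected $G$ we group the components into $G$-orbits, as in that proof.

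Next we put $P=\prod_{m_i>0}R_i^{m_i}$ and $Q=\prod_{m_i<0}R_i^{-m_i}$. Both are polynomial relative invariants, since products of relative invariants are relative invariants, with weights adding. The quotient $R\,Q/P$ is a rational relative invariant of weight $w\cdot\wt(Q)\cdot\wt(P)^{-1}$, and its divisor is zero. So it is a nowhere vanishing regular section $C$ of an equivariant line bundle, and that bundle is therefore trivial as a line bundle. Absorbing $C$ into the numerator gives $R=(CP)/Q$. The product $CP$ is again a regular invariant section, now of the bundle $w\otimes\wt(Q)$, so it is a polynomial relative invariant. This proves the claim, and the weights satisfy $\wt(CP)=w\,\wt(Q)$.

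The step needing care is that $C$ is an invariant nonvanishing section of a possibly nontrivial equivariant structure on the trivial bundle. We do not need $C$ to be an absolute invariant; it only has to be regular and invariant. Its invariance is automatic, because $R$, $P$ and $Q$ are invariant sections and invariance is multiplicative. The remaining subtlety, as in Proposition \ref{rattopol}, concerns local representatives when the coordinate ring is not a UFD. We handle it by treating $P$ and $Q$ as divisors with fixed representative cocycles $\{P_U\},\{Q_U\}$, split compatibly in charts with $P_U,Q_U$ coprime. Then Proposition \ref{RatioRelInv} applies locally and confirms the invariance of each factor. Alternatively, the whole statement follows directly from Theorem \ref{th:finitgenrelinv}. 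That theorem gives $R=CF(I)\prod R_j^{d_j}$; applying Proposition \ref{rattopol} to $F(I)$ writes it as $P'/Q'$, and the positive and negative powers of the $R_j$ are then distributed between numerator and denominator.
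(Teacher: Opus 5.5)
Your proof is correct. Your main route differs from the paper's; the paper's proof is exactly your closing ``alternatively'' remark. There one writes $R=CF(I_1,\dots,I_r)R_1^{d_1}\cdots R_q^{d_q}$ by Theorem \ref{th:finitgenrelinv}, splits the absolute factor $F(I)$ as a ratio of two polynomial relative invariants of equal weight by Proposition \ref{rattopol}, and distributes the powers $R_j^{\pm d_j}$ between numerator and denominator.

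Your direct argument instead runs the divisor-splitting proof of Proposition \ref{rattopol} for a section of an arbitrary equivariant line bundle, not just the trivial one. Compared with the paper's route:
\begin{itemize}
\item It uses only smoothness of $M$, so that each invariant irreducible hypersurface is the divisor of a regular relative invariant. It bypasses Rosenlicht's theorem and the stratification of $\Sigma$ entirely.
\item It produces a numerator and denominator with no common components.
\item The paper's route, by contrast, is a one-liner given the machinery already built. It also expresses numerator and denominator through the fixed finite set $R_1,\dots,R_q$ together with the numerator and denominator of $F$.
\end{itemize}

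Two small corrections. First, $C=RQ/P$ is a nowhere vanishing \emph{invariant} section, so it equivariantly trivializes its bundle: its cocycle $\bigl(\{C_\alpha/C_\beta\},\{X(C_\alpha)/C_\alpha\}\bigr)$ is the coboundary $\partial^0\{C_\alpha\}$. Hence the ``possibly nontrivial equivariant structure'' you hedge against cannot occur, and in fact $w=\wt(P)\wt(Q)^{-1}$. Second, your paragraph on non-UFD coordinate rings and Proposition \ref{RatioRelInv} is unnecessary. In your construction $P$ and $Q$ are products of sections $R_i$ that are invariant by construction. Moreover, that proposition concerns absolute invariants on affine or projective space, so it would not apply verbatim anyway.
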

 
 \begin{proof}
This follows at once from Theorem \ref{th:finitgenrelinv} and Proposition \ref{rattopol}.
 \end{proof}


Now let us look at examples of invariants on varieties, where Proposition \ref{RatioRelInv} 
literally fails but where Proposition \ref{rattopol} still holds as specified above.
These examples will also arise naturally in the context of relative differential invariants later.

 \begin{example}[Joint invariants of rotations on the plane]\label{sect:joint2D}
Consider the standard action of the special orthogonal group $G=SO(2,\C)$ on ordered pairs of vectors 
$v_1=(x_1,y_1),v_2=(x_2,y_2)\in\C^2$. Its infinitesimal generator is 
$\xi=y_1\p_{x_1}-x_1\p_{y_1}+y_2\p_{x_2}-x_2\p_{y_2}$. Let $\omega$ denote the area form, i.e., $\omega(v_1,v_2) = x_1 y_2-y_1 x_2$. The field of absolute invariants is generated by 
 \begin{equation}
\|v_1\|^2 = x_1^2+y_1^2, \qquad v_1 \cdot v_2= x_1 x_2+y_1 y_2, \qquad \omega(v_1,v_2) = x_1 y_2-y_1 x_2. \label{eq:Ex3}
 \end{equation}
Obviously $\|v_2\|^2$ is also invariant, but it is algebraically (rationally) dependent with the previous: 
 \[
\|v_1\|^2\|v_2\|^2 = (v_1 \cdot v_2)^2 + \omega(v_1,v_2)^2.
 \]
Let $S\subset M=\C^2\times\C^2$ be the invariant hypersurface defined by the relation $\omega(v_1,v_2)=0$ 
(the vectors are parallel); it consists of singular orbits of the $G$-action. 
On $S$ we get a transcendence basis of the field of rational $G$-invariants from one ambient and one conditional invariant
 \[ 
\|v_1\|^2=x_1^2+y_1^2, \qquad J=\frac{x_1}{x_2}.
 \] 
However, the functions $x_1|_S$ and $x_2|_S$ are not conditional relative invariants. We have:
 \begin{gather*}
Z_1:=S \cap \{x_1=0\}= \ell_0\cup\ell_1,\quad Z_2:=S \cap \{x_2=0\}= \ell_0\cup\ell_2,\ \text{ where }\\
\ell_0=\{x_1=0,x_2=0\},\ \ell_1=\{x_1=0,y_1=0\},\ \ell_2=\{x_2=0,y_2=0\}.
 \end{gather*}
Then $\ell_1$ and $\ell_2$ are $G$-invariant, while $\ell_0$ is not.
When computing $x_1/x_2$ on $S$, the identical noninvariant component $\ell_0$ cancels out, 
making $J$ an absolute invariant on $S$ even though $x_1$ and $x_2$ are not relative invariants on $S$. 

Upon projectivization (we abuse the notations) 
$S=\P^1\times\P^1\subset M=\P^3$ is the standard Segre embedding
$\bigl([a_0:a_1],[b_0,b_1]\bigr)\mapsto[a_0b_0:a_0b_1:a_1b_0:a_1b_1]=[x_1:y_1:x_2:y_2]$
(with the image: null cone of the neutral metric) and $\ell_1$ and $\ell_2$ are the instances 
of $\alpha$-ruling ($\alpha=\tfrac{a_1}{a_0}=\op{const}$), while $\ell_0$ is one particular $\beta$-ruling
($\beta=\tfrac{b_1}{b_0}=\op{const}$). The normal bundles $N_{S}(\ell_i)$ are trivial, 
and thus $\ell_i$ can be considered as loci of global (albeit meromorphic) functions. 

More precisely, interpreting those intersections as reduced Weil divisors $Z_1=\ell_0\cup\ell_1$,
$Z_2=\ell_0\cup\ell_2$, we get $Z_1-Z_2=\ell_1-\ell_2$; for an equivalent form of Cartier divisors we would
use the multiplicative notation \cite{KS2} with the same cancellation.
For instance, in the covering of $S$ by the sets
$U_{x_1} =\{x_1\neq0\}$, $U_{y_1}=\{y_1\neq0\}$, $U_{x_2}=\{x_2\neq0\}$, $U_{y_2}=\{y_2\neq0\}$ 
the Cartier divisor $\ell_1$ is represented by the global relative invariant 
 \[ 
R_{x_1} =1, \qquad R_{y_1} =1, \qquad R_{x_2} = x_1, \qquad R_{y_2} = y_1,
 \] 
and one can similarly represent $\ell_2$ and $\ell_0$ (the latter is non-invariant).

Thus Proposition \ref{RatioRelInv} does not hold on varieties/equations, 
if the numerators and denominators are understood as functions, 
yet it holds if they are understood as divisors without common components 
in the sense of \cite{KS2}. 

Notice also that on $S$ we have the following equality of rational functions: 
 \[ 
J^2=\frac{x_1^2}{x_2^2}=\frac{y_1^2}{y_2^2}=\frac{x_1^2+y_1^2}{x_2^2+y_2^2}
=\frac{\|v_1\|^2}{\|v_2\|^2},
 \] 
thus $J^2$ can be represented as a quotient of absolute invariants.
 \end{example}
 
 \begin{example}[Joint invariants of rotations in the space]\label{sect:joint3D}
Consider the the action of the group $G=SO(3,\C)$ on the set $M=\C^3\times\C^3$ of ordered pairs of 
$v_1=(x_1,y_1,z_1),v_2=(x_2,y_2,z_2)\in\C^3$. 
Generic orbits on $M$ are 3-dimensional, and the set of singular orbits is where $v_1\,\|\,v_2$:
 \[ 
\Sigma = \{x_1 y_2-y_1 x_2 = 0, x_1 z_2-z_1 x_2=0, y_1 z_2-z_1 y_2=0\}\subset M.
 \] 
We have $\op{codim}(\Sigma)=2$. 
Consider the absolute invariants
 \[ 
I = x_1^2+y_1^2+z_1^2, \qquad  J=x_2^2+y_2^2+z_2^2. 
 \] 
along with the corresponding subvariety $\Pi=\{I=0\}\subset M$. The algebraic set $\Sigma\cap\Pi$ consists 
of two irreducible components, each $G$-invariant and of codimension 3: 
 \[ 
Z_1=\{v_1=0\}, \quad Z_2=\Sigma\cap\{I=0,J=0\}.
 \] 
Question: Can either of these be defined by a single scalar relative invariant on $\Sigma$? 
In order to answer it, let us again projectivize everything (abusing the notations) and embed
$\Sigma\simeq\P^2\times\P^1$ in $\P^5$: 
The divisor $\Sigma\cap\Pi\subset\Sigma$ is reducible with components 
 \[
Z_1\simeq\P^2\ \text{ and }\
Z_2=\{v_1\parallel v_2,\|v_1\|^2=0=\|v_2\|^2\} \simeq\P^1\times\P^1.
 \]
Here the embedding of the torus is $\bigl(v,[b_0:b_1]\bigr)\mapsto[b_0\psi(v):b_1\psi(v)]$ for $v=[s:t]\in\P^1$, 
where $\psi([s:t])=[s^2-t^2,2st,i(s^2+t^2)]$ is the null cone parametrization (of $\op{deg}=2$). 
Note that we can make this null cone ``real'' by exchanging the Euclidean metric on $\C^3$ to the Minkowski metric
as considered in Example \ref{Minko} (by the so-called Wick rotation). We have:
 \begin{gather*}
N_\Sigma(Z_1)=\{(\lambda v,0)_{(0,v)}:v\in\P^2,\lambda\in\C\}\simeq
 \tau_{\text{taut}}(\P^2)=\mathcal{O}_{\P^2}(-1),\\
N_\Sigma(Z_2)=\{(w,\beta w)_{(v,\beta v)}:v,\beta\in\P^1,w\in T_{\psi(\P^1)}\P^2/\psi_*T\P^1\}\simeq
 N_{\P^2}\psi(\P^1)\times \P^1=p^*\mathcal{O}_{\P^1}(2),
 \end{gather*}
where $p:\P^1\times\P^1\to\P^1$ is the projection along $\beta$-ruling $(v,\beta)\mapsto v$ 
for $\beta=\tfrac{b_1}{b_0}$.

Both these bundles are topologically non-trivial and hence neither 
$Z_1$ nor $Z_2$ can be defined by a scalar relative invariant,
considered as a function, but rather they are defined by an invariant 
section of an equivariant line bundle.
 \end{example}

Example \ref{sect:joint3D} differs from Example \ref{sect:joint2D} in the aspect that 
the group $G$ is non-Abelian, and that the divisors have nontrivial weights. 
It straightforwardly generalizes to joint invariants of the standard action 
of $G=SO(n,\C)$ on $\C^n$ (with arbitrary number of points).

\section{Relative differential invariants and relative invariant derivations}\label{S3}

Now we apply and adapt the above theory to jet spaces and differential equations
$\E^\infty\subset\J^\infty$.
We keep the same notations in this more general setup, here is an overview: 
 \begin{align*}
&\AbsInv\supset\AbsInv^k && 
    \text{Field of rational absolute differential invariants (of order $k$)}\\
&\Der_\AbsInv && 
    \text{ Lie algebra of invariant derivations $\mathcal{A}\to\mathcal{A}$} \\ 
&\RelInv_w && 
    \text{ $\AbsInv$-module of $\hat\g^{(\infty)}$-invariant 
    sections of $w\in \mathrm{Pic}_{\g^{(\infty)}}(\E^\infty)$} \\
&\RelInv=\bigoplus_w\RelInv_w && 
    \text{$\mathcal{A}$-algebra of polynomial relative differential invariants} \\
&\Der^m_\RelInv && 
    \text{Space of derivations $\RelInv_\bullet \to \RelInv_{\bullet + m}$ 
    of weight $m\in\mathrm{Pic}_{\g^{(\infty)}}(\E^\infty)$} \\
&\Der_\RelInv = \bigoplus_{m} \Der^m_\RelInv && 
    \text{Graded Lie algebra of relative invariant derivations} 
 \end{align*}

\subsection{Jets, differential equations and prolongations}\label{JetsDiffEq}

Let us give a brief overview of the geometry of PDEs,
we refer to \cite{KV, O1, KL1} for details on jet spaces
and to \cite{O1,KL2} for the theory of differential invariants. 
(The theory applies both in real and complex cases.)

For an analytic manifold $E$ of dimension $n+m$, we let $J^k(E,n)$ denote the space of $k$-jets of analytic submanifolds of dimension $n$ (and codimension $m$). 
If $E$ is the total space of a fiber bundle $\pi \colon E \to M$, $\dim M=n$, we let $J^k \pi$ denote the space of $k$-jets of sections of $\pi$.  Many of the statements concerning jets and differential equations do not depend on whether we consider $J^k(E,n)$ or $J^k\pi$. Because of this, we will also use the notation $\J^k$ for either of them, and we have $\J^0 =E$. There exist bundle projections $\pi_{k,l}\colon \J^k \to \J^l$, for $0 \leq l<k$, and when $\J^k = J^k \pi$, we also have the bundle projections $\pi_k\colon J^k\pi \to M$. The fiber bundles $\pi_{k+1,k} \colon \J^{k+1} \to \J^{k}$ are affine bundles for $k \geq 1$ when $\J^k = J^k(E,n)$ and for $k \geq 0$ when $\J^k = J^k\pi$. Fibers of $\pi_{1,0}\colon J^k(E,n) \to E$ are isomorphic to the Grassmannian $\mathrm{Gr}(n+m,n)$ of $n$-dimensional subspaces in $\mathbb C^{n+m}$. It follows that the fibers of $\pi_{k,0}\colon \J^k \to \J^0 = E$ are algebraic manifolds for any $k$. 

For a (local) section $s \in \Gamma(\pi)$, we denote its $k$-jet at $a \in M$ in the domain of $s$ by $[s]_a^k$. The $k$th prolongation of the section $s$, denoted by $j^k s \in \Gamma(\pi_k)$, is defined by $j^k s (a) = [s]_a^k$. For a submanifold $N \subset E$ of codimension $m$, we denote its $k$-jet at $a \in N$ by $[N]_a^k$, and define the $k$th prolongation of $N$ by $j^kN = \{[N]_a^k \mid a \in N\}$. The jet spaces come equipped with a distribution $\mathcal{C}^k \subset T\J^k$, called the Cartan distribution:
\[ \mathcal{C}_{[N]_a^k}^k = \mathrm{span}\left\{T_{[N]_a^k} j^k N \mid [N]_a^{k+1} \in \pi_{k+1,1}^{-1}([N]_a^k)\right\}. \]
In the case $\J^k = J^k \pi$, take the submanifold $N$ to be the image of a section of $\pi$. 

For a submanifold $\E^k \subset J^k(E,n)$, we define its prolongation 
$(\E^k)^{(1)}$ in the following way: 
 \[ 
(\E^k)^{(1)} = \left\{ [N]_a^{k+1}\in J^{k+1}(E,n)\mid N\subset M, 
\mathrm{dim}(N)=n, [N]_a^k\in\E^k, T_{[N]_a^{k}}j^kN\subset T_{[N]_a^{k}}\E^k \right\}.
 \]
This definition is straightforwardly adapted to the case $\E^k\subset J^k\pi$. 

We define the $i$th prolongation of $\E^k$ inductively by  
$(\E^k)^{(i)}:=((\E^k)^{(1)})^{(i-1)}$ (at nonsingular points).
A PDE is a sequence of analytic submanifolds $\E=\{\E^k\subset\J^k\}_{k=0}^\infty$,
satisfying
 $$
\E^{k}\subset(\E^{k-1})^{(1)}.
 $$ 
The largest integer $k$ for which the inclusion is strict is called the order\footnote{More generally, the integers for which the inclusion is strict are called orders of $\E$. Thus $\E^{l+i}=(\E^l)^{(i)}$, $i>0$, for the top order $l$. In this paper, we will only focus on the top order $l$.} of $\E$ and denoted $\mathrm{ord}(\E)$.

We will consider only formally integrable PDEs, i.e., those for which the maps 
$\pi^{\E}_{k+1,k}:=\pi_{k+1,k}|_{\E^{k+1}}:\E^{k+1}\to\E^k$ are submersions for 
all $k\geq 0$. For $k\geq l$ they are affine bundles.

A local diffeomorphism $\varphi:E\supset U\to E$ 
(or a $\varphi$-bundle morphism) naturally prolongs to a diffeomorphism 
$\varphi^{(k)}:\J^k\supset\pi_{k,0}^{-1}(U)\to\J^k$. 
The prolongation is given by $\varphi^{(k)}([N]_a^{k})=[\varphi(N)]_a^k$, 
$a\in N\subset U$. The local diffeomorphism $\varphi$ is a 
symmetry\footnote{Our focus will be on \textit{point symmetries} 
(diffeomorphisms on $\J^0$). However our results can be adapted to the case 
of contact symmetries (diffeomorphisms on $\J^1$ preserving $\mathcal{C}^1$).} 
of $\E$ if $\varphi^{(k)}(\E^k\cap\pi_{k,0}^{-1}(U))\subset\E^k$ for each $k$. 
If $\E$ has order $l$, then $\varphi^{(l)}(\E^l\cap\pi_{l,0}^{-1}(U))\subset\E^l$ 
is a sufficient condition for $\varphi$ being a symmetry. 
The set $\mathrm{Sym}(\E)$ under composition operation forms a Lie pseudogroup. 

In most cases the infinitesimal symmetries are easier to work with, and they 
will be our main tool. A vector field $X\in\vf(U)$ naturally prolongs 
to a vector field $X^{(k)} \in \vf(\pi_{k,0}^{-1}(U))$ by the condition that 
it preserves the Cartan distribution: 
$[X^{(k)},\Gamma(\mathcal{C}^k)]\subset\Gamma(\mathcal{C}^k)$. 
Then $X$ is an (infinitesimal) symmetry of $\E$ if $X_{a_k}^{(k)}\subset 
T_{a_k}\E^k$ for each $k$ and $a_k\in\pi_{k,0}^{-1}(U)\cap\E^k$. 
Again, it is sufficient to check this condition for $k=l$, where $l$ 
is the order of $\E$. The space $\mathrm{sym}(\E)$ of infinitesimal 
symmetries forms a Lie algebra sheaf. Even though $\varphi$ or $X$ is only 
locally defined, when $\varphi(a)=a$ or $X_a=0$ their prolongation 
is defined globally on fibers $\pi_{k,0}^{-1}(a)$ for $a\in U$. 

In this geometric framework for differential equations, the notion of 
differential invariant is based on Definitions \ref{def:AbsInv} and \ref{def:RelInv}. For a Lie subalgebra $\g\subset\mathrm{sym}(\E)$, 
we denote its prolongation by $\g^{(k)}=\{X^{(k)}\mid 
X\in\g\}\subset\vf(\E^k)$, $k=1,\dots,\infty$.

 \begin{definition}
For a Lie algebra $\g$ of point symmetries of an analytic PDE $\E$,
an absolute/relative differential invariant of order $k$ is 
an absolute/relative $\g^{(k)}$-invariant on $\E^k$.
 \end{definition}

An open cover $\{U_\alpha\}$ of coordinate charts on $\J^0 = E$ gives an open cover $\{U_\alpha^{i_1 \cdots i_m}\}$ of coordinate charts on $\J^1$ by splitting the coordinates into independent and dependent ones in all possible ways. Here $i_1 < \dots < i_m$ label the coordinates that are chosen to be treated as dependent variables. In this case, the collection $\{\pi_{k,1}^{-1}(U_\alpha^{i_1 \cdots i_m})\}$ is  an atlas for $\J^k$. Concretely, let $z^1, \dots , z^{m+n}$ be coordinates on $U_\alpha$. We choose $u^1:=z^{i_1}, \dots, u^m:=z^{i_m}$ as dependent variables, and rename the remaining (independent) coordinates to $x^1,\dots,x^n$. Now, $x^i, u^j$ are split coordinates on $U_\alpha$, and we get coordinates $x^i, u^j, u^j_\sigma$ on $\pi_{k,1}^{-1}(U_\alpha^{i_1 \cdots i_m})$. Here $\sigma$ is a multi-index with $|\sigma| \leq k$, and we will refer to $u_\sigma^j$ as a variable of order $|\sigma|$.

Often a PDE of order $l$ is given in terms of defining functions $F_1,\dots,F_s$  
in either $\mathcal{O}(\J^l)$ or $\mathcal{O}(\pi_{l,1}^{-1}(U_\alpha^{i_1\cdots i_m}))$. Then $\E^{l+k} = (\E^{l})^{(k)}$ is contained in the subset defined 
by equations of the form $D_{x^\sigma}F_j=0$ for $j=1,\dots,s$ and 
$|\sigma|\leq k$, where $D_{x^\sigma}$ is the iterated total derivative. 
In general, this procedure may result in an algebraic set,
which has more components than $\E^l$. However, the condition that $\pi_{l+k,l}|_{\E^{l+k}}$ is surjective (the formal integrability constraint on $\E$)
excludes the additional components. 

\begin{example} 
For curves in $\C^2$, the space $J^1(\C^2,1)$ of 1-jets is covered by the coordinate charts $(U^y,(x,y,y_x))$ and $(U^x,(y,x,x_y))$, where $x_y = 1/y_x$ on $U^y \cap U^x$. The differential equation defining straight lines is given by $y_{xx}$ on $\pi_{2,1}^{-1} (U^y)$ and by $x_{yy}$ on  $\pi_{2,1}^{-1} (U^x)$.
\end{example}

\subsection{Algebraic PDEs and symmetries}

The majority of PDEs arising in applications are algebraic. Thus, as a 
practically mild constraint, restricting to such PDEs makes 
global results accessible. We recap some of the terminology here and 
refer to \cite{KL2} for more details. 

Since fibers of $\pi_{k,0}\colon \J^k\to\J^0$ are algebraic manifolds, 
it makes sense to define rational functions on $\J^k$ as analytic functions 
that are rational in jet-variables of order $\geq 1$. A regular function on $\J^k$ 
is then a rational function that is defined everywhere. Both rational and regular 
functions, as just defined, may depend in an analytic manner on $\J^0$. 
Both notions are invariant under prolongations of general analytic point transformations. 

 \begin{definition}
Let $\{U_\alpha^{i_1\cdots i_m}\}$ be an open cover of $\J^1$. A PDE $\E = \{\E^k\}$ of order $l$ is called algebraic if there on each chart $\pi_{l,1}^{-1}(U_\alpha^{i_1\cdots i_m})$ of $\E^l$ exist 
regular functions  $F_1,\dots,F_s$ such that $\E^{l} \cap\pi_{l,1}^{-1}(U_\alpha^{i_1\cdots i_m}) =\{a_l\in\pi_{l,1}^{-1}(U_\alpha^{i_1\cdots i_m})\mid F_1(a_l)=\cdots=F_s(a_l)=0\}\subset\J^l$.
 \end{definition}


We assume $\E$ to be formally integrable (bringing an algebraic PDE to involution
results in an algebraic PDE, possibly with singularities).
In particular, $\pi_{k,0}:\E^k\to\E^0$ is a submersion
(in most cases $\E^0=\J^0$). 
For an algebraic PDE $\E$, the fiber 
$\E_a^k:=\E^k\cap\pi_{k,0}^{-1}(a)$ is an algebraic submanifold of $\pi_{k,0}^{-1}(a)$
for every $a\in\E^0$ and each $k$.  
We call $\E$ irreducible if all $\E_a^k$ are such. 
We also assume that fibers of  $\E^k\to \E^0$ are algebraic manifolds for every $k$, by disregarding singularities if necessary. 
(Singularities are unavoidable, in general algebraic context, 
and their removal results in quasi-affine varieties. Still, many  
algebraic differential equations arising in applications are  
algebraic and smooth.)

\smallskip

Let $G \subset \mathrm{Diff}_{\mathrm{loc}}(M)$ be a Lie pseudogroup consisting of symmetries of $\E$, and define the group
 \[ 
G_a^k = \{[\varphi]_a^k \mid \varphi \in G, \varphi(a)=a\}.
 \]  
A Lie pseudogroup is defined by a Lie equation of order $l$, such that 
$G^k\subset J^k(E,E)$ is determined uniquely by $G^l$  for each $k\geq l$, 
by prolongation. In this case we refer to $l$ as the order of $G$.

If $G_a^l$ is an algebraic group acting algebraically on $\E_a^l\subset\J_a^l$ 
for every $a$, then it follows that $\E_a^k\subset G_a^k$ is an algebraic 
group acting algebraically on $\J_a^k$ for each $k\geq l$. 

We assume throughout 
that $G$ acts transitively on $\E^0=\J^0$. In this case 
all the fibers $\E_a^k$ are isomorphic to each other for every $a\in\E^0$. 
Then $G$-orbits in $\E^k$ are in one-to-one correspondence with $G^k_a$-orbits 
in $\E_a^k$. 

 \begin{definition}
A Lie pseudogroup $G$ of symmetries is called algebraic if $G_a^k$ is an algebraic group 
and its action on $\E_a^k$ is algebraic for every $a\in\J^0$ and each $k\geq0$. 
A Lie algebra of point vector fields is called algebraic if it is the Lie algebra 
(sheaf) of an algebraic Lie pseudogroup. 
 \end{definition}

It is sufficient to check this notion of algebraicity for $k=l$, 
since for $k>l$ algebraicity is inherited by prolongation.

The Lie pseudogroup $\mathrm{Sym}(\E)$ of point symmetries is always algebraic 
when $\E$ is algebraic, but we also allow $G$ to be a proper Lie sub-pseudogroup of $\mathrm{Sym}(\E)$. 
By the global Lie-Tresse theorem \cite{KL2}, generic $G$-orbits in $\E$ 
are separated by rational absolute differential invariants. 
To deal with non-generic orbits we invoke relative differential invariants.

 \begin{example}[Relative differential invariants of curves with constant curvature]\label{Minko}
This example is an elaboration of some of the computations in \cite[Sect. 2.2]{KS1}.  Let $\g$ be the Lie algebra of isometries of the Minkowski metric $-dx^2+dy^2+dz^2$ (which is equivalent to the standard Euclidean metric over $\mathbb C$), and consider the underdetermined ODE on curves in $\mathbb C^3$ defined by the vanishing of the curvature. In the local chart with independent variable $x$ and dependent variables $y, z$ it is given by $R_2=0$, where 
 \[
R_2=(y_1^2-1)z_2^2-2y_1 z_1 y_2 z_2+(z_1^2-1) y_2^2.
 \]
We consider the ODE as a submanifold  
 \[
 \E^k = \{R_2=0, D_x(R_2)=0,\dots D_x^{k-2}(R_2)=0\} \setminus \pi_{k,2}^{-1}(\{y_2=0,z_2=0\}) \subset  J^k(\mathbb C^3,1).
  \]
We have removed the fiber of $\{y_2=0,z_2=0\}$ since the algebraic variety  $\{R_2=0\} \subset J^2(\mathbb C^3,1)$ is singular at these points. In general, $\E^k$ has several irreducible components, but there is only one component $\E_0^k$ for which $\pi_{k,2}|_{\E_0^k} \colon \E_0^k \to \E^2$ is surjective. 

In \cite{KS1} we found the conditional (that is, on $\E_0^3$) 
rational absolute differential invariant. The following formula corrects a typo in that paper:
 \[
K_3 = \frac{(2(y_1^2+z_1^2-1) y_3-3D_x(y_1^2+z_1^2-1)y_2) y_2^2}{((y_1^2-1) z_2-y_1 z_1 y_2)^3}.
 \]
This is expressed as a rational function in the ambient coordinates, but this expression is not the restriction of 
an absolute differential invariant on $J^3(\mathbb C^3,1)$. 
However $K_3$ satisfies the syzygy
 \begin{equation} \label{eq:K3}
K_3^2+4 \tau K_3+4 \tau^2-\frac{\kappa_s^2}{\kappa^4}=0
 \end{equation}
on $\E_0^3$ (almost everywhere), so it gives rise to a local differential invariant
(in fact, a global function on a double branched cover).
Here $\kappa$ and $\tau$ are the curvature and torsion, respectively: 
 \[ 
\kappa = \frac{\sqrt{R_2}}{(y_1^2+z_1^2-1)^{3/2}}, \qquad 
\tau=\frac{y_2 z_3-z_2 y_3}{R_2}, \qquad 
\kappa_s = \frac{D_x(\kappa)}{\sqrt{y_1^2+z_1^2-1}}.
 \]
Note that even though $\kappa$ and $\kappa_s$ are not rational, the coefficients
of the quadratic (in $K_3$) expression \eqref{eq:K3} are rational invariants. 

Let us analyse $K_3$ in more depth. The zero sets of the denominator of $K_3$ decompose into 
$\{y_1^2=1,y_2=0,y_3=0\}$ and $\{y_1^2+z_1^2=1\}$ on $\E_0^3$, while the numerator 
vanishes on the same components and has additional zeros given by  
\begin{equation} \label{eq:R3}
    \{ y_1 y_3 - 3 y_2^2 + z_1 z_3 - 3 z_2^2=0,2(y_1^2+z_1^2-1) y_3-3D_x(y_1^2+z_1^2-1)y_2=0\}.
\end{equation}
The function $K_3$ on $\E_0^3$ is thus defined  on
\[ \E_0^3 \setminus \left(\{ y_1^2=1,y_2=0,y_3=0\} \cup \{y_1^2+z_1^2=1\} \right), \]
and it is easy to check that $dK_3 \neq 0$ wherever it is defined. 

The noninvariant subset $\{y_1^2=1,y_2=0,y_3=0\}$ (an indeterminacy component of $K_3$) 
is an artifact of our choice of representative rational function. Since there is essentially 
no difference in the behaviour of coordinates $y$ and $z$, we can just as well pick the rational invariant 
 \[ 
\tilde K_3 = - \frac{(2(y_1^2+z_1^2-1) z_3-3D_x(y_1^2+z_1^2-1)z_2) z_2^2}{((z_1^2-1) y_2-y_1 z_1 z_2)^3},   
 \]
which is not defined on $\{z_1^2=1,z_2=0,z_3=0\} \cup \{y_1^2+z_1^2=1\}$. 
Computing their ratio results in 
 \[ 
\frac{K_3}{\tilde K_3} = \frac{((z_1^2-1)y_2-y_1 z_1 z_2)^3 y_2^3}{((y_1^2-1) z_2 - y_1 z_1 y_2)^3 z_2^3},
 \]
which is equal to 1 almost everywhere on $\E_0^3$. This means that $K_3$ and $\tilde K_3$ 
represent the same rational function on $\E_0^3$, which we denote by $I_3\in\AbsInv$ in what follows.

Denote by $R_1=y_1^2+z_1^2-1$ the relative invariant
of order 1 and $R_3$ the relative differential invariant given by \eqref{eq:R3}. 
Even though  \eqref{eq:R3} is given by two equations, it is a hypersurface 
in $\E^3_0$. Indeed, the intersection of \eqref{eq:R3} with the open set 
$U_1=\{(y_1^2-1) z_2 - y_1 z_1 y_2 \neq 0\}\subset \E_0^3$ is given by the 
single  equation  $R_{3y}:=2(y_1^2+z_1^2-1) y_3-3D_x(y_1^2+z_1^2-1)y_2=0$. 
Similarly, its intersection with $U_2=\{(z_1^2-1)y_2-y_1z_1z_2\neq0\}\subset \E_0^3$ is given by $R_{3z} := 2(y_1^2+z_1^2-1) z_3-3D_x(y_1^2+z_1^2-1)z_2=0$.
We have $U_1\cap U_2=\{R_1=0\}$, and $R_{3y},R_{3z}$ are thus local expressions 
of a relative differential invariant on $\E_0^3\setminus(U_1\cap U_2)$.

The general relative differential invariant 
on $\E_0^3$ takes the form
 \[ 
F(I_3) R_1^{d_1}.
 \]
Since $R_2=0$ is a differential consequence of $R_1=0$,
the invariant determined ODE systems inside $\E_0^3$ that are given by a single (additional) equation have connected components of the form
 \[ 
R_2 = 0,\quad I_3=\op{const}.
 \]
 \end{example}

 \begin{remark}
The prolonged Lie algebra $\g^{(1)} \subset J^1(\mathbb C^3,1)$ has two orbits: 
$\{y_1^2+z_1^2-1=0\}$ and its complement. Let us fix 1-jet to $x=y=z=y_1=z_1=0$. 
The Lie subalgebra preserving $0\in J^1(\C^3,1)$ is spanned by the prolongation of 
$z\partial_y-y\partial_z$. Its prolongation restricted to $\pi_{3,1}^{-1}(0)$ takes the form 
 \[   
z_2 \partial_{y_2}-y_2 \partial_{z_2}+z_3 \partial_{y_3}-y_3 \partial_{z_3} . 
 \]
This relates to Example \ref{sect:joint2D} involving joint $SO(2,\C)$-invariants on $\C^2$. 
Indeed, the defining equations for $\E_0^3|_{\pi_{3,1}^{-1}(0)}$ 
(which is not a complete intersection) are 
 \[
y_2^2+z_2^2=0, \qquad y_3^2+z_3^2=0, \qquad y_2 z_3-z_2 y_3=0,
 \] 
to be compared to the invariants in formula \eqref{eq:Ex3}.
 \end{remark}

\subsection{Polynomial and rational differential invariants}\label{sect:polynomial}

Let $\g \subset \vf(E)$ be a Lie algebra of point symmetries of a PDE $\E$, 
possibly a proper Lie subalgebra of the full symmetry algebra. 

By definition, a relative differential invariant $R$ of order $k$ is a section 
of  some $\g^{(k)}$-equivariant line bundle $L \to \E^k$ 
(to simplify the notations we will refer to $L$ as the  $\g^{(k)}$-equivariant 
line bundle while keeping in mind that there is always also a fixed lift of 
$\g^{(k)}$ defined on it). 
As above, we refer to this $\g^{(k)}$-equivariant line bundle as the weight of $R$.
Notice also that a relative differential invariant of order $k$ can be considered as a relative differential invariant of order $k+1$, where the identification is made by pullback through $\pi_{k+1,k}$. The corresponding $\g^{(k+1)}$-equivariant line bundle over $\E^{k+1}$ can be identified with 
the pullback bundle $\pi_{k+1,k}^*(L)$, where the lift of $\g^{(k+1)}$ to $\pi_{k+1,k}^*(L)$ depends only on the $k$-jet.\footnote{The map $\mathrm{Pic}_{g^{(k)}}(\E^k) \to \mathrm{Pic}_{g^{(k+1)}}(\E^{k+1})$ defined by pullback of line bundles is not injective in general. This subtle point is discussed further in Appendix \ref{SecA}.}

For the trivial PDE ($\E^k=\J^k$ for every $k$) it was shown in \cite{KS2} that for a polynomial (and thus also rational) relative differential invariant $R$ of order $k$, the weight $\wt^k(R) \in \mathrm{Pic}_{\g^{(k)}}(\E^k)$ is contained in $\mathrm{Pic}_{\g^{(1)}}(\E^1)$, i.e., $\wt^k(R) = \pi_{k,1}^*(L)$ for each $k \geq 1$. In this section we will generalize this statement to nontrivial PDEs. 


Define the sheaf of $t$-polynomials on $\E^k$ by
 \[ 
\mathfrak{P}_t^k = \left\{f \in \mathcal{M}_{\J^k} : f\big|_{\pi_{k,1}^{-1}(U_\alpha^{i_1 \cdots i_m})} 
\text{ are polynomial in variables of order $>t$}\right\}, 
 \]
and notice that it is invariant under (the prolongation of) point transformations when $t\geq1$. 

Consider a monomial $F u_{\sigma_1}^{j_1} \cdots u_{\sigma_s}^{j_s} \in\mathfrak{P}_t^k\big|_{\pi_{k,1}^{-1}(U_\alpha^{i_1 \cdots i_m})}$ 
with $|\sigma_i| \geq t+1$ and $F \in \mathcal{M}_{\J^t}$. We define its \textit{weighted degree} to be $\sum_{j=1}^s |\sigma_j|$, and the weighted degree of a general element in $\mathfrak{P}_t^k$ is defined as the maximal weighted degree of its monomial parts. When $t \geq 1$ this is independent of the choice of coordinate chart $U_\alpha^{i_1\cdots i_m}$ since the coordinates in different charts are related by point transformations. The weighted degree of $f \in \mathfrak{P}_t^k$ is denoted by $\mathrm{wdeg}_t(f)$.

Let $\E$ be an algebraic formally integrable PDE, given by a sequence of 
irreducible algebraic varieties $\E^k\subset\J^k$, and let $\mathcal{I}_t(\E^k) \subset \mathfrak{P}_t^k$ be the ideal of polynomials that vanish on $\E^k$. 
Define the sheaf $\mathfrak{P}_t^k$ of $t$-polynomials on $\E^k$:
 \[
\mathfrak{P}_t^k(\E)=\mathfrak{P}_t^k/\mathcal{I}_t(\E^k).
 \]

Assume that $\E$ is a PDE of order $l$, so that $\E^{k+1} \to \E^k$ is an affine bundle for $k \geq l$. Then the definition of weighted degree of $\mathfrak{P}_l^k$ gives a well-defined notion of weighted degree on  $\mathfrak{P}_l^k(\E)$ for $k >l$. Because of this, $l$-polynomials on $\E$ will play an important role in this section. 

If the coordinate charts $\{\pi_{k,1}^{-1} (U_{\alpha}^{i_1 \dots i_m})\}$ restrict to sufficiently good coordinate charts of $\E^k$, a general relative differential invariant of order $k$ will take the form $R=\{f_\alpha^{i_1\cdots i_m}\}$. However, in general we may need to refine the atlas covering $\E^k$, in order 
for the relative invariant to be defined by a single function in each chart. 
Let $\{V_\sigma\}$ be a good cover of $\E^l$ 
that we will, without loss of generality, assume to be a refinement of $\{U_\alpha^{i_1\cdots i_m}\}$. 
Due to the above affineness  $\{\pi_{l+i,l}^{-1}(V_\sigma)\}$ is a good cover 
of $\E^{l+i}$ for $i\geq 1$, by induction. 
A relative differential invariant of order $k > l$ can now be written 
in terms of these charts: $R = \{f_\sigma\}$.

 \begin{definition}
Let $\E$ be an algebraic PDE of order $l\geq 1$ and $\g\subset\vf(\J^0)$ a Lie algebra of its 
point symmetries.  A relative differential invariant of order $k \geq l$ is 
$l$-\textit{polynomial} if its representative functions can be chosen $l$-polynomials: 
$f_\sigma\in\mathfrak{P}_l^k(\E)|_{\pi_{k,l}^{-1}(V_\sigma)}$. A relative differential invariant is $l$-\textit{rational} if its representative 
functions can be chosen $l$-rational. (Similar to Proposition \ref{rattopol} 
it is a ratio of two $l$-polynomial relative differential invariants.) The algebra of $1$-polynomial/rational relative differential invariants 
(of arbitrary orders) will be denoted by $\RelInv$ and $\RelInvRat$, respectively.
 \end{definition}

The representative functions of the relative differential invariant satisfy 
 \[  
X^{(k)}(f_\sigma) = \lambda_\sigma(X) f_\sigma, \qquad \forall X\in\g,
 \]
for some collection $\lambda_\sigma\in\g^*\otimes\mathcal{O}(\E^k \cap \pi_{k,l}^{-1}(V_\sigma))$
of multipliers. (These  equalities hold on $\E^k$.) The weight $(g,\lambda)\in\mathrm{Pic}_{\g^{(k)}}(\E^k)$ of $R$ is defined by the data
 \[
g=\{g_{\sigma \rho} = f_\sigma/f_\rho\}, \qquad \lambda=\{\lambda_\sigma\}.
 \]
  \begin{remark} \label{rk:cancellation}
In many cases we get $\lambda_\sigma \in \g^* \otimes \mathfrak{P}_l^k(\E)|_{ \pi_{k,l}^{-1}(V_\sigma)}$, for the following reason. From the prolongation formula for point vector fields, it follows that $X^{(k)}(f_\sigma)$ is $l$-polynomial if $f_\sigma$ is. This forces $\lambda_\sigma(X)$ to be a rational function. Furthermore, due to the fact that $\lambda_\sigma \in\mathcal{O}(\E^k\cap\pi_{k,1}^{-1}(V_\sigma))$ has no singularities, we get $\lambda_\sigma(X) \in  \mathfrak{P}_l^k(\E)|_{ \pi_{k,l}^{-1}(V_\sigma)}$. The exception to this is when there exists nonconstant polynomials that vanish on $\E^k$. 
\end{remark}

The following proposition 
generalizes \cite[Proposition 3.6]{KS2}. 

 \begin{prop}\label{pr1outof4}
Let $\E$ be an algebraic PDE of order $l \geq 1$, and $\g\subset\vf(E)$ a Lie algebra of its point 
symmetries. If $R$ is an $l$-rational relative differential invariant of order $k \geq l$, 
then $\wt^k(R)=\pi_{k,l}^*L$ for some $\g^{(l)}$-equivariant line bundle $L\to \E^l$. 
 \end{prop}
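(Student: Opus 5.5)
Since every $l$-rational relative invariant is a ratio of two $l$-polynomial ones (as remarked after the definition, in analogy with Proposition \ref{rattopol}), and $\wt^k$ is a homomorphism, I will reduce to the case where $R=\{f_\sigma\}$ has $l$-polynomial representatives $f_\sigma\in\mathfrak{P}_l^k(\E)|_{\pi_{k,l}^{-1}(V_\sigma)}$. The plan is to show that both parts of the cocycle $(g,\lambda)$ can be chosen to depend only on $l$-jets, after changing representatives by a coboundary. I will use the good cover $\{\pi_{k,l}^{-1}(V_\sigma)\}$ and the fact that $\E^k\to\E^l$ is a tower of affine bundles for $k>l$.

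\emph{Transition functions.} On overlaps $g_{\sigma\rho}=f_\sigma/f_\rho$ is nowhere vanishing and holomorphic on $\pi_{k,l}^{-1}(V_\sigma\cap V_\rho)$. Its restriction to each fiber of $\pi_{k,l}$ is a nowhere vanishing rational (quotient of polynomials) function on an affine space, or on an irreducible affine fiber of $\E^k\to\E^l$. Such a function on an affine space must be constant on the fiber. So $g_{\sigma\rho}$ is pulled back from $V_\sigma\cap V_\rho$, and hence $p_1(\wt^k R)$ is a pullback. Equivalently, each $f_\sigma$ factors as $f_\sigma=h_\sigma\cdot P$, where $P$ is the common top-weighted-degree polynomial part, well defined up to $l$-jet factors. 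I will therefore fix the $l$-polynomial representatives so that the highest-order parts agree across charts.

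\emph{Multipliers.} Next I compare weighted degrees in $X^{(k)}(f_\sigma)=\lambda_\sigma(X)f_\sigma$. The prolongation formula shows that $X^{(k)}$ is affine in jets of order $>l$: its coefficient on $\partial_{u^j_\tau}$ is linear in jets of order $\le|\tau|$, with top part linear in order $|\tau|$ and coefficients depending on $1$-jets. It follows that $\mathrm{wdeg}_l(X^{(k)}f)\le\mathrm{wdeg}_l(f)$. Since $f_\sigma\ne0$ on the irreducible $\E^k$, the quotient $\lambda_\sigma(X)$ is a holomorphic function on $\pi_{k,l}^{-1}(V_\sigma)$ that is rational on the affine fibers. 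Using Remark \ref{rk:cancellation} and the weighted degree comparison, I will show that $\lambda_\sigma(X)$ has weighted degree $0$, i.e.\ it is a function of $l$-jets.

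The subtle point is that $\mathfrak{P}_l^k(\E)$ is a quotient ring, so the degree argument needs the fiberwise ring to be a domain with a well-behaved grading by weighted degree. I will use that the fibers of $\E^{k}\to\E^l$ are affine subspaces, since $\E^{l+i}=(\E^l)^{(i)}$ is cut out by equations affine in the top jets. The associated graded ring is then a polynomial ring, and leading terms multiply, so $\mathrm{wdeg}(\lambda f)=\mathrm{wdeg}\lambda+\mathrm{wdeg} f$ forces $\mathrm{wdeg}\lambda=0$.

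With both $g$ and $\lambda$ depending only on $l$-jets, the pair $(g,\lambda)$ is a hypercohomology $1$-cocycle for $\g^{(l)}$ on $\E^l$ with respect to $\{V_\sigma\}$. The cocycle conditions descend because $\g^{(k)}$ projects to $\g^{(l)}$ and the pullback is injective on functions. This defines $L\to\E^l$ with $\wt^k(R)=\pi_{k,l}^*L$.

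I expect the main obstacle to be the multiplier step. One difficulty is that $\lambda_\sigma$ is a priori only holomorphic and not obviously rational in the fiber variables. The other is that non-reduced or non-UFD behaviour of the fibers could allow cancellations, which is the exception flagged in Remark \ref{rk:cancellation}. I would first try to handle the multiplier step by the leading-term argument in the associated graded polynomial ring described above.
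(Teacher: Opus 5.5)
Your outline coincides with the paper's: reduce to an $l$-polynomial $R$; the $g_{\sigma\rho}$ are pulled back because a nowhere-vanishing regular function on an affine fibre is constant; then show $\mathrm{wdeg}_l\lambda_\sigma(X)=0$. However, the inequality $\mathrm{wdeg}_l(X^{(k)}f)\le\mathrm{wdeg}_l(f)$, on which your multiplier step rests, is false, and so is its justification. The coefficient $\phi^j_\tau$ of $\partial_{u^j_\tau}$ in $X^{(k)}$ is affine in the jets of order $|\tau|$, but it is polynomial, not linear, in the lower ones. Since $\mathrm{wdeg}_l$ gives a jet of order $s$ the weight $s$, products of lower jets can exceed weight $|\tau|$. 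For instance, for $X=u\,\partial_x$ acting on curves in $\C^2$ one gets $X^{(3)}(u_3)=-3u_2^2-4u_1u_3$, whose $\mathrm{wdeg}_1$ is $4>3$.

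The correct general bound, which the paper uses, is $\mathrm{wdeg}_l(X^{(k)}f)\le\mathrm{wdeg}_l(f)+1$. It only gives $\mathrm{wdeg}_l\lambda_\sigma(X)\le1$. The paper then concludes with the observation your proposal is missing: no $l$-polynomial has weighted degree exactly $1$, since every variable it counts has order $\ge l+1\ge2$. This is where the hypothesis $l\ge1$ enters the multiplier step.

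Your worry about multiplicativity of the degree on the quotient $\mathfrak{P}_l^k(\E)$ is well founded. The paper's passage from $\mathrm{wdeg}_l(X^{(k)}f_\sigma)\le d+1$ to $\mathrm{wdeg}_l\lambda_\sigma(X)\le1$ uses it tacitly. But your remedy does not work for $\mathrm{wdeg}_l$. The prolonged equations are affine in their top jets, yet their $\mathrm{wdeg}_l$-leading forms can be products of lower jets. For curves $x\mapsto(u,v)$ and $\E^1=\{u_1=\tfrac12v_1^2\}$ one has $u_3=v_1v_3+v_2^2$ on $\E^3$. Hence $\mathrm{wdeg}_1(v_2^2)\le3<2\,\mathrm{wdeg}_1(v_2)$, and the associated graded ring has nilpotents.

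Both problems disappear if you give a jet of order $s>l$ the weight $s-l$ instead of $s$.
\begin{itemize}
\item Every monomial of $\phi^j_\tau$ has factors of order $\le|\tau|$ and total order $\le|\tau|+1$. So its weight is $\le|\tau|-l$; with two or more factors of order $>l$ it is $\le|\tau|+1-2l$, which again uses $l\ge1$. Hence $X^{(k)}$ does not raise this weight.
\item $D_x$ raises this weight by at most one. So the principal jets of order $s$ are polynomials of weight $\le s-l$ in the parametric jets, with coefficients depending on the $l$-jet.
\item Consequently the weight of a function on $\E^k$ equals the weight of its unique expression in the parametric jets, and is therefore multiplicative.
\item Weight $0$ still means independence of jets of order $>l$.
\end{itemize}
With this filtration your argument goes through as written.

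Separately, the normalization making the top parts of the $f_\sigma$ agree across charts is impossible in general, since it would trivialize the $g_{\sigma\rho}$. You never use it, though.
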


\begin{proof}
If $k=l$, the statement is automatically true, so we focus on $k >l$. We closely follow the proof of Proposition 3.6 of \cite{KS2}. Since an $l$-rational relative differential invariant is a ratio of $l$-polynomial relative differential invariants, it suffices to consider $l$-polynomial relative differential invariants.  Let $R\in\RelInv$ 
have order $k$ and weighted degree $\mathrm{wdeg}_l(R)=d$, and let $\{f_\sigma\}$ 
be its polynomial representatives. 

The transition functions $g_{\sigma \rho}= f_\sigma/f_\rho$ are elements of $\O^\times(\E^k \cap \pi_{k,l}^{-1}(V_\sigma \cap V \rho))$. The fibers of $\E^{i+1} \to \E^i$ are affine spaces for $i \geq l$, which implies that all elements in $\mathfrak{P}_l^k(\E)$ (that are not $\pi_{k,l}$-pullbacks of elements in $\mathfrak{P}_l^l(\E)$) vanish somewhere on $\E^k$. It follows that the polynomial factors in the numerator and denominator must cancel each other, so that $g_{\sigma \rho} \in \pi_{k,l}^*(\mathcal{O}^\times(V_\sigma \cap V_\rho))$.

Next, from the prolongation formula for a point vector field $X \in \g$, it is clear that $X^{(k)}(f_\sigma)$ has weighted degree $\leq d+1$. By Remark \ref{rk:cancellation}, the function $\lambda_\sigma(X) = X^{(k)}(f_\sigma)/f_\sigma$ is $l$-polynomial. Furthermore, it has weighted degree $\mathrm{wdeg}_l(\lambda_\sigma(X))\leq 1$. Since no $l$-polynomial has weighted degree 1, its weighted degree must be $0$. It follows that  $\lambda_\sigma(X)$ is independent of jet-variables of order $l+1$ and higher, and thus $\lambda_\sigma(X) \in \pi_{k,l}^*(\mathcal{O}(V_\sigma))$.
 \end{proof}

The main point of the proof is that polynomials cancel, thus resulting in a function of lower order (here $l$). However, this statement comes in many versions with only minor changes in the proof.
\begin{itemize}
    \item If there exists a positive integer $l_0 < l$ such that the bundles $\E^{k+1} \to \E^k$ are affine for every $k \geq l_0$, then  the number $l$ in Proposition \ref{pr1outof4} can be replaced with $l_0$. 
    \item If fibers of the bundles $\E^{k} \to \E^{k-1}$ are nonsingular affine varieties for $1< k \leq l$ and $R$ is $1$-polynomial, then $\wt^k(R) = \pi_{k,1}^*L$ for some $\g^{(1)}$-equivariant line bundle $L \to \E^1$. This is because the cancellation of polynomials is guaranteed down to (and including) order $2$.  
    \item The algebraicity of $\E^l$ is, strictly speaking, not  necessary. Even if $\E^l$ is analytic, the fibers of $\E^k \to \E^l$ will still be algebraic, and that's what matters. 
\end{itemize}
According to the first two points it is in many applications possible to get the weight of $R$ down to an order strictly less than $l$ and. The conclusion of the second point is summarized in Theorem \ref{th:rational}. We will end this section with two additional variants of Theorem \ref{th:rational} that hold for fiber-preserving transformations and contact transformations, respectively. 

\smallskip

{\bf Fiber-preserving transformations:}
If $\pi:E\to M$ is a fiber bundle, then $J^1\pi\to E$ is a vector bundle. 
The sheaf $\mathfrak{P}_{0}^k$ is invariant under fiber-preserving transformations. 

\smallskip

{\bf Contact transformations:} 
If $E$ is an analytic manifold of dimension $n+1$, then $J^1(E,n)$ is a contact space. The sheaf $\mathfrak{P}_2^k$ is invariant with respect to contact transformations of $J^1(E,n)$. 

\smallskip



In these settings, we get the following statements, where the notion of 
polynomiality is based on $\mathfrak{P}_t^k(\E)$ instead of the previous 
$\mathfrak{P}^k(\E) = \mathfrak{P}_1^k(\E)$, with only minor adaptations in the proofs. 

\begin{theorem}\label{th23outof4}
    (1) Let $\E$ be an algebraic PDE on sections of a fiber bundle $\pi\colon E \to M$ where the fibers of $\E^{k+1} \to \E^k$ are nonsingular affine varieties for $k \geq 0$, and let $\g \subset \vf(E)$ be a Lie algebra of $\pi$-projectable vector fields. If $R$ is a $0$-polynomial relative differential invariant of order $k$, then $\wt^{k}(R)= \pi_{k,0}^* L$ for some $\g$-equivariant line bundle $L \to \E^0$. 

    (2) Let $\E$ be an algebraic PDE on hypersurfaces in $E$ where the fibers of $\E^{k+1} \to \E^k$ are nonsingular  affine varieties for $k \geq 2$, and let $\g \subset \vf(J^1(E,n))$ be a Lie algebra of contact vector fields. If $R$ is a $2$-polynomial relative differential invariant of order $k \geq 2$, then $\wt^{k}(R)= \pi_{k,2}^* L$ for some $\g^{(2)}$-equivariant line bundle $L \to \E^2$. 

\end{theorem}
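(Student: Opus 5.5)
We follow the proof of Proposition~\ref{pr1outof4} and adapt only what the transformation class changes: which sheaf $\mathfrak{P}_t^k$ is invariant, and from which order on the fibres are affine. Write $t=0$ in case (1) and $t=2$ in case (2). The sheaf $\mathfrak{P}_t^k$ is invariant under the relevant transformations (fibre-preserving, resp.\ contact), so $t$-weighted degree is well defined on $\mathfrak{P}_t^k(\E)$. Each step below is run inductively from order $k$ down to order $t$, exactly as in the second bullet after Proposition~\ref{pr1outof4}.

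\emph{Transition functions.} Let $R=\{f_\sigma\}$ be $t$-polynomial of order $k$, with $f_\sigma$ defined on $\E^k\cap\pi_{k,t}^{-1}(V_\sigma)$ for a good cover $\{V_\sigma\}$ of $\E^t$. The transition functions $g_{\sigma\rho}=f_\sigma/f_\rho$ are nowhere vanishing. Since the fibres of $\E^{i+1}\to\E^i$ are nonsingular affine varieties for $i\ge t$, a regular function on such a fibre with no zeros must be constant there. The intended reason is the cancellation of polynomial factors already used in the proof of Proposition~\ref{pr1outof4}, applied fibre by fibre. Descending one order at a time, we get $g_{\sigma\rho}\in\pi_{k,t}^*\O^\times(V_\sigma\cap V_\rho)$.

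\emph{Multipliers.} For $X\in\g$, the prolonged field $X^{(k)}$ preserves $\mathfrak{P}_t^k$ and raises weighted degree by at most one. In case (1) this holds because for projectable fields the prolongation coefficients are affine in jets of order $\ge1$, with coefficients depending only on $0$-jets. In case (2) it holds because the prolongation of a contact field is affine in jets of order $\ge3$ with coefficients in $\mathfrak{P}_2$. By Remark~\ref{rk:cancellation}, $\lambda_\sigma(X)=X^{(k)}(f_\sigma)/f_\sigma$ is again $t$-polynomial, now of weighted degree $\le1$.

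No monomial in variables of order $>t$ has weighted degree exactly $1$, since each such variable has order at least $t+1\ge1$. In case (1) the order-one jets $u^j_i$ do contribute degree $1$, so this needs care. The fix is to refine the count by exploiting that for projectable fields the coefficient of $\p_{u^j_\sigma}$ has the form $D_\sigma(\varphi^j-\xi^iu^j_i)+\xi^iu^j_{\sigma i}$, where $\varphi^j,\xi^i$ are the components of $X$. Comparing top-order terms then shows $\lambda_\sigma(X)$ cannot depend on the highest jets present, and descending induction yields $\lambda_\sigma(X)\in\pi_{k,t}^*\O(V_\sigma)$. Together with the previous step, this exhibits the weight $(g,\lambda)$ as the pullback of a $\g$-equivariant, resp.\ $\g^{(2)}$-equivariant, line bundle on $\E^t$.

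\emph{Main obstacle.} The hardest step is the multiplier step in case (1). It has to justify the descent from order $k$ down to order $0$, including through $J^1\pi\to E$, a vector bundle on which degree-$1$ linear functions in $u^j_i$ exist. I would first argue via the top-order symbol: the coefficient of the top jets in $X^{(k)}f_\sigma$ equals $\xi^i$ times the total-derivative action on $f_\sigma$ plus a linear substitution. Taking $f_\sigma$ of minimal degree in the top jets and comparing leading parts then forces $\lambda_\sigma(X)$ to be free of those jets. Case (2) is analogous but simpler, since every variable of order $\ge3$ has weight $\ge3$, so the argument of Proposition~\ref{pr1outof4} applies verbatim.
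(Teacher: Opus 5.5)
Your overall route matches the paper's: rerun Proposition~\ref{pr1outof4} with $\mathfrak{P}_t^k$ in place of $\mathfrak{P}_1^k$. However, the multiplier step is exactly where the adaptation is needed, and as written it fails in both cases.

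\textbf{Case (1).} You rightly see that the ``no weighted degree $1$'' trick fails for $t=0$. Your replacement, though, only shows that $\lambda_\sigma(X)$ is free of the order-$k$ jets. The ``descending induction'' is not justified: $f_\sigma$ still has order $k$, and for $|\sigma|=k$ the $\partial_{u_\sigma}$-coefficient contains terms like $\varphi_{uu}u_{k-1}u_1$. So $X^{(k)}$ does not preserve degree in the order-$(k-1)$ jets.

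Your stated reason is also false. For $X=u^2\partial_u$ the $\partial_{u_{xx}}$-coefficient is $2u_x^2+2uu_{xx}$, which is not affine in jets of order $\ge1$.

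What is missing is a sharper bound. For projectable $X$, i.e.\ $\xi^i=\xi^i(x)$, the coefficient
$D_\sigma(\varphi^j-\xi^iu^j_i)+\xi^iu^j_{\sigma i}=D_\sigma\varphi^j-\sum_{\tau\le\sigma,\,\tau\ne\sigma}\binom{\sigma}{\tau}D_{\sigma-\tau}(\xi^i)\,u^j_{\tau i}$
has $\mathrm{wdeg}_0\le|\sigma|$. This holds because each $D_i$ raises $\mathrm{wdeg}_0$ by at most one, and $D_{\sigma-\tau}(\xi^i)$ depends on $x$ only. Hence $X^{(k)}$ does not raise $\mathrm{wdeg}_0$ at all; the $+1$ in the point case comes only from the $u$-dependence of $\xi^i$. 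Multiplicativity of top homogeneous parts then gives $\mathrm{wdeg}_0(\lambda_\sigma(X))=0$ immediately, with no descent through orders.

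\textbf{Case (2).} The claim that contact prolongations raise the $2$-weighted degree by at most one is false, and so is ``affine in jets of order $\ge3$''. Take $n=1$ and generating function $f=\tfrac12p^2$, i.e.\ $X=-p\,\partial_x-\tfrac12p^2\partial_u$. The $\partial_{u_4}$-coefficient is $D_x^4f-f_pu_5=3u_3^2+4u_2u_4$, so $X^{(4)}(u_4)$ jumps from weighted degree $4$ to $6$. Larger jumps occur at higher orders when $f_{ppp}\neq0$. A priori $\lambda_\sigma(X)$ could then have weighted degree $\ge3$, so Proposition~\ref{pr1outof4} does not apply verbatim.

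A repair is to regrade, giving $u_\sigma$ with $|\sigma|\ge3$ the weight $|\sigma|-1$.
\begin{itemize}
\item Each $D_i$ raises $\sum_r(|\tau_r|-1)$, taken over the factors $u_{\tau_r}$ with $|\tau_r|\ge2$, by at most one.
\item The only term of $D_\sigma f$ of order $|\sigma|+1$ is $f_{p_i}u_{\sigma i}$, which cancels, so the $\partial_{u_\sigma}$-coefficient has new weight $\le|\sigma|$.
\item Thus $X^{(k)}$ raises the new weight by at most one, while the smallest positive new weight is $2$. This forces $\lambda_\sigma(X)$ to have weight $0$.
\end{itemize}
This is a chartwise statement, which suffices because ``depends only on $2$-jets'' is chart-independent.

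\textbf{Transition functions.} A nowhere-vanishing regular function on a nonsingular affine variety need not be constant; take $z$ on $\mathbb{C}^\times$. Your transition-function step, like the cancellation argument in Proposition~\ref{pr1outof4}, really uses that the fibres are affine spaces.
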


Thus the weight lattice $\mathcal{W}$ is a subgroup of $\op{Pic}_{\g^{(t)}}(\E^t)$
with $t=l$ in the setting of Proposition~\ref{pr1outof4}, while in the settings of
Theorem \ref{th23outof4}: $t=0$ in case (1), $t=2$ in case (2). This restricts the order of weights 
but not the order of relative differential invariants realizing those weights; 
the latter will be addressed in Section \ref{sect:weightsdifferential}.

 \begin{remark}
The results of this section are useful in applications, since 
a-priori knowledge of possible weights facilitates the computation of 
relative differential invariants. Indeed, the defining condition of
a fixed weight invariant is a system of linear first-order PDEs. 
Furthermore, for polynomial relative differential invariants of 
a fixed degree, this PDE system reduces to a linear algebraic system 
on coefficients of the polynomials. 
 \end{remark}

\subsection{Invariant $\mathcal{C}$-differential operators} \label{sect:cdiff}

We aim to describe invariant derivations, first on the algebra $\RelInvRat$ and then 
on the subalgebra $\RelInv\subset\RelInvRat$. For a weight $w\in\W$ the graded 
component $R_w$ will be taken within $\RelInvRat$ in this entire subsection.
Moreover, till its end (up to Remark \ref{Rk8}) we will ignore torsion
and consider torsion-free weights. Concretely, choose a (noncanoncial) 
complement $\W_{tf}\simeq\Z^s$ to $\W_\tau$, so that the weight lattice splits
 \begin{equation}\label{splitW}
\W\simeq\W_{tf}\oplus\W_\tau
 \end{equation} 
and we restrict to relative invariants with weights in $\W_{tf}$.

Let $R_0$ be a relative differential invariant of weight $e_0=(g_0,\lambda_0)\in\W$, 
and let $\RelInv_w$ denote the $\AbsInv$-module of relative differential invariants 
of weight $we_0$ (or weight $w$ for short) for  $w=p/q\in\mathbb Q$. 
By Proposition \ref{pr1outof4}, we know that we can consider $we_0$ as an element in 
$\mathrm{Pic}_{\g^{(l)}}^\mathbb{Q}(\E^l)$, where $l$ is the order of the PDE $\E$. 
It will be useful to work formally with $w\in\mathbb{Q}$ even though most 
rational numbers do not correspond to weights of relative differential invariants.
They need not even correspond to an element in $\mathrm{Pic}_{\g^{(l)}}(\E^l)$. 
For such $w$ we have $\RelInv_w=\{0\}$.

Consider an element $R\in\RelInv_w$. Then $R^q/R_0^p$ is a rational absolute 
differential invariant, and for an invariant derivation $\hat{\p}\in  \Der_{\AbsInv}$, 
the following is also a rational absolute invariant\footnote{The computations here 
are done in a general open chart where $R$ and $R_0$ can be treated as functions, 
and $\hat \partial$ is treated as a $\mathcal{C}$-differential operator.}:
 \[ 
\hat{\p}\left(\frac{R^q}{R_0^p}\right) = \frac{qR^{q-1}}{R_0^p} 
\left(\hat{\p}(R)-\frac{p}{q}\frac{\hat{\partial}(R_0)}{R_0}R\right).
 \] 
It follows that the function
 \[ 
\hat{\partial}(R)-w\frac{\hat{\p}(R_0)}{R_0}R
 \]
is (the local expression on an arbitrary chart $U$ of) a relative differential 
invariant of weight $w$. 

The value of the function $R$ depends on the choice 
of fiber-coordinates on the line bundle $e_0$. A change in coordinates leads to a change in the function: 
$R_0\mapsto\mu R_0$, $R\mapsto\mu^w R$ for some $\mu\in\mathcal{O}^\times(U)$. 
We notice that\footnote{At this point we treat arbitrary powers $\mu^w$ formally, 
and so may consider $w\in\C$ arbitrary; alternatively change $\mu\to\mu^q$ 
for $w=p/q\in\mathbb{Q}$ and keep everything algebraic.}
 \[ 
\hat \partial(\mu^wR) -w\frac{\hat\partial(\mu R_0)}{\mu R_0} \mu^wR = 
\mu^w \left(\hat\partial(R)-w\frac{\hat\partial(R_0)}{R_0}R\right).
 \] 
Since this operator behaves well under change of fiber-coordinates, and since 
these computations are similar in any chart,  we obtain an invariant first-order 
$\mathcal{C}$-differential operator $\nabla^0_{\hat{\p}}$ on the $\AbsInv$-module 
of rational relative differential invariants of weight $w$: 
 \begin{equation}\label{eq:RelFromAbs}
\nabla^0_{\hat\p}\Big|_U := \hat{\p}-w\frac{\hat\p(R_0)}{R_0} 
= \hat{\p}-\hat\p(\log(R_0^w)).
 \end{equation}
 
 \begin{prop}\label{pppr4}
Let $\E$ be an algebraic PDE and $e_0\in\mathcal{W}$ 
be the weight of a rational relative differential invariant $R_0\in\RelInvRat^\times$. 
For any $w\in\mathbb Q$ the map 
$\nabla^0: \Der_{\AbsInv}\to\mathcal{C}\mathrm{Diff}_1 (\RelInv_{w},\RelInv_{w})$ 
given by \eqref{eq:RelFromAbs} is a connection on the 
$\AbsInv$-module $\RelInv_{w}$. 
 \end{prop}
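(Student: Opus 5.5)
To prove Proposition \ref{pppr4}, I will check three things: that $\nabla^0_{\hat\p}$ actually maps $\RelInv_w$ to $\RelInv_w$, that it satisfies the Leibniz rule over $\AbsInv$, and that it depends $\AbsInv$-linearly on $\hat\p\in\Der_\AbsInv$. These are exactly the axioms of a connection on the $\AbsInv$-module $\RelInv_w$. All computations will be done locally on an arbitrary chart $U$ in which $R$ and $R_0$ are represented by functions. The global statement will then follow from the transformation rule shown just before the proposition: under a change of fiber coordinates $R_0\mapsto\mu R_0$, $R\mapsto\mu^wR$, the expression $\nabla^0_{\hat\p}R$ transforms like a section of weight $we_0$. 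So the local expressions patch together into a section of the same equivariant line bundle.

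\emph{Step 1 (well-definedness).} Take $R\in\RelInv_w$ with $w=p/q$. The quotient $R^q/R_0^p$ is a rational absolute invariant, so $\hat\p(R^q/R_0^p)\in\AbsInv$. The displayed identity gives
\[
\nabla^0_{\hat\p}R=\frac{R_0^p}{qR^{q-1}}\,\hat\p\Bigl(\frac{R^q}{R_0^p}\Bigr).
\]
The right-hand side is a product of an absolute invariant and a rational relative invariant of weight $pe_0-(q-1)we_0=we_0$. It is therefore a rational relative invariant of weight $w$, or zero. Both weights are computed in $\op{Pic}^{\mathbb Q}_{\g^{(l)}}(\E^l)$ via Proposition \ref{pr1outof4}. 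If $R=0$ the claim is trivial.

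The only subtlety here is that $w$ may fail to be an actual weight, or that fractional powers appear. In that case $\RelInv_w=\{0\}$ and the claim is vacuous; otherwise the weight arithmetic stays inside the torsion-free part under the splitting \eqref{splitW}. Since $\nabla^0_{\hat\p}$ is a $\mathcal{C}$-differential operator of order one, it is a first-order operator, as the proposition requires.

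\emph{Step 2 (connection axioms).} For $f\in\AbsInv$ and $R\in\RelInv_w$ I will check
\[
\nabla^0_{\hat\p}(fR)=\hat\p(f)R+f\nabla^0_{\hat\p}R.
\]
This holds because $\hat\p$ is a derivation and the correction term $w\,\hat\p(R_0)/R_0$ is a multiplication operator. Additivity in $R$ is immediate. For $f\in\AbsInv$ I will also check $\nabla^0_{f\hat\p}=f\nabla^0_{\hat\p}$, together with additivity in $\hat\p$; both follow because the formula $\hat\p-w\,\hat\p(\log R_0)$ is linear in $\hat\p$.

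I expect the main obstacle to be Step 1, more precisely the justification that rationality and the weight are preserved globally, rather than only in a chart. I would handle it through the $\mu^w$ transformation computation already given, and, when $w\notin\mathbb Z$, by replacing $\mu$ with $\mu^q$ so that everything stays algebraic.
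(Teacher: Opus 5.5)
Your proposal is correct and follows the paper's argument. The paper likewise obtains invariance of $\RelInv_w$ under $\nabla^0_{\hat\p}$ from the identity $\hat\p(R^q/R_0^p)=\frac{qR^{q-1}}{R_0^p}\,\nabla^0_{\hat\p}R$ and globality from the $\mu^w$ transformation rule (both in the text just before the proposition), and then verifies the Leibniz rule over $\AbsInv$; your additional check of $\AbsInv$-linearity in $\hat\p$ fills in what the paper leaves implicit.
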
 

 \begin{proof}
Indeed, the map 
$\nabla^0:\Der_{\AbsInv}\ni\hat{\p}\mapsto\nabla^0_{\hat{\p}}\in\op{End}(\RelInv_w)$
satisfies the Leibniz rule 
$\nabla^0_{\hat{\p}}(IR) = \hat{\p}(I)R+I\nabla^0_{\hat{\p}}(R)$ for $I\in\AbsInv$, 
and hence is a connection (in the algebraic sense). 
By the previous section, we 
 \end{proof}

We can generalize the above idea to obtain a general expression for a 
$\mathcal{C}$-differential operator acting on the $\AbsInv$-algebra 
$\RelInvRat$ of relative differential invariants. Let the weight lattice
$\W=\wt(\RelInvRat^\times)\subset\mathrm{Pic}_{\g^{(l)}}(\E^l)$ have rank $s$,
and let relative differential invariants $R_1,\dots, R_s$ have independent weights
$e_1,\dots,e_s$. Then any other relative differential invariant has weight 
$w_1 e_1+\cdots+w_s e_s$ for some $w=(w_1,\dots,w_s)\in\mathbb Q^s$.  
We define a first-order invariant $\mathcal{C}$-differential operator $\nabla^w$ on 
the space $\RelInv_{w}$ of of rational relative invariants of this weight $w$ as follows: 
 \begin{equation}\label{eq:RelFromAbs2}
\nabla^w_{\hat\p}\Big|_U := \hat\p -\sum_{i=1}^sw_i\frac{\hat\p(R_i)}{R_i} 
= \hat{\partial} -\hat\p(\log(R_1^{w_1}\cdots R_s^{w_s})).
 \end{equation}

 \begin{prop} \label{prop:connection}
Let $\E$ be an algebraic PDE. Let $e_1,\dots,e_s$ be a maximal independent 
set of weights in of $\W$, and $R_1,\dots,R_s$ 
a set of relative invariants with these weights. 
For a weight $w=(w_1,\dots,w_s)\in\W\otimes\mathbb Q\simeq\mathbb Q^s$, 
the map $\nabla^w:\Der_{\AbsInv}\to\mathcal{C}\mathrm{Diff}_1(\RelInv_w,\RelInv_w)$ 
given by \eqref{eq:RelFromAbs2} is a connection on  the $\AbsInv$-module $\RelInv_w$. 
 \end{prop}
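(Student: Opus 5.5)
We need to show that $\nabla^w_{\hat\p}$ maps $\RelInv_w$ into itself, that it is well defined globally (independent of the chart and of the fiber coordinates on the line bundles $e_1,\dots,e_s$), and that it satisfies the connection axioms. The plan reduces everything to the one-weight case of Proposition \ref{pppr4}, by making a single auxiliary relative invariant play the role of $R_0$.

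First we check invariance. Fix $R\in\RelInv_w$ with $w=(w_1,\dots,w_s)\in\mathbb Q^s$ and choose a common denominator $q$, so that $qw_i=p_i\in\Z$. The monomial $R_\diamond:=R_1^{p_1}\cdots R_s^{p_s}$ is a rational relative invariant of weight $qw$, the same weight as $R^q$. Hence $R^q/R_\diamond$ has trivial weight and is a rational absolute differential invariant. Applying $\hat\p\in\Der_\AbsInv$ and expanding, exactly as in the computation preceding \eqref{eq:RelFromAbs}, gives
\[
\hat\p\Big(\frac{R^q}{R_\diamond}\Big)=\frac{qR^{q-1}}{R_\diamond}\Big(\hat\p(R)-\sum_i w_i\frac{\hat\p(R_i)}{R_i}R\Big).
\]
The left side is absolute, and $qR^{q-1}/R_\diamond$ is relative of weight $-w$. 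So the bracket, which is $\nabla^w_{\hat\p}R$, is locally a relative invariant of weight $w$. Alternatively, one can apply Proposition \ref{pppr4} with $R_0=R_\diamond$ and weight parameter $1/q$, since $\frac1q\hat\p(\log R_\diamond)=\hat\p\log(R_1^{w_1}\cdots R_s^{w_s})$.

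Second, we check that the local expressions glue. Under a change of fiber coordinates $R_i\mapsto\mu_iR_i$, the representative changes as $R\mapsto\mu_1^{w_1}\cdots\mu_s^{w_s}R$. Put $\mu=\prod\mu_i^{w_i}$, treated formally, or replace $\mu_i$ by $\mu_i^q$ to stay algebraic. A direct Leibniz computation gives
\[
\nabla^w_{\hat\p}(\mu R)=\hat\p(\mu)R+\mu\hat\p R-\mu R\Big(\sum_i w_i\frac{\hat\p\mu_i}{\mu_i}+\sum_i w_i\frac{\hat\p R_i}{R_i}\Big)=\mu\,\nabla^w_{\hat\p}R,
\]
because $\hat\p(\mu)/\mu=\sum_i w_i\hat\p(\mu_i)/\mu_i$. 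Thus the representatives on overlaps transform by the same transition cocycle of the bundle of weight $w$, and $\nabla^w_{\hat\p}R$ is a global section of that bundle. Invariance under $\hat\g$ holds chartwise by the first step.

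Third, the connection axioms. We have $\AbsInv$-linearity in $\hat\p$: $\nabla^w_{I\hat\p}=I\nabla^w_{\hat\p}$, since both terms of \eqref{eq:RelFromAbs2} are linear in $\hat\p$ over functions. The Leibniz rule $\nabla^w_{\hat\p}(IR)=\hat\p(I)R+I\nabla^w_{\hat\p}R$ holds because the zeroth-order term is $\AbsInv$-linear in $R$. Also $\nabla^w_{\hat\p}$ is a first-order $\mathcal C$-differential operator, as $\hat\p$ is a total derivation and the correction is multiplication by a function.

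The main subtlety is independence of the choice of $R_1,\dots,R_s$. The definition should be independent of this choice, at least up to the expected ambiguity: replacing $R_i$ by $R_i'$ of the same weight changes the correction term by $\sum_i w_i\hat\p\log(R_i'/R_i)$, with $R_i'/R_i\in\AbsInv$. This is a different but equally valid connection. So the claim is only that each fixed choice gives a connection, which is what the statement asserts. The other point to watch is that torsion is ignored via \eqref{splitW}, so that weights are parametrized by $\mathbb Q^s$.
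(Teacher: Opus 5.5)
Your proposal is correct and follows essentially the paper's route. The paper only says the proof is ``similar to'' Proposition~\ref{pppr4}, i.e.\ the same three ingredients you use: absoluteness of $R^q/R_1^{p_1}\cdots R_s^{p_s}$ forcing the bracket to be relative of weight $w$; covariance of \eqref{eq:RelFromAbs2} under $R_i\mapsto\mu_iR_i$, $R\mapsto\prod_i\mu_i^{w_i}R$; and the Leibniz rule. Your reduction to Proposition~\ref{pppr4} with $R_0=R_\diamond$ and parameter $1/q$ makes that ``similarity'' precise. As in the paper, you tacitly fix the fiber coordinates on the bundle of weight $w$ compatibly with those of $e_1,\dots,e_s$, so that $R^q/R_\diamond$ is literally absolute rather than absolute up to a global nonvanishing factor; this is harmless but worth stating explicitly.
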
 

The proof is similar to the case $s=1$, Proposition \ref{pppr4}. 
Note that formula \eqref{eq:RelFromAbs2} is based on a particular set of 
relative differential invariants $R_1,\dots, R_s$. For another choice 
the connection is modified so: $\nabla^w\mapsto\nabla^w+J$, $J\in\AbsInv$. 
Let $n$ be the dimension of $\Der_{\AbsInv}$ over the field $\AbsInv$. 


 \begin{prop}\label{propDeltaRel}
Under the assumptions of Proposition \ref{prop:connection}, let 
$\hat\p_1,\dots,\hat\p_n$ be a basis of $\Der_{\AbsInv}$. Then any first-order  
$\mathcal{C}$-differential operator $\Delta:\RelInv_w\to\RelInv_{w+m}$ is of the form 
$\alpha^i\nabla^w_{\hat\p_i}+\beta$ where $\alpha^1,\dots,\alpha^n,\beta$ are 
relative differential invariants of weight $m$. 
 \end{prop}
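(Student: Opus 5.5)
The plan is to reduce $\Delta$ to its symbol and a zeroth order part, and identify each with relative invariants by working in a local chart. Fix a chart $U$ on which all $R\in\RelInv_w$ are represented by functions. A first-order $\mathcal{C}$-differential operator there has the form $\Delta=\sum_j a^j D_j + b$, with $D_j$ the total derivatives and $a^j,b$ functions on jets. By the global Lie-Tresse theorem, the invariant derivations $\hat\p_1,\dots,\hat\p_n$ form a basis of the horizontal (total derivative) module over the field of rational functions on $\E^\infty$: each $\hat\p_i=\sum_j c_i^j D_j$ with $\det(c_i^j)\ne0$ generically. Here $n$ equals the number of independent variables, since $\Der_\AbsInv$ is generated by Tresse derivatives with respect to $n$ functionally independent invariants. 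Inverting $(c_i^j)$, we can rewrite
 \[
\Delta=\sum_i\alpha^i\hat\p_i+\tilde\beta=\sum_i\alpha^i\nabla^w_{\hat\p_i}+\beta,\qquad
\beta=\tilde\beta+\sum_i\alpha^i\,\hat\p_i\bigl(\log(R_1^{w_1}\cdots R_s^{w_s})\bigr).
 \]
This decomposition is unique, because the operators $\nabla^w_{\hat\p_i}$ have linearly independent symbols $\hat\p_i$.

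It remains to show that $\alpha^i$ and $\beta$ are relative invariants of weight $m$. I would use the equivariance of $\Delta$: for $X\in\g$ the commutation rule $L_{X^{(\infty)}}\circ\Delta-\Delta\circ L_{X^{(\infty)}}=\lambda_m(X)\Delta+{}$(terms from changing $w$ to $w+m$) holds on sections. Equivalently, $\Delta$ is an invariant operator between the equivariant line bundles of weights $w$ and $w+m$. The operators $\nabla^w_{\hat\p_i}$ are invariant maps from the bundle of weight $w$ to itself, by Proposition \ref{prop:connection} and the fact that $\hat\p_i$ commute with $\g^{(\infty)}$.

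Next, apply $\Delta$ to $fR$ with $R\in\RelInv_w$ fixed nonzero and $f$ an arbitrary function. Comparing symbols gives $\sum\alpha^i\hat\p_i(f)R$, so the symbol $\sum_i\alpha^i\hat\p_i$ must be invariant after twisting by weight $m$. Since the $\hat\p_i$ are invariant and independent, applying $L_X$ shows $L_X\alpha^i=\lambda_m(X)\alpha^i$, together with the analogous compatibility with the transition functions. So each $\alpha^i$ is a relative invariant of weight $m$. Then $\beta=\Delta-\sum\alpha^i\nabla^w_{\hat\p_i}$ is a zeroth-order invariant operator $\RelInv_w\to\RelInv_{w+m}$, that is, multiplication by a function. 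Evaluating on $R$ gives $\beta=\Delta(R)/R-\sum\alpha^i\nabla^w_{\hat\p_i}(R)/R$, which is a ratio of relative invariants of weights $w+m$ and $w$, hence of weight $m$.

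The main obstacle is making the invariance argument global and well defined across charts and fiber coordinates, and making precise what ``$\Delta:\RelInv_w\to\RelInv_{w+m}$'' means. If the hypothesis is only that $\Delta$ maps invariants to invariants, rather than being an equivariant operator on all sections, I would first upgrade it. The upgrade uses $\nabla^0$-type Leibniz rules: $\AbsInv$ acts on $\RelInv_w$, and $\Delta(IR)=\sum\alpha^i\hat\p_i(I)R+I\Delta(R)$. Since the absolute invariants $I$ have differentials $\hat\p_i(I)$ spanning, by Lie-Tresse, we can extract $\alpha^i R$ as a combination of elements of $\RelInv_{w+m}$ with coefficients in $\AbsInv$. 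This yields $\alpha^i\in\RelInv_m$ directly, without assuming equivariance on arbitrary sections.
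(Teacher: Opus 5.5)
Your proposal is correct, but the key invariance step differs from the paper's. The paper also writes $\Delta=\alpha^i\hat\p_i+\tilde\beta$. It then computes $[\Delta,X^{(\infty)}](R)$ in two ways: once using $X^{(\infty)}R=\lambda^w(X)R$ and $X^{(\infty)}\Delta(R)=\lambda^{w+m}(X)\Delta(R)$, and once using $[\hat\p_i,X^{(\infty)}]=0$. From this it reads off the system \eqref{eq:relativederivations}. The $\alpha^i$ satisfy the homogeneous relative-invariance equation of weight $m$. The coefficient $\tilde\beta$ satisfies an inhomogeneous equation, whose particular solution $\beta_0=-\sum w_i\alpha^j\hat\p_j(\log R_i)$ is exactly the term absorbed into $\nabla^w$.

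Your route (b) uses the $\AbsInv$-module structure instead. You have $\Delta(IR)-I\Delta(R)=\alpha^i\hat\p_i(I)R\in\RelInv_{w+m}$. Inverting the matrix $(\hat\p_i(I_j))$, which is the identity for Tresse derivatives, gives $\alpha^iR\in\RelInv_{w+m}$. Then $\beta=(\Delta R-\alpha^i\nabla^w_{\hat\p_i}R)/R$ is a ratio of invariant sections of weights $w+m$ and $w$.

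Your version has three advantages.
\begin{itemize}
\item It uses only the stated hypothesis that $\Delta$ maps $\RelInv_w$ into $\RelInv_{w+m}$.
\item It makes explicit the separation of the coefficients of $R$ and $\hat\p_i(R)$, which the paper performs without comment. That separation is justified by replacing $R$ with $IR$, which is essentially your argument.
\item Ratios of global invariant sections are global invariant sections, so it handles transition functions as well as multipliers, and no inhomogeneous equation has to be solved.
\end{itemize}
The paper's version, in turn, exhibits the explicit infinitesimal system, which shows why the $\nabla^w$-normalization is the natural one.

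Your route (a) can be dropped: it assumes more, namely equivariance on arbitrary sections, and (b) supersedes it. Both proofs tacitly need $\RelInv_w\neq0$. They also need $n$ horizontally independent absolute invariants, so that $\hat\p_1,\dots,\hat\p_n$ span the total derivatives and the matrix $(\hat\p_i(I_j))$ can be made invertible.
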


 \begin{proof}
A general first-order $\mathcal{C}$-differential operator has (locally) the form 
$\Delta=\alpha^i\hat\p_i +\tilde\beta:\RelInv_w\to\mathcal{R}_{w+m}$. 
The invariance condition constrain $\alpha^i,\beta$ as follows. 
Using the fact that $\Delta$ takes a relative differential invariant of weight $w$ 
to one of weight $w+m$ gives 
 \begin{align*}
[\Delta,X^{(\infty)}](R) &= \Delta(\lambda^w(X)R) - X^{(\infty)}(\Delta(R)) \\ 
&= \Delta(\lambda^w(X))R +\lambda^w(X)\Delta(R) -\lambda^w(X)\tilde\beta R -\lambda^{w+m}(X)\Delta(R) \\
&= \bigl(\alpha^i\hat{\partial}_i(\lambda^w(X)) -\lambda^m(X)\tilde\beta\bigr)R - 
\lambda^m(X)\alpha^i\hat{\partial}_i(R).
 \end{align*}
Simultaneously, using the fact that $[\hat{\partial}_i, X^{(\infty)}] =0$, we see that
 \[
[\Delta, X^{(\infty)}](R) = [\alpha^i \hat{\partial}_i, X^{(\infty)}](R)+ [\tilde\beta,X^{(\infty)}](R) = -X^{(\infty)}(\alpha^i) \hat{\partial}_i(R) - X^{(\infty)}(\tilde\beta) R.
 \] 
Comparing these expressions, we conclude that the functions $\alpha^i$ and $\tilde\beta$
satisfy the system 
\begin{equation}
 \begin{cases} 
X^{(\infty)}(\alpha^i) = \lambda^m(X) \alpha^i ,\\ 
X^{(\infty)} (\tilde\beta) = \lambda^m(X)\tilde\beta-\alpha^i \hat{\partial}_i(\lambda^w(X)).
\end{cases} \label{eq:relativederivations}
\end{equation}
The first line of \eqref{eq:relativederivations} means $\alpha^i$ are relative differential invariants of weight $m$. 
The second line tells that $\tilde\beta=\beta_0+\beta$, where $\beta$ is a relative differential invariant of weight $m$ and $\beta_0$ is a particular
solution to the second line of \eqref{eq:relativederivations}, for instance 
 \[ 
\beta_0=-\sum_{i=1}^s\sum_{j=1}^n w_i\alpha^j\hat\p_j(\log R_i).
 \] 
Using formula \eqref{eq:RelFromAbs2} we conclude the claim.    
 \end{proof}

Above, the $\mathcal{C}$-differential operator $\Delta$ was treated simply as a first-order differential operator from 
$\RelInv_w$ to $\RelInv_{w+m}$ for a fixed weight $w\in\W$. 
Since $w$ was arbitrary, we can let it vary to obtain a first-order 
$\mathcal{C}$-differential operator 
$\Delta:\RelInvRat\to\RelInvRat$, $\Delta|_{\RelInv_w}=\Delta_w:= 
\alpha_w^i \nabla^w_{\hat\p_i} + \beta_w:\RelInv_w\to\RelInv_{w+m}$. 

Let $S_1, S_2$ be relative differential invariants of weights $w_1,w_2$, respectively. Then 
 \[
\Delta(S_1 S_2)=\Delta_{w_1+w_2}(S_1 S_2) = 
\alpha_{w_1+w_2}^i \nabla^{w_1}_{\hat{\partial}_i} (S_1) S_2
+\alpha_{w_1+w_2}^i \nabla^{w_2}_{\hat{\partial}_i} (S_2) S_1+\beta_{w_1+w_2} S_1 S_2.
 \] 
For $\Delta\colon \RelInv \to \RelInv$ to be a derivation, 
it must satisfy Leibniz' rule: 
 \[
\Delta_{w_1+w_2}(S_1 S_2) = \Delta_{w_1} (S_1) S_2+S_1 \Delta_{w_2} (S_2)
 \] 
which is equivalent to $\alpha_{w_1+w_2}^i=\alpha_{w_1}^i=\alpha_{w_2}^i$ 
and $\beta_{w_1+w_2}=\beta_{w_1}+\beta_{w_2}$. Thus, $\Delta$ is a derivation 
if and only if $\alpha_w^1,\dots,\alpha_w^n$ are independent of the weight 
and $\beta_w$ depends linearly on the weight. The last condition means that 
$\beta_w=w_i\beta^i$ for $s$ relative invariants $\beta^1,\dots,\beta^s$ of 
weight $m$ that are independent of $w$. This leads to the following definition.

 \begin{definition}
A rational relative invariant derivation $\Delta:\RelInvRat\to\RelInvRat$ 
of weight $m=(m_1, \dots, m_s)$ is a collection of linear maps of the form 
 \begin{equation}\label{def15}
\Delta_w =\alpha^i\nabla^w_{\hat{\partial}_i} +w_i\beta^i:\RelInv_w\to\RelInv_{w+m},
 \end{equation}
defined for each weight $w=(w_1,\dots,w_s)$, with $\alpha^i,\beta^i\in\RelInv_m$.
Relative invariant derivations form a graded module 
$\mathrm{Der}(\RelInvRat)$ over the graded algebra $\RelInvRat$.
 \end{definition}

 \begin{remark}\label{Rk8}
To extend the $\mathcal{C}$-differential operator $\Delta:\RelInv_w\to\RelInv_{w+m}$
to arbitrary weight $w\in\W$ let us use splitting \eqref{splitW} and decompose
$w=w_{tf}+w_\tau$ correspondingly. Let us also choose relative invariants 
$R_{w_\tau}$ of weights $w_\tau$ for all torsion in $\W$. 
Since $\Delta:\RelInv_{w_{tf}}\to\RelInv_{w_{tf}+m}$ is given by 
Proposition \ref{propDeltaRel}, we introduce the required operator $\Delta$
by the following commutative diagram
 \begin{center}\begin{tikzcd} 
\RelInv_w \arrow[d,"\cdot R^{-1}_{w_\tau}"] \arrow[r,"\Delta"] & \RelInv_{w+m} \\
\RelInv_{w_{tf}} \arrow[r,"\Delta"] & \RelInv_{w_{tf}+m} \arrow[u,"\cdot R_{w_\tau}" ']
 \end{tikzcd}\end{center}
This extends the relative invariant derivation $\Delta$
of weight $m\in\W_{tf}$ to a map $\RelInv_{\bullet}\to\RelInv_{\bullet+m}$.
We can further extend it for arbitrary $m\in\W$
by the formula $\Delta'=R_{m_\tau}\Delta$, where $m=m_{tf}+m_\tau$.
 \end{remark}

Now let us consider the graded subalgebra $\RelInv\subset\RelInvRat$ of 
polynomial relative differential invariants. 
The algebra $\AbsInv$ of rational differential invariants
does not act on it, however with the notation
$\RelInv(\E^l)$ and $\RelInvRat(\E^l)$ for polynomial/rational relative
differential invariants of order $l$ on the equation $\E$,
the extension $\RelInv\otimes_{\RelInv(\E^l)}\RelInvRat(\E^l)$ is a module over
the algebra $\mathfrak{A}^l$ of rational-polynomial differential invariants
exploited in the global Lie-Tresse theorem.

There is a submodule $\mathrm{Der}(\RelInv)\subset\mathrm{Der}(\RelInvRat)$ 
of polynomial relative invariant derivations acting on the graded subalgebra 
$\RelInv$. Given $\Delta\in\mathrm{Der}(\RelInvRat)$ of weight $m$ let
$Q$ be its denominator (common denominator for all coefficients) of weight $q$.
Then $\Delta'=Q\Delta\in\mathrm{Der}(\RelInv)$ is a polynomial invariant 
derivation of weight $m+q$. Thus, differentiations from $\mathrm{Der}(\RelInv)$ 
may be also used as generators of the algebra of polynomial relative differential invariants.
The question of whether $\RelInv$ is finitely generated as differential algebra 
will be discussed in the next section.

\section{Localization and finite generation of relative invariants}\label{S4}

\subsection{The weights of rational relative differential invariants are finitely generated}\label{sect:weightsdifferential}

Let us now discuss a differential-algebraic version of Theorem \ref{th:finitgenrelinv}.

 \begin{theorem}\label{th:reldiff}
Assume that $G$ is an algebraic Lie pseudogroup of symmetries of an algebraic PDE 
$\E$ acting transitively on $\E^0$. The group $\RelInvRat^\times/\AbsInv^\times$ 
of rational relative modulo absolute differential invariants is finitely generated. 
Moreover, there exists a finite number of rational absolute differential invariants 
$I_1,\dots,I_r$, rational invariant derivations $\nabla_1,\dots,\nabla_s$ and 
polynomial relative differential invariants $R_1,\dots,R_q$ such that any other 
rational relative differential invariant takes the form  
 \begin{equation}\label{RCFR}
R=CF(I_1,\dots,I_r,\nabla_1(I_1),\dots) R_1^{d_1} \cdots R_q^{d_q}
 \end{equation}
for some integer exponents $d_i$, a rational function $F$ of (a finite number of) 
absolute differential invariants and an element $C\in\Hcech^0(\E^\ell,\mathcal{O}_{\E^\ell}^\times)$ for some integer $\ell$.
 \end{theorem}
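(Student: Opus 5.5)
The plan is to reduce the differential statement to the algebraic Theorem \ref{th:finitgenrelinv}. The reduction combines the stabilization of singularities in the global Lie--Tresse theorem with the fact that the invariant hypersurfaces lying over the singular locus come from a fixed finite jet-order $\ell$. First I fix $\ell$ and the Zariski closed invariant proper subset $S_\ell\subset\E^\ell$ given by the first part of the global Lie--Tresse theorem. By enlarging $S_\ell$ if necessary, I arrange that for every $k\ge\ell$ the rational geometric quotient of $\E^k\setminus\pi_{k,\ell}^{-1}(S_\ell)$ exists. I also arrange that the generators $I_1,\dots,I_r$ and the derivations $\nabla_1,\dots,\nabla_s$ of the second part are regular there, and that the differential invariants $\nabla_J I_i$ of order $\le k$ have maximal rank there. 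Exactly as in the proof of Theorem \ref{th:finitgenrelinv}, these invariants separate $G^k$-orbits on $\E^k\setminus\pi_{k,\ell}^{-1}(S_\ell)$. Consequently every $G$-invariant hypersurface of $\E^k$ not contained in $\pi_{k,\ell}^{-1}(S_\ell)$ is the zero locus of a rational function $F(I_1,\dots,\nabla_J I_i,\dots)$.

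Next I treat the invariant hypersurfaces inside $\pi_{k,\ell}^{-1}(S_\ell)$. Since $\pi_{k,\ell}\colon\E^k\to\E^\ell$ is a tower of affine bundles with irreducible fibers, every irreducible hypersurface of $\E^k$ contained in $\pi_{k,\ell}^{-1}(S_\ell)$ is a component $\pi_{k,\ell}^{-1}(\Sigma_i^1)$, where $\Sigma_i^1$ is a codimension-one irreducible component of $S_\ell$. There are finitely many such $\Sigma^1_1,\dots,\Sigma^1_{q_1}$, independent of $k$. Transitivity on $\E^0$ together with algebraicity of the stabilizers $G^\ell_a$ lets me argue fiberwise. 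If $G$ is disconnected, I group the components into $q$ minimal invariant unions. Each of these is then the zero divisor of a polynomial relative invariant $R_j$ of order $\le\ell$: polynomiality in the fiber coordinates follows as in Proposition \ref{rattopol}, passing to the Cartier divisor, which is legitimate on the nonsingular part.

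Given an arbitrary rational relative differential invariant $R$ of order $k$, I then choose exponents $d_j$ equal to the multiplicities of $R$ along $\pi_{k,\ell}^{-1}(\Sigma_j^1)$. After this choice $R/R_1^{d_1}\cdots R_q^{d_q}$ has its zeros and poles only off $\pi_{k,\ell}^{-1}(S_\ell)$. There these zeros and poles are cut out by some $F(I,\nabla_JI)$, so the ratio of $R/\prod R_j^{d_j}$ to $F$ is a nowhere vanishing regular function on $\E^k$. Because the fibers of $\E^k\to\E^\ell$ are affine spaces, such a unit must be a pullback, which gives $C\in\Hcech^0(\E^\ell,\O^\times)$. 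The same cancellation argument as in Proposition \ref{pr1outof4} applies here. This yields \eqref{RCFR}. Finite generation of $\RelInvRat^\times/\AbsInv^\times$ then follows because the group of units modulo invariant units is finitely generated in the quasi-projective setting and only $q$ relative invariants $R_j$ are involved.

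I expect the main obstacle to be the second step: proving that the singular hypersurfaces stabilize at order $\ell$, i.e.\ that no new invariant divisor of $\E^k$ lies over $S_\ell$ other than pullbacks, and that the exceptional locus need not grow with $k$. To handle it I will use the statement that the quotient exists on the complement of $\pi_{k,\ell}^{-1}(S_\ell)$ for all $k$, together with irreducibility of the affine fibers. The delicate point is the regularity and rank condition on the generators $\nabla_JI_i$ at higher orders. I would check that it can only fail along $\pi_{k,\ell}^{-1}(S_\ell)$ by invoking the rational geometric quotient property itself rather than the explicit generators.
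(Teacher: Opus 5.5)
Your outline follows the paper's proof. The first part of the global Lie--Tresse theorem fixes $\ell$ and $S_\ell$. The codimension-one components of $S_\ell$ give the $R_j$, absolute invariants handle the good locus, and the leftover unit is pulled back from $\E^\ell$. The point you flag as the main obstacle is harmless: once $\ell\ge\op{ord}\E$, irreducibility of the affine fibres of $\pi_{k,\ell}$ shows that every invariant hypersurface inside $\pi_{k,\ell}^{-1}(S_\ell)$ is some $\pi_{k,\ell}^{-1}(\Sigma^1_i)$. You also need no maximal-rank condition on the $\nabla_JI_i$, since the geometric quotient suffices.

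Your third paragraph, however, has two problems. First, the order of choices is reversed. $F$ generally has zeros and poles along the $\Sigma^1_j$ (its generators do), so if $d_j$ are the multiplicities of $R$, the ratio $R/(F\prod R_j^{d_j})$ is not a unit. For instance, $Q_6$ in the special affine example has multiplicity $0$ along $\{R_2=0\}$ and $\{R_4=0\}$, yet $Q_6=\tfrac1{15}(5I_4I_6-5I_5^2+32I_4^3)R_2^{23}R_4^{-7}$. No $F$ with the right divisor off $\{R_2R_4=0\}$ can avoid that locus, since $Q_6/F$ would then be a unit, hence of trivial weight, whereas $\wt(Q_6)=13$. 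You must first choose $F$ with $\op{div}F=\op{div}R$ off $\pi_{k,\ell}^{-1}(S_\ell)$, and only then take $d_j$ to be the multiplicities of $R/F$.

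Second, and this is the genuine gap, orbit separation does not produce such an $F$. It only shows that each invariant prime divisor of the good locus is the pullback of a prime divisor of $Y_k=(\E^k\setminus\pi_{k,\ell}^{-1}(S_\ell))/G^k$, and set-theoretically a component of the zero set of some invariant. Matching $\op{div}R$ with multiplicities requires the descended divisor to be principal on $Y_k$, i.e.\ control of $\op{Cl}(Y_k)$, and this can fail. Take $u_x^3+u_y^3=1$ with $\g=\langle\p_x,\p_y,\p_u\rangle$. Then $\E^1\cong\C^3\times C$ with $C$ the affine Fermat cubic, and absolute invariants of every order are rational functions on the fibre $\E^k_0$, an affine-space bundle over $C$. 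For each $P\in C$ the canonical section of $\O(\C^3\times\{P\})$ with the trivial lift is a rational relative invariant of order one. A representation $CF\prod R_j^{d_j}$ with finitely many fixed $R_j$ would force all classes $[P]$ into the subgroup of $\op{Cl}(\E^k_0)\cong\op{Pic}(C)$ generated by the classes of the $\op{div}R_j$. This is impossible: these classes generate $\op{Pic}(C)=\op{Pic}(\bar C)/\langle\text{points at infinity}\rangle$, which is uncountable.

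So an additional input is needed. One option is finite generation of the class groups $\op{Cl}(Y_k)$, stable in $k$; one then adds finitely many relative invariants whose divisors generate them. The paper's proof makes the same leap: it refers the existence of the exponents $d_i$ back to the proof of Theorem \ref{th:finitgenrelinv}, where invariant hypersurfaces off $\Sigma$ are simply said to be expressible through absolute invariants. This gap is inherited rather than introduced by you, but it is a real gap in the argument as written.
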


 \begin{proof}
By \cite[Theorem 1]{KL2}, there exists a number $\ell$ (possibly larger than 
the orders of $G$ and $\E$) and a Zariski closed invariant proper subset 
$\Sigma\subset\E^\ell$ such that $G^k$-orbits in 
$\E^k\setminus\pi_{k,\ell}^{-1}(\E^\ell\setminus\Sigma)$ are closed, 
have the same dimension and algebraically fiber the space, for $k\geq\ell$. 
Thus there is a geometric quotient 
(by transitivity of $G$-action on the base, the choice of $a\in\E^0$ does not matter)
 \[ 
(\E^k \setminus \pi_{k,\ell}^{-1}(\Sigma))/G^k \simeq 
(\E_a^k \setminus \pi_{k,\ell}^{-1}(\Sigma_a))/G_a^k.
 \] 

By \cite[Theorem 2]{KL2}, orbits in $\E^\infty\setminus\pi_{\infty,\ell}^{-1}(\Sigma)$ 
are separated by rational invariants whose restrictions to fibers of $\E^\infty\to\E^\ell$ 
are polynomial and, furthermore, the algebra $\mathfrak{A}^\ell$ of such 
rational-polynomial differential invariants is generated by a finite set 
of differential invariants $\{I_1,\dots,I_r\}$ 
together with a finite set of invariant derivations $\{\nabla_1,\dots,\nabla_s\}$. 
In particular, we may assume that all the generators are defined in the complement 
of $\pi_{\infty,\ell}^{-1}(\Sigma)$. 

The singular set $\Sigma\subset\E^\ell$ can be split into a finite number of orbits 
of irreducible components in the same way as in Section \ref{sect:weights}. 
Those of codimension 1 are given by the vanishing of a set of relative differential 
invariants $R_1,\dots,R_q$ of order $\leq \ell$. For any other rational relative 
differential invariant $R$ (of any order), there exists a set of integers (weights)
$d_1,\dots,d_q$ such that 
 \[
\wt(R_1^{-d_1}\dots R_q^{-d_q}R)=0\quad\Leftrightarrow\quad 
R_1^{-d_1}\dots R_q^{-d_q}R\in\AbsInv\cdot\HH^0(\E,\mathcal{O}^\times).
 \]
Adjusted by a nonvanishing function $C$ (which is the pullback of a 
rational relative invariant on $\E^\ell$ with $\ell$ the order of the equation)
as in Theorem \ref{th:finitgenrelinv},
we get an absolute differential invariant that can be represented by 
a rational function $F$ of absolute differential invariants $\nabla_JI_i$ 
(for a finite collection of indices $i$ and multi-indices $J$). 
 \end{proof}

 \begin{remark}
While torsion does not play a significant role in this theorem and its corollary,
let us remark that by Corollary \ref{CoraffbundleTor} the torsion $\W_\tau$ is supported 
on the jet-level $l$ from which $\E^{i+1} \to \E^i$ are affine bundles.
Thus only finite generation of $W_{tf}$ needs to proven.
 \end{remark}

Let us comment on the integer $\ell$ in the theorem and proof. For any PDE, there exists an integer $i$ such that the fibers of $\E^{i+1} \to \E^i$ are affine spaces. For example, this always holds for $i \geq \mathrm{ord}(\E)$, but it often  holds for smaller $i$ as well. The integer $\ell$ in the theorem can be taken to be the smallest integer $i$ such that the orbits intersect the fibers $\E^{i+1} \to \E^i$ by affine subspaces. 

We will refer to this as the  {\em the GLT-order} of pseudogroup $G$ acting on the PDE $\E$.  

\smallskip

The following result immediately implies Theorem \ref{th:2}.

 \begin{cor}\label{CorTh8}
Under the assumptions of Theorem \ref{th:reldiff}, 
$\RelInvRat$ is finitely generated as $\AbsInv$ algebra.
Consequently, the group 
$\W=\wt(\RelInv^\times)$ is a finitely generated subgroup 
of $\mathrm{Pic}_{\g^{(\ell)}}(\E^\ell)$ for a positive integer $\ell$
which does not exceed the GLT order.
 \end{cor}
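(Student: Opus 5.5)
The corollary follows from formula \eqref{RCFR} of Theorem \ref{th:reldiff}, together with the weight homomorphism $\wt\colon\RelInvRat^\times\to\mathrm{Pic}_{\g^{(\infty)}}(\E^\infty)$, in the same way as Corollary \ref{cor:finiteweight} follows from Theorem \ref{th:finitgenrelinv}.

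\emph{Step 1: finite generation of $\RelInvRat$.} By Theorem \ref{th:reldiff}, every nonzero homogeneous element has the form $R=CF\,R_1^{d_1}\cdots R_q^{d_q}$ with $F\in\AbsInv$ and $C\in\Hcech^0(\E^\ell,\O^\times_{\E^\ell})$. In the quasi-projective setting the group $\Hcech^0(\E^\ell,\O^\times)$ modulo constants is finitely generated (units of the coordinate ring), so fix generators $C_1,\dots,C_p$. Then $\RelInvRat$ is generated as an $\AbsInv$-algebra by
\[
R_1^{\pm1},\dots,R_q^{\pm1},\quad C_1^{\pm1},\dots,C_p^{\pm1}.
\]
Every element of $\RelInvRat$ is a finite sum of homogeneous pieces, and each piece is an $\AbsInv$-multiple of a Laurent monomial in these generators. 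The same computation shows that $\RelInvRat^\times/\AbsInv^\times$ is generated by the classes of the $R_i$ and $C_j$.

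\emph{Step 2: weights.} Absolute invariants have trivial weight, so applying $\wt$ to \eqref{RCFR} gives
\[
\wt(R)=\wt(C)+\sum_i d_i\,\wt(R_i).
\]
Hence $\W$ is generated by the finitely many classes $\wt(R_i)$ and $\wt(C_j)$, and is therefore a finitely generated abelian group. Its rank is finite, so $\W\otimes\mathbb{Q}$ is finite-dimensional, which is Theorem \ref{th:2}.

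\emph{Step 3: the order of the weights (main obstacle).} It remains to see that $\W$ lies in $\mathrm{Pic}_{\g^{(\ell)}}(\E^\ell)$ with $\ell$ at most the GLT order. Each generator $R_i$ cuts out a codimension-one component of the singular set $\Sigma\subset\E^\ell$, so it has order $\le\ell$ and its weight is pulled back from $\E^\ell$. The $C_j$ also live on $\E^\ell$. Consequently every weight equals $\pi_{\infty,\ell}^*$ of a class on $\E^\ell$, so $\W$ is the image of a finitely generated subgroup of $\mathrm{Pic}_{\g^{(\ell)}}(\E^\ell)$.

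The subtle point is that the pullback map $\mathrm{Pic}_{\g^{(\ell)}}(\E^\ell)\to\mathrm{Pic}_{\g^{(k)}}(\E^k)$ need not be injective (Appendix \ref{SecA}). I would handle this by defining $\W$ directly as the subgroup of $\mathrm{Pic}_{\g^{(\ell)}}(\E^\ell)$ generated by the $\wt^\ell(R_i)$ and $\wt^\ell(C_j)$. Proposition \ref{pr1outof4}, together with the affineness of $\E^{k+1}\to\E^k$ for $k\ge\ell$, guarantees that the weight of any higher-order invariant is already determined at level $\ell$. This is what justifies regarding $\W$ as a subgroup there; being generated by finitely many elements, it is finitely generated.
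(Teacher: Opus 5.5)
Your argument is essentially the paper's: the paper treats the corollary as immediate from \eqref{RCFR}, in exactly the way Corollary~\ref{cor:finiteweight} follows from Theorem~\ref{th:finitgenrelinv}, with the invariants $R_i$ of order $\le\ell$ generating $\W$; moreover $\wt(C)=0$, because the weight of a global unit is the coboundary $\partial^0 C$, so the $C_j$ contribute nothing to $\W$. In Step~3, Proposition~\ref{pr1outof4} only gives \emph{existence} of a class on $\E^\ell$ pulling back to a given weight, so what actually justifies regarding $\W$ as a subgroup of $\mathrm{Pic}_{\g^{(\ell)}}(\E^\ell)$, rather than merely the image of one, is the injectivity of $\pi_{\infty,\ell}^*$ on $\W$ proved in Appendix~\ref{SecA}.
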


Compare this to Proposition \ref{pr1outof4} and Theorem \ref{th:rational}. They imply $\W\subset \mathrm{Pic}_{\g^{(k)}}(\E^k)$ for $k=\op{ord}(\E)$ or even $k=1$, respectively, both of which are sometimes smaller than the GLT order.
 
 \begin{example} \label{ex:projective}
The set of straight lines in the plane $\C^2$ is invariant with respect to the 
pseudogroup of projective transformations $PSL(3,\C)$. The same is true for the 
set of quadrics, cubics, etc. 
The straight lines are defined by the differential equation $y_2=0$, 
while the quadrics are defined by the equation $9y_2^2y_5-45y_2y_3y_4+40y_3^3=0$. 
These invariant classes are given by the simplest relative differential invariants,
the first three of which are:
 \begin{align*}
R_2 &= y_2,  \\
R_5 &= 9y_2^2y_5-45y_2 y_3 y_4+40 y_3^3, \\
R_7 &= 18 y_2^4 (9 y_2^2 y_5 - 45 y_2 y_3 y_4 + 40 y_3^3) y_7 - 189 y_2^6 y_6^2
   + 126 y_2^4 (9 y_2 y_3 y_5 + 15 y_2 y_4^2 - 25 y_3^2 y_4) y_6\\ 
  & -  189 y_2^4 (15 y_2 y_4+ 4 y_3^2) y_5^2 + 210 y_2^2 y_3 (63 y_2^2 y_4^2    
   - 60 y_2 y_3^2 y_4+32 y_3^4) y_5 -  4725 y_2^4 y_4^4 \\
  & - 7875 y_2^3 y_3^2 y_4^3 + 31500 y_2^2 y_3^4 y_4^2 - 33600y_2y_3^6y_4 + 11200y_3^8,
 \end{align*}

The field $\AbsInv$ of rational absolute differential invariants is generated by 
 \[ 
I_7 = \frac{R_7^3}{R_5^8}, \qquad \hat \partial = \frac{R_2 R_7}{R_5^3} D_x.
 \]
The polynomial relative differential invariant defining the set of cubics is of ninth order and can be written as 
 \begin{equation}\label{eq:ProjectiveRelative}
R_9 = \bigl(4800 I_7^3 - 1029 I_7^2 - 2450 I_7 \hat \partial(I_7) + 1400 I_7 
\hat \partial^2(I_7) - 1925 \hat\partial(I_7)^2\bigr) R_2^{-5} R_5^{21} R_7^{-6}
 \end{equation}
(the expression involves rational invariant $I_7$ and 
derivation $\hat{\p}$ yet it is polynomial). 
 
Notice that there is a freedom in representing a relative differential invariant 
in this way. This is because the relative invariants $R_2, R_5, R_7$ do not have 
independent weights. However, we still need all of them since $R_7$ is not of 
the form $F(I_7) R_2^{d_2} R_5^{d_5}$ for a rational function $F$.  

The study of differential invariants of curves in the projective plane dates back to 
Halphen's PhD thesis \cite{H} from 1878. A detailed description of orbits (over $\R$) 
can be found in \cite{KoL}; in \cite{KS2} we elaborated this example 
in the context of invariant divisors.  
 \end{example}

\subsection{Polynomial relative invariants are not finitely generated}\label{Snonfin}

Consider the Lie algebra of special affine transformations on $\mathbb C^2$:
 \begin{equation}\label{eq:saff}
\g = \langle\partial_x,\partial_y,x\partial_y,y\partial_x,x\partial_x-y\partial_y\rangle.
 \end{equation}
We will use this already well-studied Lie algebra to show that the differential algebra 
of polynomial relative differential invariants is (in general) not finitely generated. 
We start by describing the absolute and relative differential invariants. 
This is an elaboration of what was done in \cite{KS1} in the light of \cite{KS2} 
along with the results of the previous sections.

The space $J^1(\C^2,1)$ of 1-jets of curves in the plane is covered by two 
coordinate charts: $U_1\simeq\C^3(x,y,y_1)$ and $U_2\simeq\C^3(y,x,x_1)$, where 
the coordinates are related by $x_1=1/y_1$. If $\omega_1$ denotes the contact form 
on $J^1(\C^2,1)$, then the subbundle $\mathcal{C}^{1*}= \langle \omega_1\rangle$ of contact forms in $T^*J^1(\mathbb C^2,1)$ is a $\g^{(1)}$-equivariant line bundle. 

 \begin{prop} 
The $\g^{(1)}$-equivariant line bundles over $J^1(\C^2,1)$ are generated by $\mathcal{C}^{1*}$:
 \[
\mathrm{Pic}_{\g^{(1)}}(J^1(\C^2,1)) = \left\{\left(\mathcal{C}^{1*}\right)^{\otimes k} : 
k\in\mathbb Z\right\}\simeq\mathbb Z. 
 \] 
 \end{prop}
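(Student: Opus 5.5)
The plan is to exploit the fact that $\g^{(1)}$ acts transitively on $M:=J^1(\C^2,1)$. Equivariant line bundles over a homogeneous space should then be classified by one-dimensional representations of the isotropy subalgebra, subject to an integrality condition coming from the global topology of $M\simeq\C^2\times\P^1$.

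First, check transitivity. The translations $\p_x,\p_y$ act transitively on the base $\C^2$. The prolongation of the $\mathfrak{sl}(2)$ part acts on the fiber $\P^1$ of directions by M\"obius transformations, hence transitively. Fix the point $o=(x,y,y_1)=(0,0,0)\in U_1$ and compute its isotropy $\h\subset\g^{(1)}$. The fields vanishing at $(0,0)$ are $x\p_y$, $y\p_x$ and $x\p_x-y\p_y$, with prolongations
\[
x\p_y+\p_{y_1},\qquad y\p_x-y_1^2\p_{y_1},\qquad x\p_x-y\p_y-2y_1\p_{y_1}.
\]
Hence $\h=\langle y\p_x,\ H:=x\p_x-y\p_y\rangle$ (prolonged), a two-dimensional Borel subalgebra with $[\h,\h]=\langle y\p_x\rangle$ and $\h/[\h,\h]\simeq\C\cdot H$.

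Next, set up the classification map. Given $(\pi,\hat\g)\in\op{Pic}_{\g^{(1)}}(M)$, the isotropy $\h$ acts linearly on the fiber $L_o$ by a character $\chi$, which kills $[\h,\h]$ and is therefore determined by the number $c=\chi(H)$. The tensor product corresponds to addition, so this gives a homomorphism $\op{Pic}_{\g^{(1)}}(M)\to\C$.

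For injectivity, suppose $\chi=0$. Then $\hat\g$ has an invariant nonzero vector at $o$. Spreading it by the transitive action (locally via flows, using $\HH^0$ and $\HH^1$ computations on a good cover of the charts $U_1,U_2$) produces a global nowhere-vanishing invariant section. This trivializes the bundle equivariantly, i.e.\ the class is zero in the hypercohomology $\mathbb{H}^1(\g^{(1)},\O_M)$. Concretely, in the chart $U_1$ I would normalize $\lambda$ by $d^0$-coboundaries so that it is constant along orbits. Since there is one orbit, $\lambda$ is then determined by its value on $\h$ at $o$.

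For the image, restrict the equivariant bundle to the fiber $\P^1=\pi_{1,0}^{-1}(0)$ with its $\h\oplus\langle x\p_y\rangle\simeq\mathfrak{sl}(2)$ action. By the Borel--Weil correspondence, a homogeneous line bundle on $\P^1=SL(2)/B$ exists globally (with transition functions on $U_1\cap U_2$, where $x_1=1/y_1$) exactly when $c\in\Z$. In practice I would write $\lambda_1$ and $\lambda_2$ in the two charts, impose the compatibility condition \eqref{eq:compatibility} with $g_{12}=y_1^{k}$, and read off $c=\pm k$, so that $c$ is integral. Every integer occurs, since $\O(k)$ pulled back from $\P^1$ carries the induced $SL(2)$-linearization, and translations lift trivially.

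Finally, identify the generator. The contact form $\omega_1=dy-y_1\,dx$ satisfies $L_{H^{(1)}}\omega_1=-\omega_1$ at $o$ and is annihilated by $y\p_x$ there. So $\mathcal{C}^{1*}$ has $c=-1$, and its tensor powers exhaust $\Z$.

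The main obstacle is the injectivity and integrality step: making the passage from the isotropy character to a global hypercohomology class rigorous in the analytic setting (the relevant units are $\O^\times(\C^2\times\P^1)$). I would handle it by explicit normalization of the cocycle $(g,\lambda)$ on the two-chart cover, using translation invariance to remove the dependence on $(x,y)$ first.
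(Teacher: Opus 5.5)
Your proof is correct, but it is organized differently from the paper's.

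\textbf{The paper's route.} It uses translations only to pass to the fibre $\P^1$ over $0\in\C^2$. The stabilizer of $0$ has Lie algebra $\mathfrak{sl}(2,\C)$ and integrates to the simply connected semisimple group $SL(2,\C)$. Hence every line bundle on $\P^1$ carries exactly one linearization, equivariance imposes no constraint, and $\op{Pic}_{\g^{(1)}}(J^1(\C^2,1))\simeq\op{Pic}(\P^1)=\Z$.

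\textbf{Your route.} You use transitivity on all of $J^1(\C^2,1)$ and the solvable (Borel) isotropy $\h$ of a point $o\in J^1$. Its character gives an a priori continuous invariant $c=\chi(H)\in\C$, which you then cut down to $\Z$ by integrality on the fibre. This has two advantages. First, the statement that $\mathcal{C}^{1*}$ is a generator becomes the one-line computation $L_{H^{(1)}}\omega_1=-\omega_1$; the paper leaves this point implicit (it amounts to $\mathcal{C}^{1*}|_{\P^1}\simeq\O(-1)$). Second, your invariant matches the explicit cocycles $g_{12}=y_1^{w}$, $\lambda_1,\lambda_2$ that the paper writes right after the proposition.

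\textbf{The step you flag.} The injectivity/integrality step is handled more easily by integration than by normalizing cocycles. The prolonged fields are complete on $J^1(\C^2,1)=\C^2\times\P^1$, and their lifts are linear on fibres, hence also complete. So $\hat\g$ integrates to the simply connected group $SL(2,\C)\ltimes\C^2$. Then $L\simeq (SL(2,\C)\ltimes\C^2)\times_B\C_\chi$, and $\op{Pic}_{\g^{(1)}}(J^1(\C^2,1))\simeq\mathrm{Hom}(B,\C^\times)=\Z$. This gives injectivity and integrality at once, with no separate Borel--Weil step. The paper's shorter argument tacitly relies on the same homogeneous-bundle fact, in the form that an equivariant bundle on $J^1(\C^2,1)$ is induced from its restriction to the fibre over $0$.
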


 \begin{proof}
The translations allow to fix point $0\in\C^2$ and the fiber of $J^1\to J^0$
over it is $\mathbb{P}^1$. The stabilizer is the full projective group $PSL(2,\C)$ 
acting effectively on $\mathbb{P}^1$, with the corresponging Lie algebra 
$\mathfrak{sl}(2,\C)\subset\g$. Thus there are no restriction on the line bundles 
coming from equivariance: we get 
$\op{Pic}_{\g^{(1)}}(J^1(\C^2,1))\simeq\op{Pic}(\mathbb P^1)=\mathbb Z$.
 \end{proof}
 
 \begin{remark}
The equivariant line bundle $\Lambda^3 T^*J^1(\mathbb C^2,1)$ is isomorphic 
(as a $\g^{(1)}$-equivariant line bundle) to $(\mathcal{C}^{1*})^{\otimes 2}$, 
and $\pi_{1,0}^* \Lambda^2 T^*\mathbb C^2$ is trivial. 
 \end{remark}

In terms of transition functions and local lifts on the cover $\{U_1, U_2\}$, the group of $\g^{(1)}$-equivariant line bundles is generated by the element $(g=\{g_{12}\},\lambda=\{\lambda_1,\lambda_2\})$ with $g_{12}=y_1^{w}$, 
$\lambda_1=(0,0,-w y_1,0,-w)$, $\lambda_2 = (0,0,0,w x_1,-w)$ with $w \in \mathbb Z$.  The projection $\mathrm{Pic}_{\g^{(1)}}(J^1(\mathbb C^2,1)) \to \mathfrak{M}_{\g^{(1)}}(J^1(\mathbb C^2,1))$ is injective, meaning that the equivariant line bundles are uniquely characterized by the multiplier $\lambda \in \mathfrak{M}_{\g^{(1)}}(J^1(\mathbb C^2,1))$.  Furthermore, due to the transitivity on $J^1(\mathbb C^2,1)$, we have an injection $ \mathfrak{M}_{\g^{(1)}}(J^1(\mathbb C^2,1)) \to \Hmod^1(\g^{(1)},\mathcal{O}(U_1))$, meaning that the equivariant line bundles can be recovered completely from the local data on $U_1$. This justifies our choice of doing all remaining computations in this section only on $U_1 \subset J^1(\mathbb C^2,1)$. 

The functions 
\[R_2=y_2, \qquad R_4=3y_2 y_4-5 y_3^2\] 
are local expressions for relative differential invariants on $J^4(\mathbb C^2,1)$. Their weights are $w=3$ and $w=8$, respectively. The following are the corresponding invariant tensor fields: 
\[ R_2^{-1} (dy-y_1 dx)^{\otimes 3}, \qquad R_4^{-1} (dy-y_1 dx)^{\otimes 8}.\]

Notice that the Lie algebra element 
\[(-x \partial_x+y\partial_y)^{(k)} = -x\partial_x+y\partial_y+ \sum_{i=1}^k (i+1) y_i \partial_{y_i}\] 
induces a grading on the polynomial algebra $\mathbb C[y_2,\dots, y_k]$. Any polynomial relative differential invariant $R$ is required to be homogeneous with respect to this grading, and its weight $w$ is equal to the degree of $R$ with respect to the grading. As a consequence, all polynomial relative differential invariants have positive weight.  

The following proposition is well-known (cf. \cite{KS1} or \cite[Table 5]{O1}).

\begin{prop} \label{prop:absinvsaff}
The field of rational absolute differential invariants with respect to \eqref{eq:saff} is generated by the differential invariant $I_4=R_4^3/R_2^8$ and the invariant derivation $\hat{\partial}=(R_4/R_2^3) D_x$. 
\end{prop}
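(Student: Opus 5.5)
Since $\g$ acts transitively on $J^1(\C^2,1)$ (its prolongation to $J^1$ is transitive, with stabilizer $\mathfrak{sl}(2,\C)$), orbits of $\g^{(k)}$ are determined by the action of the stabilizer of a 1-jet on the fiber of $J^k\to J^1$. I would fix the 1-jet $x=y=y_1=0$. Its stabilizer in $\g$ is spanned by $x\p_y$ (which prolongs trivially on $y_i$, $i\ge2$, at this point) together with $y\p_x$ and $x\p_x-y\p_y$. The residual action on $(y_2,y_3,\dots)$ comes from the Borel part: the scaling $x\p_x-y\p_y$, acting with weights $i+1$ on $y_i$, and $y\p_x$, whose prolongation at the fixed 1-jet acts unipotently. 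Concretely, for $X=y\p_x$ we get $X^{(k)}=-\sum_i D_x^i(y_1y)\p_{y_i}$, which at $y=y_1=0$ restricts to an operator of the form $\sum c_{ij}y_jy_{i+1-j}\p_{y_i}$ with $j\ge2$. So $y_2$ is invariant, $y_3$ is invariant, $y_4$ is shifted by a multiple of $y_2^2$, and so on. The transcendence degree of the field of absolute invariants of order $\le k$ is therefore $\dim J^k-\dim(\text{generic orbit})=(k+2)-5=k-3$ for $k\ge4$. This matches one new invariant per order starting at order $4$.

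The main steps are as follows. First, verify directly that $R_2=y_2$ and $R_4=3y_2y_4-5y_3^2$ satisfy $X^{(k)}R=\lambda(X)R$ with multipliers of weights $3$ and $8$; this is already stated. Then $I_4=R_4^3/R_2^8$ has weight $3\cdot8-8\cdot3=0$ and is absolute. Second, check that $\hat\p=(R_4/R_2^3)D_x$ is an invariant derivation. Its coefficient has weight $8-9=-1$, which cancels the weight $+1$ carried by $D_x$ under the scaling element. Invariance can be confirmed by showing $[X^{(\infty)},\hat\p]=0$ for the five generators, using $[X^{(\infty)},D_x]=-D_x(\xi)D_x$ for $X=\xi\p_x+\eta\p_y$. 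An alternative route is to recognize $\hat\p$ as $R_4/R_2^{3}$ times the standard invariant coframe dual; the identity $y_2^{1/3}dx=ds$ (affine arclength) yields the invariant derivation $y_2^{-1/3}D_x$, and multiplying by the absolute factor $I_4^{1/3}=R_4/R_2^{8/3}$ makes it rational. Third, compute $\hat\p^{j}I_4$. Each of these has top-order term linear in $y_{4+j}$ with a nonzero coefficient (a power of $R_2$ times $R_4$ powers), so $I_4,\hat\p I_4,\dots,\hat\p^{k-4}I_4$ are algebraically independent and are $k-3$ in number. This matches the transcendence degree.

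The main obstacle is passing from transcendence degree to generation of the whole field, rather than just a finite extension of it. I would handle this via the global Lie–Tresse theorem and Rosenlicht: absolute invariants separate generic orbits, so it suffices to show that the invariants listed separate generic orbits in each $J^k$. For that, I would construct a rational normal form (a cross-section). Using the stabilizer, set $y_3=0$ (via the unipotent part acting on $y_3$ after using $x\p_y$-type shifts, or equivalently by solving along the orbit) and $y_2=1$ (via scaling, up to a finite ambiguity $y_2^{1/3}$, i.e.\ a cube root of unity). The values of $I_4,\hat\p I_4,\dots$ at the section then recover $y_4,y_5,\dots$ rationally and triangularly. The remaining finite stabilizer of the section (cube roots of unity acting on $y_i$ with weights $i+1$) must be checked to be absorbed, i.e.\ to act trivially on the recovered coordinates. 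This is why the powers are $R_4^3$ and $R_2^8$ rather than smaller ones. Once the section map is birational onto the quotient, the field of rational invariants equals $\C(I_4,\hat\p I_4,\hat\p^2I_4,\dots)$.
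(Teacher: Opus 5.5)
Your strategy is to reduce to the stabilizer of a $1$-jet acting on the fiber of $J^k\to J^1$, count transcendence degree, use triangularity in the top jet, and finish with a cross-section. That is a legitimate self-contained route; the paper gives no proof and only cites \cite{KS1} and \cite[Table 5]{O1}. Two parts of your setup are wrong, though. First, the stabilizer of the $1$-jet $x=y=y_1=0$ is two-dimensional, $\langle y\p_x,\,x\p_x-y\p_y\rangle$, as it must be since $\dim\g-\dim J^1=2$. The algebra $\mathfrak{sl}(2,\C)$ is the stabilizer of a point of $J^0$, and $x\p_y$ is excluded because $(x\p_y)^{(1)}=x\p_y+\p_{y_1}$. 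Second, your own formula gives, on the fiber over this $1$-jet, $(y\p_x)^{(k)}=-3y_2^2\p_{y_3}-10y_2y_3\p_{y_4}-\cdots$. So $y_3$ is not invariant; it is shifted by a multiple of $y_2^2$. This shift is exactly what makes $3y_2y_4-5y_3^2$ invariant and what permits the normalization $y_3=0$ you use later.

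The genuine gap is in the final step, which you rightly flag as the crux. On $S=\{y_2=1,\ y_3=0\}$ one has $I_4|_S=27y_4^3$, so the invariants cannot recover $y_4$. The residual group $\Z_3$ (the elements $\mathrm{diag}(a,a^{-1})$ with $a^3=1$) acts on $S$ by $y_i\mapsto\omega^{i+1}y_i$, $\omega^3=1$, which is nontrivial on $y_4,y_6,y_7,\dots$. It therefore cannot be checked to be ``absorbed'', and $S$ maps generically $3{:}1$ onto the quotient rather than birationally. The exponents $3,8$ are forced by weight zero alone, not by this finite group.

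What is true is the following. Generic $B$-orbits meet $S$ in exactly one $\Z_3$-orbit, so restriction to $S$ identifies the rational invariants of order $\le k$ with $\C(y_4,\dots,y_k)^{\Z_3}$, a subfield of index $3$. You must then show that $K=\C\bigl(I_4|_S,\hat\partial I_4|_S,\dots,\hat\partial^{k-4}I_4|_S\bigr)$ is this whole fixed field. Your triangularity does it: for $1\le j\le k-4$,
\[
\hat\partial^jI_4|_S=9\cdot3^{2+j}\,y_4^{2+j}y_{4+j}+P_j(y_4,\dots,y_{3+j}),
\]
with $P_j$ a polynomial. Hence $K(y_4)=\C(y_4,\dots,y_k)$. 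Since $y_4^3\in K$, we get $[K(y_4):K]\le3$. As $K\subset\C(y_4,\dots,y_k)^{\Z_3}$, and the latter has index exactly $3$, the two fields coincide.
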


The general rational relative invariant derivation taking divisors of weight $w$ 
to divisors of weight $w+4$ has the form 
 \[ 
\nabla_{w} = F_1\left(I_4, \hat{\partial}(I_4),\hat{\partial}^2(I_4),\dots \right) \left( y_2 D_x-\frac{w}{3} y_3\right) + F_2\left(I_4, \hat{\partial}(I_4),\hat{\partial}^2(I_4),\dots\right),
 \]
where $F_1,F_2$ are rational functions. A relative invariant derivation from $\RelInv_w$ to $\RelInv_{w+m}$ has the general form
 \[ 
\tilde{\nabla}_{w} = \left(\frac{R_2^3}{R_4}\right)^{m-4}\nabla_w.
 \]
We refer to $m$ as the weight of $\tilde{\nabla}$, and write $\wt(\tilde{\nabla})=m$. 
The requirement that the coefficients of $\tilde{\nabla}_w$ are polynomial 
puts restrictions on $m$. To cancel the denominator of $({R_2^3}/{R_4})^{m-4}$ 
we multiply with an appropriate power of $I_4$:
 \[  
\left(\frac{R_2^3}{R_4}\right)^{m-4} I_4^t = \frac{R_2^{3m-12-8t}}{R_4^{m-4-3t}}.
 \] 
This is a polynomial if and only if $12+8t \leq 3m$ and $m \leq 4+3t$. 
From the inequalities $12+8t\leq 3m\leq 12+9t$ it follows that $t\geq0$ and $m\geq4$. 
Notice that $t=0,m=4$ holds for $y_2 D_x-({w}/{3}) y_3$, and for no other polynomial relative invariant derivation. 

\smallskip

We shall now show that the set of polynomial relative differential invariants is not 
finitely generated as a differential algebra, thereby  proving Theorem \ref{th:infinite}. 
To do this, we will define a sequence of relative differential invariants of even order, 
starting with $Q_4:=R_4$ of weight $w=8$. 
We use the relative invariant derivation $\Delta_w:= y_2D_x-\frac{w}{3}y_3$ to 
sequentially construct the relative differential invariants  
 \[ 
Q_{2i} := \frac{\Delta^2(Q_{2(i-1)})+ \frac{(5i-7)(5i-3)}{3^2 \cdot 5} Q_4 Q_{2(i-1)}}{R_2}
 \] 
of order $2i$ and weight $w=5i-2$, for $i=3,4,\dots$. To see that these are, in fact, 
relative differential invariants we notice that the two terms in the numerator 
of $Q_{2i}$ have the same weight: $\wt(Q_{2(i-1)})+8$. Furthermore, these 
relative differential invariants are polynomial.

 \begin{lemma}\label{LemQ}
For $i \geq 2$, the relative differential invariant $Q_{2i}$ takes the form 
 \[ 
Q_{2i} = 3y_2^{i-1} y_{2i} + \gamma_i(y_2, \dots, y_{2i-1}), 
\]
where $\gamma_i$ is a polynomial in the indicated variables. 
 \end{lemma}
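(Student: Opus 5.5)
Induction on $i$. For $i=2$ we have $Q_4=R_4=3y_2y_4-5y_3^2$, which has the claimed form with $\gamma_2=-5y_3^2$. Assume $Q_{2(i-1)}=3y_2^{i-2}y_{2i-2}+\gamma_{i-1}(y_2,\dots,y_{2i-3})$ with weight $w=5i-7$. We track only the top-order part of $\Delta^2(Q_{2(i-1)})$ and collect everything else into a polynomial in $y_2,\dots,y_{2i-1}$.

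First, $\Delta_w=y_2D_x-\tfrac{w}{3}y_3$ and $D_x$ raises the order by one and acts linearly on the top jet. Since $D_x(y_{2i-2})=y_{2i-1}$, we get
\[
\Delta_w Q_{2(i-1)}=3y_2^{i-1}y_{2i-1}+(\text{polynomial in }y_2,\dots,y_{2i-2}).
\]
This is a relative invariant of weight $w+4$. Applying $\Delta_{w+4}$ gives
\[
\Delta^2 Q_{2(i-1)}=3y_2^{i}y_{2i}+\delta(y_2,\dots,y_{2i-1}).
\]
Here $D_x$ of the lower terms involves at most $y_{2i-1}$, and the multiplications by $y_3$ and the $D_x(y_2^{i-1})$ contributions do not create $y_{2i}$. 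The term $\tfrac{(5i-7)(5i-3)}{45}Q_4Q_{2(i-1)}$ has order at most $2i-2$. So the numerator of $Q_{2i}$ equals $3y_2^iy_{2i}+\tilde\delta(y_2,\dots,y_{2i-1})$.

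The main obstacle is divisibility by $R_2=y_2$: we must show $\tilde\delta$ is divisible by $y_2$, i.e.\ that it vanishes on $\{y_2=0\}$, since the leading term already is divisible. My plan is to compute $\tilde\delta\bmod y_2$ explicitly. Modulo $y_2$, $\Delta$ acts as multiplication by $-\tfrac{w}{3}y_3$ plus $y_2D_x$, and the $y_2D_x$ part only contributes modulo $y_2$ through $D_x$ of terms whose coefficient becomes $y_2\cdot(\dots)$; all such terms vanish. Carefully, modulo $y_2$,
\[
\Delta_{w+4}\Delta_w Q\equiv y_2D_x(-\tfrac w3 y_3Q)-\tfrac{w+4}{3}y_3\Delta_wQ\equiv \tfrac{w(w+4)}{9}y_3^2Q.
\]
Here $y_2D_x(\cdot)\equiv0$ modulo $y_2$, and $\Delta_wQ\equiv-\tfrac w3y_3Q$. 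Also $Q_4\equiv-5y_3^2$. Hence the numerator modulo $y_2$ is
\[
\Bigl(\tfrac{w(w+4)}{9}-\tfrac{5(5i-7)(5i-3)}{45}\Bigr)y_3^2\,Q_{2(i-1)}=0,
\]
since $w=5i-7$ and $w+4=5i-3$. This is exactly why the constant was chosen. Hence the numerator is divisible by $y_2$ as a polynomial.

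Dividing by $y_2$ yields $3y_2^{i-1}y_{2i}+\gamma_i$ with $\gamma_i=\tilde\delta/y_2$ a polynomial in $y_2,\dots,y_{2i-1}$. This completes the induction and also confirms that each $Q_{2i}$ is polynomial.
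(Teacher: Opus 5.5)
Your proof is correct and follows essentially the paper's argument: induction on $i$ with leading-term tracking, then divisibility by $R_2=y_2$ via reduction modulo $y_2$. The paper's decomposition $Q_{2(i-1)}=A_{i-1}y_2+B_{i-1}$ is the same computation, and in both the constant $\tfrac{(5i-7)(5i-3)}{45}$ cancels $\tfrac{w(w+4)}{9}y_3^2Q_{2(i-1)}$ against $-5y_3^2Q_{2(i-1)}$. Your intermediate line writes $y_2D_x(-\tfrac w3y_3Q)$ where $y_2D_x(\Delta_wQ)$ is meant; this is harmless, since both terms vanish modulo $y_2$.
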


 \begin{proof}
Notice first that the statement is true for $Q_4$. Next, assume that
 \[
Q_{2(i-1)} = 3 y_{2}^{i-2} y_{2(i-1)} + \gamma_{i-1}(y_2,\dots,y_{2(i-1)-1}),
 \] 
and that $\gamma_{i-1}$ is a polynomial. It follows that 
 \[ 
\Delta^2(Q_{2(i-1)}) = 3 y_2^{i} y_{2i} + \tilde \gamma_i(y_2,\dots,y_{2i-1}),
 \]
where $\tilde \gamma_i$ is a polynomial. From this it follows that 
 \[
Q_{2i} = 3y_2^{i-1} y_{2i} + \gamma_i(y_2,\dots,y_{2i-1}),
 \]
for some rational function $\gamma_i$. Next we show that $Q_{2i}$ is a polynomial, which is equivalent to  $\gamma_i$ being polynomial. 

Since $Q_{2(i-1)}$ is a polynomial, we can write $Q_{2(i-1)} = A_{i-1} y_2 + B_{i-1}$ where $A_{i-1}(y_2,\dots,y_{2(i-1)})$  and $B_{i-1}(y_3,\dots,y_{2(i-1)})$ are polynomials (which is true for $i=3$). It is then clear that 
  \[
\Delta^2(Q_{2(i-1)}) = \tilde A_i y_2 + 
\tfrac19\wt(Q_{2(i-1)}) 
(\wt(Q_{2(i-1)})+4) B_{i-1} y_3^2,
 \]
where $\tilde A_i$ is a new polynomial. At the same time, we have 
 \[ 
Q_4 Q_{2(i-1)} = (3 y_4 Q_{2(i-1)} -5 y_3^2 A_{i-1}) y_2 - 5B_{i-1} y_3^2.
 \] 
Since $\wt(Q_{2(i-1)}) = 5i-7$ it follows that $\Delta^2(Q_{2(i-1)}) + \frac{(5i-7)(5i-3)}{45} Q_4 Q_{2(i-1)}$ is divisible by $R_2=y_2$. Indeed, this is exactly the linear combination of the two polynomials for which the terms not containing a $y_2$-factor cancel each other out. Thus $Q_{2i}$ is polynomial, and the statement of the lemma follows by induction. 
 \end{proof}

 \begin{prop} \label{prop:equiaff}
The differential algebra of polynomial relative differential invariants with respect 
to \eqref{eq:saff} cannot be generated by a finite number of polynomial relative 
differential invariants and polynomial relative invariant derivations as a differential 
algebra (using only addition/subtraction, multiplication, and application of derivations). 
 \end{prop}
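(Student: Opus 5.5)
We argue by contradiction. Suppose a finite set of polynomial relative differential invariants $P_1,\dots,P_a$ and polynomial relative invariant derivations $\tilde\nabla_1,\dots,\tilde\nabla_b$ generates the differential algebra $\RelInv$. Let $N$ bound the orders of all $P_j$ and of the coefficients of all $\tilde\nabla_k$, and take $i$ with $2i>N+2$. We will show that $Q_{2i}$ from Lemma \ref{LemQ} cannot be expressed by such generators. The main invariant we track is the \emph{top-order leading coefficient}. For an element of order $k$ we write it as a polynomial in $y_k$ and look at the coefficient of its highest power of $y_k$. By Lemma \ref{LemQ}, $Q_{2i}$ is affine in $y_{2i}$ with leading coefficient $3y_2^{i-1}$. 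This is a low power of $y_2$ relative to the weight $5i-2$, which grows linearly in $i$ with slope $5$ against slope $1$.

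The first step is to classify the polynomial derivations. By the discussion after Proposition \ref{prop:absinvsaff}, every polynomial relative invariant derivation of weight $m$ has the form $\tilde\nabla_w = G\,(y_2D_x-\tfrac w3 y_3)+H$. Here $G$ and $H$ are polynomial relative invariants: $G=(R_2^3/R_4)^{m-4}F_1$ has weight $m-4\ge 0$, and $H$ has weight $m$. If $G$ is a constant, then $m=4$. Otherwise $G$ is a nonconstant polynomial invariant of positive weight, hence of order $\ge 2$. Applying $\tilde\nabla$ to a polynomial of order $k$ affine in $y_k$ with leading coefficient $c$ gives order $k+1$ with leading coefficient $G\,y_2\,c$. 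So each application of a derivation raises the order by one and multiplies the leading coefficient by $G y_2$. Any generated monomial reaching order $2i$ must therefore arise from at least $2i-N$ applications of derivations. The top coefficient is then divisible by a product of factors $G_k y_2$, possibly times a leading coefficient of some $P_j$ and other factors.

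The second, crucial step is a weight count. We compare, for any generated expression of order $2i$, the weight $w$ with the power of $y_2$ (or, more robustly, the degree in jets of order $\ge3$) appearing in the coefficient of $y_{2i}$. We use the grading by $-x\p_x+y\p_y$: $y_j$ has degree $j+1$ and weight equals degree. Each derivation step adds weight $m\ge4$ and contributes at least one factor $y_2$ (degree $3$) to the leading coefficient. When $G$ is constant the step contributes exactly $y_2$ and weight $4$, so the "excess weight" $w-3(\text{power of } y_2)-(2i+1)$ increases by $1$ per step. The target $Q_{2i}$ has excess $5i-2-3(i-1)-(2i+1)=0$. The starting generators have bounded excess, but each of the at least $2i-N$ derivation steps increases the excess by $m-3-1\ge 0$, with equality exactly when $m=4$. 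Repeated $\Delta$ alone, starting from $Q_4$ at excess $0$, keeps excess $0$. So this naive invariant does not obstruct, and a finer invariant is needed.

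The refinement is to look at the full $y_2$-adic valuation of $Q_{2i}$ rather than only its leading term. $Q_{2i}$ is obtained only after dividing by $R_2$ at each even step, so $Q_{2i}\not\equiv 0 \bmod y_2$ with the specific low power $y_2^{i-1}$. Any expression built polynomially, with derivations applied to bounded-order generators, carries an extra factor $y_2$ per derivation step in its top coefficient, giving roughly $y_2^{2i-N}$. This exceeds $y_2^{i-1}$ for large $i$. Products and sums cannot lower the power of $y_2$ in the top-order coefficient below the minimum over the contributing monomials, because the $y_{2i}$-coefficient of a sum is the sum of the coefficients. Taking $i$ with $2i-N>i-1$ therefore yields the contradiction. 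The main obstacle is making this rigorous: sums could cancel terms, and products of lower-order factors do not affect the leading coefficient in $y_{2i}$ but could in principle contribute. I would handle this by tracking the ideal $(y_2^{\,2i-N})$ and showing that the coefficient of $y_{2i}$ in every generated element of order $\le 2i$ lies in it.
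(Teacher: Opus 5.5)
Your argument breaks at Step~1, and the $y_2$-adic bound you build on it is false. The discussion after Proposition~\ref{prop:absinvsaff} secures only $m\ge4$; it does not make $G$ polynomial. What is true is weaker: the $D_x$-coefficient of a polynomial relative invariant derivation of weight $m$ is a polynomial relative invariant of weight $m-1$, and it need not be divisible by $R_2=y_2$. For instance,
\[
\tilde\Delta_w(S)=\frac{R_4\,\Delta_w(S)-\tfrac{w}{8}\,\Delta_8(R_4)\,S}{R_2}
=R_4\,D_x(S)-w\bigl(\tfrac38\,y_2y_5-\tfrac78\,y_3y_4\bigr)S
\]
is a polynomial relative invariant derivation of weight $9$. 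Here your $G=R_4/R_2$ and $H=-\tfrac w8\Delta_8(R_4)/R_2$ are not polynomial, and the leading coefficient $R_4=3y_2y_4-5y_3^2$ is not divisible by $y_2$. On an element of order $k\ge5$ that is affine in $y_k$, $\tilde\Delta$ multiplies the top coefficient by $R_4$. Starting from $\Delta_8(R_4)=3y_2^2y_5+\cdots$, the element $\tilde\Delta^k\bigl(\Delta_8(R_4)\bigr)$ has order $5+k$ and top coefficient $3y_2^2R_4^k$, whose $y_2$-adic valuation is exactly $2$ for every $k$. So for a generating set containing $\tilde\Delta$, your claim that the $y_{2i}$-coefficient lies in $(y_2^{2i-N})$ fails, and comparing with $3y_2^{i-1}$ gives no contradiction.

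The missing idea is already on your page: the weight alone obstructs. Your ``excess'' stayed at $0$ only because you subtracted three times the $y_2$-exponent; the bare weight-versus-order comparison works. Polynomial relative invariants are homogeneous for $\deg y_j=j+1$, with weight equal to degree, so this grading is a genuine direct sum. Hence in any expression for $Q_{2i}$ you may discard all terms of weight $\neq5i-2$, which also disposes of the cancellation problem you flag. Since $Q_{2i}$ has order $2i$ by Lemma~\ref{LemQ}, some surviving term must have order $\ge2i$. That term contains a chain of at least $2i-N$ order-raising derivations, each of weight $\ge4$. Thus $5i-2\ge4(2i-N)$, which is impossible once $3i>4N-2$. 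This slope comparison (at least $4$ units of weight per order for generated elements, versus $5/2$ for $Q_{2i}$) is exactly the paper's proof.
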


 \begin{proof}
We claim that the infinite set $\mathcal Q= \{Q_{2i}\mid i=2,3,\dots\}$ 
of relative differential invariants can not be contained in an algebra 
generated by a finite set of polynomial relative differential invariants 
and a finite set of polynomial relative invariant derivations. 

Given polynomial relative differential invariants $S_1,\dots,S_p$ (with positive weights) 
and polynomial relative invariant derivations $\tilde\Delta_1,\dots,\tilde\Delta_{q}$ 
(with weights $\geq4$), the allowed algebraic-differential operations always 
(non-strictly) increase the weight of relative differential invariants.

Let $w_i$ denote the lowest possible weight attained by an $i$-th order 
relative invariant generated by $S_1,\dots,S_p$ and $\tilde\Delta_1,\dots,\tilde\Delta_q$. 
Starting at some order $k$, we have $w_{k+2i} \geq  w_k+8i$. 
This is because of the finiteness of the set of invariants, and because 
all polynomial relative invariant derivations increase the weight by at least $4$. 
On the other hand, the weight of $Q_{k+2i}$ is 
$\frac52(k+2i)-2=\frac52k-2+5i$ which grows slower than $w_k$. 
Thus no finite collection $\{S_i,\Delta_j\}$ of polynomial relative differential 
invariants and invariant derivations can generate all elements in $\mathcal{Q}$. Figure \ref{fig:Q} gives a graphic illustration of this. 
 \end{proof}

This proposition finishes the proof of  Theorem \ref{th:infinite}. 
We complement this by the fact that the algebra $\RelInv$ is generated by $R_2$, $R_4$ and 
$\Delta_w$ under localization on $R_2,R_4$, in full agreement with Theorem \ref{th:finite}. 


\begin{figure}[ht]
\begin{tikzpicture}
\shade [left color=gray!20!white, right color=white] (3*\hscale, 2*\wscale) -- (26*\hscale,\fpeval{4+2/5*18}*\wscale) -- (26*\hscale,2*\wscale) -- cycle;

\draw[thick,->] (5*\hscale,3.5*\wscale) -- (26*\hscale,3.5*\wscale) node[anchor=north west] {Weight};
\draw[thick,->]  (6*\hscale,3*\wscale) -- (6*\hscale,11*\wscale) node[anchor=south east] {Order};
\foreach \x in {8,10,12,14,16,18,20,22,24}
   \draw (\x*\hscale,3.5*\wscale+0.1) -- (\x*\hscale,3.5*\wscale-0.1) node[anchor=north] {$\x$};
\foreach \y in {4,5,6,7,8,9,10}
    \draw (6*\hscale +0.1,\y*\wscale) -- (6*\hscale-0.1,\y*\wscale) node[anchor=east] {$\y$};

    \foreach \Point in {(8*\hscale,4*\wscale), (12*\hscale,5*\wscale), (16*\hscale,6*\wscale), (20*\hscale,7*\wscale), (24*\hscale,8*\wscale),
    (13*\hscale,6*\wscale),(17*\hscale,7*\wscale),(21*\hscale,8*\wscale),(25*\hscale,9*\wscale),
    (18*\hscale,8*\wscale),(22*\hscale,9*\wscale),(26*\hscale,10*\wscale),
    (23*\hscale, 10*\wscale)}{
    \node at \Point {\textbullet};
}

 \node at (3*\hscale, 2*\wscale) {$\quad$ \textbullet $R_2^2$ };
    \node at (16*\hscale, 4*\wscale) {\color{gray} $\quad$ \textbullet $Q_4^2$ };
    \node at (18*\hscale, 5*\wscale) {\color{gray} $\qquad\quad \; $ \textbullet  $R_2^2\Delta(Q_4)$ };

\draw[thick,->] (\fpeval{8*\hscale},\fpeval{4*\wscale}) node[anchor=east] {$Q_4$} -- node[anchor=west] {$\Delta$} (12*\hscale-\dist,5*\wscale-\dist);\draw[thick,->] (12*\hscale,5*\wscale) --node[anchor=west] {$\Delta$} (16*\hscale-\dist,6*\wscale-\dist);\draw[thick,->]  (16*\hscale,6*\wscale) --node[anchor=west] {$\Delta$} (20*\hscale-\dist,7*\wscale-\dist); \draw[thick,->] (20*\hscale,7*\wscale) -- (24*\hscale-\dist,8*\wscale-\dist);

\draw[thick,->] (16*\hscale,6*\wscale) -- node[anchor=north] {$\hspace{-20pt}\cdot R_2^{-1}$} (13*\hscale+\dist,6*\wscale);

\draw[thick,->] (13*\hscale,6*\wscale)node[anchor=east] {$Q_6$} --node[anchor=west] {$\Delta$} (17*\hscale-\dist,7*\wscale-\dist); \draw[thick,->] (17*\hscale,7*\wscale) --node[anchor=west] {$\Delta$} (21*\hscale-\dist,8*\wscale-\dist);\draw[thick,->] (21*\hscale,8*\wscale) -- (25*\hscale-\dist,9*\wscale-\dist);

\draw[thick,->] (21*\hscale,8*\wscale) -- node[anchor=north]{$\hspace{-20pt}\cdot R_2^{-1}$} (18*\hscale+\dist, 8*\wscale);

\draw[thick,->] (18*\hscale, 8*\wscale)node[anchor=east] {$Q_8$} --node[anchor=west] {$\Delta$} (22*\hscale-\dist, 9*\wscale-\dist); \draw[thick,->] (22*\hscale, 9*\wscale) -- (26*\hscale-\dist,10*\wscale-\dist);

\draw[thick,->] (26*\hscale,10*\wscale) -- node[anchor=north]{$\hspace{-20pt}\cdot R_2^{-1}$} (23*\hscale+\dist, 10*\wscale);

\draw[thick,dashed] (23*\hscale, 10*\wscale)node[anchor=east] {$Q_{10}$} --(\fpeval{23+5}*\hscale-\dist, \fpeval{10+5/4}*\wscale-\dist);
\draw[thick,dashed] (26*\hscale, 10*\wscale) --(\fpeval{26+1.5}*\hscale-\dist, \fpeval{10+1.5/4}*\wscale-\dist);
\draw[thick,dashed] (25*\hscale, 9*\wscale) --(\fpeval{25+1.75}*\hscale-\dist, \fpeval{9+1.75/4}*\wscale-\dist);
\draw[thick,dashed] (24*\hscale, 8*\wscale) --(\fpeval{24+2}*\hscale-\dist, \fpeval{8+2/4}*\wscale-\dist);
\end{tikzpicture}
\caption{The figure shows the weight and order of $Q_{2i}$ for $i=2,\dots,5$. It illustrates that $Q_{2i}$ is computed from $Q_{2(i-1)}$ by two applications of $\Delta$ and division by $R_2$ (the term in $Q_{2i}$ involving $Q_4 Q_{2(i-1)}$ is neglected in this illustration, but is important in order to get a polynomial after division). The key take-away is that the slope of the $\Delta$-arrows is smaller than the slope of the line that connects the invariants in $\mathcal{Q}$.}\label{fig:Q}
\end{figure}
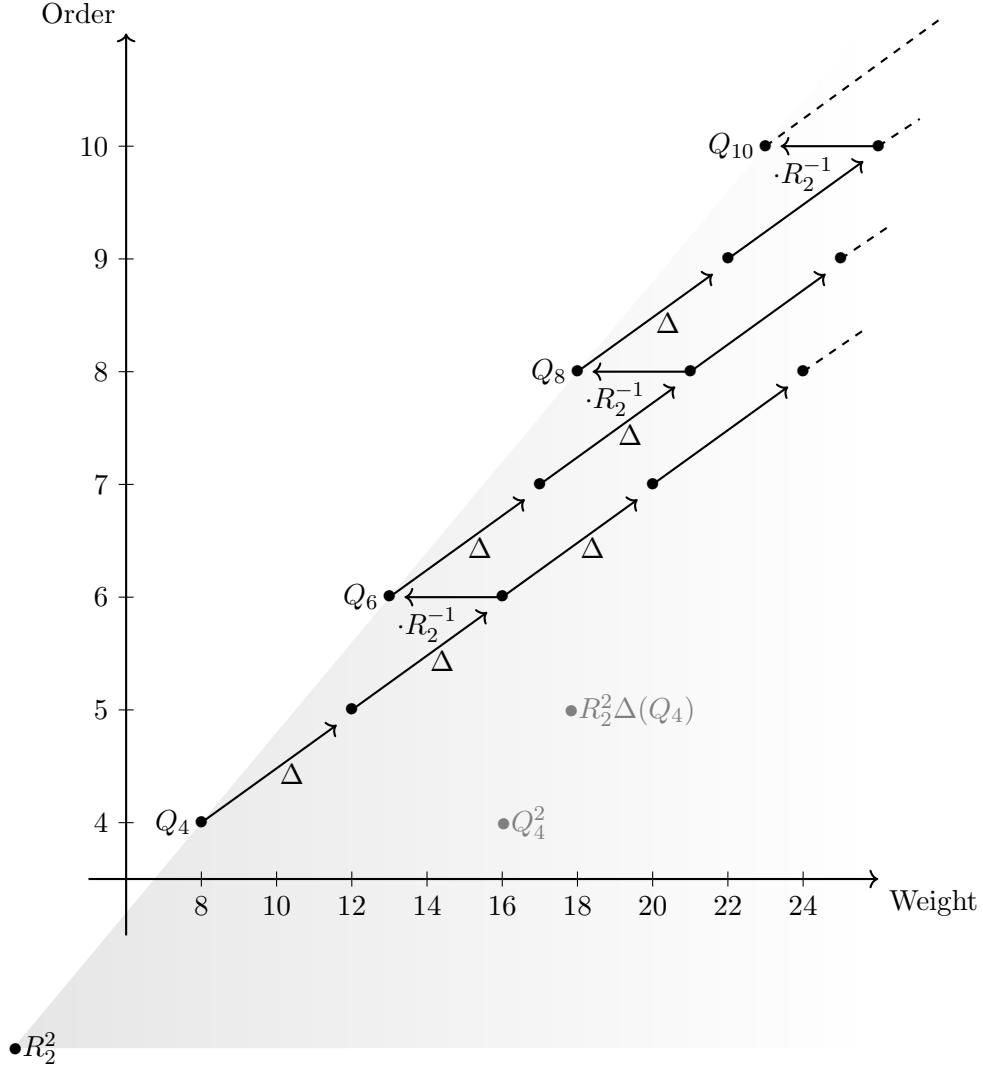

 \begin{remark}
For any $k\in\mathbb{N}$, differential invariants of the action 
\eqref{eq:saff} on $\J^k=J^k(\C^2,1)$ are bijective with differential invariants 
of the Lie group $SL(2)$ acting on the algebraic variety $\J^k_o$, 
where $o\in\C^2$ is arbitrary point (for instance, the origin).
The Hilbert theorem is trivial here, as the ring of polynomial absolute 
invariants of the prolonged action consists of constants. 
We proved the failure of its differential analog for polynomial relative invariants
by restricting (for simplicity) to the invariants that are affine in the highest jets. 

We leave aside the question whether the algebra of relative invariants up to order $k$ 
is finitely generated (without any appeal to relative invariant derivations), however
note that the invariants affine in the highest jets may not be sufficient for that.
For instance, the following relative differential invariants of orders 
$9$ and $12$ are not generated by affine invariants:
 \[ 
P_9= y_2^5 y_9^2 + (\text{terms of lower order}), \qquad 
P_{12} = y_2^7 y_{12}^2 +  (\text{terms of lower order}).
 \]
Indeed, in order $9$, there are no relative invariants of the form 
$y_2^i y_9+(\cdots)$ with $i\leq2$, and similar for $P_{12}$. 
We conjecture that this pattern continues for orders $3j$ where $j\geq5$.
 \end{remark}

\subsection{Polynomial invariants and localizations}

The exact sequence \eqref{eq:ADP2} in the context of  \textit{differential} invariants, a priori, has the form
 \begin{equation}\label{ADP1}
0\to\AbsInv_*\to\HH^0(\E^\infty,\mathcal{O}_{\E^\infty}^\times)\cdot\AbsInv_\times
\longrightarrow\RelInvRat^\times \stackrel{\wt}\longrightarrow\op{Pic}_{\g^{(\infty)}}(\E^\infty).
 \end{equation}
However,  since $\W$ can be treated as a subgroup of $\mathrm{Pic}_{\g^{(l)}}(\E^l)$ for a finite $\ell$ we get 
due to Theorem \ref{th:reldiff} the exact sequence
 \begin{equation}\label{ADP2}
0\to\AbsInv_*\to\HH^0(\E^\ell,\mathcal{O}_{\E^\ell}^\times)\cdot\AbsInv_\times
\longrightarrow\RelInvRat^\times \stackrel{\wt}\longrightarrow\op{Pic}_{\g^{(\ell)}}(\E^\ell).
 \end{equation}

Similarly to Section \ref{sect:weights} we definte the 
$\g$-equivariant class group of an equation $\E$ by the same
formula \eqref{DivCl}, denote it $\op{Cl}_\g(\E)$ and 
note that it has the same interpretation as the weight space
 \[ 
\W\simeq \RelInvRat^\times/(\HH^0(\E^\ell,\mathcal{O}_{\E^\ell}^\times)\cdot\AbsInv_\times).
 \]

The following is therefore a reformulation of the second claim
in Corollary \ref{CorTh8}. The order can be infinity or 
the GLT-order $\ell$ as in that corollary.
 \begin{cor}\label{invCl}
The group $\op{Cl}_\g(\E)$ is a discrete subgroup of finite rank
in $\op{Pic}_{\g^{(\infty)}}(\E^\infty)$.
 \end{cor}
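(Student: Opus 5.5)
The plan is to read Corollary \ref{invCl} off the exact sequence \eqref{ADP2}, combined with the finite generation statement of Corollary \ref{CorTh8}. First we identify $\op{Cl}_\g(\E)$ with the weight group $\W$. By definition $\op{Cl}_\g(\E)$ is the quotient of the invariant divisors by the principal ones coming from $\AbsInv_\times$. Exactness of \eqref{ADP2} at $\RelInvRat^\times$ says that the kernel of $\wt$ is exactly $\HH^0(\E^\ell,\mathcal{O}^\times_{\E^\ell})\cdot\AbsInv_\times$, which gives
\[
\op{Cl}_\g(\E)\simeq\RelInvRat^\times/(\HH^0(\E^\ell,\mathcal{O}^\times_{\E^\ell})\cdot\AbsInv_\times)\simeq\W .
\]
Here $\W$ sits inside $\op{Pic}_{\g^{(\ell)}}(\E^\ell)$, and via pullback $\pi_{\infty,\ell}^*$ it maps into $\op{Pic}_{\g^{(\infty)}}(\E^\infty)$.

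The key step is finite rank. Theorem \ref{th:reldiff} gives the representation \eqref{RCFR}. Every factor except $R_1^{d_1}\cdots R_q^{d_q}$ has trivial weight: $F$ is an absolute invariant and $C$ is a global nonvanishing function. Hence $\W$ is generated by $\wt(R_1),\dots,\wt(R_q)$, exactly as in Corollary \ref{cor:finiteweight}. This already gives finite generation, so the rank of $\W_{tf}$ is at most $q$, and by Corollary \ref{CoraffbundleTor} the torsion part is finite.

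Discreteness should follow as in the algebraic case of \S\ref{S1.2}. Relative invariants modulo the equivalence $\sim$ form an at most countable set: they are rational in jets of bounded order over the transitive base, and are determined up to $\HH^0(\mathcal{O}^\times)$ by their finitely many zero and pole components. So $\W$ is a countable, finitely generated subgroup, and in particular carries the discrete topology induced from the lattice $\W_{tf}\simeq\Z^p$.

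The main obstacle is the passage from order $\ell$ to $\infty$. The pullback map $\op{Pic}_{\g^{(\ell)}}(\E^\ell)\to\op{Pic}_{\g^{(\infty)}}(\E^\infty)$ need not be injective (see Appendix \ref{SecA}). This is harmless for the claim. The image of a finitely generated group is finitely generated, so it still has finite rank. Moreover, the class group is defined intrinsically from relative invariants, so computing it at order $\ell$ or at order $\infty$ yields the same quotient $\RelInvRat^\times/(\HH^0\cdot\AbsInv_\times)$. For this last point we use that $\HH^0(\E^\infty,\mathcal{O}^\times)=\pi^*_{\infty,\ell}\HH^0(\E^\ell,\mathcal{O}^\times)$, since the fibers of $\E^\infty\to\E^\ell$ are affine spaces and carry no nonconstant invertible polynomials.
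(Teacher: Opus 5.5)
Your argument is the paper's: the corollary is obtained by identifying $\op{Cl}_\g(\E)$ with $\W$ through the exactness of \eqref{ADP2} (or \eqref{ADP1} at infinite order), and then invoking the finite generation of $\W$ from Theorem \ref{th:reldiff} and Corollary \ref{CorTh8}. One small correction: your claim that $\RelInvRat^\times/{\sim}$ is at most countable is false as stated, since for a nonconstant absolute invariant $I$ the divisors of $I-c$, $c\in\C$, are pairwise inequivalent; nothing depends on this claim, because countability of $\W$ already follows from its finite generation.
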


Finite generation of the group $\op{Cl}_\g(\E)$ is a weaker property than finite generation of $\RelInvRat$ or $\RelInv$. Clearly $\RelInvRat^\times$ is not finitely generated as a group, but 
$\RelInvRat$ is finitely generated as a differential $\AbsInv$-algebra
by Theorem \ref{th:reldiff}. 
The finite generation of $\RelInv$ is more subtle as seen in Proposition \ref{prop:equiaff}. The remainder of this section will be devoted to that topic.

\smallskip

Referring to presentation \eqref{RCFR}, in general, there are $p\leq q$ 
relative invariants with independent torsion-free weight and, 
without loss of generality, we assume that the first invariants $R_1,\dots,R_p$ are such. 
From these and absolute invariant derivations $\hat\p_j$ we can, as shown in Section \ref{sect:cdiff}, define a connection $\nabla$ on the $\AbsInv$-module $\RelInv_w$ by
formula \eqref{eq:RelFromAbs2}. Clearing denominators ($Q_j$) and using
formula \eqref{def15} with $\alpha^i=Q_i$ we obtain 
polynomial relative invariant derivations
 \begin{equation}\label{nablaj}
\nabla_j:=Q_j\Delta_{\hat\p_j}:\RelInv\to\RelInv
 \end{equation}
of homogenuity $m_j>0$ (that is, mapping $\RelInv_w\mapsto\RelInv_{w+m_j}$) 
for $1\leq j\leq s$.

Now, we can address a stronger finite generation property 
for the $\AbsDiffInv$-algebra $\RelDiffInvPol$ of polynomial relative differential invariants.

 \begin{proof}[Proof of Theorem \ref{th:finite}]
We prove the theorem in three steps.

\smallskip

Step 1: Algebraic action on an affine or projective variety. 

By Rosenlicht's theorem there exists a finite number of 
rational absolute invariants $I_1,\dots,I_r$, 
generating all other rational absolute invariants as a field, and by Theorem \ref{th:reldiff} there exists a finite number of polynomial relative invariants $R_1,\dots,R_p$, 
generating the weight lattice $\mathcal{W}$ of all rational relative invariants. 

A set of generating relative invariants $R_1,\dots,R_q$, 
as we saw in Section \ref{sect:weights}, can be bigger ($q\geq p$).
In particular expressing $I_i=\frac{P_i}{Q_i}$ (irreducible fraction) 
we know by Proposition \ref{rattopol} that
both numerators $P_i$ and denominators $Q_i$ are relative invariants and
we include them into the set $\{R_j\}_{j=1}^q$.
We also assume that $R_j=0$ contain all codimension 1 singular strata of the foliation by $\g$-orbits, cf.\ strata $\Sigma^1_i$ 
used in the proof of Theorem \ref{th:finitgenrelinv}.

Consider a polynomial relative invariant $R$.
By Theorem \ref{th:finitgenrelinv} it has the following form
(the factor $C$ is constant on affine or projective varieties over $\C$):
 \begin{equation}\label{RFR1}
R=F(I_1,\dots,I_r)R_1^{m_1}\dots R_q^{m_q},
 \end{equation}
where $F=H/K$ is a ratio of two polynomials in $I_i$. 
We assume the numerator $H$ has no common factors with the 
denominator $K$, when expressed in terms of $I_i$. (Since the coordinate ring
of an affine variety may fail to be a UFD this statement should be understood
for all possible factorizations.) However when they are expressed in terms of
ambient affine coordinates, then common factors may appear
(as we saw in Examples \ref{sect:joint2D} and \ref{sect:joint3D}).

Consider an irreducible factor of $K$, which is not one of $R_j$. 
(Since $R_j$ are in general sections of line bundles, this 
assumes fixing a coordinate chart, where $R_j$ can be treated as polynomial functions.)
Then its locus $\Sigma$ contains regular points, moreover a Zariski open set of it 
consists of regular points. Then by the separation property from Rosenlicht's theorem,
$\Sigma$ should be rationally expressed through $I_i$. Since the left
hand side of \eqref{RFR1} is polynomial this pole should be cancelled by 
a factor in $H$. We conclude that the only possible factors of $K$ are $R_j$.

The numerator of $F$ is a polynomial $H$ in $I_i$ and bringing it to the common
denominator in terms of $P_i,Q_i$ we get an expression of the type
 \begin{equation}\label{RFR2}
R=\bar{H}(P_1,Q_1,\dots,P_r,Q_r)R_1^{d_1}\dots R_q^{d_q},
 \end{equation}
where $\bar{H}$ is a (new) polynomial of its arguments; 
note that the number of factors $R_j$ has potentially increased and their powers have potentially been modified.

Some of the exponents $d_i$ can be negative.
Since we allow divisions by a finite number of polynomials (localization)
the claim is proved in this case.

\smallskip

Step 2: Extension to algebraic bundles and finite jets.

By our assumption the action is transitive on the base manifold $M$, 
so every invariant (relative or absolute) is uniquely determined by its 
restriction to the fiber of a base (reference) point $o\in M$. 
The fiber $\pi_k^{-1}(o)\subset J^k_o$ is a tower of projective/algebraic bundles.

In fact, while the jet-spaces form a tower of affine bundles 
(starting from jet-order two), those are not affine varieties themselves 
(both projections $\pi_{k+1,k}:J^{k+1}_o\to J^k_o$ 
and $\pi_{k,k-1}:J^k_o\to J^{k-1}_o$ are naturally affine, but their composition
$\pi_{k+1,k-1}:J^{k+1}_o\to J^{k-1}_o$ is not such)
yet the notion of polynomiality 
is well-defined. Hence the argument of step 1 works by induction.

Projective and more general algebraic varieties (Grassmanians or Lagrangian Grassmanians)
arise on the level of first or second jets. Here polynomiality is not
well-defined, but rationality is natural, and we use the general approach
of \cite{KS2} in the algebraic context (as in Section \ref{sect:weights}):
denoting the variety by $N$ and the Lie algebra on it by $\g_N$, the
(classes) of relative invariants are encoded by $\op{Div}_{\g_N}(N)$
and finite generation holds verbatium by the above argument.

In general, $M$ can be a singular or a quasi-projective 
variety (given by equalities and inequalities). The former case is reduced to the latter 
by removing finitely many components of singularities. 
This gives a Zariski open subset $U$ of an 
algebraic variety $\E$ such that $\E\setminus U$ consists of 
finitely many invariant components.
The components of codimension one are invariant divisors represented by relative differential invariants $R_j$ on which we allow to localize.

These invariant divisors give rise to factors $C\in\Hcech^0(\E,\mathcal{O}_\E^\times)$ 
and we add generators of this group to the list of generating relative invariants
$R_j$ (even though they may be trivial as relative invariants).
The singularities of higher codimensions are irrelevant to this \v{C}ech cohomology 
(by Hilbert's nullstellensats or by Hartog's extension principle). 
This allows us to have a more constraint representation of relative invariants
\eqref{RCFR} without factor $C$.

For a differential equation $\E$, the fiber $\E^k_o\subset J^k$ can be treated similarly;
here $k$ is the maximal order of the equations in the system. 
This gives finite generation of relative differential invariants (without invariant derivations)
for any finite jet-order. 

\smallskip

Step 3: Infinite jets and differential equations.

Now consider the infinitely prolonged equation $\E^{\infty}\subset J^\infty$ 
and a polynomial relative differential invariant $R$
of the prolonged action of $G$. This invariant has form \eqref{RFR1} 
but now the function $F$ depends on absolute differential invariants and
their invariant derivations as in \eqref{nablaj}:
 \[
F(I_i,\hat\partial_j(I_i),\dots)=
\frac{H\bigl(P_i,Q_i,\nabla_j(P_i),\nabla_j(Q_i),\dots\bigr)}{K\bigl(P_i,Q_i,\nabla_j(P_i),\nabla_j(Q_i),\dots\bigr)}.
 \]
One can extend the argument of step 1 for the denominator $K$, 
however there is a simpler reasoning as follows. 
From the global Lie-Tresse theorem \cite{KL2} we know that generators of 
absolute differential invariants of the $\g$-action can be taken to be 
polynomials $\hat\p_J(I_i)$ of sufficiently high order $|J|+|I_i|\geq\ell$
(here $J$ is the multi-index for the iterated derivation). 
Take $\ell$ to be at least
the maximal order of polynomial relative differential invariants $R_j$ 
generating the weight lattice $\mathcal{W}$.
Then $F$ is polynomial with respect to the indicated generators $\hat\p_J(I_i)$.

Consequently, after possible cancellations, we get 
 \[
R= \bar{H}\Bigl(P_i,Q_i,\nabla_j(P_i),\nabla_j(Q_i),\dots\Bigr)
R_1^{d_1}\cdots R_q^{d_q}.
 \]
Here $\bar{H}$ is a polynomial of the indicated arguments - relative invariants (it 
should be weighted homogeneous in order for the result to be a relative invariant), 
and the exponents $m_i$ may have changed to $d_i$ (possibly negative) 
due to a cancellation with the denominator $K$ of $F$. 

Thus we obtain that any relative differential invariant $R$ is a polynomial
in the generating invariants $R_j$ and relative invariant derivations $\nabla_j$ 
upon a possible division by one of $R_j$. This proves 
the required finite polynomial generation with finitely many localizations.
 \end{proof}

 \begin{example}
Let us revisit Example \ref{ex:projective} where we expressed cubics by a 
ninth-order relative differential invariant $R_9$, by using a combination of absolute and relative differential invariants. 
In order to express it in terms of polynomial relative differential invariants only, 
we define the polynomial relative invariant derivation 
(obtained by clearing the denominators)
 \[ 
\nabla =R_5^4 R_7^{-1}\left(\hat\p- w_1 \frac{\hat\p(R_2)}{R_2}-\frac13w_2 \frac{\hat\p(R_5)}{R_5}\right) \colon \mathcal{R}_{(w_1,w_2)} \to\mathcal{R}_{(w_1,w_2+4)}, 
 \]
where $(w_1,w_2)$ is the weight of the input expressed in terms of the basis 
$\{\wt(R_2),\tfrac13\wt(R_5)\}$. Now we can rewrite \eqref{eq:ProjectiveRelative} in terms of relative differential invariants: 
 \begin{equation*}
\tfrac13 R_9 = \frac{1400 R_7 \nabla^2(R_7) -1575 \nabla(R_7)^2-2450 R_5^4 \nabla(R_7)+1600 R_7^3-343 R_5^8}{R_2^5R_5^3}. 
 \end{equation*}
Thus we see how localization arises in the generation of relative invariants.

\com{
Since $\nabla(R_2)=\nabla(R_5)=0$, we have 
 \begin{align*}
\hat\p(I_7) &= R_5^{-4}R_7 \nabla(I_7)= 3\frac{R_7^3}{R_5^{12}}\nabla(R_7),\\
\hat\p^2(I_7) &=R_5^{-8} R_7^2\nabla^2(I_7)= 3\frac{R_7}{R_2^2 R_5^{14}} \left(R_7 \nabla^2(R_7)+2\nabla(R_7)^2\right). 
 \end{align*}
This allows us rewrite \eqref{eq:ProjectiveRelative} in terms of relative differential invariants: 
 \begin{align*}
R_9 = \frac{1400 R_5^2 R_7 \nabla^2(R_7)-2975 R_5^2 \nabla(R_7)^2
-2450 R_2 R_5^5 R_7 \nabla(R_7)+1600R_2^2 R_7^5-343 R_2^2 R_5^8 R_7^2}
{36R_2^7R_5^3R_7^2}.
 \end{align*}
 }

\end{example}

\section{Examples and applications}\label{S5}

In this section, we look at some additional examples. In particular, we compute realizable weights the rational differential invariants

\subsection{Special affine transformations revisited}

Let us return to example \eqref{eq:saff} of \S\ref{Snonfin}:
 \[ 
\g = \mathfrak{saff}(2)=\mathfrak{sl}(2,\C)\ltimes\C^2\subset\vf(\mathbb C^2).
 \]
As a consequence of Proposition \ref{prop:absinvsaff}, the field of 
rational differential invariants of order $k$ is generated by 
 \[ 
I_4=\frac{R_4^3}{R_2^8}, \qquad I_5=\hat{\p}(I_4), \quad\dots, 
\quad I_{k}=\hat{\p}^{k-4}(I_4).
 \] 
We see that $I_4$ is defined on the complement of $\{R_2=0\} \subset J^4(\mathbb C^2,1)$, 
and $dI_4 \neq 0$ on the complement of $\Sigma_4 := \{R_2 R_4=0\} \subset J^4(\mathbb C^2,1)$. 
More generally, the set on which $I_4, \dots, I_k$ are defined and functionally independent, i.e., 
 \[ 
dI_4 \wedge \cdots \wedge dI_k \neq 0,
 \]
is exactly the complement of $\pi_{k,4}^{-1}(\Sigma_4)\subset J^k(\C^2,1)$. In other words, the set of singular points stabilizes at $k=4$. 
It follows that the general rational relative differential invariant of order $k$ is of the form 
  \[ 
F(I_4, \dots, I_k) R_2^{d_1} R_4^{d_2},
 \]
where $F$ is a rational function of $k-3$ variables and $d_1, d_2$ are integers. 
Therefore, since $R_2^3/R_4$ has weight $w=1$, we have 
 \[
\mathcal{W}=\mathbb{Z}\simeq \op{Pic}_{\g^{(1)}}\bigl(J^1(\C^2,1)\bigr) 
\simeq \op{Pic}_{\g^{(\infty)}}\bigl(J^\infty(\C^2,1)\bigr).
 \]
The weights are given by the action of the Cartan element $\xi=-x\p_x+y\p_y$ in $\mathfrak{sl}(2)$,
prolonged to the space of jets: with $w=\lambda(\xi)$ we get $L_{\xi^{(\infty)}}R=w\cdot R$.
 
The general polynomial invariants can be expressed in the form
 \[ 
R=H(I_4,I_5,\dots,I_k) R_2^{d_1} R_4^{d_2},
 \]
where $H$ is a polynomial of its arguments.
Then the space of realizable weights is
 \[
\mathcal{W}_\dagger=\{3,6,8,9\}\cup\{10+\mathbb{N}\}\subset\op{Pic}_{\g^{(\infty)}} \bigl(J^\infty(\C^2,1)\bigr).
 \]
The generators are $R_2$ of weight $3$, $R_4$ of weight $8$ and 
$Q_{2i}$ from Lemma \ref{LemQ} of weights $5i-2$, $i\ge3$.
For example, the relative invariant $Q_6$ of weight $w=13$ 
is given by
 \[
Q_6 = 3 y_2^2 y_6 - 21 y_2 y_3 y_5 + \frac{21}{5} y_2 y_4^2 + 21 y_3^2 y_4= \frac{1}{15} \left(5 I_4 I_6-5 I_5^2+32 I_4^3 \right) R_2^{23} R_4^{-7}.
 \]

Finally, let us note that if we extend $\g$ to $\mathfrak{aff}(2)=\mathfrak{gl}(2)\ltimes\C^2$ by adding the vector field $\eta=x\p_x+y\p_y$ then the space
of relative invariants is the same $\mathcal{R}$ or respectively $\mathcal{R}^{\text{rat}}$, 
but the weight lattice $\W$ becomes two-dimensional with weights 
$(w_1,w_2)=(\lambda(\xi),\lambda(\eta))\in\mathbb{Z}^2$. 
Thus the algebra of invariants $\mathcal{R}$ is the same but is re-graded.

\subsection{Relative invariants of $\mathfrak{sl}(3)$ on the space of curves in a flag variety}

In this section, we will consider relative differential invariants of curves on 
a particular 3-dimensional manifold $M$. The manifold is a rank 1 bundle over $\C P^2$ 
with fiber $\C P^1$ that can be identified with $J^1(\C P^2,1)$, 
the bundle of jets of curves in $\mathbb CP^2$.
Consider the standard realization of $\mathfrak{sl}(3,\C)$ on $\C P^2$, 
and let $\g$ denote the first jet-prolongation of this Lie algebra 
to $M=J^1(\C P^2,1)$. In local coordinates, $\g$ is given by 
 \begin{align*}
\langle &\partial_x, \partial_y, x\partial_y+\partial_z,x \partial_x-y\partial_y-2z\partial_z,y\partial_x-z^2\partial_z, x\partial_x+y\partial_y, \\
& \,\ x^2 \partial_x+xy\partial_y+(y-xz)\partial_z,xy \partial_x+y^2\partial_y+z(y-xz)\partial_z \rangle.
 \end{align*}
 
This local version of $\g$ was one of the main examples of \cite{KS1}, 
where its invariant determined ODE systems were computed. Here we recast 
those results in the global setting. In particular we will describe the space 
of weights in terms of equivariant line bundles over $J^1(M,1)$. We will compute the global absolute and relative differential invariants of curves in $M$. 

We can identify $M$ with the space of full flags
$F_{1,2}(\C^3)=SL(3,\C)/B$, where $B$ is the Borel subgroup. This shows that
$M$ is a homogeneous space, i.e., $\g$ acts transitively on $M$. This means that it is sufficient to do the computations in one particular coordinate chart of $M$. We will therefore use the coordinates $x,y,z$ throughout. 

To get coordinates on $J^1(M,1)$, we split the coordinates on $M$ into ``dependent'' and ``independent'' ones. As there are 3 possible choices for the independent coordinate, this can be done in 3 different ways, giving us the three charts of $J^1(M,1)$ that we will be working with. With these choices being made, we also get canonical coordinates on $J^k(M,1)$ for $k \geq 1$. Let us first set $x$ as the independent coordinate. 

The following functions are relative differential invariants:
\begin{align*}
  R_1 &= y_1-z, \qquad \qquad 
  R_{2a} = y_2, \qquad \qquad 
  R_{2b} = z_1 y_2-(y_1-z) z_2-2z_1^2, \\
  R_{3} &= R_1 R_{2b} D_x(R_{2a})-R_1 R_{2a} D_x(R_{2b})+3R_{2a} R_{2b} \left(D_x(R_1) -R_{2a}\right) ,\\
  R_4 &= R_1^2\left(3R_{2a} D_x^2(R_{2a})-4 D_x(R_{2a})^2\right)-3 R_{2a}^2 \left(3R_{2a}^2-6 R_{2a} D_x(R_1)+2 R_1 D_x(R_{2a})\right).
\end{align*}
These local expressions were also given in \cite[Sect. 2.6]{KS1}, and they are sufficient for writing down the absolute differential invariants. Notice that $R_1=0$ is the condition of a curve in $M$ to be Legendrian with respect to the contact distribution given by $\ker(dy-zdx)$. 
\begin{prop} \label{prop:absinv27}
The field of rational absolute differential invariants is generated by the following differential invariants and invariant derivation:
\[ I_3=\frac{R_3^3}{(R_{2a} R_{2b})^4}, \quad I_{4a} = \frac{R_3 R_{4}}{R_{2a}^4 R_{2b}^2},  \quad \nabla=\frac{R_1 R_3^2}{(R_{2a} R_{2b})^3} D_x.\] 
\end{prop}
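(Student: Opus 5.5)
Since $\g$ acts transitively on $M$, I will fix a base point $o\in M$ and work in the chart where $x$ is independent. The problem then reduces to the action of the stabilizer $\g_o$ (the Borel subalgebra, of dimension $5$) on the fibers $J^k_o(M,1)$. I will use Rosenlicht's theorem together with the global Lie--Tresse theorem \cite{KL2}. By these, the field $\AbsInv$ is generated by finitely many invariants and invariant derivations, and its transcendence degree in order $k$ equals $\dim J^k(M,1)-\dim\g=(2k+3)-8=2k-5$ for $k$ large enough that the generic stabilizer is trivial. So the plan has three parts. First verify that the listed objects are invariant. Then determine the order at which the action becomes free. Finally show that $I_3$, $I_{4a}$ and $\nabla$ generate a field of the correct transcendence degree in each order, and that this field is all of $\AbsInv^k$.

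\emph{Invariance.} $R_1,R_{2a},R_{2b},R_3,R_4$ are relative invariants, as stated above; this can be checked by prolonging the eight vector fields. By Theorem \ref{th:rational} their weights lie in $\mathrm{Pic}_{\g^{(1)}}(J^1(M,1))$. In the chart they are recorded by multipliers, which are characters on the Cartan part spanned by $x\p_x-y\p_y-2z\p_z$ and $x\p_x+y\p_y$, corrected by $x$-dependent terms for the remaining fields. I will compute the weight vectors of $R_1,R_{2a},R_{2b},R_3,R_4$ and of $D_x$, the latter viewed as a relative operator with weight coming from $dx$. A direct check then shows that the exponents in $I_3$, $I_{4a}$ and $\nabla$ cancel all weights, so the three objects are absolute. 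Equivalently, one verifies $X^{(k)}(I)=0$ and $[X^{(\infty)},\nabla]=0$ for the generators $X$.

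\emph{Counting.} Generic orbits of $\g^{(2)}$ on $J^2(M,1)$ have dimension $7=\dim J^2$, so there is no invariant of order $2$. In order $3$ the dimension is $9$, so $\g^{(3)}$ has an open orbit of codimension $1$ and exactly one invariant appears, namely $I_3$. In order $4$ there are $3$ invariants: $I_{4a}$, $\nabla I_3$ and $I_3$. In order $k\ge4$ there are $2k-5$ invariants: $I_3$, the derivatives $\nabla^jI_3$ for $1\le j\le k-3$, and $\nabla^jI_{4a}$ for $0\le j\le k-4$. That count is $1+(k-3)+(k-3)=2k-5$. For functional independence I will use the fact that $\nabla^j I_3$ and $\nabla^j I_{4a}$ are affine in the top jets $y_{3+j}$ and $z_{3+j}$. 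So it suffices to show that the $2\times2$ leading coefficient matrix is nondegenerate at order $4$; the affine structure then propagates this to higher orders. At order $3$ one also needs $dI_3\ne0$.

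\emph{Generation.} Independence gives a field with the correct transcendence degree, but generation also needs the extension $\AbsInv^k\supset\C(I_3,\dots)$ to be trivial rather than finite algebraic. I expect this to be the main obstacle. I would argue as in Example \ref{Minko} and Proposition \ref{prop:absinvsaff}, by showing that the generic stabilizer in $G^k_o$ of a point is connected and trivial. Concretely, fix a normal form for generic $k$-jets, for instance $x=y=z=0$, $y_1=1$, $z_1=0$, with suitable values in second order. Check that only the identity preserves the normal form. Then the remaining free jet coordinates in the normal form are recovered rationally from $I_3$, $I_{4a}$ and their $\nabla$-derivatives, which is possible because of the affine dependence on top jets. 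Hence every rational invariant, restricted to the normal-form slice, is a rational function of the generators, and so the generators produce all of $\AbsInv$.
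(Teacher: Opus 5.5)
Your invariance check and your independence argument are sound. We have $\partial R_3/\partial z_3=R_1^2R_{2a}$ and $\partial R_4/\partial y_4=3R_1^2R_{2a}$. So the top-jet matrix of $(I_{4a},\nabla I_3)$ in $(y_4,z_4)$ is triangular with nonzero diagonal, and applying $\nabla$ multiplies it by the coefficient of $D_x$. The paper gives no proof here beyond citing \cite{KS1}, so your plan is judged on its own.

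\textbf{The gap is in the generation step.} A normal form that freezes the $1$- and $2$-jets cannot be a cross-section, even though generic stabilizers are trivial from order $3$ on. On a fixed $2$-jet fiber, $I_3$ is a constant times $R_3^3$. Its restriction to any rational curve there is therefore a cube and never has degree one.

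Concretely, take the generic $1$-jet $x_1=z_1=0$ in the chart with $y$ independent. It is fixed by the torus $(x,y,z)\mapsto(\alpha x,\beta y,\beta\alpha^{-1}z)$, which scales $x_2,z_2$ (that is, $R_{2a},R_{2b}$ there) by $\alpha\beta^{-2}$ and $\alpha^{-1}\beta^{-1}$. This exponent matrix has determinant $-3$. Accordingly, the element $(\alpha,\beta)=(\omega^2,\omega)$ with $\omega^3=1$ fixes the generic $2$-jet $x_2=z_2=-1$. It acts on $(x_3,z_3)$ by the scalar $\omega^2$, so it multiplies $R_3=z_2x_3-x_2z_3$ by $\omega^2$.

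Hence your slice meets each generic orbit in three points, and $I_3|_S=c\,t^3$. The free coordinate $t$ is not a rational function of the restricted generators; they only produce the $\Z_3$-invariant subfield. Your criteria cannot detect this. Replacing $I_3$ by $I_3^2$ keeps trivial stabilizers, independence and the count. Yet the differential field it generates meets $\C(J^3)$ only in $\C(I_3^2)$.

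\textbf{What is missing:} a correct base case in order $3$, followed by an induction that needs no slice.

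\emph{Base case.} Given that generic orbits in $J^3$ are $8$-dimensional, $\AbsInv^3$ is algebraic over $\C(I_3)$. Since $R_3$ is affine in the $3$-jets with $\partial R_3/\partial z_3\neq0$, and $R_{2a}R_{2b}$ is not a cube, the generic level set $\{R_3^3=c\,(R_{2a}R_{2b})^4\}$ is irreducible. So $\C(I_3)$ is algebraically closed in $\C(J^3)$, and therefore $\AbsInv^3=\C(I_3)$. Equivalently, the image of restriction to your slice is $\C(t^3)$, which $I_3$ does generate.

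\emph{Induction.} For $k\ge4$ the two new generators $a_k,b_k$ are affine in $(y_k,z_k)$ with invertible matrix, as you showed. Hence $\C(J^k)=\C(J^{k-1})(a_k,b_k)$, with $a_k,b_k$ algebraically independent over $\C(J^{k-1})$. Write an order-$k$ invariant as a reduced, normalized fraction in $a_k,b_k$. Invariance, together with uniqueness of that representation, forces its coefficients into $\AbsInv^{k-1}$. Induction then gives the proposition.
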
 
This is a version of Theorem 2.15 in \cite{KS1} with a minor correction and simplification. Note that $I_3, I_{4a}, \nabla(I_3)$ generate the field of rational absolute differential invariants of order 3, while $I_3, I_{4a}, \nabla(I_3), \nabla(I_{4a}), \nabla^2(I_3)$ generate the field of rational absolute differential invariants of order 4, and so on. 

The functions $R_1, R_{2a}, R_{2b}, R_3, R_4$ can be considered as local coordinate expressions for relative differential invariants, but the global relative differential invariants are not uniquely determined by their expressions in a single coordinate chart (although the rational absolute differential invariants are). To complete the description of these relative differential invariants, we describe them in the other coordinate charts as well. These charts are obtained by making a different choice of ``independent'' coordinate. If $y$ is chosen as the independent coordinate, we have 
\begin{align*}
R_1 &= 1-z x_1, \qquad \qquad R_{2a} =- x_2, \qquad \qquad R_{2b} = (x_1 z_2-z_1 x_2)z -z_2 - 2 x_1 z_1^2, \\ 
R_3 &= R_1 R_{2b} D_y(R_{2a})-R_1 R_{2a} D_y(R_{2b})-3R_{2a} R_{2b} x_1 z_1. 
\end{align*} 
Choosing $z$ as independent coordinate we have 
\begin{align*}
R_1 &= y_1-zx_1, \qquad \qquad R_{2a} = x_1 y_2 - y_1 x_2, \qquad \qquad R_{2b}= y_2-z x_2-2 x_1, \\
 R_{3} &= R_1 R_{2b} D_z(R_{2a})-R_1 R_{2a} D_x(R_{2b})-3R_{2a} R_{2b} x_1.
\end{align*}
The expressions for $R_1$ are found by analyzing the orbits. In each case, $R_1$ vanishes exactly where the orbit dimension drops. Furthermore, it is clear that the obtained algebraic set is irreducible. Since the higher-order variables are essentially global (fibers of $J^{k+1}(M,1) \to J^{k}(M,1)$ are affine spaces for $k \geq 1$), it is sufficient to compute relative differential invariants of order greater than 1 in one coordinate system, apply the appropriate coordinate transformations and remove denominators to obtain the corresponding expressions in the remaining coordinate systems. 

\begin{remark}
All the local expressions for the relative differential invariants above are  functions with nontrivial zero sets, and qualitatively the picture looks the same in all charts. This does not necessarily happen in general, as there are examples (of Lie algebras of vector fields) where $\g^{(1)}$ is transitive in some open chart while having singular orbits in another chart, cf.\ \cite{KS1}. 
\end{remark}

The generators $I_3, I_{4a}, \nabla(I_3)$ are defined outside the set given by $R_{2a} R_{2b} =0$. Furthermore, the 3-form $dI_3 \wedge dI_{4a} \wedge d\nabla(I_3)$ vanishes exactly on the subset of $J^4(M,1)$ defined by $R_1 R_3=0$. Thus, if we define 
\[ \Sigma= \{R_{1}=0\} \cup \{R_{2a}=0\} \cup \{R_{2b}=0\} \cup \{R_{3}=0\} \subset J^3(M,1),\] 
the absolute differential invariants $I_3, {I_{4a}}, \nabla(I_3), \nabla(I_{4a}), \dots, \nabla^{k-4}(I_{4a}), \nabla^{k-3}(I_{3})$ separate orbits in $J^k(M,1)\setminus \pi_{k,3}^{-1}(\Sigma)$. The following proposition is a specification of Theorem \ref{th:reldiff} to this example. 

 \begin{prop} \label{prop:relinv27}
The general rational relative differential invariant of order $k$ takes the form 
 \[ 
F\Bigl(I_3, {I_{4a}}, \nabla(I_3), \nabla(I_{4a}), \dots, \nabla^{k-4}(I_{4a}),
\nabla^{k-3}(I_{3})\Bigr) R_1^{d_1} R_{2a}^{d_2} R_{2b}^{d_3} R_{3}^{d_4},
 \]
for some rational function $F$ and integers $d_1,d_2,d_3,d_4 \in \mathbb Z$. 
 \end{prop}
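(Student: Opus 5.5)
The plan is to specialize the argument of Theorem \ref{th:reldiff} (itself modeled on Theorem \ref{th:finitgenrelinv}) to this example. Let $R$ be a rational relative differential invariant of order $k\ge 3$. Lower orders are handled by the same argument with fewer generators, or by noting that $\g^{(1)}$ has only the orbit $\{R_1=0\}$ and its complement on $J^1(M,1)$. We will show that every zero or pole divisor of $R$ lying outside $\pi_{k,3}^{-1}(\Sigma)$ is cut out by absolute invariants. We will also show that every zero or pole divisor inside $\pi_{k,3}^{-1}(\Sigma)$ is one of the four irreducible hypersurfaces $\{R_1=0\}$, $\{R_{2a}=0\}$, $\{R_{2b}=0\}$, $\{R_3=0\}$.

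\emph{Step 1 (divisors inside $\Sigma$).} Irreducibility of $\{R_1=0\}$ was noted above. For $\{R_{2a}=0\}$, $\{R_{2b}=0\}$ and $\{R_3=0\}$ I would check irreducibility in the chart with independent variable $x$. There each of them is affine in a top jet variable ($y_2$, $z_2$, $y_3$ or $z_3$ respectively) with a coefficient that does not vanish identically, so each is irreducible. Since the fibers of $J^{k}\to J^{3}$ are affine spaces, the hypersurface components of $\pi_{k,3}^{-1}(\Sigma)$ are exactly the pullbacks of these four. By transitivity of $\g$ on $M$, this fiberwise check at a reference point suffices globally.

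\emph{Step 2 (divisors outside $\Sigma$).} The invariants listed in the statement are regular off $\{R_{2a}R_{2b}=0\}$. We know that $dI_3\wedge dI_{4a}\wedge d\nabla(I_3)\neq0$ off $\{R_1R_3=0\}$. Applying $\nabla$ adds one new top-order jet affinely, with a coefficient proportional to $R_1R_3^2/(R_{2a}R_{2b})^3$ times a nonzero factor, so the listed generators of order $\le k$ remain functionally independent off $\pi_{k,3}^{-1}(\Sigma)$. Their number, $2(k-3)+1$, equals the transcendence degree. Together with Proposition \ref{prop:absinv27} and the stabilization of singularities from the global Lie--Tresse theorem, this shows that they separate generic orbits on $J^k(M,1)\setminus\pi_{k,3}^{-1}(\Sigma)$. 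An invariant irreducible hypersurface there is then the zero locus of a rational function of them.

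\emph{Step 3 (assembling $R$).} Choose integers $d_i$ so that $R/(R_1^{d_1}R_{2a}^{d_2}R_{2b}^{d_3}R_3^{d_4})$ has no zeros or poles along $\Sigma$. The remaining divisor is then expressed by some $F$ of the generators, and the quotient is a nowhere-vanishing section with trivial weight. The last point is to show that the factor $C\in\Hcech^0(\E^3,\O^\times)$ of \eqref{RCFR} is constant. The fibers $J^1_o(M,1)\simeq\P^1$ are compact, higher jets form affine bundles, and $\g$ is transitive on $M$. So a global invertible regular function that is polynomial in jets of order $\ge2$ must be constant on fibers, and then it is constant on $M$, which can be absorbed into $F$.

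I expect the main obstacle to be Step 2, namely confirming that no hidden invariant hypersurface arises where the chosen generators fail to separate orbits but which is not among the four listed. I would first verify this by explicitly computing the leading coefficient of $\nabla^j$ in the top jets. If some extra factor appeared, it would have to be added to $\Sigma$.
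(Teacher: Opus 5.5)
Your route is the paper's: take $\Sigma=\{R_1R_{2a}R_{2b}R_3=0\}\subset J^3(M,1)$ and specialize Theorem \ref{th:reldiff}. The gap is in Step 2.

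Step 3 needs the listed generators to separate \emph{all} orbits on $U=J^k(M,1)\setminus\pi_{k,3}^{-1}(\Sigma)$. The paper asserts exactly this just before the proposition. Equivalently, the locus where orbits are not separated must have no codimension-one component outside $\Sigma$. Functional independence on $U$, the count $2k-5$, and field generation give you only separation of \emph{generic} orbits. That does not imply that every invariant hypersurface $H$ meeting $U$ is of the form $\{F=0\}$. A submersion $\Phi$ with connected generic fibres can have special fibres that split into several orbits along invariant hypersurfaces. Then any $F(\Phi)$ vanishing on $H$ also vanishes on the other pieces, and $R/F(\Phi)$ acquires a divisor not among $R_1,R_{2a},R_{2b},R_3$. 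The global Lie--Tresse theorem does not exclude this: it provides some singular set $S_\ell$, but not that its divisorial part lies in your $\Sigma$. Your proposed check of the leading coefficients of $\nabla^j$ detects only functional dependence, so by itself it cannot settle the question.

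That check does give the inductive step. Off $\pi_{4,3}^{-1}(\Sigma)$ one has $dI_3\wedge dI_{4a}\wedge d\nabla(I_3)\neq0$ and the orbits are $8$-dimensional. Hence the $(y_4,z_4)$-block of $(I_{4a},\nabla I_3)$ is nondegenerate there: a degenerate combination would vanish on the fibre and on the orbit, so it would be proportional to $dI_3$. Each application of $\nabla$ brings in two new jets, not one. For $m\ge5$ the $(y_m,z_m)$-block of the two new invariants is that order-four block times $\bigl(R_1R_3^2/(R_{2a}R_{2b})^3\bigr)^{m-4}$. So the new invariants are affine coordinates on each $\C^2$-fibre of $J^m\to J^{m-1}$ over $U$. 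Separation then lifts from order $m-1$ to order $m$: move one point into the fibre of the other and use injectivity on that fibre.

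What is missing is the base case: each level set $\{I_3=c\}\setminus\Sigma$, $c\neq0$, in $J^3$ must be a single orbit. Since $\g^{(1)}$ is transitive on $\{R_1\neq0\}$, you may work over the $1$-jet $z_1=0$, $y_1-z=1$. There $I_3=(y_2z_3-z_2y_3)^3/(y_2z_2)^4$, and the level set is isomorphic to $\C^*\times\C^*\times\C$, hence irreducible. The stabilizer orbits in it are $3$-dimensional and locally closed, so there is only one.

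Two smaller slips. First, $J^1_o(M,1)\simeq\P^2$, not $\P^1$; your compactness argument for $C$ is unaffected. Second, being affine in a top variable with nonzero coefficient does not imply irreducibility. For $R_3$, the coefficients of $y_3$ and $z_3$ are $-R_1(R_1z_2+2z_1^2)$ and $R_1^2y_2$, which share the factor $R_1$. You must check that the free term $3y_2z_1(2R_1z_2-z_1y_2+2z_1^2)$ is not divisible by $R_1$; it is not, so irreducibility holds.
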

Since $R_3^3/(R_{2a} R_{2b})^4$ is an absolute invariant, there are only 
3 independent weights.
Denote by $\tau$ the projection $M\to\mathbb CP^2$ and let $\bar\g=\tau_*(\g)$. 
The following are some $\W$-weights: 

\smallskip

 \begin{itemize}
 \item 
The vertical bundle $\mathcal{V}_\tau = \ker(\tau_*) \subset TM$ is a $\g$-equivariant line bundle over $M$. In our local coordinates, this bundle is spanned by $\partial_z$. 
 \item 
Since $M \simeq J^1(\mathbb CP^2,1)$, and $\g$ preserves the contact distribution, the contact form $\omega_0$ spans a $\g$-equivariant line bundle $\langle \omega_0 \rangle \subset T^*M$. In local coordinates $\omega_0 = dy-z dx$. 
 \item 
The Cartan-distribution on $J^1(M,1)$ is a rank 3 subbundle $\mathcal{C}_1\subset TJ^1(M,1)$ given by the kernel of 1-forms $\omega_1,\omega_2$. In local coordinates, we have $\omega_1 = dy-y_1 dx, \omega_2 = dz-z_1 dx$. The subbundle of $T^* J^1(M,1)$ spanned by $\omega_1$ is a $\g^{(1)}$-equivariant line bundle.
 \item 
The contact distribution on $M$ is a rank 2 subbundle $\mathcal{C}_0 = \ker(\omega_0) \subset TM$. In local coordinates, it is spanned by $\partial_x+z \partial_y$ and $\partial_z$. It has $\mathcal{V}_\tau$ as a subbundle, and the quotient $\mathcal{C}_0/\mathcal{V}_\tau$ is a $\g$-equivariant line bundle over $M$ isomorphic to $\mathcal{V}_\tau^*\otimes\langle\omega_0\rangle^*$. 
 \item 
The tautological bundle $\mathcal{O}_{\mathbb CP^2}(-1)$ and the canonical bundle $\Lambda^2 T^*\mathbb CP^2\simeq\mathcal{O}_{\mathbb CP^2}(-3)$ are $\bar{\g}$-equivariant line bundles over $\mathbb CP^2$.  The latter is spanned by $dx \wedge dy$. The pullback bundle $\tau^* \Lambda^2 T^* \mathbb CP^2$ is isomorphic as a $\g$-equivariant line bundle to $\mathcal{V}_\tau \otimes \langle \omega_0 \rangle^2$.  
\end{itemize}
One can find more explicit line bundles, for example  
 \[
\Lambda^2\mathcal{V}_{\pi_{1,0}},\quad 
\langle\omega_1\wedge\omega_2\rangle \simeq \pi_{1,0}^*(\mathcal{V}_\tau^*) \otimes \langle \omega_1 \rangle,\quad 
\Lambda^3 \mathcal{C}_1,\quad \mathcal{C}_1/\mathcal{V}_{\pi_{1,0}},
 \]
where $\mathcal{V}_{\pi_{1,0}}=\ker(d\pi_{1,0})\subset\mathcal{C}_1$, 
but it turns out that the first three candidates above
are sufficient to generate $\mathcal{W}$. 

 \begin{prop}\label{newprop:relinv27}
The weight lattice for rational relative differential invariants is equal to
 \[
\mathcal{W} = \{ \mathcal{V}_\tau^{\otimes p} \otimes \langle \omega_0 \rangle ^{\otimes q} \otimes \langle \omega_1 \rangle^{\otimes r} \mid p,q,r \in \mathbb Z\} \simeq \mathbb Z^3\subset\op{Pic}_{\g^{(\infty)}}\bigl(J^\infty(M,1)\bigr).
 \]
It is generated by the weights of the following $\g^{(3)}$-invariant tensor 
fields\footnote{In this formula $\p_z$ is not a vector field on $\J^\infty$
but a section of the vertical tangent to the bundle $\tau:M\to\C P^2$ pulled back 
from $M$ to $\J^\infty$ (this is an associated bundle to which $\g$ naturally lifts).}: 
 \[
\alpha = \frac{R_1 R_3}{R_{2a}^2 R_{2b}} \partial_z, \qquad 
\beta =  \frac{R_3}{R_1^2 R_{2a} R_{2b}} (dy-z dx),  \qquad 
\gamma = \frac{R_{2a} R_{2b}^2}{R_1^2 R_3}(dy-y_1 dx).
 \]
The weights $\wt(\alpha)$ and $\wt(\beta)$ are pullbacks of line bundles over $M$, 
while $\wt(\gamma)$ is the pullback of a line bundle over $J^1(M,1)$. Thus  
$\mathcal{W}\subset\op{Pic}_{\g^{(1)}}\bigl(J^1(M,1)\bigr)$.
 \end{prop}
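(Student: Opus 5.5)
By Proposition \ref{prop:relinv27}, every rational relative differential invariant has the form $F\cdot R_1^{d_1}R_{2a}^{d_2}R_{2b}^{d_3}R_3^{d_4}$, so $\W$ is generated by $\wt(R_1),\wt(R_{2a}),\wt(R_{2b}),\wt(R_3)$. The single relation $\wt(R_3)^3=\wt(R_{2a})^4\wt(R_{2b})^4$ comes from $I_3$ being absolute. The proof will therefore have two parts. First, I will show that the group generated by these four weights equals the group generated by $\wt(\alpha),\wt(\beta),\wt(\gamma)$. Second, I will identify these three weights with $\mathcal{V}_\tau$, $\langle\omega_0\rangle$, $\langle\omega_1\rangle$ and show they are independent, so that $\W\simeq\Z^3$.

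\textbf{Step 1: invariance of $\alpha,\beta,\gamma$.} I will check that each is a $\g^{(3)}$-invariant section of the indicated bundle tensored with the appropriate monomial. In the chart with independent variable $x$, this means computing the multipliers $\lambda_{R}(X)=X^{(3)}(R)/R$ for the eight generators of $\g$ and each of $R_1,R_{2a},R_{2b},R_3$. These multipliers are functions on $J^1$ by Theorem \ref{th:rational}. I will then compare them with the multipliers of $\partial_z$, $dy-z\,dx$ and $dy-y_1dx$ under the lifted action, i.e.\ with $-X(\text{coefficient})$ for the Lie derivative action. The computation is routine; the translations and the scalings $x\p_x-y\p_y-2z\p_z$, $x\p_x+y\p_y$ fix the combination, and the projective generators serve as a check.

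Invariance in the other two charts follows from the transformation formulas for $R_1,R_{2a},R_{2b},R_3$ given above. Alternatively, a tensor field invariant on a dense open set is invariant globally. Consequently $\wt(\alpha)=\wt(\mathcal{V}_\tau)$, $\wt(\beta)=\wt(\langle\omega_0\rangle)$, $\wt(\gamma)=\wt(\langle\omega_1\rangle)$ as elements of $\op{Pic}_{\g^{(\infty)}}$, each being the inverse of the weight of its monomial prefactor.

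\textbf{Step 2: change of basis.} Let $a_i$ denote the weights of $R_1,R_{2a},R_{2b},R_3$ written additively, and impose $3a_4=4a_2+4a_3$. Then
\[
-\wt(\alpha)=a_1+a_4-2a_2-a_3,\qquad -\wt(\beta)=a_4-2a_1-a_2-a_3,\qquad -\wt(\gamma)=a_2+2a_3-2a_1-a_4 .
\]
I will check that these three vectors generate the lattice $\Z^4/\langle(0,4,4,-3)\rangle$ spanned by the $a_i$. This amounts to showing that the $4\times4$ integer matrix consisting of these three rows together with $(0,4,4,-3)$ has determinant $\pm1$, which can be verified directly. (If the determinant were not $\pm1$ the statement would need adjusting, so this check is the crux.)

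\textbf{Step 3: independence.} I must show that no nontrivial integer combination $p\,\wt(\alpha)+q\,\wt(\beta)+r\,\wt(\gamma)$ is trivial, i.e.\ that $\W$ has rank $3$ rather than less. Equivalently, I need that $R_1,R_{2a},R_{2b}$ are multiplicatively independent modulo absolute invariants and $\HH^0(\mathcal{O}^\times)$. Suppose a monomial $R_1^{d_1}R_{2a}^{d_2}R_{2b}^{d_3}$ were an absolute invariant times a unit. Its zero and pole divisors would then have to be expressible through $I_3,I_{4a},\dots$, hence contained in level sets in the regular region. This is impossible since $\{R_1=0\},\{R_{2a}=0\},\{R_{2b}=0\}$ lie in $\Sigma$ and are irreducible and distinct. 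More concretely, I can compare multipliers: evaluating $\lambda$ on the two Cartan elements and on a third generator at a generic point produces a $3\times3$ matrix, and I only need to show it is nonsingular.

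Finally, because $\alpha$ and $\beta$ involve only bundles on $M$ and $\gamma$ only bundles on $J^1(M,1)$, pulled back, the inclusion $\W\subset\op{Pic}_{\g^{(1)}}(J^1(M,1))$ follows, consistent with Theorem \ref{th:rational}. I expect the main obstacle to be the bookkeeping of multipliers in Step 1, especially the sign conventions for dual versus direct bundles. The lattice check in Step 2 is the point where the statement could actually fail.
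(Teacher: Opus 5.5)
Your Steps 1 and 2 are correct and are essentially the paper's proof. The paper checks that $\alpha,\beta,\gamma$ are invariant. It then writes $R_1,R_{2a},R_{2b},R_3$ as the coefficients of $I_3^{-1}\alpha\otimes\beta\otimes\gamma^{-1}$, $I_3^{-3}\alpha^2\otimes\beta^4\otimes\gamma^{-3}$, $I_3^{-4}\alpha^4\otimes\beta^5\otimes\gamma^{-3}$ and $I_3^{-9}\alpha^8\otimes\beta^{12}\otimes\gamma^{-8}$. That is the explicit inverse of your change of basis, and your $4\times4$ determinant is indeed $-1$.

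There is one sign slip. In the paper's convention, if $f\,e$ is invariant for a frame $e$ of $L$, then $\wt(f)=L$. So $\mathcal V_\tau$ is the weight of the prefactor of $\alpha$, not its inverse. The sign flips uniformly for all three generators, so it is harmless.

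The paper's proof stops at generation. The claim $\W\simeq\Z^3$ rests only on the remark that $I_3$ gives the sole relation. Your Step 3 tries to supply the missing independence, and as written it does not work.

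\textbf{First argument.} It proves too much, because nothing forces the divisor of an absolute invariant into the regular region. The invariant $I_3=R_3^3/(R_{2a}R_{2b})^4$ is absolute, yet its entire divisor lies on distinct irreducible components of $\Sigma$. Your reasoning never uses that $R_3$ is left out, so it would equally ``show'' that $(0,4,4,-3)$ is not a relation, which is false.

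\textbf{Multiplier variant.} This fails too. The values of $\lambda$ at a generic point are not cohomology invariants, since coboundaries $X(\mu)/\mu$ do not vanish there. For example, on $\{R_1\ne0\}$ the function $R_1$ is a nowhere-zero invariant section, so $\wt(R_1)$ is trivial there and $\lambda_{R_1}=d^0R_1$, even though its values are nonzero. Only the restriction of $\lambda$ to the isotropy algebra at a point is invariant. At any generic point that restriction kills $\wt(R_1)$, so it can never produce a nonsingular $3\times3$ matrix.

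\textbf{A working fix.} What separates $R_1,R_{2a},R_{2b}$ from $R_3$ is their order.
\begin{enumerate}
\item Suppose $R_1^{d_1}R_{2a}^{d_2}R_{2b}^{d_3}$ had trivial weight. By exactness of \eqref{ADP2} it would equal $C\,I$, with $C$ a global unit and $I\in\AbsInv$.
\item A nowhere-vanishing regular function is constant on the affine fibres of each $J^k(M,1)\to J^{k-1}(M,1)$ for $k\ge2$. Hence $C$ has order $\le1$ and $I$ has order $\le2$.
\item The algebra $\g^{(2)}$ has an open orbit in the $7$-dimensional space $J^2(M,1)$; equivalently, there is no nonconstant absolute invariant of order $2$, the first one being $I_3$. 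So $I$ is constant.
\item The monomial would then be nowhere zero. Since $R_1,R_{2a},R_{2b}$ are distinct irreducible nonconstant polynomials, this forces $d_1=d_2=d_3=0$.
\end{enumerate}
Together with $3a_4=4a_2+4a_3$, this gives $\mathrm{rank}\,\W=3$. Hence the surjection $\Z^3\to\W$ from your Step 2 is an isomorphism.
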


 \begin{proof}
The fact that $\alpha, \beta, \gamma$ are invariant is a straight-forward computation, from which it follows that the line bundles $\mathcal{V}_\tau, \langle \omega_0 \rangle, \langle \omega_1 \rangle$ are contained in $\mathcal{W}$. The relative differential invariants $R_1, R_{2a}, R_{2b}, R_3$ from Proposition \ref{prop:relinv27}
correspond to the invariant tensor fields 
$I_3^{-1} \alpha\otimes  \beta \otimes  \gamma^{-1}$, 
$I_3^{-3}  \alpha^2 \otimes \beta^4 \otimes \gamma^{-3}$, 
$I_3^{-4} \alpha^4 \otimes \beta^5 \otimes \gamma^{-3}$, 
$I_3^{-9} \alpha^8 \otimes \beta^{12} \otimes \gamma^{-8}$, 
and hence have the following weights:
\begin{align*}
 \wt(R_1) &= \mathcal{V}_\tau \otimes \langle \omega_0 \rangle \otimes   \langle \omega_1 \rangle^{-1}, \\
 \wt(R_{2a}) &=  \mathcal{V}_\tau^{2} \otimes \langle \omega_0 \rangle^{4} \otimes   \langle \omega_1 \rangle^{-3}, \\
  \wt(R_{2b}) &=\mathcal{V}_\tau^{4} \otimes \langle \omega_0 \rangle^{5} \otimes \langle \omega_1 \rangle^{-3}, \\
   \wt(R_3) &= \mathcal{V}_\tau^{8} \otimes \langle \omega_0 \rangle^{12} \otimes \langle \omega_1 \rangle^{-8}.
\end{align*}
The powers in these formulas refer to tensor products of (equivariant) line bundles, e.g.\ $\langle\omega_1\rangle^{-1}=\langle\omega_1\rangle^*$, etc. 
Thus, these line bundles generate the weight lattice $\W$.
 \end{proof}

 \begin{remark}
It is apparent from the formulae in Proposition \ref{newprop:relinv27} 
that the weight of relative invariants are pullback from the base $M$
or from $J^1(M,1)$, yet due to the general approach of the global 
Lie-Tresse theorem and the results of Section \ref{sect:polynomial} 
they should be rather supported on some $\J^t_o$ for a point $o\in M$.
The explanation lies in the setup of homogeneous vector bundles \cite{BHVB}:
due to the transitivity of action on $M=G/H$ the involved bundles are
$G\times_HV$ associated to $H$-representations $V$, which are read off at
any particular point $o\in M$.
 \end{remark}

Proposition \ref{prop:relinv27} gives a complete description of $\g$-invariant 
scalar differential equations. As an application, in the next step, we describe
all invariant {\it determined} ODE systems. 

Since our generators for the field $\AbsInv$ of absolute differential invariants 
separate orbits in $J^k(M,1)\setminus\pi_{k,3}^{-1}(\Sigma)$ for every $k\geq3$, 
every ODE system that is not contained in $\pi_{\infty,3}^{-1}(\Sigma)$ can be defined 
in terms of these generators. The determined systems inside 
$\Pi_1^\infty = \pi_{\infty,1}^{-1}(\{R_1 =0\})$, 
$\Pi_{2a}^\infty = \pi_{\infty,2}^{-1}(\{R_{2a} =0\})$ and  
$\Pi_{2b}^\infty = \pi_{\infty,2}^{-1}(\{R_{2b} =0\})$ were found in \cite{KS1}. 
Systems in  $\Pi_3^{\infty} = \pi_{\infty,3}^{-1}(\{R_3=0\})$ were not explicitly 
computed there, so let us do it here. We define 
 \[ 
\Pi_3^k = \{R_3=0, D_x(R_3)=0, \dots, D_x^{k-3}(R_3) =0\} \subset J^k(M,1).
 \] 
Since $\Pi_3^k$ is $\g^{(k)}$-invariant, the vector fields of $\g^{(k)}$ restrict to vector fields on $\Pi_3^k$. On the complement of $\Pi_1^k \cup \Pi_{2a}^k \cup \Pi_{2b}^k$ the orbit dimension is 8. The absolute invariants $I_3,I_{4a}, \nabla(I_3), \nabla(I_{4a}), \dots$, $\nabla^{k-3}(I_3)$ all vanish on $\Pi_3^k$, but there are nontrivial combinations that don't vanish. The field of rational absolute differential invariants on $\Pi_3^\infty$ is generated by 
 \[ 
\frac{I_{4a}^3}{I_3}, \qquad I_{4a}^{-2} \nabla.
 \] 
Thus ODE systems lying in $\Pi_3^\infty$ can be defined in terms of the absolute differential invariants from Proposition \ref{prop:absinv27}, which justifies the omission of this computation in \cite{KS1}. 


\subsection{Infinite pseudogroup: Second order ODEs of the form $y'' = F(x,y)$}

Consider the class of ordinary differential equations of the form $y''=F(x,y)$. 
This class of ODEs includes all Painlev\'e transcendants and it was first 
investigated by S.\,Lie. As shown in \cite{B}, the point vector fields preserving this class are those of the form
 \begin{equation} \label{eq:vfields}
a(x) \partial_x+\left(\left(a'(x)/2+C\right)y+b(x)\right) \partial_y.
 \end{equation}
This class of ODEs can be identified with the space of functions on $\mathbb C^2$ with coordinates $x,y$, and in the latter setting the Lie algebra of vector fields on $J^0(\mathbb C^2)$ is given by
 \[ 
\g = \left\{a(x) \partial_x+\left(\left(a'(x)/2+C\right)y+b(x)\right) \partial_y+ \left(a'''(x) y/2+b''(x)+\left(C-3 a'(x)/2\right) u \right) \partial_u\right\},
 \]
where $u$ plays the role of the dependent variable\footnote{For jets of functions, there is a canonical splitting of independent and dependent variables. Therefore, computations can be done in a single chart.}. Notice that \eqref{eq:vfields} preserves the subbundle $H =\langle dx \rangle \subset T^*\C^2$, which is a $\g$-equivariant line bundle. 
 
A generating set of absolute differential invariants of such ODEs with respect to $\g$ 
was found in \cite{B}. Here we will briefly revisit these computations, but we formulate 
the results in terms of rational differential invariants, whereas \cite{B} allowed 
square roots in the expressions (the difference between these approaches was mentioned 
in \cite[Remark 2]{B}). As a novel feature, we will describe the algebra of
polynomial relative differential invariants. 

We let $\AbsInv^k$ denote the field of rational absolute differential invariants of order $k$. By \cite[Theorem 1]{B}, we have for the transcendence degrees:
 \[
\mathrm{tr.deg}(\AbsInv^3)=0, \qquad \mathrm{tr.deg}(\AbsInv^4)=2,\qquad 
\mathrm{tr.deg}(\AbsInv^k) =  \mathrm{tr.deg}(\AbsInv^{k-1})+(k-1),\quad k\geq5.
 \] 
The following absolute invariants constitute a transcendence basis for $\AbsInv^4$:
 \begin{align*}
I_{4a} &= \frac{u_{yy} u_{yyyy}}{u_{yyy}^2}, \\
I_{4b} &= \frac{(u_{xxyy}+u  u_{yyy} + 5 u_{y} u_{yy}) (5 u_{yy} u_{yyyy} - 6 u_{yyy}^2)}{u_{yyy} u_{yy}^3} - \frac{5u_{xyyy}^2}{u_{yyy}u_{yy}^2}
- \frac{6 u_{xyy}^2 u_{yyyy}}{u_{yyy} u_{yy}^3}
+ \frac{12 u_{xyy}u_{xyyy}}{u_{yy}^3}. 
 \end{align*}
Defining the first-order $\mathcal{C}$-differential operator 
 \[
\Delta =\frac{(5u_{yy} u_{yyyy}-6 u_{yyy}^2) D_x-(5 u_{yy} u_{xyyy}-6 u_{xyy} u_{yyy}) D_y}{u_{yy} u_{yyy}^2}
 \]
we can write two independent invariant derivations in the following way: 
 \[ 
\nabla_1 =  \frac{u_{yy}}{u_{yyy}} D_y, \qquad 
\nabla_2 = u_{yyy} \Delta(I_{4a}) \Delta.
 \] 
The following proposition is an alternative version of the first part of 
\cite[Theorem 2]{B} reformulated via our generators with a focus 
on the field of rational differential invariants.
 \begin{prop}\label{prop:bibikov}
The field of rational absolute differential invariants is generated by $I_{4a}, I_{4b}$ and the invariant derivations $\nabla_1, \nabla_2$. 
 \end{prop}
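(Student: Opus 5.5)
The plan has three parts: verify that the proposed generators are differential invariants, show that they produce enough independent invariants in each order, and then argue that they generate the whole field and not just a subfield of full transcendence degree.

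\emph{Invariance.} First I would check by direct computation that $I_{4a}$ and $I_{4b}$ are annihilated by the prolongations $X^{(4)}$ of all $X\in\g$. It is convenient to split $\g$ by the parameter functions. The fields with $a=0$, $C=0$ and arbitrary $b(x)$ give shifts $u\mapsto u+b''(x)$ together with $y$-translations; these are killed at once because only $y$-derivatives of $u$ of order $\ge2$ and mixed derivatives appear, up to the combination $u_{xxyy}+uu_{yyy}+5u_yu_{yy}$, which must be checked. The fields with $a=0$ and $C\neq0$ are scalings. The remaining fields, with $a(x)$ arbitrary, are handled by expanding in jets of $a$ at a point; by transitivity of the pseudogroup on $\C^2$ (translations in $x$ and $y$) it suffices to work at the origin.

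In the same computation I would verify that $\nabla_1$ and $\nabla_2$ commute with $X^{(\infty)}$. I would do this by exhibiting $u_{yy}/u_{yyy}$ and $u_{yyy}\Delta(I_{4a})$ as relative invariants whose weights compensate the weights of $D_y$ and of $\Delta$ in the sense of Section~\ref{sect:cdiff}. This is exactly the construction \eqref{eq:RelFromAbs2} with $w=0$.

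\emph{Counting.} Next I would compare transcendence degrees with the formula from \cite[Theorem 1]{B}. At order $4$, $I_{4a}$ and $I_{4b}$ are independent, so together they give $\mathrm{tr.deg}=2$. At order $k\ge5$ there are $k$ new jet variables $u_{x^iy^{k-i}}$, while $\mathrm{tr.deg}$ grows by only $k-1$. I would show by induction that the invariants $\nabla_1^{i}\nabla_2^{j}(I_{4a})$ and $\nabla_1^{i}\nabla_2^{j}(I_{4b})$ of order $k$ give $k-1$ functionally independent invariants modulo order $k-1$. For this I would track their leading (affine) terms in the top-order jets. Since $\nabla_1\sim D_y$ and $\nabla_2\sim D_x$ modulo $D_y$ in the symbols, these leading terms are linear combinations of the $k$ top-order variables. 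Independence should then follow by computing a rank at one generic point, for instance $u=y^4/24+y^2/2+\ldots$ with generic mixed terms. This proves that the generated field $\AbsInv'$ has the same transcendence degree as $\AbsInv^k$ in each order.

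\emph{Generation.} Equal transcendence degree only shows that $\AbsInv^k$ is algebraic over $\AbsInv'\cap\AbsInv^k$, so the last step is to prove equality of the fields. By Rosenlicht's theorem, as used in the global Lie--Tresse theorem, it suffices to show that the generators separate generic orbits of $G^k_o$ on the fiber $J^k_o$. I would do this with a normal form. The stabilizer of $o$, acting through the jets of $a$, the jets of $b$ and the constant $C$, is used to normalize as many jet coordinates as possible to fixed constants. Generic orbits then meet a slice $S$ in points whose free coordinates can be read off rationally from the values of the generators; this is where $I_{4b}$ and $\nabla_2$ are needed.

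I expect this step to be the main obstacle: making the slice into a genuine cross-section. Finite residual stabilizers of the slice (coming from the sign ambiguity behind the square roots mentioned in \cite[Remark 2]{B}) could make the generated field a proper finite extension. I would check that the generators are invariant under this residual group and that they generate its full invariant field on $S$. If a degree-two extension appears, the fix is to replace a generator by a suitable rational combination, which is precisely the reason for the specific choice of $I_{4b}$ and $\nabla_2$ here.
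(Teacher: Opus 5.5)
There is a genuine gap in the generation step, which is the real content of the proposition. You leave it as a conditional plan. Its fallback (``replace a generator by a suitable rational combination'') would prove a different statement, since the claim is about these specific generators.

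\textbf{Order 5 is where the difficulty sits.} Your assertion that all $\nabla_1^i\nabla_2^j(I_{4a})$, $\nabla_1^i\nabla_2^j(I_{4b})$ have leading terms linear in the top-order jets is false in order $5$. The coefficient $u_{yyy}\Delta(I_{4a})$ of $\nabla_2$ is itself of order $5$, so $\nabla_2(I_{4a})=u_{yyy}\Delta(I_{4a})^2$ and $\nabla_2(I_{4b})=u_{yyy}\Delta(I_{4a})\Delta(I_{4b})$ are quadratic in the $5$-jets. This is precisely where a degree-two extension could occur, and it is where the square-root/sign issue you anticipate actually lives: the reflection $x\mapsto-x$, which lies in $G$, reverses the signs of $\Delta$ and $\Delta(I_{4a})$. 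It can be settled directly from these factorizations. With the $4$-jets frozen, the generated field already contains the affine invariants $\nabla_1(I_{4a}),\nabla_1(I_{4b})$ and the ratio $\Delta(I_{4b})/\Delta(I_{4a})$. So the only possible rational algebraic extension would adjoin an invariant $\rho\,\Delta(I_{4a})$ with $\rho$ rational on $\J^4$, and none exists. This is consistent with the weights: $\Delta(I_{4a})$ is the first relative invariant realizing an $L_{p,q}$ with $p+q$ odd.

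\textbf{You give no mechanism covering all orders.} Redoing a slice-and-separation argument for every $k$ is not a proof. The missing lemma is that affinity in the top jets gives generation, not merely the correct transcendence degree.
\begin{itemize}
\item For $k\ge6$, each $\nabla_1^i\nabla_2^j(I_{4a})$, $\nabla_1^i\nabla_2^j(I_{4b})$ of order $k$ is affine in the $k$-jets with coefficients of lower order, because the coefficients of $\nabla_1,\nabla_2$ have order at most $5$. Moreover $k-1$ of them have independent linear parts.
\item On a fibre of $\J^k\to\J^{k-1}$, the orbits of the subgroup acting trivially on $(k-1)$-jets are then exactly the common level sets of these affine functions.
\item Hence every invariant of order $k$ is rational in them with coefficients in $\AbsInv^{k-1}$.
\end{itemize}
By induction everything reduces to orders $4$ and $5$. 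Order $5$ is handled as above. Order $4$ is the one place a slice is used. On $u_{yy}=u_{yyy}=1$, with all other jets of order $\le3$ zero, $I_{4a}$ and $I_{4b}$ restrict to $u_{yyyy}$ and $u_{xxyy}(5u_{yyyy}-6)-5u_{xyyy}^2$. These generate an algebraically closed subfield, whence $\AbsInv^4=\C(I_{4a},I_{4b})$.

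This three-step structure (slice in order $4$, factorization in order $5$, affinity from order $6$ on) is the paper's proof.

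A smaller correction to your count: order $k$ brings $k+1$ new jet variables, not $k$. The increment $k-1$ reflects the two new group parameters $a^{(k+3)}$ and $b^{(k+2)}$.
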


 \begin{proof}
Most of this proposition is a direct consequence of the theorem in \cite{B}, 
as it is not difficult to see the relation between the generators used. 
In particular, it is easy to obtain a transcendence basis for the field of 
rational differential invariants in each order $k\geq0$. 
For example, the invariants  
$I_{4a},I_{4b},\nabla_1(I_{4a}),\nabla_1(I_{4b}),\nabla_2(I_{4a}),\nabla_2(I_{4b})$ 
are algebraically independent and constitute a transcendence basis for the field of fifth-order rational differential invariants. However, it is not a priori clear that these 6 invariants differentially generate the field $\AbsInv=\AbsInv^\infty$. 

The invariants $I_{4a}, I_{4b}$ constitute a transcendence basis for the field $\AbsInv^4$, since they are algebraically independent and $\mathrm{tr.deg}(\AbsInv^4)=2$. Consider the submanifold $\Pi \subset J^4(\mathbb C^2)$ defined by $u_{yy}=1, u_{yyy}=1$ and $u_\sigma=0$ for the remaining variables with $|\sigma| \leq 3$. We have 
 \[ 
I_{4a}|_{\Pi}= u_{yyyy},\qquad I_{4b}|_{\Pi}= u_{xxyy}(5u_{yyyy}-6)-5u_{xyyy}^2.
 \] 
Since it is apparent from here that the field generated by $I_{4a}, I_{4b}$ 
does not allow a finite algebraic extension consisting of rational functions,
we conclude that $I_{4a}$ and $I_{4b}$ generate $\AbsInv^4$. 

Among differential invariants of order five 
$\nabla_1(I_{4a}), \nabla_1(I_{4b}), \nabla_2(I_{4a}), \nabla_2(I_{4b})$, 
the last two are quadratic in the fifth-order variables, so we must also 
check whether they (together with $I_{4a}, I_{4b}$) actually generate the whole field 
$\AbsInv^5$, and not just form a transcendence basis. 
However, this follows from the expressions 
$\nabla_2(I_{4a}) = u_{yyy} \Delta(I_{4a})^2, \nabla_2(I_{4b})=u_{yyy} \Delta(I_{4a}) \Delta(I_{4b})$ because a finite algebraic extension with 4-jets frozen corresponds
to passing to the radical of the corresponding polynomial ideal, while 
$\rho\Delta(I_{4a})$ is not an absolute invariant for any rational function 
$\rho$ on $\J^4$. Thus, the GLT order of this Lie algebra actions is 5. 

Since $\nabla_1^i \nabla_2^j(I_{4a})$ and $\nabla_1^i \nabla_2^j(I_{4b})$ are affine in the highest-order variables for $i \geq 0, j \geq 1$, the invariants $I_{4a}$ and $I_{4b}$ together with $\nabla_1$ and $\nabla_2$ generate the whole differential field $\AbsInv$. 
 \end{proof}


The 2-form $d I_{4a} \wedge d I_{4b}$ is defined on the complement of $\Sigma^1 = \pi_{4,3}^{-1}(\{u_{yy} u_{yyy} =0\})$, and it is nonvanishing on the complement of $\Sigma^1 \cup \Sigma^2$, where  $\Sigma^2$ has codimension 2 and is defined by 
 \[
\Sigma^2= \{5u_{yy}u_{yyyy}-6u_{yyy}^2=0,\ 5u_{yy}u_{xyyy}-6u_{xyy} u_{yyy}=0,\ 
u_{xyy}u_{yyyy}-u_{yyy}u_{xyyy}=0\}.
 \]
We note that the function $5u_{yy}u_{yyyy}-6u_{yyy}^2$ is a relative differential invariant of order 4. Next, looking at the fifth-order invariants, the 6-form 
\[ dI_{4a} \wedge dI_{4b} \wedge d \nabla_1(I_{4a}) \wedge d \nabla_1(I_{4b}) \wedge d \nabla_2(I_{4a}) \wedge d \nabla_2(I_{4b})  \]
vanishes exactly on $\{\Delta(I_{4a}) (5u_{yy}u_{yyyy}-6u_{yyy}^2)=0\}$. Let us denote the relative invariants by
 \[ 
R_2 = u_{yy}, \quad R_3 = u_{yyy}, \quad R_4 = 5u_{yy}u_{yyyy}-6u_{yyy}^2, \quad R_5 = \Delta(I_{4a}).
\]
Note that $R_4=(5I_{4a}-6)R_3^2$, so this invariant does not appear in
the following description of weights. Since the tensor fields
 \[ 
R_3 (dx \wedge dy)^{\otimes 2}, \qquad \frac{R_2^2}{R_3} dx^2, \qquad R_2 dx \otimes (dx \wedge dy).
 \] 
are invariant, the third-order rational relative differential invariants are sections of line bundles
 \[ 
L_{p,q}=H^{\otimes p} \otimes (\Lambda^2 T^* \mathbb C^2)^{\otimes q}
 \]
where $p+q\in 2\mathbb Z$. In order five, we get a new invariant tensor field
 \[
R_5^{-1} dx \wedge dy,
 \]
which implies that the relative invariant $R_5$ is a section of $\Lambda^2 T \C^2$, realizing $p=0,q=-1$ for $L_{p,q}$. Consequently all $L_{p,q}$, $(p,q)\in\mathbb{Z}^2$,
can be realized in the weight space $\mathcal{W}$. 

 \begin{prop} \label{newprop:bibikov}
The weight lattice for rational relative differential invariants is equal to
 \[
\mathcal{W}= \{L_{p,q} \mid p,q \in \mathbb{Z}\} \simeq \mathbb Z^2\subset
\op{Pic}_{\g^{(\infty)}}\bigl(J^\infty(\C^2)\bigr).
 \]
It is generated by the weights of relative differential invariants $R_2$, $R_3$ and $R_5$. These equivariant line bundles are pulled back from the base, i.e.,   
$\mathcal{W}\subset\op{Pic}_{\g}(\C^2)$.
 \end{prop}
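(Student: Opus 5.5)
The plan follows the pattern of Proposition \ref{newprop:relinv27}, in three steps.

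\emph{Step 1: $\W\subset\{L_{p,q}\}$.} By Proposition \ref{prop:bibikov} the GLT order is $5$, and the singular set where the generators fail to separate orbits is cut out by $R_2R_3R_4R_5=0$, together with the codimension-two set $\Sigma^2$, which is irrelevant for divisors. Theorem \ref{th:reldiff} then writes every rational relative differential invariant as
\[
CF(\dots)R_2^{d_2}R_3^{d_3}R_4^{d_4}R_5^{d_5}.
\]
Here $C$ is constant, since $J^\infty(\C^2)$ is a tower of affine bundles over $\C^2$ and nonvanishing functions polynomial in jets are functions on the base. Since $\g$ is transitive on $\C^2$, Rosenlicht-type arguments make such a $C$ a constant, up to trivial weight. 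Using $R_4=(5I_{4a}-6)R_3^2$, we get $\W$ generated by $\wt(R_2),\wt(R_3),\wt(R_5)$.

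\emph{Step 2: compute these three weights.} I use the invariant tensor fields displayed before the statement. Invariance of $R_3(dx\wedge dy)^{\otimes2}$, $R_2^2R_3^{-1}dx^2$, $R_2\,dx\otimes(dx\wedge dy)$ and $R_5^{-1}dx\wedge dy$ is checked by applying the prolonged field \eqref{eq:vfields} lifted to $u$, with arbitrary $a(x),b(x),C$. This is a routine but lengthy computation: it verifies that each Lie derivative vanishes modulo the multiplier identities. From these I read off
\[
\wt(R_3)=L_{0,-2},\qquad \wt(R_2)=L_{-1,-1},\qquad \wt(R_5)=L_{0,1}\ \text{(dual convention)}.
\]
Thus $\W$ is contained in the span of $L_{p,q}$ and contains $L_{0,\pm1}$ and $L_{1,1}$, hence all of $\{L_{p,q}\}$. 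The relative invariant $R_5$ realizes the odd parity that the third-order invariants miss.

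\emph{Step 3: $\W\simeq\Z^2$.} I must show the map $(p,q)\mapsto L_{p,q}$ into $\op{Pic}_{\g^{(\infty)}}(J^\infty(\C^2))$ is injective. Since all line bundles over $\C^2$ are trivial, equivariant classes are determined by multipliers modulo $d^0$ of nonvanishing functions, and the latter are constants here. Evaluating the multiplier on the scaling fields $x\p_x+\tfrac12y\p_y$ ($a=x$) and $y\p_y$ (the $C$-term) gives two independent integer-valued functionals on $(p,q)$, namely $H$ scales by $a'$ and $\Lambda^2T^*$ by $\tfrac32a'+C$. Hence $L_{p,q}$ is trivial only if $p=q=0$.

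Finally, the statement that these weights are pullbacks from the base is immediate, since $H$ and $\Lambda^2T^*\C^2$ are $\g$-equivariant bundles on $\C^2$.

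The main obstacle is Step 1. It requires that $\{R_5=0\}$ and $\{R_2R_3=0\}$ exhaust the codimension-one invariant components of the singular locus at every order, and that $R_5=\Delta(I_{4a})$, up to the factor clearing denominators, is irreducible as a polynomial so that it genuinely defines a divisor. I would verify this on the slice $\Pi$ used in the proof of Proposition \ref{prop:bibikov}.
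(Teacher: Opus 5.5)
Your proposal is correct and follows essentially the paper's route. It reads the codimension-one singular locus off the degeneracy of $dI_{4a}\wedge dI_{4b}$ and of the fifth-order $6$-form, uses $R_4=(5I_{4a}-6)R_3^2$, obtains the weights of $R_2,R_3,R_5$ from the invariant tensor fields (your signs are just the dual convention), and proves independence of the classes through the multiplier values on the scaling fields $x\p_x+\tfrac12y\p_y$ and $y\p_y$; these values are the paper's $(A,B)$ in the remark after the statement.

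One correction in Step 3. In the analytic setting used here, nonvanishing functions have the form $e^{h}$, not constants, so that is not why your two evaluations are cohomological invariants. The correct reason is that both prolonged scaling fields vanish at the origin of $J^\infty(\C^2)$, where every coboundary $X(f)/f$ vanishes. The paper reaches the same conclusion by citing the Chevalley--Eilenberg computation in \cite{GKO} and \cite{S}.
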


 \begin{remark} 
It follows from \cite[Prop. 3.8]{KS2} that all weights are (pullbacks) of line bundles over $\mathbb C^2$. All such line bundles are topologically trivial, so the only thing that distinguishes them is their multiplier, considered as an element in $\HH^1(\g, \mathcal{O}(\C^2))$. The Chevalley-Eilenberg cohomology group of the finite-dimensional Lie subalgebra 
 \[ 
\langle \partial_x, 2x\partial_x+y\partial_y, x^2\partial_x+xy\partial_y, \partial_y, y \partial_y, x\partial_y\rangle 
 \]
was computed in \cite{GKO} (last row in Table 3) and in \cite{S} to be 2-dimensional. More specifically, the general cocycle takes the form
 \[ 
\lambda(\partial_x)=\lambda(\partial_y)=\lambda(x\partial_y)=0, \quad \lambda(2x\partial_x+y\partial_y)=A, \quad \lambda(x^2 \partial_x+xy\partial_y)= Ax, \quad \lambda(y\partial_y)=B. 
 \]
The line bundle $L_{p,q}$ for $(p,q)=(1,0)$ corresponds to $(A,B)=(2,0)$ while that for $(p,q)=(0,1)$ corresponds to $(A,B)=(3,1)$. 
 \end{remark}

This ends description of global relative invariants for an important subclass of 
second order scalar ODEs. 
Relative differential invariants of general second order ODEs 
were considered in \cite{T2}; ref.\ \cite{K1} indicates that their 
weight lattice $\W$ also has rank 2, the modern proof follows the steps 
of Proposition \ref{newprop:bibikov}, see \cite{KS2}. 
Invariants of more general ODEs were classified in \cite{Cha}, 
it remains to recast them in cohomological terms via the technique of this work.

\subsection{Scalar polynomial invariants of metrics}

Differential invariants of (pseudo) Riemannian metrics is a classical subject,
and it is known \cite{W} that their algebra is generated by SPI 
(scalar polynomial invariants), which are full contractions obtained from 
the curvature tensor, its covariant derivatives, and their tensor products. 
These SPI separate generic orbits of the pseudogroup 
$G=\op{Diff}_{\text{loc}}(M)$ acting on the jets of metrics 
$\E=J^\infty(S^2_\times T^*M)$ where by cross subscript
we indicate the fiber subbundle of nondegenerate quadrics (metrics). 

In Riemannian signature, SPI separate all orbits, while in the pseudo-Riemannian
case there are classes of metrics not characterized by their global
polynomial differential invariants. In Lorentzian signature, these are
degenerate Kundt spaces, which play an important role 
in general relativity \cite{PPCM,CHP}. Cartan type invariants \cite{KS0}
are used instead to separate orbits, but these are combinations of absolute 
and conditional invariants.

 \begin{theorem}
The weight space of the Lie algebra $\g=\op{Lie}(G)=\vf(M)$ action on $\E$
is trivial: $\W=0$. In other words, all polynomial relative differential
invariants are absolute (SPI).
 \end{theorem}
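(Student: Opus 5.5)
The plan is to show that the weight of any rational relative differential invariant is trivial in $\op{Pic}_{\g^{(\infty)}}(\E^\infty)$. I would first reduce the weight to low jet order, and then use the size of the symmetry algebra $\vf(M)$ to kill every possible multiplier. The weight of a polynomial invariant is recorded by its multiplier, and everything can be read at one base point by transitivity. So the task reduces to finding characters of a stabilizer, which will turn out to be unimodular.

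\emph{Reduction to order $0$.} The equation $\E=J^\infty(S^2_\times T^*M)$ is the trivial PDE on sections of the bundle $S^2_\times T^*M\to M$. The pseudogroup acts by bundle automorphisms, via fiber-preserving transformations of natural-bundle type. The fibers of $\E^{k+1}\to\E^k$ are affine spaces for all $k\ge0$, and $\E^0$ has fibers the nondegenerate quadrics, a nonsingular affine variety. Hence Theorem~\ref{th23outof4}(1) applies: every weight $\wt^k(R)$ of a ($0$-polynomial, hence by Corollary~\ref{cor:relativeratio} any rational) relative differential invariant is $\pi_{k,0}^*L$ for a $\g$-equivariant line bundle $L$ over $\E^0=S^2_\times T^*M$. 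Polynomiality in all jets of order $\ge1$ is natural here, since the prolonged action of $\vf(M)$ on metric jets is polynomial in the jets of the vector field and linear-affine in the metric jets.

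\emph{Computing $\op{Pic}_{\g}(\E^0)$ restricted to realizable weights.} The action of $\vf(M)$ on $\E^0$ is transitive, because any metric germ is locally equivalent at the $0$-jet level to a constant one. So, as in \cite{KS2} and in the $\mathfrak{saff}$ example, a $\g$-equivariant line bundle is determined by a character of the isotropy algebra of a point $o=(x,g_x)$. Modulo higher-order jets of vector fields, which act as nilpotent ideals and hence have no nonzero characters on a connected stabilizer, this isotropy is $\mathfrak{so}(g_x)$. Since $n=\dim M\geq 2$, the algebra $\mathfrak{so}(p,q)_\C=\mathfrak{so}(n,\C)$ is semisimple for $n\ge3$, so its characters vanish. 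Possible torsion from the component group $O/SO$ is excluded by working with the Zariski-connected pseudogroup $G$, as stipulated in the paper. Thus the multiplier of any realizable weight is zero. The Picard factor is trivial locally, and globally $\wt$ is determined by the isotropy data via the homogeneous bundle description $G\times_H V$. Hence $\W=0$.

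\emph{Conclusion and obstacle.} With $\W=0$, every polynomial relative invariant $R$ satisfies $L_{X^{(\infty)}}R=0$, so it is an absolute polynomial invariant. By the classical result \cite{W}, such invariants are SPI. The step I expect to be delicate is the case $n=2$, where $\mathfrak{so}(2,\C)\simeq\C$ is abelian and does admit nonzero characters. There I would use the higher isotropy, namely the $2$-jets of vector fields, whose bracket with $\mathfrak{so}(2)$ acts on the curvature jet. Alternatively I would argue directly: on the complement of the singular set, a relative invariant of nonzero $\mathfrak{so}(2)$-weight would have to vanish on the regular orbits where the stabilizer is trivial. Its divisor then lies in an invariant hypersurface, and the only candidates are cut out by the Gaussian curvature and its derivatives, which are absolute. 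A further check is that the higher-order isotropy has no characters, which holds because it is the kernel of a surjection onto a perfect or nilpotent quotient.
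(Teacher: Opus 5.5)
The overall route is right. You reduce to $\E^0$ via Theorem~\ref{th23outof4}(1) and then kill the multiplier using semisimplicity of $\mathfrak{so}(n,\C)$ for $n\ge 3$. This is the paper's mechanism, which it phrases as Sternberg linearization plus Whitehead's lemma on the homogeneous pieces of the multiplier.

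\textbf{The component group is dismissed without justification.} Zariski-connectedness of $G$ says nothing about isotropy groups. The stabilizer of $q\in\E^0_o$ in the connected group $GL(n,\C)$ is the disconnected group $O(q)$. So your sentence excluding the $O/SO$ contribution has no basis, and in fact that contribution is really present.
\begin{itemize}
\item Take functionally independent SPI $S_1,\dots,S_n$. Then $J=\det(D_iS_j)$ is a nonzero polynomial relative invariant.
\item Since $J^2/\det q=\det(q^{ab}D_aS_iD_bS_j)$ is an SPI, $2\,\wt(J)=0$.
\item Yet $\wt(J)\neq 0$. A trivializing factor $\mu$ would make $f=\mu^2/\det q$ a nonvanishing absolute invariant. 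Fix $\theta_0\in\E^k$ over $q_0=\mathrm{id}$. The path $t\mapsto \mathrm{diag}(e^{\pi i t},1,\dots,1)\cdot\theta_0$, $0\le t\le 1$, ends at $B\theta_0$ with $B=\mathrm{diag}(-1,1,\dots,1)\in O(q_0)$. Close it up inside the fiber over $q_0$.
\item On this loop $\det q$ winds once, while $f$ winds zero times. Indeed, $f$ is constant along the orbit part, and $f\circ B=f$ with $B^2=1$ forces $\log f(B\theta_0)=\log f(\theta_0)$ on the simply connected fiber. So $\mu$ cannot be single-valued.
\end{itemize}
Hence your argument, like the paper's (which explicitly works ``modulo torsion''), establishes only $\W\otimes\mathbb{Q}=0$. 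A smaller slip: nilpotent Lie algebras do have nonzero characters. To discard $\g_{\ge1}$ you need $\g_{\ge1}\subset[\mathfrak h,\mathfrak h]$, or algebraicity, so that the pro-unipotent part has no algebraic characters.

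\textbf{The case $n=2$ is left open, and neither proposed repair works.}
\begin{itemize}
\item \emph{Higher isotropy cannot help.} The isotropy of a point of $\E^0$ is $\mathfrak h=\mathfrak{so}(q)\ltimes\g_{\ge1}$, and $[\mathfrak h,\mathfrak h]\subset\g_{\ge1}$. So a character may be nonzero on $\mathfrak{so}(2)$. Since weights are pulled back from $\E^0$, nothing in higher jets removes it. For instance, on the double cover of $\E^0$ singling out a null line $N$ of $q^{-1}$, the $N$-component of $dK$ is a relative invariant with nonzero $\mathfrak{so}(2)$-character.
\item \emph{The regular-orbits argument is backwards.} A nonzero character forces $R$ to vanish where the stabilizer contains the rotation, i.e.\ on singular orbits. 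It imposes nothing where the stabilizer is trivial.
\end{itemize}
The paper's step ``$\h$ is simple'' does not literally cover $n=2$ either.

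\textbf{What works is the reflection you discarded.}
\begin{itemize}
\item The linear vector fields are complete, so the lift integrates to the universal cover $\widetilde{GL}(2,\C)$.
\item There a lift $\tilde b$ of $B$ stabilizes $q_0$ and acts by $\op{Ad}=-1$ on $\mathfrak{so}(q_0)$. The stabilizer's character on $L_{q_0}$ must be $\op{Ad}_{\tilde b}$-invariant, hence zero on $\mathfrak{so}(q_0)$.
\item Moreover $\tilde b^{2}$ lies in the central kernel of $\widetilde{GL}(2,\C)\to GL(2,\C)$, so it fixes every jet. It must therefore act trivially wherever an invariant section is nonzero, so the residual character of $\tilde b$ is $\pm1$.
\end{itemize}
That residual sign is exactly the $2$-torsion realized by $J$.
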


 \begin{proof}
The pseudogroup $G$ acts transitively on $M$. The stabilizer 
$G_o$ of a point $o\in M$ acts transitively on the fiber
$\E^0_o=S^2_\times T_o^*M$ over $\C$ (over $\R$ there are $n+1$ orbits,
$n=\dim M$, given by signature of the quadratic form $q\in\E^0_o$) and the kernel 
of the action is $G^1_o$ consisting of diffeomorphisms $\phi$ with 
$d_o\phi=1$. Thus the effective stabilizer of $q\in\E^0_o$ is $H=O(q)$.

By Theorem \ref{th23outof4}(1) the weights of relative invariants come
from $\E^0$, which by transitivity on the base can be changed to $\E^0_o$. 
The group acting on it is $GL(n)$ whence 
$\op{Pic}_{\g_o}(\E^0_o)=\op{Pic}(\E^0_o)$. The open set $\E^0_o$ is the
complement to the cone variety $\det(q)=0$ in $S^2T^*_oM$,
equivalently via projectivization it is the complement of the determinental
divisor $D$ of degree $n=\dim M$ in $X=\mathbb{P}S^2T^*_oM$. 
The Picard group of the complement is given by the exact sequence
(cf.\ \cite{BPS})
 \begin{equation}\label{3Pic}
0\to[D]\to\op{Pic}(X)\to\op{Pic}(X\setminus D)\to0
 \end{equation}
and hence $\op{Pic}(\E^0_o)=\mathbb{Z}_n$, implying $\op{Pic}(\E^0_o)\otimes\mathbb{Q}=0$.

Thus modulo torsion every line bundle over $\E_o^0$ is trivial.
Next we claim that the action of $\g$ admits a unique linearization 
(in algebraic geometry sense: a lift to the line bundle).
First of all, note that by Sternberg's theorem, 
for any point $q$ the action of the stabilizer 
$\h=\op{Lie}(H)$ at $q$ linearizes (into the standard irrep of $\h$ on $T_oM$ 
and the trivial representation on the tangent to the fiber $S^2T_o^*M$). 
Since any vector field $\xi\in\h$ lifts to $\hat\xi=\xi+a_\xi(x,q)u\p_u$ 
on the line bundle over $\E_o^0$, the homogeneous terms $a^d_\xi(x,q)$ of 
the coefficients must form a finite-dimensional representation $V_d$ 
(possibly reducible) of $\h$
(and there are finitely many of those as the relative invariant is polynomial).
However, the Whitehead theorem implies that $\HH^1(\h,V_d)=0$ since $\h$ is simple, and hence
the weight of this cocycle is trivial.
 \end{proof}

Note that one may alternatively use Theorem \ref{th:rational} in the 
middle part of the proof, since the action of $\g$ is still transitive 
on $\E^1_o$ with the same isotropy algebra $\h=\mathfrak{so}(n)$ at $q_1\in\E^1_o$.

\begin{remark}
    This theorem is slightly counter-intuitive, since the determinant is a section of the equivariant line bundle $(\Lambda^n T^*M)^{\otimes 2}$, which is, in general, a nontrivial element in $\mathrm{Pic}_{\g}(M)$. However, considered as an element in $\mathrm{Pic}_{\g}(S_\times^2 T^* M)$, it is in fact trivial, as the multiplier of $\det(q)$ is actually a cocycle $d^0 (\mathrm{det}(q))$ when restricting to the complement of $\{\mathrm{det}(q)=0\}$. 
\end{remark}

 \begin{remark}
The torsion of \eqref{3Pic} gives a non-vanishing function 
$C=\det(q)\in \HH^0(\E_o^0,\O^\times)$ for $q\in\Gamma(S^2_\times T^*M)$. 
While cohomologically trivial, it may be still considered as 
the only global scalar relative invariant of $\g$ on $\E$. 
In fact, $\det(q)^s$ appears in raising/lowering indices for SPI.

Note also that for the space of (possibly degenerate) quadrics $\J^0=S^2T^*M$ 
on $M$ the action of the pseudogroup $G$ is not transitive,
with singular orbits given by the relative invariant $R=\det(q)$ of order 0.
One may show that the global Lie-Tresse theorem applies in this (intransitive) 
situation, and that $R$ generates all relative invariants on $\J^\infty$.
 \end{remark}

This explains the absence of proper relative differential invariants 
for general Lorentzian spacetimes in mathematical relativity. 
However relative differential invariants
do occur for special metrics like degenerate Kundt spacetimes or Killing horizons. 
Those are characterized by more restrictive equations $\bar\E\subset 
J^\infty(S^2T^*M)$ with nontrivial weight space $\W\subset\op{Pic}_{\g^{\infty}}(\bar\E)$.

\section{Outlook}\label{S6}

\subsection{Generalizations: relative vectors and tensors}

In this paper we established finite generation of scalar relative differential 
invariants via a finite number of those and a finite number of relative 
invariant derivations. These in turn generate all absolute differential invariants.

Geometrically, these relative differential invariants correspond to invariant divisors
with equivariant line bundles as weights. Through them it is possible to describe
all invariant hypersurfaces, which correspond to differential equations given by 
scalar (nonlinear) differential operators. 

More general higher codimension submanifolds in jet spaces\footnote{In general, submanifolds of codimension $r>1$ may not be given by $r$ functions, unless they are complete intersections (generic case). 
This leads to invariant theory of differential ideals, based on \cite{Kol}.},
given by several differential equations, correspond to either 
higher rank equivariant bundles or to a sequence of thereof with smaller rank 
(in particular, a tower of equivariant line bundles). 
The former is a straightforward (but difficult) generalization 
of the theory of equivariant line bundles.
The latter corresponds to conditional invariants, see
\cite{KS1} for an example of computations. 

 \begin{example}
Let us revisit Example \ref{Ex1}. By its last formula, an invariant
hypersurface in the region $\{a\neq0\}$ is expressed through absolute
invariants $I_1,\dots,I_4$. But $I_1=a$ is also an absolute invariant. 
Thus $\W=0$ and all scalar relative invariants are absolute. 

Similarly, relative invariant vectors of dimension 2 (characterizing
surfaces of codimension 2) are given by absolutive invariants. Indeed, 
if $a\neq0$, then all orbits are separated by invariants $\{I_i\}_{i=1}^4$.
On the hypersurface $\{I_1=a=0\}$ we get invariants $I_1'=x$,
$I_2=b$, $I_3'=2bz-y^2$. The first is rationally expressed via $I_3|_{a=0}$
if $b\neq0$, while $I_3'$ is a conditional invariant on $\{a=0\}$, not 
obtainable from restrictions of $I_i$ or other global invariants. 

Further on, in codimension 3 we have a polynomial relative invariant 
$R=[a,b,y]^t$, $L_\xi R=\Lambda\cdot R$, with the multiplier 
$\Lambda=\begin{pmatrix}0&0&0\\ 0&0&0\\ x&1&0\end{pmatrix}$. 
The matrix equation $L_\xi F=\Lambda F$ does not have polynomial invertible solutions,
hence the multiplier is not zero-cohomologous
and the relative invariant $R$ is nontrivial. Note, however, 
that the singular stratum $R=0$ is given by two absolute invariants
$I_1=I_2=0$ and the conditional invariant $I_3''=y$ on the locus $\{a=b=0\}$.
 \end{example}

In \cite{O1} and \cite{FO} the theory of matrix multiplier was developed
for the local theory of relative differential invariance. 
The global approach of \cite{KS2} can combine this with the sheaf theory of
analytic/algebraic vector bundles, to set up the hypercohomology for equivariant 
vector bundles.
However this depends on a more complicated homological technique than that used 
so far. 

Actually, even in non-equivariant case (without reference to a group action) the classification of vector bundles 
over algebraic manifolds is a hard problem (complicated already for 
non-rational curves, cf.\ the elliptic case in \cite{Ath}).
A successful realization of this is the theory of homogeneous vector bundles \cite{BHVB} applicable for a finite-dimensional Lie group $G$ and 
a subordinated homogeneous space $M=G/H$. Yet in prolongation to the jet-spaces
the action becomes intransitive, so a proper refinement of the theory is required
for a complete description.

The sections of such equivariant vector bundles can be relative invariant vectors or tensors,
or more complicated geometric objects. We hope to return to this discussion elsewhere.

\subsection{Invariant differential equations}

In this work we considered global relative invariants, given by both polynomial and
rational functions. More complicated expressions can contain radicals and other
multi-valued analytic functions, but this is related to non-algebraic covers 
(or quotients) of the group or the manifold it acts upon (as already mentioned in 
\cite{KL2}). 

Invariants of weight zero are absolute invariants, and 
the general form of such is $F(I_i,\nabla_jI_i,\dots)$ as appears in the global
Lie-Tresse theorem. Here for algebraic equations one can require $F$ to be 
polynomial. However, there exist important cases when
invariant differential equations are expressed through rational absolute differential invariants but are non-algebraic. A large class of such examples comes from quotients of differential equations, i.e.\ differential syzygies on a basis of the algebra of rational absolute differential invariants. 

 \begin{example}
Consider $M=J^0(\mathbb CP^1)=\mathbb CP^1 \times \mathbb C$ with the action of Lie group 
$G=PGL(1,\C)\times\C$ corresponding to the Lie algebra of vector fields
in local coordinates $(x,u)$:
 \[ 
\g = \langle \partial_x,x\partial_x,x^2\partial_x,\partial_u\rangle.
 \]
The field of rational differential invariants is generated by 
 \[ 
I=\frac{u_1u_3 -\frac{3}{2}u_2^2}{u_1^4}\ \text{ and }\ 
J=\frac{u_1^2u_4 -6u_1u_2u_3 +6u_2^3}{u_1^6}
 \]
via the invariant derivation $\frac{d}{dI}=\frac1{D_xI}D_x=\frac1{u_1J}D_x$.
For instance, we have
 \[
\frac{dJ}{dI}=\frac{K}{J}-6\frac{I^2}{J},\ \text{ where }\
K= \frac{u_1^3u_5 -10u_1^2u_2u_4 +30u_1u_2^2u_3 -\frac{45}{2}u_2^4}{u_1^8}.
 \]

In the complement of $u_1=0$, consider the algebraic ODE defined by $F:=K-6I^2-J^2=0$. The quotient PDE is given by $\frac{dJ}{dI}=J$ and it has the solution 
$G_c:=J-ce^I=0$ for $c\in\C$. The equation $\{F=0\}\subset J^5(\C P^1)$ is thus 
foliated by non-algebraic differential constraints $G_c=0$. 
This additional constraint is compatible with $F=0$ for every $c$ as $D_x(G_c)=u_1(F+J G_c)$. In fact, $F=0$ is a differential consequence of $G_c=0$ for $u_1\neq0$. 
 \end{example}

This shows that analytic invariant differential equations can naturally 
arise from algebraic ones, but they can still be  expressed through rational generators. Invariant differential equations in the local setting
can be classified by the method of Lie \cite{Li2,O1}, which now has a global
counterpart, see examples of computations in \cite{KS1}.


\subsection{Invariant differential operators}

This was an active area of investigations in the smooth context: 
classification of invariant linear differential operators between density bundles.
When the base is a curve or a surface $M$ some results were obtained, cf.\ \cite{F}.
Here the group of diffeomorphisms $G=\op{Diff}(M)$ is acting 
on both the base $M$ and density bundles $\xi,\eta$ over it.
Extending those results to general manifolds $M$ does not seem plausible.

A specific instance when this was successful is the theory of
invariant differential operators between homogeneous vector bundles \cite{BHVB},
for instance over symmetric spaces using the representation theory \cite{Hel} or
over generalized flag varieties $G/P$, together with its
curved analogs in analytic/algebraic context, 
using the technique of parabolic geometry and BGG machinery \cite{BE}.

This problem may be rephrased through the action of $G$ on the space of  
differential operators between natural densities $\xi,\eta$ over $M$ 
and can be approached through the theory of global differential invariants.
A more general problem to classify $G$-invariant differential operators 
between density bundles over jet-spaces $\J^k$ includes, in particular,
relative invariant derivations. It remains to be seen 
if the theory we developed in this paper may turn useful in this problem.

\appendix

\section{On equivariant Picard group under jet-prolongation}\label{SecA}

We have computed several examples of relative invariants of $\g$-actions,
where the weight lattice $\W\subset\J^\infty$ was lifted from lower jets
$\W\subset\J^k$ (for $k=0\vee1$ or even weights could descend to the base
$M$ of a bundle on $\J^0$). While every $\g$-equivariant line bundle over $\J^k$
can be pulled back to $\J^\infty$ by the projection $\pi_{\infty,k}$ the map
 \begin{equation}\label{pullback}
\pi_{\infty,k}^*:\op{Pic}_{\g^{(k)}}(\J^k)\to\op{Pic}_{\g^{\infty}}(\J^\infty)
 \end{equation}
is not necessary injective (and similarly for differential equations $\E$).

 \begin{example}
Consider the Lie algebra $\g_r=\langle\p_x,x^i\p_y:0\leq i\leq r\rangle$ on 
$\J^0=\C^2(x,y)$ prolonged to $\J^k=J^k(\C,\C)$. Since any vector bundle
over $\C^2$ is topologically trivial, we have in this case:
 \[
\op{Pic}_{\g_r^{(k)}}(\J^k) = \HH^1(\g_r,\mathcal{O}(\J^k))=
\begin{cases}
\C^{r-k} & \text{for }k<r,\\
0 & \text{for }k\geq r.
\end{cases}
 \]
  Indeed, the multiplier $\lambda$ can be changed by a coboundary so that $\lambda(\partial_x)=\lambda(\partial_y)=0$.  
Next, the cocycle conditions imply that 
\[\lambda(x^i \partial_y) = \sum_{j=0}^{i-1} \binom{i}{j} c_{i-j}(y_1,\dots, y_k)x^j, \quad 1 \leq i \leq r.\]
For $k=0$ the coefficients $c_1,\dots,c_r$ are constants.
For $k=1$ we can set $c_1=0$ by adding a coboundary. Then, by the cocycle conditions, 
the remaining coefficients $c_2,\dots,c_r$ are constants. This pattern continues 
until $k=r$, when all coefficients can be normalized: $c_1=\cdots=c_r = 0$.

Note that $\g_r$ has neither absolute invariants on $\J^k$ for $k\leq r$,
no relative invariants on any $\J^k$. 
Ultimately, for $r=\infty$ or alternatively for 
$\g_\infty=\langle\p_x,f(x)\p_y:f\in\mathcal{O}(\C)\rangle$, we get an example of an action
with $\dim\op{Pic}_{\g^{\infty}}(\J^\infty)=\infty$ and no absolute or relative
differential invariants.
 \end{example}

Thus $\op{Pic}_{\g^{\infty}}(\J^\infty)$ could be rather large, yet the
weight lattice $\W$ has finite rank by Theorem~\ref{Th1}. The pullback
behaves well with respect to this discrete subgroup.

 \begin{theorem}
The map \eqref{pullback} is injective on $\W$. More generally the same is true 
for the pullback on differential equations 
$\pi_{\infty,k}^*:\op{Pic}_{\g^{(k)}}(\E^k)\to\op{Pic}_{\g^{\infty}}(\E^\infty)$
starting from the order $k=l$ of $\E$ above which $\E^{i+1}\to\E^i$ are affine bundles.
 \end{theorem}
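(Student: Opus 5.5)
The plan is to prove injectivity by contradiction. Suppose $w\in\W$, viewed in $\op{Pic}_{\g^{(k)}}(\E^k)$ with $k\ge l$, satisfies $\pi_{\infty,k}^*w=1$. Choose a rational relative differential invariant $R$ with $\wt(R)=w$. By Corollary \ref{cor:relativeratio} we may write $R=P/Q$ with $P,Q$ polynomial relative invariants; the case of polynomial $R$ is therefore the essential one.

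\textbf{Step 1: reduce to an absolute invariant upstairs.} Since the pulled-back bundle is trivial, there is a nonvanishing $\mu\in\O^\times$ on some $\E^N$, together with local representatives, such that $\mu R$ has trivial weight. Taking the trivialization cocycle into account, $\mu R=I$ is then an absolute invariant on $\E^N$. This follows from the exact sequence \eqref{ADP1}: an element of $\RelInvRat^\times$ with trivial weight lies in $\HH^0(\E^\infty,\O^\times)\cdot\AbsInv_\times$. Equivalently, the trivializing coboundary $\partial^0(\mu)$ gives transition data $g_{\sigma\rho}=\mu_\sigma/\mu_\rho$ and multiplier $\lambda_\sigma=d^0\mu_\sigma$, where the $\mu_\sigma$ are nonvanishing functions on $\pi_{N,k}^{-1}(V_\sigma)$.

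\textbf{Step 2: descend $\mu$ to order $k$.} This is the key step, and it uses the affine structure. The fibers of $\E^N\to\E^k$ are affine spaces, since $k\ge l$. A nowhere-vanishing regular function on an affine space is constant, and more generally a nowhere-vanishing function that is polynomial in the fiber variables depends only on the base. This is the same cancellation argument as in the proof of Proposition \ref{pr1outof4}. Hence each $\mu_\sigma=\pi_{N,k}^*\nu_\sigma$, and the coboundary already lives on $\E^k$: we get $(g,\lambda)=\partial^0(\nu)$ on $\E^k$, so $w$ is trivial in $\op{Pic}_{\g^{(k)}}(\E^k)$.

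\textbf{Main obstacle.} The difficulty is justifying that $\mu_\sigma$ may be taken polynomial (or rational) in the higher jets, rather than merely analytic. A priori an analytic nonvanishing function such as $e^{y_{k+1}}$ on an affine fiber is not constant. The plan is to exploit the fact that $w\in\W$ is represented by an algebraic relative invariant $R$. Choose the representative on $\E^N$ to be $R$ itself, which is polynomial or rational in the fibers. The trivializing coboundary on $\E^N$ then compares two algebraic data: $(R_\sigma/R_\rho,\,d^0R_\sigma)$ with trivial $g$. This forces $\mu_\sigma=I/R_\sigma$ with $I$ a rational absolute invariant, so $\mu_\sigma$ is rational in the fibers. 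A rational nonvanishing function on an affine space (nonsingular, irreducible) is constant along the fibers, and Step 2 applies.

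If $I$ is only meromorphic, the same Rosenlicht/global Lie--Tresse separation property used in Theorem \ref{th:reldiff} should be used instead. It shows that the field of absolute invariants consists of rational functions in the fibers, which again gives that $\mu_\sigma$ is rational in the fibers.
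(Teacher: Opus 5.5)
Your Steps 1 and 2 are exactly the paper's argument. The paper also reduces triviality of the pulled-back weight to a rescaling factor $f$ with $w=d\log f$ that makes $R$ absolute. It then excludes this because a nowhere vanishing factor that is rational in the affine fibre variables depends only on the lower jet. The paper, however, only considers \emph{rational} factors (``which clearly cannot happen for rational factors $f$'').

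\textbf{The gap.} Your proposal breaks in the ``main obstacle'' paragraph, where you try to derive this rationality for a merely analytic trivialization. The claim that comparing the algebraic data of $R$ with trivial data ``forces'' $I=\mu R$ to be rational is unjustified. A trivializing cochain is determined only up to a global nonvanishing analytic absolute invariant $h$ (since $\partial^0h$ is trivial), which replaces $I$ by $hI$. Rationality of $R$ alone says nothing about whether a rational choice exists.

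\textbf{A counterexample to that step.} The step uses only rationality of $R$, and it fails in the following case. On jets of functions $y(x)$ take $\g=\langle\p_x,\p_y,x\p_y,X\rangle$ with $X=x\p_x+(2y+x^2)\p_y$. Then $(x\p_y)^{(\infty)}=x\p_y+\p_{y_1}$ and $X^{(\infty)}=x\p_x+(2y+x^2)\p_y+(y_1+2x)\p_{y_1}+2\p_{y_2}-\sum_{i\ge3}(i-2)y_i\p_{y_i}$.
\begin{itemize}
\item $R=y_3$ is a polynomial relative invariant with constant multiplier $\lambda(X)=-1$, and $\lambda=0$ on the other generators.
\item Its class is nontrivial on $J^1$. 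The conditions for $\p_x,\p_y,x\p_y$ force a trivializing $\nu$ on $J^1$ to be constant, and then $X^{(1)}\nu=-\nu$ forces $\nu=0$.
\item Yet on $J^2$ the class is trivialized by $e^{-y_2/2}$, with the transcendental absolute invariant $y_3e^{y_2/2}$.
\item By your own Step 2, no fibrewise rational trivializing factor can exist here.
\end{itemize}
This $\g$ is not algebraic, so the theorem is not contradicted. But the example shows that any analytic version must use algebraicity of the action, which your argument never does.

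\textbf{The fallback.} It is false as well. Rosenlicht's theorem and the global Lie--Tresse theorem describe the field of \emph{rational} invariants. Analytic invariants need not be rational in the fibres even for algebraic actions; for example, $e^{y_2}$ is an absolute invariant of $\langle\p_x,\p_y\rangle$ on jets of functions.

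\textbf{What would close it.} The only legitimate source of a rational $I$ in your write-up is the exactness of \eqref{ADP1} with $\AbsInv_\times$ rational, which you cite in Step 1. Granting it, $R_\sigma/I$ is directly a nonvanishing, fibrewise rational trivializing cochain, and Step 2 concludes. However, for analytic trivializations that exactness is precisely the point in question; without algebraicity of the action it fails, as the example shows. Citing it only relocates the gap.

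So you have two options. You can state explicitly, as the paper tacitly does, that triviality is tested with trivializing factors rational in the jet-fibre variables; then your Steps 1--2 are complete and coincide with the paper's proof. Otherwise you need a genuine argument that exploits the algebraic structure of the isotropy action on the affine fibres.
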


 \begin{proof}
Every $w\in\W\setminus0$ is a realizable weight of a certain rational relative 
differential invariant $R\in\RelInvRat$: $L_XR=w(X)R$, $X\in\g$, $w(X)\in\mathfrak{P}^k$. 
Let $k$ be the order of $R$ and let $\hat{R}=\pi_{m,k}^*R$ be the pullback to order $m>k$.
Triviality of the weight means topological triviality of the corresponding line bundle
(which does not change under jet-prolongation) and triviality of the $\g$-lift. 
Thus one has to recheck if a rescaling $\bar{R}\mapsto f\bar{R}$ can make $\bar{R}$ 
an absolute invariant, that is $w=d\log f$, which clearly cannot happen for 
rational factors $f$ of order $m>k$.
 \end{proof}

\section{On real form and reducible varieties}\label{SecB}

While all equations-varieties of this work were assumed irreducible,
a more general class of algebraic sets (or reducible varieties) can also arise in applications.

For instance, the action of the pseudogroup $G=\op{Diff}_\text{loc}\bigl(\R^2(x,y)\bigr)$
of point transformations on second order ODEs $y_{xx}=f(x,y,y_x)$ has two
fundamental relative differential invariants $I,H$ of order 4,
introduced by Tresse \cite{T2}, see also \cite{K1}. 
Extension of $G$ by the Cartan's duality $\Z_2$ \cite{Car1} gives the pseudogroup  
$\hat{G}$ acting on the locus $\Sigma=\{IH=0\}\subset J^4(\R^3(x,y,p),\R(f))$
by interchanging components, 
where we let $p=y_x$ (the condition $I=H=0$ characterizes trivial ODEs). 
While $\Sigma$ is reducible as an algebraic set as well as $G$-space, 
it is irreducible as a $\hat{G}$-space.

Similar sitation arises in differential invariants of CR hypersurfaces 
in $\C^2$ \cite{Car2} with $G$ being the pseudogroup of 
biholomorphic transformations. In this case the fundamental relative invariant 
is complex-valued, corresponding to $I+iH$. 
The corresponding locus is irreducible 
(the $\Z_2$ action corresponds to conjugation).
In fact, both problems have the same complexifications (complex ODEs, cf.\ \cite{Mer})
so taking real form may give reducible varieties.


The ring of a reducible variety $\E=\cup_i\E_i$ is subject to the following 
\v{C}ech-like complex, where $I_1$ enumerates the components of $\E$, 
$I_2$ components of pairwise intersections, etc, up to $I_n$, $n=|I_1|$
($\k$ is $\C$ as our principal choice, but it can be also $\R$):
 \[
0\to\k[\E]\to\prod_{\sigma_1\in I_1}\k[\E_{\sigma_1}]\to
\prod_{\sigma_2\in I_2}\k[\E_{\sigma_2}]\to\dots\to
\prod_{\sigma_n\in I_n}\k[\E_{\sigma_n}]\to0.
 \]
This helps passing to $G$-invariants, as $\k[\E_{\sigma}]$ are
integral domains and the invariant theory works in both algebraic and
differential contexts.

Equations and actions prolong component-wise, fibers of the jet-bundles are 
affine, so one can obtain finite generation of invariants with a caveat 
that invariant may be defined through localization on components. 
If a discrete subgroup of $G$ shuffles components, this gives
identification of actions on $\E_i$ and reduces the description of invariants
to what we have already studied.

Thus we conclude that in a more general case of a disconnected pseudogroup action
on a reduced algebraic equation the $G$-orbits can be separated by absolute, relative
and conditional invariants. This more general finite generation will be discussed 
elsewhere.


\end{document}